\documentclass
[
    fontsize = 11 pt,        % The size of the font.
    american,                % Support for American English.
    captions = tableheading, % Places the correct amount of space when the caption of a table is above the table.
    numbers = noenddot,      % Does not use a period at the end of numbered titles, such as sections or figures.
    footheight = 35 pt,      % Defines the height of the foot. Due to the line, it needs extra height.
    abstracton,
    letterpaper,             % Use US letter size instead of A4.
%    draft,                   % Only displays boxes of figures. This option is useful if compilation takes a long time.
]
{scrartcl}

%\usepackage[letterpaper]{geometry}
% This file contains most of the packages used for this document. If you want to add a package, do it here.
% Some packages are already included in other files in the ›core‹ folder if they were already necessary. Thus, make sure to go through these files too if you want to know whether a certain package is already included.
%
% This file contains the following parts:
%   • Typography
%   • Colors
%   • Math
%   • Fonts
%   • Graphics
%   • Tables
%   • Enumerations
%   • Algorithms
%   • Spaces and Special Characters
%   • Miscellaneous
%   • Bibliography
%   • Additional Packages
%   • Hyperlinks

%%%%%%%%%%%%%%%%
%% Typography %%
%%%%%%%%%%%%%%%%

\usepackage[T1]{fontenc}
\usepackage[utf8]{inputenc}
\usepackage
[
    babel = true, % Enables language-specific tuning.
]
{microtype}           % Uses the text space more efficiently.
\usepackage{csquotes} % Uses the correct quotes according to the current language.

%%%%%%%%%%%%
%% Colors %%
%%%%%%%%%%%%

\usepackage[dvipsnames]{xcolor} % Allows it to define colors. The option says that common names can be used.

% Dark blue.
\definecolor{stroke1}{HTML}{2574A9} % This color is used as the standard color to highlight things.

%%%%%%%%%%
%% Math %%
%%%%%%%%%%

% The following packages are the standard packages used in order to typeset math. They contain a lot of useful commands.
\usepackage{amsmath}
\usepackage{amssymb}
\usepackage{amsthm}
\usepackage{thmtools}
\usepackage{mathtools}
\usepackage{thm-restate}
\usepackage{dsfont}        % Yields far better blackboard-bold letters than \mathbb. Use \mathds in order to write such letters.
\usepackage{braceMnSymbol} % Adjusts overbraces and underbraces such that longer versions are put together seamlessly.

%%%%%%%%%%%
%% Fonts %%
%%%%%%%%%%%

\usepackage
[
    ttscale = 0.85, % Scales the typewriter font.
]
{libertine} % The main font used in this thesis.
\usepackage
[
    libertine,    % Changes the math font to libertine (the main font).
    slantedGreek, % Makes all greek letters italic by default. If you want to use an upright greek letter, use ›\up‹ immediately followed by the letter’s name. For example, \upGamma displays an upright uppercase gamma.
    vvarbb,       % Changes the \mathbb font to another font. However, \mathbb remains ugly and should not be used. Use \mathds instead.
    libaltvw,     % Uses different characters for v und w that look far better than the default ones.
]
{newtxmath} % The main math font of this thesis. It fits well with the main font.
\usepackage{url} % Responsible for URL formatting.
\usepackage{bm}  % Allows to use sensible bold letters in math mode. This package has to go after the font packages. Otherwise it does not work correctly!

%%%%%%%%%%%%%%
%% Graphics %%
%%%%%%%%%%%%%%

\usepackage{graphicx} % The standard package for including graphics into your document.
\usepackage
[
    subrefformat = simple, % Formats the label of the \subref command without parentheses.
    labelformat = simple,  % Formats the mark of a subfigure without parentheses.
]
{subcaption}         % Enables it to have subfigures inside of a single figure.

%%%%%%%%%%%%
%% Tables %%
%%%%%%%%%%%%

\usepackage{array}     % Improves the way that tables can be formatted.
\usepackage{booktabs}  % Adds lines (called ›rules‹) that can be used in tables and improves spacing.
\usepackage{longtable} % Allows to make tables that span multiple pages.
\usepackage{pdflscape} % Allows to change a page into landscape. This is handy if a table is very wide.

%%%%%%%%%%%%%%%%%%
%% Enumerations %%
%%%%%%%%%%%%%%%%%%

\usepackage[shortlabels]{enumitem} % Adds tons of useful features to enumeration environments.

%%%%%%%%%%%%%%%%
%% Algorithms %%
%%%%%%%%%%%%%%%%

\usepackage
[
    ruled,         % Creates lines at the top and at the bottom. Further, the caption is now above the algorithm.
    vlined,        % Shows the scope of a statement spanning multiple lines via a small vertical bar. Thus, no closing tags are needed.
    linesnumbered, % Shows line numbers.
]
{algorithm2e} % Allows to write pseudocode.

%%%%%%%%%%%%%%%%%%%%%%%%%%%%%%%%%%%
%% Spaces and Special Characters %%
%%%%%%%%%%%%%%%%%%%%%%%%%%%%%%%%%%%

\usepackage{xspace}   % Adds the functionality that a space after a command will be shown as a space in the output.
\usepackage
[
    shortcuts, % Allows to use short symbols for non-breaking hyphens and dashes instead of lengthy commands.
]
{extdash}             % Adds non-breaking hyphens and dashes.
\usepackage{setspace} % Allows to easily chnage the spacing inside of the document.

%%%%%%%%%%%%%%%%%%%
%% Miscellaneous %%
%%%%%%%%%%%%%%%%%%%

\usepackage{xparse}    % Is used in order to define reasonable commands.
\usepackage{footnote}  % Allows it to extend the environments footnotes can be used in. It is said that this package is in conflict with ›hyperref‹. I did not note any troubles. However, if something is fishy, it is probably best to not use this package.
\usepackage{afterpage} % Adds the \afterpage command, which specifies that the provided argument shall be processed after the current page is finished.
\usepackage
[
    textsize = scriptsize, % Determines the text size of the TODO note.
]
{todonotes}            % Adds TODO notes to the document. These are small text areas inside of the margin of a page.

%%%%%%%%%%%%%%%%%%
%% Bibliography %%
%%%%%%%%%%%%%%%%%%

\usepackage
[
    sortcites,              % Sort multiple references when citing them together.
    style = numeric,        % The style of a citation mark.
%    defernumbers,           % Makes sure that references always have unique numbers. This is important if you use multiple bibliographies.
    safeinputenc,           % Allows to use UTF8 characters in the bibliography and tries to translate them into TeX automatically.
    backref = true,         % Creates back references in the bibliography.
    backrefstyle = three,   % Compresses three or more consecutive pages in the back references into a range.
    hyperref = true,        % Makes links generated by biblatex clickable. If hyperref is not used, a warning is issued.
    maxbibnames = 99,       % The maximum number of names displayed in the bibliography.
    maxcitenames = 2,       % The maximum number of names displayed when using commands like ›textcite‹. The default is 3. After that, ›et al.‹ is used.
%    useprefix,              % Prints name prefixes, such as ›von‹. The default is false. This means that prefixes are not considered to be part of the last name.
]
{biblatex} % Used in order to format the bibliography.

\DefineBibliographyStrings{english}%
{%
    backrefpage  = {\lowercase{s}ee page}, % For a single page number.
    backrefpages = {\lowercase{s}ee pages} % For multiple page numbers.
}

\addbibresource{references.bib}

%%%%%%%%%%%%%%%%%%%%%%%%%
%% Additional Packages %%
%%%%%%%%%%%%%%%%%%%%%%%%%

\usepackage{mfirstuc}
\usepackage{algorithmicx}
\usepackage{amsfonts}
\usepackage
[
    labelfont = bf,
    format = plain,
]
{caption}

\usepackage[american]{babel}

%%%%%%%%%%%%%%%%
%% Hyperlinks %%
%%%%%%%%%%%%%%%%

\usepackage
[
    bookmarks = true,                 % Generates boodmarks for the PDF.
    bookmarksopen = false,            % The bookmarks are closed by default.
    bookmarksnumbered = true,         % The bookmarks use the numbers of the corresponding headline.
    pdfstartpage = 1,                 % The first page seen when opening the PDF.
    colorlinks = true,                % The text of hyperlinks is colored instead of having a colored box around it.
    allcolors = stroke1,              % Every hyperlink uses the same color. If you want to change specific colors, use the commands below.
%        linkcolor = stroke1,          % The color of an in-document hyperlink.
%        citecolor = stroke1,          % The color of a citation.
%        filecolor = stroke1,          % The color of a file link.
%        pagecolor = stroke1,          % The color of a reference to a page.
%        urlcolor = stroke1,           % The color of a weblink.
]
{hyperref} % The standard package that is used for creating hyperlinks inside of a document.

\usepackage
[
%    capitalise, % Capitalizes the words in front of the labels. This can also be done by simply using \Cref instead of \cref. In order to have a greater variety, this option is not used.
    noabbrev,   % The words in front of the labels are not abbreviated.
    nameinlink, % Extends the link of a reference to the word in front of it.
]
{cleveref} % This package must be included after ›hyperref‹. It creates clever references that know what they refer to.

% This file contains all sorts of macros that are globally used. Further, certain options made available through packages are set here as well.
%
% This file contains the following parts:
%   • Miscellaneous
%   • Footnotes
%   • Theorem Environments
%   • Meta Commands
%   • Common Commands

%%%%%%%%%%%%%%%%%%%
%% Miscellaneous %%
%%%%%%%%%%%%%%%%%%%

% Defines the layout of the title.
\date{}

% Defines the IfEmptyTF command. This is useful for optional arguments provided as [].
\makeatletter
    \def\IfEmptyTF#1%
    {%
        \if\relax\detokenize{#1}\relax%
            \expandafter\@firstoftwo%
        \else%
            \expandafter\@secondoftwo%
        \fi%
    }
\makeatother

% Creates an environment that automatically uses math mode if necessary and creates a space afterward if wanted. Basically, if the command \example is defined to use this environment, you can use \example without mathe mode in normal text as if it were ordinary text.
\NewDocumentCommand{\mathOrText}{m}
{%
    \ensuremath{#1}\xspace%
}

% Reduces the space around scaling bracekts.
\let\originalleft\left
\let\originalright\right
\renewcommand{\left}{\mathopen{}\mathclose\bgroup\originalleft}
\renewcommand{\right}{\aftergroup\egroup\originalright}

% Lets math text in an environment of bold text also appear bold.
\makeatletter
    \DeclareRobustCommand{\bfseries}%
    {%
        \not@math@alphabet\bfseries\mathbf%
        \fontseries\bfdefault\selectfont%
        \boldmath%
    }

% Adds square and curly brackets to the exceptions for xspace such that no space is used right in front of them.
\xspaceaddexceptions{]\}}

% Formats URLs by using the normal font (not the typewriter font).
\urlstyle{rm}

% Allows large display formulas to span multiple pages.
\allowdisplaybreaks

% Defines an optional argument for labels named ›ineq‹ that signals that cleveref should name the respective reference ›inequality‹ instead of its actual name.
\crefname{ineq}{inequality}{inequalities}
\creflabelformat{ineq}{#2{\upshape(#1)}#3}

% Defines an optional argument for labels named ›term‹ that signals that cleveref should name the respective reference ›term‹ instead of its actual name.
\crefname{term}{term}{terms}
\creflabelformat{term}{#2{\upshape(#1)}#3}

% Defines an optional argument for labels named ›cond‹ that signals that cleveref should name the respective reference ›condition‹ instead of its actual name.
\crefname{cond}{condition}{conditions}
\creflabelformat{term}{#2{\upshape(#1)}#3}

% Defines an optional argument for labels named ›assume‹ that signals that cleveref should name the respective reference ›assumption‹ instead of its actual name.
\crefname{assume}{assumption}{assumptions}
\creflabelformat{term}{#2{\upshape(#1)}#3}

%%%%%%%%%%%%%%%
%% Footnotes %%
%%%%%%%%%%%%%%%

% In the following, the command ›footnote‹ is redefined such that the footnote mark can be more easily adjusted.
\let\oldfootnote\footnote

% The following are variables used by the command.
\newlength{\spaceBeforeFootnote} % Denotes the space before the footnote mark in em.
\newlength{\spaceAfterFootnote}  % Denotes the space after the footnote mark in em.

% The new footnote command. The first three arguments are optional, the fourth mandatory. Its arguments have the following meaning:
%   1. The amount of space before the footnote mark in em. The default is 0.
%   2. The amount of space after the footnote mark in em. The default is 0.
%   3. The number of the footnote mark.
%   4. The text of the footnote.
\RenewDocumentCommand{\footnote}{o o o m}%
{%
    \IfNoValueTF{#1}%
    {%
        \oldfootnote{#4}%
    }%
    {%
        \setlength{\spaceBeforeFootnote}{\IfEmptyTF{#1}{0}{#1} em}%
        \IfNoValueTF{#2}%
        {%
            \hspace*{\spaceBeforeFootnote}\oldfootnote{#4}%
        }%
        {%
            \setlength{\spaceAfterFootnote}{\IfEmptyTF{#2}{0}{#2} em}%
            \hspace*{\spaceBeforeFootnote}\IfNoValueTF{#3}{\oldfootnote{#4}}{\oldfootnote[#3]{#4}}\hspace*{\spaceAfterFootnote}%
        }%
    }%
}

% The following commands enable it such that footnotes can be used in various other environments other than simple text.
\makesavenoteenv{figure}
\makesavenoteenv{table}
\makesavenoteenv{tabular}

%%%%%%%%%%%%%%%%%%%%%%%%%%
%% Theorem Environments %%
%%%%%%%%%%%%%%%%%%%%%%%%%%

% The following theorem style uses a bold heading for the theorem and normal (upright) text. The environment begins with a triangle of color ›stroke1‹ pointing to the right and uses a QED symbol that is a triangle of the same color pointing to the left. Thus, the environment is enclosed by triangles.
\declaretheoremstyle
[
   	spaceabove = \topsep,
   	spacebelow = \topsep,
   	headfont = \bfseries,
   	headformat = \textcolor{stroke1}{$\blacktriangleright$} \NAME~\NUMBER \NOTE,
   	notefont = \bfseries,
   	notebraces = {(}{)},
   	bodyfont = \normalfont,
   	postheadspace = 0.5 em,
   	qed = \textcolor{stroke1}{\bfseries$\blacktriangleleft$},
]
{myTheoremStyle}

% The QED symbol used in proofs is a squre with color ›stroke1‹ in order to look similar to the theorem environments.

\declaretheorem
[
   	style = myTheoremStyle,
   	name = Lemma,
    sharenumber = conjecture,
]
{lemma}
\declaretheorem
[
   	style = myTheoremStyle,
   	name = Corollary,
    sharenumber = conjecture,
]
{corollary}
\declaretheorem
[
   	style = myTheoremStyle,
   	name = Theorem,
    sharenumber = conjecture,
]
{theorem}
\declaretheorem
[
   	style = myTheoremStyle,
   	name = Definition,
    sharenumber = conjecture,
]
{definition}

\declaretheorem
[
    style = myTheoremStyle,
    name = Observation,
    sharenumber = conjecture,
]
{observation}
\declaretheorem
[
    style = myTheoremStyle,
    name = Condition,
    sharenumber = conjecture,
]
{condition}

%%%%%%%%%%%%%%%%%%%
%% Meta Commands %%
%%%%%%%%%%%%%%%%%%%

% A template for a function that can use an optional variable bracket size. Its arguments have the following meaning:
%   1. The name of the function.
%   2. The type of the left bracket. This should be a bracket symbol, as it will be forwarded to the command \left.
%   3. The type of the right bracket. The same restrictions as with parameter 2 hold here.
%   4. The arguments that the function takes, that is, the things that are enclosed by the brackets.
%   5. The size of the brackets. This should be a value like \big or similar, as it will be forwarded to the command \left.
\NewDocumentCommand{\functionTemplate}{m m m m o}%
{%
    \IfNoValueTF{#5}%
    {%
        \mathOrText{#1\left#2{#4}\right#3}%
    }%
    {%
        \mathOrText{#1#5#2{#4}#5#3}%
    }%
}

% The following two commands are used as variables for the following command.
\newcommand*{\leftBracketType}{(}
\newcommand*{\rightBracketType}{)}

% This is a command that creates a command that is a function as defined by the command \functionTemplate. Its arguments have the following meaning:
%   1. The name of the function command.
%   2. The name of the function itself.
%   3. The type of the left bracket. This will be forwarded to parameter 2 of \functionTemplate. The default is (. Use \lbrack for [ and \{ for }.
%   4. The type of the right bracket. This will be forwarded to parameter 3 of \functionTemplate. The default is ). The rest is similar to parameter 3.
% The command created has two optional arguments, which are as follows:
%   1. The arguments of the function. If this is empty, only the name of the function will be used.
%   2. The size of the brackets. This will be forwarded to parameter 5 of \functionTemplate.
\NewDocumentCommand{\createFunction}{m m o o}%
{%
    \renewcommand*{\leftBracketType}{\IfNoValueTF{#3}{(}{#3}}%
    \renewcommand*{\rightBracketType}{\IfNoValueTF{#4}{)}{#4}}%
    \NewDocumentCommand{#1}{o o}%
    {%
        \IfNoValueTF{##1}%
        {%
            \mathOrText{#2}%
        }%
        {%
            \functionTemplate{#2}{\leftBracketType}{\rightBracketType}{##1}[##2]%
        }%
    }%
}

% A template for a probabilistic symbol, which can make use of a condition denoted by |. Its arguments have the following meaning:
%   1. The name of the function.
%   2. The argument of the function.
%   3. The condition of the function. The default is that there is no condition.
%   4. The size of the brackets. This will be forwarded to parameter 5 of \functionTemplate.
\DeclareDocumentCommand{\probabilisticFunctionTemplate}{m m O{} o}
{%
    \functionTemplate{#1}%
    {\lbrack}%
    {\rbrack}%
    {#2\IfEmptyTF{#3}{}{\ \IfNoValueTF{#4}{\left}{#4}\vert\ \vphantom{#2}#3\IfNoValueTF{#4}{\right.}{}}}%
    [#4]%
}

%%%%%%%%%%%%%%%%%%%%%
%% Common Commands %%
%%%%%%%%%%%%%%%%%%%%%

%%%%%%%%%%%%%%%%%%%%%
% Number Sets

% Number sets appear in bold by default.
\newcommand*{\N}{\mathOrText{\mathds{N}}}
\newcommand*{\Z}{\mathOrText{\mathds{Z}}}

\newcommand*{\R}{\mathOrText{\mathds{R}}}
\newcommand*{\C}{\mathOrText{\mathds{C}}}
\newcommand*{\indicatorFunctionSymbol}{\mathds{1}}

%%%%%%%%%%%%%%%%%%%%%
% Probabilistic Functions
% All of these functions follow the outline of \probabilisticFunctionTemplate. That is, the syntax is, for example, \Pr{A}[B][\big], which would be shown as Pr[A | B] with \big brackets.

% Probability measure
\RenewDocumentCommand{\Pr}{m O{} o}%
{%
    \probabilisticFunctionTemplate{\mathrm{Pr}}{#1}[#2][#3]%
}

% Expected value
\NewDocumentCommand{\E}{m O{} o}%
{%
    \probabilisticFunctionTemplate{\mathrm{E}}{#1}[#2][#3]%
}

% Variance
\NewDocumentCommand{\Var}{m O{} o}%
{%
    \probabilisticFunctionTemplate{\mathrm{Var}}{#1}[#2][#3]%
}

%%%%%%%%%%%%%%%%%%%%%
% Landau Notation
% The following commands all take a mandatory argument, which is the term of the Landau notation, as well as an optional argument, which determines the size of the brackets.

% Big O
\DeclareDocumentCommand{\bigO}{m o}%
{%
    \functionTemplate{\mathrm{O}}{(}{)}{#1}[#2]%
}

% Small O
\DeclareDocumentCommand{\smallO}{m o}%
{%
    \functionTemplate{\mathrm{o}}{(}{)}{#1}[#2]%
}

% Big Theta
\DeclareDocumentCommand{\bigTheta}{m o}%
{%
    \functionTemplate{\upTheta}{(}{)}{#1}[#2]%
}

% Big Omega
\DeclareDocumentCommand{\bigOmega}{m o}%
{%
    \functionTemplate{\upOmega}{(}{)}{#1}[#2]%
}

% Small Omega
\DeclareDocumentCommand{\smallOmega}{m o}%
{%
    \functionTemplate{\upomega}{(}{)}{#1}[#2]%
}

%%%%%%%%%%%%%%%%%%%%%
% Constants

% Pi; ratio of a circle’s circumference to its diameter

% Euler’s constant. This command takes an optional parameter, which becomes the exponent of this constant.
\DeclareDocumentCommand{\eulerE}{o}%
{%
    \mathOrText{\mathrm{e}\IfNoValueTF{#1}{}{^{#1}}}%
}

% i; the imaginary unit

%%%%%%%%%%%%%%%%%%%%%
% Other

% A polynomial function. The mandatory parameter is the argument of the function, the optional one is the size of the brackets.
\DeclareDocumentCommand{\poly}{m o}%
{%
    \functionTemplate{\mathrm{poly}}{(}{)}{#1}[#2]%
}

% The identity function
\createFunction{\id}{\mathrm{id}}

% An indicator function. The first parameter is set as an index, the second is the argument of the function, and the third is the size of the brackets.
\NewDocumentCommand{\ind}{m o o}%
{%
    \IfNoValueTF{#2}%
    {%
        \mathOrText{\indicatorFunctionSymbol_{#1}}%
    }%
    {%
        \functionTemplate{\indicatorFunctionSymbol_{#1}}{(}{)}{#2}[#3]%
    }%
}

% The domain of a function. Its parameters are the same as for \poly.
\DeclareDocumentCommand{\dom}{m o}%
{%
    \functionTemplate{\mathrm{dom}}{(}{)}{#1}[#2]%
}

% The range of a function. Its parameters are the same as for \poly.
\DeclareDocumentCommand{\rng}{m o}%
{%
    \functionTemplate{\mathrm{rng}}{(}{)}{#1}[#2]%
}

% The d for an integral. The optional parameter becomes the exponent/degree of the operator.
\DeclareDocumentCommand{\d}{o}%
{%
    \mathrm{d}\IfNoValueTF{#1}{}{^{#1}}%
}

% A command that creates sets. The first parameter is the left-hand side, the second is the right-hand side, and the third (optional) parameter is the size of the brackets.
\DeclareDocumentCommand{\set}{m m o}%
{%
    \mathOrText{\IfNoValueTF{#3}{\left}{#3}\{#1\ \IfNoValueTF{#3}{\left}{#3}\vert\ \vphantom{#1}#2\IfNoValueTF{#3}{\right.}{}\IfNoValueTF{#3}{\right}{#3}\}}%
}

\setcounter{page}{0}

\newcommand\sbullet[1][.6]{\mathbin{\vcenter{\hbox{\scalebox{#1}{$\bullet$}}}}}
\newcommand*{\separator}{$\sbullet$\xspace}

\newcommand*{\mathbold}[1]{\textbf{\mathOrText{#1}}}

% Indicator function
\newcommand*{\indicator}[1]{\mathOrText{\indicatorFunctionSymbol{\left\{#1\right\}}}}

% Be able to create an equation number in an align* environment.

\newcommand*{\defeq}{\mathOrText{\coloneqq}}

\newcommand*{\symDiff}{\mathOrText{\oplus}}

\newcommand*{\subgraph}[2]{\mathOrText{#1 \left[ #2 \right]}}

\newcommand*{\graph}{\mathOrText{G}}
\newcommand*{\edges}{\mathOrText{E}}
\newcommand*{\vertices}{\mathOrText{V}}
\newcommand*{\numberOfVertices}{\mathOrText{n}}
\newcommand*{\degree}{\mathOrText{\Delta}}

% graph neighborhood. Parameter: vertex; Optional parameter: graph
\DeclareDocumentCommand{\neighbors}{m o}%
{%
	\mathOrText{N\IfNoValueTF{#2}{}{_{#2}}\left(#1\right)}%
}
\DeclareDocumentCommand{\neighborsClosed}{m o}%
{%
	\mathOrText{N\IfNoValueTF{#2}{}{_{#2}}\left[#1\right]}%
}

\newcommand*{\singleWeight}{\mathOrText{\lambda}}
\newcommand*{\weights}{\mathbold{\singleWeight}}
\newcommand*{\criticalWeight}[1]{\mathOrText{\singleWeight_{\text{c}}\left(#1\right)}}
\newcommand*{\gibbs}{\mathOrText{\mu}}
\newcommand*{\prt}{\mathOrText{Z}}
\newcommand*{\maxPartition}{\mathOrText{\prt_{\max}}}
\newcommand*{\independentSet}{\mathOrText{I}}

\newcommand*{\numberOfCliques}{\mathOrText{m}}
\newcommand*{\numberOfBlocks}{\mathOrText{\numberOfCliques}}

\newcommand*{\powerset}[1]{\mathOrText{2^{#1}}}
\newcommand*{\dtv}[2]{\mathOrText{d_{\text{TV}}\big(#1, #2\big)}}
\newcommand*{\size}[1]{\mathOrText{\left| #1 \right|}}
\newcommand*{\absolute}[1]{\mathOrText{\left| #1 \right|}}
\DeclareDocumentCommand{\norm}{m o}%
{%
	\mathOrText{\left\lVert #1 \right\rVert\IfNoValueTF{#2}{}{_{#2}}}%
}

\newcommand*{\err}{\mathOrText{\varepsilon}}

\DeclareDocumentCommand{\mix}{m m o}%
{%
	\mathOrText{\tau_{#1}\IfNoValueTF{#3}{}{^{(#3)}}\left(#2\right)}
}

% Conditions
\newcommand*{\pmc}{clique dynamics condition\xspace}

\newcommand*{\spmc}{strict clique dynamics condition\xspace}
\newcommand*{\Spmc}{\xmakefirstuc{\spmc}}
\newcommand*{\spmcConstant}{\mathOrText{\alpha}}
\newcommand*{\spmcFunctionSymbol}{\mathOrText{f}}
\DeclareDocumentCommand{\spmcFunction}{o}%
{%
	\mathOrText{\spmcFunctionSymbol\IfNoValueTF{#1}{}{\left(#1\right)}}%
}

%% More complex macros.
% The weights of a polymer model. The optional parameter denotes the index of the weight.

% The set of all polymer families. The first optional argument denotes the polymer model, the second the restriction.
\DeclareDocumentCommand{\independentSets}{o o}%
{%
    \mathOrText{\mathcal{I}\IfNoValueF{#1}%
        {%
            \IfNoValueF{#2}{_{\vert#2}}\IfEmptyTF{#1}{}{\left(#1\right)}%
        }%
    }%
}

% Spin configuration on S. first parameter: Set S, second parameter: vertex, third paramter: copy from independent set
\DeclareDocumentCommand{\spinConfig}{o o o}%
{%
	\mathOrText{\sigma\IfNoValueF{#1}%
		{%
			\IfEmptyTF{#1}{}{_{\vert#1}}
			\IfNoValueF{#3}{^{(#3)}} 
			\IfNoValueF{#2}{\IfEmptyTF{#2}{}{\left(#2\right)}}%
		}%
	}%
}

% constant 0 spin configuration on S. first parameter: Set S, second parameter: vertex 
\DeclareDocumentCommand{\zeroSpinConfig}{o o}%
{%
	\mathOrText{\mathbold{0}\IfNoValueF{#1}%
		{%
			\IfEmptyTF{#1}{}{_{\vert#1}}
			\IfNoValueF{#2}{\left(#2\right)}%
		}%
	}%
}

% probability according to gibbs distribution, First parameter: graph; Second parameter: event; Third paramter: condition; No idea what the rest was.
\DeclareDocumentCommand{\gibbsPr}{m m O{} o}%
{%
    \probabilisticFunctionTemplate{\mathds{P}_{#1}}{#2}[#3][#4]%
}

% events for vertices beeing (not) in an independent set
\newcommand*{\inSet}[1]{\mathOrText{#1}}
\newcommand*{\notInSet}[1]{\mathOrText{\overline{#1}}}

% The partition function. The first optional argument denotes the polymer model, the second the is legancy stuff.
\DeclareDocumentCommand{\partitionFunction}{o}%
{%
    \mathOrText{\prt\IfNoValueF{#1}%
    	{%
            \IfEmptyTF{#1}{}{\left(#1\right)}%
    	}%
    }%
}

% The name of the Gibbs distribution. The first optional argument denotes the polymer model, the second the restriction.
\DeclareDocumentCommand{\gibbsDistribution}{o o}%
{%
\mathOrText{\gibbs\IfNoValueF{#1}%
    {%
        \IfNoValueF{#2}{\IfEmptyTF{#2}{}{_{\vert#2}}}\IfEmptyTF{#1}{}{^{(#1)}}%
    }%
}%
}

% The Gibbs distribution as a function. The mandatory argument denotes the argument of the function, the rest is like in the previous command.
\DeclareDocumentCommand{\gibbsDistributionFunction}{m o o o}%
{%
    \IfNoValueTF{#2}%
    {%
        \gibbsDistribution%
    }%
    {%
        \IfNoValueTF{#3}{\gibbsDistribution[#2]}{\gibbsDistribution[#2][#3]}%
    }%
    \left(#1\IfNoValueF{#4}{\middle\vert\ #4}\right)%
}

% optional paramter is the vertex
\DeclareDocumentCommand{\weight}{o}%
{%
	\mathOrText{\lambda\IfNoValueTF{#1}{}{_{#1}}}%
}

% A block. The optional parameter denotes its index.
\DeclareDocumentCommand{\block}{o}%
{%
	\mathOrText{\clique\IfNoValueTF{#1}{}{_{#1}}}%
}

% A polymer clique. The optional parameter denotes its index.
\DeclareDocumentCommand{\clique}{o}%
{%
    \mathOrText{\Lambda\IfNoValueTF{#1}{}{_{#1}}}%
}

% General framework for MC notation. First parameter: Name; Second parameter: Parametization 
\DeclareDocumentCommand{\abstractDynamics}{m o}%
{%
	\mathOrText{#1\IfNoValueTF{#2}{}{\left(#2\right)}}%
}

% Generic Markov chain. The optional parameter denotes the polymer model.
\newcommand*{\markovSymbol}{\mathOrText{\mathcal{M}}}
\DeclareDocumentCommand{\markov}{o}%
{%
	\mathOrText{\IfNoValueTF{#1}{\abstractDynamics{\markovSymbol}}{\abstractDynamics{\markovSymbol}[#1]}}%
}

% The Markov chain based on clique covers. The optional parameter denotes the denotes the model and clique cover.
\newcommand*{\cliqueDynamicsSymbol}{\mathOrText{\mathcal{C}}}
\DeclareDocumentCommand{\cliqueDynamics}{o}%
{%
	\mathOrText{\IfNoValueTF{#1}{\abstractDynamics{\cliqueDynamicsSymbol}}{\abstractDynamics{\cliqueDynamicsSymbol}[#1]}}%
}

% The Markov chain based on disjoint clique covers including swap. The optional parameter denotes the model and clique cover.
\newcommand*{\blockDynamicsSymbol}{\mathOrText{\mathcal{B}}}
\DeclareDocumentCommand{\blockDynamics}{o}%
{%
	\mathOrText{\IfNoValueTF{#1}{\abstractDynamics{\blockDynamicsSymbol}}{\abstractDynamics{\blockDynamicsSymbol}[#1]}}%
}

% Transition "matrix". First parameter: Markov chain; Second parameter: states
\newcommand*{\transitionSymbol}{\mathOrText{P}}
\DeclareDocumentCommand{\transitionMatrix}{o o}%
{%
	\mathOrText{\transitionSymbol\IfNoValueF{#1}%
		{%
			\IfEmptyTF{#1}{}{_{#1}}%
			\IfNoValueF{#2}{\left( #2 \right)}
		}%
	}
}

% Genera state space. Paramter: Markov chain
\DeclareDocumentCommand{\stateSpace}{o}%
{%
	\mathOrText{\Omega\IfNoValueTF{#1}{}{_{#1}}}%
}

% stationary distribution. First parameter: Markov chain; Second parameter: state 
\DeclareDocumentCommand{\stationary}{o o}%
{%
	\mathOrText{\pi\IfNoValueF{#1}%
		{%
			\IfEmptyTF{#1}{}{_{#1}}%
			\IfNoValueF{#2}{\left( #2 \right)}
		}%
	}
}

%%%%%%%%%%%%%%%%%%%%%%%%%%%%%%%%%%%%%%%%%%%%%%%%%%%%%%%%%%%%%%%%
% Simplicial complex stuff

\newcommand*{\complexGroundset}{\mathOrText{U}}
\newcommand*{\complexPartition}[1]{\mathOrText{\complexGroundset_{#1}}}

% standard face
\newcommand*{\complexFace}{\mathOrText{\tau}}

% face representing independent set
\newcommand*{\faceFromIndep}[1]{\mathOrText{\complexFace_{#1}}}

\newcommand*{\indepFromFace}[1]{\mathOrText{\independentSet_{#1}}}

% how to represent vertices as elements in the complex
\newcommand*{\complexElement}[1]{\mathOrText{x_{#1}}}

% how to represent leaving a clique empty
\newcommand*{\complexEmpty}[1]{\mathOrText{\emptyset_{#1}}}

% Simplicial complex. First parameter: dimension; Second paramter: restriction for link
\DeclareDocumentCommand{\complex}{o o}%
{%
	\mathOrText{X\IfNoValueF{#1}%
		{%
			\IfNoValueF{#2}{_{#2}}
			\IfEmptyTF{#1}{}{\left( #1 \right)}%
		}%
	}%
}

% Weights of faces. First parameter: face; Second parameter: restriction to link 
\DeclareDocumentCommand{\complexWeight}{o o}%
{%
	\mathOrText{w\IfNoValueF{#1}%
		{%
			\IfNoValueF{#2}{_{#2}}
			\IfEmptyTF{#1}{}{\left( #1 \right)}%
		}%
	}%
}

% Two-step walk on complex. The optional parameter denotes the weigted complex.
\newcommand*{\twoStepSymbol}{\mathOrText{\mathcal{V}}}
\DeclareDocumentCommand{\twoStep}{o}%
{
	\mathOrText{\IfNoValueTF{#1}{\abstractDynamics{\twoStepSymbol}}{\abstractDynamics{\twoStepSymbol}[#1]}}%
}

% Link walk walk on complex. The optional parameter denotes the link.
\newcommand*{\skeletonWalkSymbol}{\mathOrText{\mathcal{S}}}
\DeclareDocumentCommand{\skeletonWalk}{o}%
{%
	\mathOrText{\IfNoValueTF{#1}{\abstractDynamics{\skeletonWalkSymbol}}{\abstractDynamics{\skeletonWalkSymbol}[#1]}}%
}

%%%%%%%%%%%%%%%%%%%%%%%%%%%%%%%%%%%%%%%%%%%%%%%%%%%%%%%%%%%%%%%%%%%%%%%
% SAW tree stuff

\newcommand*{\sawRoot}{\mathOrText{r}}

% SAW tree. First parameter: graph, root; Second paramter: condition
\DeclareDocumentCommand{\sawTree}{o o}%
{%
	\mathOrText{\mathds{T}\IfNoValueF{#1}%
		{%
			\IfNoValueF{#2}{^{#2}}
			\IfEmptyTF{#1}{}{\left( #1 \right)}%
		}%
	}%
}

% Copies in SAW tree. First parameter: vertex; Second parameter: tree 
\DeclareDocumentCommand{\sawCopies}{m o}%
{%
	\mathOrText{C\IfNoValueTF{#2}{}{_{#2}}\left(#1\right)}%
}

\newcommand*{\sawCopy}[1]{\mathOrText{\widehat{#1}}}

\newcommand*{\sawLayer}[2]{\mathOrText{L_{#1}\left(#2\right)}}

\DeclareDocumentCommand{\sawFunction}{o}%
{%
	\mathOrText{\sawCopy{\spmcFunctionSymbol}\IfNoValueTF{#1}{}{\left(#1\right)}}%
}

%%%%%%%%%%%%%%%%%%%%%%%%%%%%%%%%%%%%%%%%%%%%%%%%%%%%%%%%%%%%%%%%%%%%%%%
% Influence matrices

% Pairwise influence. First parameter: graph; Second parameter: partial configuration; Third parameter: vertex pair 
\newcommand*{\pairwiseInfluenceSymbol}{\mathOrText{\Psi}}
\DeclareDocumentCommand{\pairwiseInfluence}{m o o}%
{%
	\mathOrText{\pairwiseInfluenceSymbol_{#1}\IfNoValueF{#2}%
		{%
			\IfEmptyTF{#2}{}{^{#2}}
			\IfNoValueF{#3}{\left(#3\right)}%
		}%
	}
}

% Clique influence. First parameter: graph; Second parameter: clique cover; Third parameter: vertex pair 
\newcommand*{\cliqueInfluenceSymbol}{\mathOrText{\Phi}}
\DeclareDocumentCommand{\cliqueInfluence}{m m o}%
{%
	\mathOrText{\cliqueInfluenceSymbol_{#1,#2}\IfNoValueTF{#3}{}{\left(#3\right)}
	}
}

%%%%%%%%%%%%%%%%%%%%%%%%%%%%%%%%%%%%%%%%%%%%%%%%%%%%%%%%%%%%%%%%%%%%%%%%
% LinA notation

\newcommand*{\eigenvalueSymbol}{\mathOrText{\beta}}

%symbol used for any eigenvalue: Parameter: identifier 
\DeclareDocumentCommand{\eigenvalue}{o}%
{%
	\mathOrText{\eigenvalueSymbol\IfNoValueTF{#1}{}{_{#1}}}%
}

% eigenvalues of specific operator. First parameter: index (e.g., number of eigenvalue); Second parameter: operator 
\DeclareDocumentCommand{\eigenvalueOf}{m o}%
{%
	\mathOrText{\eigenvalueSymbol\IfNoValueTF{#2}{}{_{#2}}\left(#1\right)}%
}

\newcommand*{\transposed}[1]{\mathOrText{#1^{\text{T}}}}

\newcommand*{\maxAbsEigenvalue}[1]{\mathOrText{\eigenvalueSymbol^{*}\left(#1\right)}}

\newcommand*{\spectralRadius}[1]{\mathOrText{\rho\left(#1\right)}}

%%%%%%%%%%%%%%%%%%%%%%%%%%%%%%%%%%%%%%%%%%%%%%%%%%%%%%%%%%%%%%%%%%%%%%%%
% Canonical paths 

% edges of a markov chain. Paramter: markov chain
\newcommand*{\mcEdges}[1]{\mathOrText{E\left(#1\right)}}

% edges of a markov chain. Paramter: markov chain
\newcommand*{\mcAllEdges}[1]{\mathOrText{E^{*}\left(#1\right)}}

% Cut weight of Markov chain edge. First parameter: chain; Second parameter: edge 
\DeclareDocumentCommand{\transitionWeight}{m o}%
{%
	\mathOrText{Q_{#1}\IfNoValueTF{#2}{}{\left(#2\right)}}%
}

% congestion. Paramter: paths
\DeclareDocumentCommand{\congestion}{o}%
{%
	\mathOrText{\rho\IfNoValueTF{#1}{}{\left(#1\right)}}%
}

\newcommand*{\canonicalPaths}{\mathOrText{\Gamma}}

% path. Paramter: endpoints
\DeclareDocumentCommand{\canonicalPath}{o}%
{%
	\mathOrText{\gamma\IfNoValueTF{#1}{}{_{#1}}}%
}

%%%%%%%%%%%%%%%%%%%%%%%%%%%%%%%%%%%%%%%%%%%%%%%%%%%%%%%%%%%%%%%%%%%%%%%%
% MC comparison

\newcommand*{\statRatio}{\mathOrText{a}}

\newcommand*{\flowRatio}[1]{\mathOrText{A\left(#1\right)}}

\newcommand*{\length}[1]{\mathOrText{\left\vert#1\right\vert}}

%%%%%%%%%%%%%%%%%%%%%%%%%%%%%%%%%%%%%%%%%%%%%%%%%%%%%%%%%%%%%%%%%%%%%%%%
% Hard-spheres

\newcommand*{\sidelength}{\mathOrText{\ell}}

\newcommand*{\volume}{\mathOrText{\mathds{V}}}

\newcommand*{\numParticles}{\mathOrText{k}}

\newcommand*{\hsFugacity}{\mathOrText{\lambda}}

\newcommand*{\hsGraph}[1]{\mathOrText{\graph_{#1}}}

\newcommand*{\hsVertices}[1]{\mathOrText{\vertices_{#1}}}

\newcommand*{\hsEdges}[1]{\mathOrText{\edges_{#1}}}

\newcommand*{\hsWeight}[1]{\mathOrText{\weight_{#1}}}

\newcommand*{\hsVertex}[1]{\mathOrText{v_{#1}}}

\newcommand*{\hsDegree}[1]{\mathOrText{\degree_{#1}}}

\newcommand*{\gaussCircleError}{\mathOrText{\gamma}}

% Radius. First optional: number of potential. Second optional: resolution
\newcommand*{\radius}{\mathOrText{r}}

% Grid. The optional parameter is the resolution
\DeclareDocumentCommand{\grid}{o}%
{%
	\mathOrText{\mathds{G}\IfNoValueTF{#1}{}{\left(#1\right)}}%
}

\DeclareDocumentCommand{\subgrid}{o}%
{%
	\mathOrText{\mathds{H}\IfNoValueTF{#1}{}{_{#1}}}%
}

\newcommand*{\resolution}{\mathOrText{\rho}}

% The hard sphere partition function. First optional: up.  Secon optional: not used yet.
\DeclareDocumentCommand{\hardSpherePrt}{o o}%
{%
	\mathOrText{\prt\IfNoValueF{#1}%
		{%
			\IfNoValueF{#2}{_{#2}}\IfEmptyTF{#1}{}{\left(#1\right)}%
		}%
	}%
} 

\DeclareDocumentCommand{\valid}{o o}%
{%
	\mathOrText{D\IfNoValueF{#1}%
		{%
			\IfNoValueF{#2}{_{#2}}\IfEmptyTF{#1}{}{\left(#1\right)}%
		}%
	}%
} 

\newcommand*{\normSphere}[1]{\mathOrText{v_{#1}}}

\newcommand*{\dist}[2]{\mathOrText{d \left(#1, #2 \right)}}

\DeclareDocumentCommand{\lebesgue}{m o}%
{%
	\mathOrText{\nu^{#1} \IfNoValueTF{#2}{}{\left(#2\right)}}%
}

\newcommand*{\vol}[1]{\mathOrText{\left\vert#1\right\vert}}

\DeclareDocumentCommand{\integerSphere}{m o}%
{%
	\mathOrText{b_{#1}\IfNoValueTF{#2}{}{\left(#2\right)}}%
}

\DeclareDocumentCommand{\rescale}{o o}%
{%
	\mathOrText{\varphi\IfNoValueF{#1}%
		{%
			\IfEmptyTF{#1}{}{^{(#1)}}\IfNoValueF{#2}{\left(#2\right)}%
		}%
	}%
} 

\DeclareDocumentCommand{\remap}{o o}%
{%
	\mathOrText{\Phi\IfNoValueF{#1}%
		{%
			\IfEmptyTF{#1}{}{^{(#1)}}\IfNoValueF{#2}{\left(#2\right)}%
		}%
	}%
} 

\newcommand*{\invremap}[2]{\mathOrText{\left( \Phi^{(#1)} \right)^{-1} \left( #2 \right)}}

\title{A spectral independence view on hard spheres via block dynamics}

\author{Tobias Friedrich$^{*}$ \and Andreas Göbel$^{*}$ \and Martin~S. Krejca$^{\dagger}$ \and Marcus Pappik$^{*}$}

% Affiliation
\publishers
{%
    $^{*}$ \normalsize Hasso Plattner Institute, University of Potsdam, Potsdam, Germany\\
    \{tobias.friedrich, andreas.goebel, marcus.pappik\}@hpi.de \\[4 ex]
    
    $^{\dagger}$ Sorbonne Université, CNRS, LIP6, Paris, France \\
    martin.krejca@lip6.fr \\ 
    This work was supported by the Paris Île-de-France Region. 
}

\begin{document}

\maketitle
\thispagestyle{empty}
\vspace*{-4 ex}
%!TEX root = ../clique_spectral_independence.tex

\begin{abstract}
The hard-sphere model is one of the most extensively studied models in statistical physics.
It describes the continuous distribution of spherical particles, governed by hard-core interactions.
An important quantity of this model is the normalizing factor of this distribution, called the \emph{partition function}.
We propose a Markov chain Monte Carlo algorithm for approximating the grand-canonical partition function of the hard-sphere model in $d$ dimensions.
Up to a fugacity of $\hsFugacity < \eulerE/2^d$, the runtime of our algorithm is polynomial in the volume of the system.
This covers the entire known real-valued regime for the uniqueness of the Gibbs measure.

Key to our approach is to define a discretization that closely approximates the partition function of the continuous model.
This results in a discrete hard-core instance that is exponential in the size of the initial hard-sphere model.
Our approximation bound follows directly from the correlation decay threshold of an infinite regular tree with degree equal to the maximum degree of our discretization.
% However, due to the exponential blow-up, Glauber dynamics, which are commonly used for Markov chain Monte Carlo algorithms, are infeasible.
% We 
To cope with the exponential blow-up of the discrete instance we use clique dynamics, a Markov chain that was recently introduced in the setting of abstract polymer models.
We prove rapid mixing of clique dynamics up to the tree threshold of the univariate hard-core model.
% by adapting a recent technique for bounding the spectral gap.
This is achieved by relating clique dynamics to block dynamics and adapting the spectral expansion method, which was recently used to bound the mixing time of Glauber dynamics within the same parameter regime.
\end{abstract}

\vspace*{3 ex}
\hspace*{1.3 em}
\begin{minipage}{0.85\textwidth}
    \textbf{Keywords:} hard-sphere model~\separator Markov chain~\separator partition function~\separator Gibbs distribution~\separator approximate counting~\separator spectral independence
\end{minipage}

\newpage
%!TEX root = ../clique_spectral_independence.tex

\section{Introduction}
Statistical physics models particle systems as probability distributions.
One of the most fundamental and mathematically challenging models in this area is the hard-sphere model, which plays a central role in understanding the thermodynamic properties of monoatomic gases and liquids \cite{boublik1980statistical,LiquidsBook}.
It is a continuous model that studies the distribution and macroscopic behavior of indistinguishable spherical particles, assuming only hard-core interactions, i.e., no two particles can occupy the same space. 
%Besides its extensive application to understand the has been used to obtain bounds for the optimal sphere packing density~\cite{Hales_Kepler,sphere24,Cohn2017ACB,Per15,JJP19}.

We focus on computational properties of the \emph{grand-canonical ensemble} of the hard-sphere model in a finite $d$-dimensional cubic region $\volume = [0, \sidelength)^d$ in the Euclidean space.
In the grand-canonical ensemble, the system can exchange particles with its surrounding based on a fugacity parameter $\hsFugacity$, which is inverse to the temperature of the system.
For the rest of the paper, we make the common assumption that the system is normalized such that the particles have unit volume.
That means we fix their radii to $\radius = (1/ \normSphere{d})^{1/d}$, where $\normSphere{d}$ is the volume of a unit sphere in $d$ dimensions.

A simple probabilistic interpretation of the distribution of particles in the grand-canonical ensemble is that centers of points that are drawn from a Poisson point process on $\volume$ with intensity~$\hsFugacity$, conditioned on the event that no two particles overlap (i.e., every pair of centers has distance at least $2 \radius$).
The resulting distribution over particle configurations in $\volume$ is called the \emph{Gibbs distribution} of the model.
An important quantity in such models is the so called partition function  $\partitionFunction[\volume, \hsFugacity]$, which can be seen as the normalizing constant of the Gibbs distribution.
Formally, it is defined as
\[
	\hardSpherePrt[\volume, \hsFugacity] = 
	1 + \sum_{\numParticles \in \N_{> 0}}  
	\frac{\hsFugacity^\numParticles}{\numParticles !}
	\int_{\volume^\numParticles} \valid[x^{(1)}, \dots, x^{(\numParticles)}] \,\d \lebesgue{d \times \numParticles} ,
\]
where 
\[
	\valid[x^{(1)}, \dots, x^{(\numParticles)}] = \begin{cases}
		1 \emph{ if $\dist{x^{(i)}}{x^{(j)}} \ge 2r$ for all $i, j \in [\numParticles]$ with $i \neq j$} \\
		0 \emph{ otherwise} 
	\end{cases} 
\]
and $\lebesgue{d \times \numParticles}$ is the Lebesgue measure on $\R^{d \times \numParticles}$.
Commonly, two computational task are associated with the grand-canonical hard-sphere model:
(1) approximating its partition function $\partitionFunction[\volume, \hsFugacity]$, and (2) approximately sampling from the Gibbs distribution.

Studying computational aspects of the hard-sphere model carries a historical weight, as in the seminal work of Metropolis \cite{Metropolis}, the Monte Carlo method was introduced to investigate a two-dimensional hard-sphere model. % on $224$ particles.
Approximate-sampling Markov chain approaches have been mainly focused on the canonical ensemble of the model, that is, the system does not exchange particles with its surrounding and thus, the distribution is defined over a fixed number of spheres~\cite{HayesM14,Shperes2,JJP19}.
Considering the grand canonical ensemble, exact sampling algorithms have appeared in the literature for the two-dimensional model without asymptotic runtime guarantees~\cite{Kendall1998, kendall_2000, MJM17}. 
A result that is more aligned with theoretical computer science was given in \cite{GJ18}, where the authors introduced an exact sampling algorithm for the grand-canonical hard-sphere model in $d$-dimensions. 
Their algorithm is based on rejection sampling with a runtime linear in the volume of the system $\vol{\volume}$ when assuming a continuous computational model and access to a sampler from a continuous Poisson point process. 
Their approach is guaranteed to apply for $\hsFugacity<2^{-(d+1/2)}$.

Besides such sampling results, there is an ongoing effort to improve the known fugacity regime where the Gibbs measure is unique and correlations decay exponentially fast \cite{fernandez2007analyticity,christoph2019disagreement,HPP20,perkinsHardSpheres2020}.
Note that for many discrete spin systems, such as the hard-core model, correlation decay is closely related to the applicability of different methods for efficient approximation of the partition function \cite{2010:Sly:computational_transition,2014:Galanis:inapproximability_independent_hard_core,Weitz2006Counting}.
Recently, the correlation decay bounds for the hard-sphere model were improved in \cite{HPP20} to $\hsFugacity<2^{-(d-1)}$, using probabilistic arguments, and in \cite{perkinsHardSpheres2020} to $\hsFugacity < \eulerE/2^d$, based on an analytic approach.
A common feature of \cite{HPP20} and \cite{perkinsHardSpheres2020} is that they translated tools originally developed in theoretical computer science for investigating the discrete hard-core model to the continuous domain.
%However, in contrast to the discrete hard-core model, tight bounds remain an open problem. 

Our work is in line with the computational view on the hard-sphere model but more algorithmic in nature.
We investigate the range of the fugacity~$\hsFugacity$ for which an approximation of $\partitionFunction[\volume, \hsFugacity]$ can be obtained efficiently in terms of the volume of the system $\vol{\volume}$,  assuming a discrete computational model.
Our main result is that for all $\hsFugacity < \eulerE/2^d$ there is a randomized algorithm for $\err$-approximating the partition function in time polynomial in $\vol{\volume}$ and $1/\err$.
\begin{restatable}{theorem}{hardsphereApproximation}
	\label{thm:hard_sphere_approx}
	Let $(\volume, \hsFugacity)$ be an instance of the continuous hard-sphere model with $\volume = [0, \sidelength)^d$.
	If there is a $\delta \in (0, 1]$ such that
	\[
	\hsFugacity \le (1 - \delta) \frac{\eulerE}{2^d} ,
	\]
	then for each $\err \in (0, 1]$ there is a randomized $\err$-approximation of $\hardSpherePrt[\volume, \hsFugacity]$ computable in time polynomial in $\vol{\volume}^{1/\delta^2}$ and $\frac{1}{\err}$.	
\end{restatable}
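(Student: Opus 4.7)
The plan is to reduce the continuous problem to a discrete hard-core instance on a fine grid, run a polynomial-time clique-based Markov chain on that instance, and use standard MCMC machinery to turn samples into a partition function estimate.

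\textbf{Discretization.} First, I would partition $\volume$ into axis-aligned sub-cubes of side length $\resolution$ for some small $\resolution > 0$. Each cell is treated as a discrete vertex that is either empty or contains a sphere center, and two cells are declared to conflict whenever some pair of points, one from each cell, lies at distance less than $2\radius$. This yields a discrete hard-core instance on a conflict graph $\hsGraph{\resolution}$, with fugacity rescaled to $\hsFugacity\resolution^d$, whose independent sets correspond to suitably rounded sphere configurations. A Riemann-sum argument shows that, provided $\resolution$ is taken polynomially small in $\err$ and $1/\vol{\volume}$, the discrete partition function approximates $\hardSpherePrt[\volume, \hsFugacity]$ to within a factor $1 \pm \err/2$.

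\textbf{Clique structure and the tree threshold.} The graph $\hsGraph{\resolution}$ has $\Theta(\vol{\volume}/\resolution^d)$ vertices, so it is exponentially large in the parameters of the original problem. Crucially, however, its edges arise only from cell-pair proximity, so every sphere-sized region induces a clique of size roughly $\normSphere{d} (2\radius)^d / \resolution^d = 2^d / \resolution^d$, and $\hsGraph{\resolution}$ is covered by $\bigO{\vol{\volume}/\resolution^d}$ such cliques. The product of the rescaled fugacity and the clique size is therefore $\hsFugacity\resolution^d \cdot 2^d/\resolution^d = \hsFugacity \cdot 2^d$, and the hypothesis $\hsFugacity \le (1-\delta)\eulerE/2^d$ places the discrete instance strictly below the univariate tree threshold of the hard-core model on a clique of this size, with multiplicative slack~$\delta$.

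\textbf{Rapid mixing via clique dynamics.} Because the vertex set is exponential, vertex-based Glauber dynamics are too slow; instead I would run clique dynamics, whose single step selects one of the $\poly{\vol{\volume}}$ cliques uniformly and resamples its occupancy (at most one cell, or none) from the correct conditional distribution. Each step costs polynomial time because only polynomially many cell-outcomes enter the normalising sum on a clique. The heart of the argument, and the main technical obstacle, is to prove that clique dynamics mix in time polynomial in the number of cliques, uniformly in $\resolution$, whenever the fugacity is strictly below the tree threshold. I would achieve this by relating clique dynamics to a block dynamics on the clique cover and lifting the spectral independence and spectral expansion bounds recently established for Glauber dynamics on the discrete hard-core model below the tree threshold to the block setting; the exponent $1/\delta^2$ in the final runtime is what the resulting mixing-time estimate produces in terms of the spectral slack $\delta$.

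\textbf{From samples to the partition function.} Given an efficient approximate sampler for the discrete Gibbs distribution, standard simulated annealing yields an $\err/2$-approximation of the discrete partition function: telescope along a sequence $0 = \hsFugacity_0 < \hsFugacity_1 < \dots < \hsFugacity_M = \hsFugacity$ with $M = \poly{\log \vol{\volume}, 1/\err}$, and estimate each ratio $\partitionFunction[\hsFugacity_i]/\partitionFunction[\hsFugacity_{i-1}]$ by averaging an occupation-count statistic of samples produced by the clique dynamics at $\hsFugacity_i$. Combining this with the discretization bound from the first step gives the claimed $\err$-approximation of $\hardSpherePrt[\volume, \hsFugacity]$ in time polynomial in $\vol{\volume}^{1/\delta^2}$ and $1/\err$.
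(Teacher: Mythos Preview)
Your overall architecture matches the paper's: discretize the continuous model to a hard-core instance, verify that the discrete fugacity lies below the tree threshold, run clique dynamics (justified via block dynamics and spectral independence), and convert samples to a partition-function estimate. But two quantitative claims in your proposal are wrong in ways that matter, and the counting step differs from the paper's.

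\textbf{The discretization resolution.} You assert that ``$\resolution$ taken polynomially small in $\err$ and $1/\vol{\volume}$'' suffices for the Riemann-sum bound. The paper's convergence analysis (\Cref{lemma:hard_sphere_convergence}) gives an error of order $\resolution^{-1}\eulerE^{\Theta(\vol{\volume}\ln\vol{\volume})}$ (in the paper's parametrization where large $\resolution$ means a fine grid), which forces the grid spacing to be \emph{exponentially} small in $\vol{\volume}$. Your proposal is internally inconsistent here: with a merely polynomial grid, $\hsGraph{\resolution}$ would have $\Theta(\vol{\volume}/\resolution^d)$ vertices, which is polynomial in $\vol{\volume}$, yet you then say the vertex set is ``exponentially large in the parameters of the original problem''. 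The exponential blow-up is real and is precisely the reason clique dynamics are required; if your polynomial-resolution claim were correct, ordinary Glauber dynamics would already suffice and the whole block/clique machinery would be unnecessary.

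\textbf{The clique count.} You write that $\hsGraph{\resolution}$ is covered by $\bigO{\vol{\volume}/\resolution^d}$ sphere-sized cliques. That is the number of \emph{vertices}. Each clique contains $\Theta(1/\resolution^d)$ vertices (in your parametrization), so the number of cliques is $\Theta(\vol{\volume})$, independent of $\resolution$. This is the key fact that makes the running time polynomial in $\vol{\volume}$ despite the exponentially many vertices.

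\textbf{Counting from samples.} The paper does not use simulated annealing along a fugacity schedule; it uses a clique-wise self-reducibility scheme (\Cref{lemma:appx_partition_function}), removing cliques one at a time and estimating $\numberOfCliques = \bigO{\vol{\volume}}$ ratios. Simulated annealing is a legitimate alternative, but your claim that $M = \poly{\log\vol{\volume}, 1/\err}$ stages suffice is too optimistic: since $\log\hardSpherePrt[\volume,\hsFugacity] = \Theta(\vol{\volume})$, any standard cooling schedule needs $\poly{\vol{\volume}}$ stages, not $\poly{\log\vol{\volume}}$.
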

Note that this bound on $\hsFugacity$ precisely coincides with the best known bound for the uniqueness of the Gibbs measure in the thermodynamic limit, recently established in \cite{perkinsHardSpheres2020}.
For many discrete spin systems, such as the hard-core model or general anti-ferromagnetic 2-state spin systems, the region of efficient approximation of the partition function is closely related to uniqueness of the Gibbs measure. 
More precisely, it can be shown that the partition function of every graph of maximum degree $\degree$ can be approximated efficiently if the corresponding Gibbs distribution on an infinite $\degree$ regular tree is unique \cite{li2013correlation,weitz2005combinatorial}.
A detailed discussion for the discrete hard-core model can be found in the next subsection.
In a sense, \Cref{thm:hard_sphere_approx} can be seen as the algorithmic counterpart of the recent uniqueness result for the continuous hard-sphere model. 
This answers an open question, asked in \cite{perkinsHardSpheres2020}.

The way we prove our result is quite contrary to \cite{HPP20} and \cite{perkinsHardSpheres2020}.
Instead of translating discrete tools from computer science into the continuous domain, we rather discretize the hard-sphere model. 
By this, existing algorithmic and probabilistic techniques for discrete models become available, and we avoid continuous analysis.
 
Our applied discretization scheme is fairly intuitive and results in an instance of the discrete hard-core model. This model has been extensively studied in the computer science community.
However, as this hard-core instance is exponential in the size of the continuous system $\vol{\volume}$, existing approaches for approximating its partition function, such a Markov chain Monte Carlo methods based on Glauber dynamics, are not feasible.
We overcome this problem by applying a Markov chain Monte Carlo approach based on clique dynamics, which were introduced in \cite{friedrich2020polymer} in the setting of abstract polymer models.
Previously known conditions for the rapid mixing of clique dynamics were developed for the multivariate version of the hard-core model.
Due to this generality, those conditions do not result in the desired bound in our univariate setting. Instead we relate those clique dynamics to another Markov chain, called \emph{block dynamics.}
We then prove the desired mixing time for the block dynamics by adapting a recently introduced technique for bounding the mixing time of Markov chains, based on local spectral expansion \cite{ALOG20}. 
Together with a known self-reducibility scheme for clique dynamics, this results in the desired approximation algorithm.

Note that we aim for a rigorous algorithmic result for approximating the partition function of the continuous hard-sphere model.
To be in line with commonly used discrete computational models, our Markov chain Monte Carlo algorithm does not assume access to a continuous sampler but instead samples approximately from a discretized version of the Gibbs distribution. 
Note that sampling from the continuous hard-sphere partition function cannot be done using a discrete computation model as this would involve infinite float pointer precision.
For practical matters, our discretization of the Gibbs distribution can be seen as an approximation of the original continuous Gibbs measure.
However, a rigorous comparison between both distributions based on total variation distance is not applicable, due to the fact that one is discrete whereas the other is continuous in nature.

Assuming access to a continuous sampler, we believe that our approach can be used to obtain an approximation of the Gibbs distribution of the continuous model within the same fugacity regime, by adding small perturbations to the drawn sphere centers. 
This would be in line with the relation between the mixing time of continuous heat-bath dynamics and strong spatial mixing, pointed out in \cite{HPP20}, combined with the uniqueness bound from \cite{perkinsHardSpheres2020}.

In \Cref{sec:intro_discrete,sec:intro_block,sec:intro_spectral} we discuss our technical contributions in more detail and explain how they relate to the existing literature. Finally in \Cref{sec:intro_outlook} we discuss possible extensions and future work.

\subsection{Discretization and hard-core model}\label{sec:intro_discrete}
Our discretization scheme expresses the hard-sphere partition function as the partition function of an instance of the (univariate) hard-core model.
An instance of the hard-core model is a tuple $(\graph, \weight)$ where $\graph = (\vertices, \edges)$ is an undirected graph and $\weight \in \R_{>0}$.
Its partition function is defined as
\[
	\partitionFunction[\graph, \weight] \defeq \sum_{\independentSet \in \independentSets[\graph]} \weight^{\size{\independentSet}} ,
\]
where $\independentSets[\graph]$ denotes the independent sets of $\graph$. 
A common way to obtain an approximation for the partition function is by applying a Markov chain Monte Carlo algorithm.
This involves sampling from the Gibbs distribution $\gibbsDistribution[\graph, \weight]$ of $(\graph, \weight)$, which is a probability distribution on $\independentSets[\graph]$ that assigns each independent set $\independentSet \in \independentSets[\graph]$ the probability
\[
	\gibbsDistributionFunction{\independentSet}[\graph, \weight] = \frac{\weight^{\size{\independentSet}}}{\partitionFunction[\graph, \weight]} .
\]

Conditions for efficient approximation of the hard-core partition function have been studied extensively in the theoretical computer science community.
Due to hardness results in \cite{2010:Sly:computational_transition} and \cite{2014:Galanis:inapproximability_independent_hard_core}, it is known that for general graphs of maximum degree $\degree \in \{3\} \cup \N_{>5}$ there is a critical parameter value $\criticalWeight{\degree} = (\degree - 1)^{\degree - 1}/(\degree - 2)^{\degree}$, such that there is no FPRAS for the partition function of $(\graph, \weight)$ for $\weight > \criticalWeight{\degree}$, unless $\mathrm{RP} = \mathrm{NP}$.
On the other hand, in \cite{Weitz2006Counting} it was proven that there is a deterministic algorithm for approximating the partition function of $(\graph, \weight)$ for $\weight < \criticalWeight{\degree}$ that runs in time $\size{\vertices}^{\bigO{\degree}}$.
The critical value $\criticalWeight{\degree}$ is especially interesting, as it precisely coincides with the upper bound on $\weight$ for which the hard-core model on an infinite $\degree$-regular tree exhibits strong spatial mixing and a unique Gibbs distribution \cite{Weitz2006Counting}.
For this reason, it is also referred to as the \emph{tree threshold}. 
This relation between computational hardness and phase transition in statistical physics is one of the most celebrated results in the area.
Both, the hardness results \cite{galanis2016inapproximability,bezkova2018inapprox} and the approximation algorithms \cite{patel2017deterministic,harvey2018computing} were later generalized for complex $\weight$.

Note that the computational hardness above the tree threshold $\criticalWeight{\degree}$ for general graphs of maximum degree $\degree$ applies to both, randomized and deterministic algorithms.
However, in the randomized setting, Markov chain Monte Carlo methods are known to improve the runtime of the algorithm introduced in \cite{Weitz2006Counting}.
Those approaches use the vertex-wise self-reducibility of the hard-core model to construct a randomized approximation of the partition function based on an approximate sampler for the Gibbs distribution.
Commonly, a Markov chain on the state space $\independentSets[\graph]$, called \emph{Glauber dynamics}, is used to construct the sampling scheme.
At each step, a vertex $v \in \vertices$ is chosen uniformly at random.
With probability~$\weight/(1 + \weight)$ the chain tries to add $v$ to the current independent set and otherwise it tries to remove it.
The resulting Markov chain is ergodic and reversible with respect to the Gibbs distribution, meaning that it eventually converges to $\gibbsDistribution[\graph, \weight]$.
A sequence of results has shown that for all $\degree \ge 3$ there is a family of graphs with maximum degree $\Delta$, such that the Glauber dynamics are torpidly mixing for $\weight > \criticalWeight{\degree}$, even without additional complexity-theoretical assumptions \cite{dyer2002counting,galvin2006slow,mossel2009hardness}.
Whether the Glauber dynamics are rapidly mixing for the entire regime $\weight < \criticalWeight{\degree}$ remained a long-standing open problem, until recently the picture was completed \cite{ALOG20}.
By relating spectral expansion properties of certain random walks on simplicial complexes to the Glauber dynamics, it was shown that the mixing time is polynomial in $\size{\vertices}$ below the tree threshold. 
The mixing time was recently further improved in \cite{ChenOptimal} for a broader class of spin systems by combining simplicial complexes with entropy factorization and using the modified log-Sobolev inequality.  

By mapping the hard-sphere model to an instance of the hard-core model we can make use of the existing results about approximation and sampling below the tree threshold.
Roughly, our discretization scheme restricts the positions of sphere centers to an integer grid, while scaling the radii of spheres and the fugacity appropriately.
For a hard-sphere instance $(\volume, \hsFugacity)$ with $\volume = [0, \sidelength)^d$ the hard-core representation for resolution $\resolution \in \R_{\ge 1}$ is a hard-core instance $(\hsGraph{\resolution}, \hsWeight{\resolution})$ with $\hsGraph{\resolution} = (\hsVertices{\resolution}, \hsEdges{\resolution})$.
Each vertex $v \in \hsVertices{\resolution}$ represents a grid point in the finite integer lattice of side length $\resolution \sidelength$.
Two distinct vertices in $\hsVertices{\resolution}$ are connected by an edge in $\hsEdges{\resolution}$ if the Euclidean distance of the corresponding grid points is less than $2 \resolution \radius$. 
Furthermore, we set $\hsWeight{\resolution} = \hsFugacity/\resolution^d$.
We provide the following result on the rate of convergence of $\partitionFunction[\hsGraph{\resolution}, \hsWeight{\resolution}]$ to the hard-sphere partition function $\hardSpherePrt[\volume, \hsFugacity]$ in terms of $\resolution$.
\begin{restatable}{lemma}{hardsphereConvergence}
	\label{lemma:hard_sphere_convergence}
	Let $(\volume, \hsFugacity)$ be an instance of the continuous hard-sphere model in $d$ dimensions.
	For each resolution $\resolution \ge 2 \sqrt{d}$ it holds that
	\[
		1 - \resolution^{-1} \eulerE^{\bigTheta{\vol{\volume} \ln\vol{\volume}}} 
		\le \frac{\hardSpherePrt[\volume, \hsFugacity]}{\partitionFunction[\hsGraph{\resolution}, \hsWeight{\resolution}]}
		\le 1 + \resolution^{-1} \eulerE^{\bigTheta{\vol{\volume} \ln\vol{\volume}}} .
		\qedhere
	\]
\end{restatable}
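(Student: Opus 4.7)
The plan is to rewrite $\partitionFunction[\hsGraph{\resolution}, \hsWeight{\resolution}]$ as an integral over $\volume^k$ matching the form of $\hardSpherePrt[\volume, \hsFugacity]$, and then compare the two integrands pointwise. Partition $\volume$ into half-open cubes $C_v \defeq [v/\resolution, (v + 1)/\resolution)^d$ of Lebesgue measure $\resolution^{-d}$, one per grid vertex $v \in \hsVertices{\resolution}$, and let $\remap\colon \volume \to \hsVertices{\resolution}$ send $x$ to the unique $v$ with $x \in C_v$. Expanding $\partitionFunction[\hsGraph{\resolution}, \hsWeight{\resolution}] = \sum_{\independentSet \in \independentSets[\hsGraph{\resolution}]} \hsWeight{\resolution}^{\size{\independentSet}}$ as a $(k!)^{-1}$-weighted sum over ordered $k$-tuples of pairwise distinct vertices forming an independent set, and absorbing $\hsWeight{\resolution}^k = \hsFugacity^k / \resolution^{dk}$ into the measure of the $k$-fold product of cubes, yields
\[
    \partitionFunction[\hsGraph{\resolution}, \hsWeight{\resolution}] = \sum_{k \ge 0} \frac{\hsFugacity^k}{k!} \int_{\volume^k} \valid[x^{(1)}, \dots, x^{(k)}][\resolution] \d \lebesgue{d \times k} ,
\]
where $\valid[x^{(1)}, \dots, x^{(k)}][\resolution]$ is the indicator that $\remap[][x^{(1)}], \dots, \remap[][x^{(k)}]$ are pairwise distinct and pairwise non-adjacent in $\hsGraph{\resolution}$; tuples landing in the same cube contribute zero to both sides. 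Subtraction then reduces the problem to bounding the integral of $\valid[x^{(1)}, \dots, x^{(k)}] - \valid[x^{(1)}, \dots, x^{(k)}][\resolution]$ summed against the factorial weights.

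For the pointwise estimate, $\dist{x}{\remap[][x]/\resolution} \le \sqrt{d}/\resolution$ for every $x \in \volume$, so for any pair
\[
    \absolute{\dist{x^{(i)}}{x^{(j)}} - \dist{\remap[][x^{(i)}]/\resolution}{\remap[][x^{(j)}]/\resolution}} \le 2\sqrt{d}/\resolution .
\]
Hence $\valid[x^{(1)}, \dots, x^{(k)}]$ and $\valid[x^{(1)}, \dots, x^{(k)}][\resolution]$ can disagree only if some pair has distance in the spherical shell $[2\radius - 2\sqrt{d}/\resolution, 2\radius + 2\sqrt{d}/\resolution]$; the assumption $\resolution \ge 2\sqrt{d}$ keeps this inner radius non-negative. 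By the standard surface-area formula the shell has $d$-dimensional Lebesgue measure $\bigO{\resolution^{-1}}$ (with the implicit constant depending only on $d$), so a union bound over the $\binom{k}{2}$ pairs bounds the Lebesgue measure of the ``bad'' tuples in $\volume^k$ by $\binom{k}{2} \bigO{\resolution^{-1} \vol{\volume}^{k - 1}}$.

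Sphere packing forces both indicators to vanish once $k$ exceeds a capacity $K_{\max} = \bigO{\vol{\volume}}$, so summing the pointwise bound yields
\[
    \absolute{\hardSpherePrt[\volume, \hsFugacity] - \partitionFunction[\hsGraph{\resolution}, \hsWeight{\resolution}]} \le \resolution^{-1} \sum_{k = 2}^{K_{\max}} \frac{\hsFugacity^k k^2 \vol{\volume}^{k - 1}}{k!} \bigO{1} = \resolution^{-1} \eulerE^{\bigO{\vol{\volume}}} ,
\]
which is in turn dominated by $\resolution^{-1} \eulerE^{\bigTheta{\vol{\volume} \ln \vol{\volume}}}$. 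Dividing by $\partitionFunction[\hsGraph{\resolution}, \hsWeight{\resolution}] \ge 1$ (the empty set is always an independent set) returns both one-sided ratio bounds.

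The most delicate step is the first identity: one must verify that the half-open cubes tile $\volume$ exactly (implicitly requiring $\resolution \sidelength \in \N$, or a small boundary adjustment), that the over-count of size-$k$ independent sets by ordered tuples of distinct vertices is exactly $k!$, and that tuples with repeated cubes contribute zero on either side. The remaining shell-volume estimate and factorial summation are routine.
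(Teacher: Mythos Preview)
Your proof is correct and follows essentially the same approach as the paper: rewrite the discrete partition function as an integral via the cube-tiling/rounding map, bound the discrepancy by the measure of tuples with some pair in a shell of width $O(\resolution^{-1})$ around radius $2\radius$, truncate at $k = O(\vol{\volume})$ by packing, and divide by $\partitionFunction[\hsGraph{\resolution}, \hsWeight{\resolution}] \ge 1$. Your intermediate bound $\resolution^{-1}\eulerE^{O(\vol{\volume})}$ is in fact sharper than the paper's (which crudely upper-bounds the sum by $\vol{\volume}^{K-1}\eulerE^{\hsFugacity}$ with $K = O(\vol{\volume})$ to get $\eulerE^{\Theta(\vol{\volume}\ln\vol{\volume})}$), but since the lemma only asserts the weaker $\eulerE^{\Theta(\vol{\volume}\ln\vol{\volume})}$ bound this is immaterial.
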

Although proving this rate of convergence involves some detailed geometric arguments, there is an intuitive reason why the partition functions converge eventually as $\resolution \to \infty$.
Increasing the resolution $\resolution$ also linearly increases the side length of the grid and the minimum distance that sphere centers can have.
This is equivalent to putting a grid into $\volume$ with increasing granularity but fixing the radii of spheres instead.
However, only scaling the granularity of this grid increases the number of possible configurations by roughly $\resolution^d$, which would cause the partition function of the hard-core model to diverge as $\resolution \to \infty$. 
To compensate for this, we scale the weight of each vertex in the hard-core model by the inverse of this factor.

Using this discretization approach, the fugacity bound from \Cref{thm:hard_sphere_approx} results from simply considering $\hsDegree{\resolution}$, the maximum degree of $\hsGraph{\resolution}$ and comparing $\hsWeight{\resolution}$ with the corresponding tree threshold $\criticalWeight{\hsDegree{\resolution}}$. 
Recall that we assume $\radius = (1/\normSphere{d})^{1/d}$.
A simple geometric argument shows that $\hsDegree{\resolution}$ is roughly upper bounded by $2^d \resolution^d$ for sufficiently large $\resolution$.
Now, observe that
\[
	\hsWeight{\resolution} = \frac{\hsFugacity}{\resolution^d} < \criticalWeight{2^d \resolution^d} ,
\] 
for $\hsFugacity < \resolution^d \criticalWeight{2^d \resolution^d}$.
This follows from the fact that $\resolution^d \criticalWeight{2^d \resolution^d}$ converges to $\eulerE/2^d$ from above for $\resolution \to \infty$.
Thus, the approximation bound from \Cref{thm:hard_sphere_approx} and the uniqueness bound in \cite{perkinsHardSpheres2020} coincide with the regime of $\hsFugacity$, for which $\hsWeight{\resolution}$ is below the tree threshold $\criticalWeight{\hsDegree{\resolution}}$ in the limit $\resolution \to \infty$. 

The arguments above show that for a sufficiently high resolution $\resolution$ the partition function of the hard-sphere model $\partitionFunction[\volume, \hsFugacity]$ is well approximated by the partition function of our discretization $(\hsGraph{\resolution}, \hsWeight{\resolution})$ and that $(\hsGraph{\resolution}, \hsWeight{\resolution})$ is below the tree threshold for $\hsFugacity < \eulerE/2^d$.
However, this does not immediately imply an approximation algorithm within the desired runtime bounds.
Based on \Cref{lemma:hard_sphere_convergence}, we still need to choose $\resolution$ exponentially large in the volume $\vol{\volume}$.
Note that the number of vertices in $\hsGraph{\resolution}$ is roughly $\size{\hsVertices{\resolution}} \in \bigTheta{\resolution^d \vol{\volume}}$. 
Even without explicitly constructing the graph, this causes problems, as the best bound for the mixing time of the Glauber dynamics is polynomial in $\size{\hsVertices{\resolution}}$ and thus exponential in $\vol{\volume}$.
Intuitively, the reason for this mixing time is that the Glauber dynamics only change one vertex at each step.
Assuming that each vertex should be updated at least once to remove correlations with the initial state, any mixing time that is sublinear in the number of vertices is unlikely.
We circumvent this problem by applying dynamics that update multiple vertices at each step but still allow each step to be computed efficiently without constructing the graph explicitly.

\subsection{Block and clique dynamics}\label{sec:intro_block}
Most of the results that we discuss from now on apply to the multivariate version of the hard-core model, that is, each vertex $v \in \vertices$ has its own weight $\weight[v]$.
For a given graph $\graph = (\vertices, \edges)$ we denote the set of such vertex weights by $\weights = \{\weight[v]\}_{v \in \vertices}$ and write $(\graph, \weights)$ for the resulting multivariate hard-core instance.
In the multivariate setting, the contribution of an independent set $\independentSet \in \independentSets[\graph]$ to the partition function is defined as the product of its vertex weights (i.e., $\prod_{v \in \independentSet} \weight[v]$), where the contribution of the empty set is fixed to $1$.
Similar to the univariate hard-core model, the Gibbs distribution assigns a probability to each independent set proportionally to its contribution to the partition function.
For a formal definition, see \Cref{subsec:hard_core}.  

As we discussed before, the main problem with approximating the partition function of our discretization $(\hsGraph{\resolution}, \hsWeight{\resolution})$ is that the required graph $\hsGraph{\resolution}$ is exponential in the volume of the original continuous system $\vol{\volume}$.
As the Glauber dynamics Markov chain only updates a single vertex at each step, the resulting mixing time is usually polynomial in the size of the graph, which is not feasible in our case.
Various extensions to Glauber dynamics for updating multiple vertices in each step have been proposed in the literature, two of which we discuss in the following.

\subsubsection*{Clique dynamics}
Recently, in \cite{friedrich2020polymer} a Markov chain, called \emph{clique dynamics}, was introduced in order to efficiently sample from the Gibbs distribution of abstract polymer models.
Note that this is similar to our algorithmic problem, as abstract polymer models resemble multivariate hard-core instances. 
For a given graph $\graph = (\vertices, \edges)$, we call a set $\clique = \{\clique[i]\}_{i \in [\numberOfCliques]} \subseteq \powerset{\vertices}$ a clique cover of size $\numberOfBlocks$ if and only if its union covers all vertices $\vertices$ and each $\clique[i] \in \clique$ induces a clique in $\graph$.
For an instance of the multivariate hard-core model $(\graph, \weights)$ and a given clique cover $\clique$ of $\graph$ with size $\numberOfCliques$ the clique dynamics Markov chain $\cliqueDynamics[\graph, \weights, \clique]$ is defined as follows.
First, a clique $\clique[i] \in \clique$ for $i \in [\numberOfCliques]$ is chosen uniformly at random.
Let us write $\subgraph{\graph}{\clique[i]}$ for the subgraph, induced by $\clique[i]$, and $\subgraph{\weights}{\clique[i]} = \{\weight[v]\}_{v \in \clique[i]}$ for the corresponding set of vertex weights.
Next, an independent set from $\independentSets[\subgraph{\graph}{\clique[i]}]$ is chosen according to the Gibbs distribution $\gibbsDistribution[\subgraph{\graph}{\clique[i]}, \subgraph{\weights}{\clique[i]}]$.
Note that, as the vertices $\clique[i]$ form a clique, such an independent set is either the empty set or contains a single vertex from $v \in \clique[i]$.
If the empty set is drawn, all vertices from $\clique[i]$ are removed from the current independent set.
Otherwise, if a single vertex $v \in \clique[i]$ is drawn, the chain tries to add $v$ to the current independent set.

Using a coupling argument, it was proven in \cite{friedrich2020polymer} that the so-called \emph{\pmc} implies that for any clique cover of size $\numberOfCliques$ the clique dynamics are mixing in time polynomial in $\numberOfCliques$ and $\maxPartition$, where $\maxPartition = \max_{i \in [\numberOfCliques]} \{\partitionFunction[\subgraph{\graph}{\clique[i]}, \subgraph{\weights}{\clique[i]}]\}$ denotes the maximum partition function of a clique in $\clique$.
This is important for the application to polymer models, as they are usually used to model partition functions of other spin systems, which often results in a multivariate hard-core model of exponential size \cite{HPR19,CFFGL19,JKP19,CGGPSV19,BCHPT20,CP20,GGS20}. 
As discussed in \cite{friedrich2020polymer}, those instances tend to have polynomial size clique covers that arise naturally from the original spin system.
In such cases, the mixing time of clique dynamics is still polynomial in the size of original spin system, as long as the \pmc is satisfied.   

This is very similar to our discretization $(\hsGraph{\resolution}, \hsWeight{\resolution})$.
To see this, set $a = \frac{2 \resolution}{\sqrt{d}} \radius$ and divide the $d$-dimensional integer lattice of side length $\resolution \sidelength$ into cubic regions of side length $a$.
Every pair of integer points within such a cubic region has Euclidean distance less than $2 \resolution \radius$, meaning that the corresponding vertices in $\hsGraph{\resolution}$ are adjacent.
Thus, each such cubic region forms a clique, resulting in a clique cover of size $(\resolution \sidelength/a)^d \in \bigO{\vol{\volume}}$.
This means, there is always a clique cover with size linear in $\vol{\volume}$ and independent of the resolution $\resolution$.
By showing that, for the univariate hard-core model, the mixing time of clique dynamics is polynomial in the size of the clique cover for all $\hsWeight{\resolution} < \criticalWeight{\hsDegree{\resolution}}$, we obtain a Markov chain with mixing time polynomial in $\vol{\volume}$ independent of the resolution $\resolution$.
Unfortunately, the \pmc does not hold for the entire regime up to the tree threshold in the univariate hard-core model.
We overcome this problem by proving a new condition for rapid mixing of clique dynamics based on a comparison with block dynamics.

\subsubsection*{Block dynamics}
Block dynamics are a very natural generalization of Glauber dynamics to arbitrary sets of vertices.
For a given graph $\graph = (\vertices, \edges)$, we call a set $\block = \{\block[i]\}_{i \in [\numberOfBlocks]} \subseteq \powerset{\vertices}$ a block cover of size $\numberOfBlocks$ if and only if its union covers all vertices $\vertices$.
We refer to the elements of $\block$ as blocks.
Note that the clique cover discussed before is a special case of a block cover, where all blocks are restricted to be cliques.
At each step, the block dynamics Markov chain $\blockDynamics[\graph, \weight, \block]$ chooses a block $\block[i] \in \block$ uniformly at random.
Then, the current independent set is updated on $\block[i]$ based on the projection of the Gibbs distribution onto $\block[i]$ and conditioned on the current independent set outside $\block[i]$.
For a formal definition, see \Cref{subsec:block_dynamics}.

In fact, block dynamics are defined for a much more general class of spin systems than the hard-core model.
However, due to the fact that each step of the Markov chain involves sampling from a conditional Gibbs distribution, block dynamics are rarely used as an algorithmic tool on its own.
Instead, they are usually used to deduce rapid mixing of other dynamics.

For spin systems on lattice graphs, close connections between the mixing time of block dynamics and Glauber dynamics are known \cite{martinelli1999lectures}.
Such connections were for example applied to improve the mixing time of Glauber dynamics of the Monomer Dimer model on torus graphs \cite{monomerDimerTorus}.
Moreover, block dynamics were used to improve conditions for rapid mixing of Glauber dynamics on specific graph classes, such as proper colorings \cite{coloring2006,coloring+indep2018,coloring2018,mossel2010gibbs} or the hard-core model \cite{coloring+indep2018,mossel2010gibbs} in sparse random graphs. 
A very general result for the mixing time of block dynamics was achieved in \cite{nonlocalMC}, who proved that for all spin systems on a finite subgraph of the $d$-dimensional integer lattice the mixing time of block dynamics is polynomial in the number of blocks if the spin system exhibits strong spatial mixing.
This result was later generalized in \cite{blanceIsing} for the Ising model on arbitrary graphs.
Very recently, block dynamics based random equally-sized blocks where used in \cite{ChenOptimal} to prove entropy factorization and improve the mixing time of Glauber dynamics for a variety of discrete spin systems up to the tree threshold.
 
Although our discretization works by restricting sphere positions to the integer lattice, the resulting graph is rather different from the lattice.
Thus, results like those in \cite{nonlocalMC} do not apply to our setting.
However, on the other hand, we do not need to prove rapid mixing for arbitrary block covers.
Instead, in order to obtain rapid  mixing for clique dynamics, it is sufficient to establish this result for cases where all blocks are cliques. 

Applying block dynamics directly would involve sampling from a conditional Gibbs distribution within each clique.
Due to the exponential size of the cliques in our discretization, this would impose additional algorithmic challenges.
Instead, similar to the previous literature, we rather use block dynamics as a tool for proving rapid mixing of another Markov chain, namely clique dynamics.

\subsubsection*{Improved mixing condition for clique dynamics via block dynamics}
We analyze the mixing time of clique dynamics for a given clique cover by relating it to the mixing time of block dynamics, using the cliques as blocks.
This is done by investigating a notion of pairwise influence between vertices that has also been used to establish rapid mixing of Glauber dynamics up to the tree threshold \cite{ALOG20}.
Let $\gibbsPr{\graph}{\inSet{w}}$ denote the probability of the event that a vertex $w \in \vertices$ is in an independent set drawn from $\gibbsDistribution[\graph, \weights]$.
Further, let $\gibbsPr{\graph}{\notInSet{w}}$ denote the probability that $w$ is not in an independent set. 
We extend this abuse of notation to conditional probabilities, so $\gibbsPr{\graph}{\cdot}[\notInSet{w}]$ for example denotes the probability of some event conditioned on $w$ not being in an independent set.
For a pair of vertices $v, w \in \vertices$ the influence $\pairwiseInfluence{\graph}[][v, w]$ of $v$ on $w$ is defined as 
\[
	\pairwiseInfluence{\graph}[][v, w] = 
	\begin{cases}
		0 & \text{ if } v = w, \\
		\gibbsPr{\graph}{\inSet{w}}[\inSet{v}] - \gibbsPr{\graph}{\inSet{w}}[\notInSet{v}] & \text{ otherwise. } 
	\end{cases}
\] 
The following condition in terms of pairwise influence is central to our analysis.
\begin{condition}
	\label{condition:influence_bound}
	Let $(\graph, \weights)$ be an instance of the multivariate hard-core model.
	There is a constant $C \in \R_{>0}$ and a function $q\colon V \to \R_{>0}$ such that for all $S \subseteq \vertices$ and $r \in S$ it holds that
	\[
		\sum_{v \in S} \absolute{\pairwiseInfluence{\graph}[][r, v]} q(v) \le C q(r) .
		\qedhere
	\]
\end{condition}
Note that this condition appeared before in \cite{Chen2020rapid}, where it was used for bounding the mixing time of Glauber dynamics for anti-ferromagnetic spin systems.
Given \Cref{condition:influence_bound}, we obtain the following result for the mixing time of block dynamics based on a clique cover.
\begin{restatable}{theorem}{mixingBlockDynamics}
	\label{thm:main}
	Let $(\graph, \weights)$ be an instance of the multivariate hard-core model such that \Cref{condition:influence_bound} is satisfied.
	Let $\clique$ be a clique cover for $\graph$ of size $\numberOfCliques$, and let $\maxPartition = \max_{i \in [\numberOfCliques]} \{\partitionFunction[\subgraph{\graph}{\clique[i]}, \subgraph{\weights}{\clique[i]}]\}$.
	The mixing time of the block dynamics $\blockDynamics[\graph, \weights, \clique]$, starting from $\emptyset \in \independentSets[\graph]$, is bounded by
	\[
		\mix{\blockDynamics}{\err}[\emptyset] \le \numberOfCliques^{\bigO{(2 + C)C}} \maxPartition^{\bigO{(2 + C)C}} \ln\left( \frac{1}{\err} \right) .
		\qedhere
	\]
\end{restatable}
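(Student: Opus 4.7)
The plan is to adapt the local-spectral-expansion machinery of \cite{ALOG20} (used there for Glauber dynamics on the hard-core model) to the block dynamics $\blockDynamics[\graph, \weights, \clique]$ when blocks are cliques. The idea is to encode the clique cover as a lifted simplicial complex, identify block dynamics with a natural down-up walk on this complex, and then push Condition \ref{condition:influence_bound} through the local-to-global spectral expansion theorem to obtain a gap, which we finally turn into a mixing bound.

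\emph{Step 1 (Lifted complex).} First I would construct a weighted pure simplicial complex whose top faces are indexed by tuples $(x_1,\dots,x_{\numberOfCliques})$ where $x_i \in \clique[i]\cup\{\emptyset\}$ and the set of chosen vertices forms an independent set in $\graph$; the weight of such a face is proportional to $\prod_{v}\weight[v]$ over the non-empty coordinates. This is the standard construction used to convert a clique-covered instance into a balanced complex, and its top-level stationary distribution coincides with $\gibbsDistribution[\graph,\weights]$ lifted to choices per clique. Block dynamics on $\clique$ is exactly the ``$1$-out, $1$-in'' clique-swap walk on this complex: resampling one block marginally given the others.

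\emph{Step 2 (Local spectral expansion from pairwise influence).} Next I would verify that the links of the complex have spectral gap bounded below. For a link obtained by pinning some coordinates, the resulting walk on a single clique link $\clique[i]$ is the projection of the influence matrix $\pairwiseInfluenceSymbol_{\subgraph{\graph'}{\clique[i]}}$ of a conditioned subinstance $\graph'$. By Condition \ref{condition:influence_bound}, for every $S\subseteq\vertices$ the restriction $\pairwiseInfluenceSymbol_{\graph,S}$ satisfies $\sum_{v\in S}|\pairwiseInfluence{\graph}[][r,v]|q(v)\le C q(r)$, so the Collatz–Wielandt characterization yields $\spectralRadius{\pairwiseInfluenceSymbol_{\graph,S}}\le C$. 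Since conditioning preserves Condition \ref{condition:influence_bound} (the influence matrix of a conditioned instance is again bounded by $C$ in the weighted $\ell_\infty$-sense, for a suitable reweighting of $q$), the same bound holds on every link. As in \cite{ALOG20}, spectral independence with constant $C$ implies that each link walk has second eigenvalue at most $C/(k-1)$ at level $k$, i.e.\ the complex is a $(\tfrac{C}{k-1})_{k}$-local-spectral expander.

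\emph{Step 3 (Gap for the clique-swap walk via trickle-down).} Then I would apply the Oppenheim trickle-down / local-to-global theorem of \cite{ALOG20}: telescoping the local eigenvalue bounds across the $\numberOfCliques$ levels yields a spectral gap of at least $\numberOfCliques^{-O(C)}$ for the top-level down-up walk, which equals the block dynamics $\blockDynamics[\graph,\weights,\clique]$ after the identification in Step 1. The exponent $(2+C)C$ appears through the standard telescoping product $\prod_{k=1}^{\numberOfCliques-1}\bigl(1-\tfrac{C}{k}\bigr)^{-1}$, giving gap $\gtrsim \numberOfCliques^{-O(C)}$ and hence a polynomial loss.

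\emph{Step 4 (From gap to mixing starting at $\emptyset$).} Finally, the mixing time from the empty independent set is $\mix{\blockDynamics}{\err}[\emptyset]\le \mathrm{gap}^{-1}\bigl(\ln(1/\stationary[\blockDynamics][\emptyset])+\ln(1/\err)\bigr)$. Since $\stationary[\blockDynamics][\emptyset]=1/\prt[\graph,\weights]$ and $\prt[\graph,\weights]\le \maxPartition^{\numberOfCliques}$ (each clique contributes at most $\maxPartition$), we have $\ln(1/\stationary[\blockDynamics][\emptyset])\le \numberOfCliques\ln\maxPartition$. Combining with the gap from Step 3 yields the claimed bound $\numberOfCliques^{O((2+C)C)}\maxPartition^{O((2+C)C)}\ln(1/\err)$.

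The main obstacle will be Step 2: establishing that Condition \ref{condition:influence_bound} transfers to arbitrary links of the lifted complex. Pinning some cliques to specific vertices induces a conditioned hard-core instance on a vertex subset, and one needs to show that the weighted $\ell_\infty$-norm bound $C$ on $\pairwiseInfluenceSymbol$ is inherited by the influence matrix of the conditioned instance, possibly after reweighting $q$ on the surviving vertices. Once that stability property of Condition \ref{condition:influence_bound} is in hand, the rest is assembling the standard spectral-independence machinery and the gap-to-mixing argument above.
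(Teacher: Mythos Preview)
Your overall architecture matches the paper's: lift to a simplicial complex over $\bigcup_i(\clique[i]\cup\{\emptyset_i\})$, identify block dynamics with the two-step walk, bound local expansion via an influence matrix, and apply the local-to-global theorem of \cite{alev2020improved}. However, several load-bearing steps are missing or mis-stated.

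\textbf{The telescoping fails at the top levels, and this is where $\maxPartition$ really enters.} In Step~3 you write the product $\prod_{k}(1-C/k)^{-1}$, but for the last $\approx (2+C)C$ levels the local-expansion bound $(2+C)C/(\numberOfCliques-k-1)$ exceeds~$1$, so the factor $1-\alpha_k$ is nonpositive and \Cref{lemma:twoStepMixing} gives nothing. The paper handles this by proving, via a canonical-paths argument on the skeleton walk (\Cref{lemma:canonical_paths}), a crude but unconditional bound $\eigenvalueOf{\transitionMatrix[\skeletonWalk]}[2]\le 1-1/(12\maxPartition^2)$ that holds at \emph{every} level. The final product then splits: the influence bound is used for the first $\numberOfCliques-O((2+C)C)$ levels, and the $1/(12\maxPartition^2)$ bound for the remaining $O((2+C)C)$ levels, contributing a factor $\maxPartition^{O((2+C)C)}$ to the gap. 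Your Step~4 accounting, where $\maxPartition$ appears only through $\ln(1/\stationary[\blockDynamics][\emptyset])\le \numberOfCliques\ln\maxPartition$, would yield a bound polylogarithmic in $\maxPartition$, which is better than the theorem actually claims---and is too good, precisely because the top-level patch is missing.

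\textbf{The relevant influence matrix is not $\pairwiseInfluence{\graph}$.} The skeleton walk on a link of the lifted complex is governed by the \emph{clique influence matrix} $\cliqueInfluence{\graph}{\clique}$, whose rows and columns are indexed by $\complexGroundset=\{\complexElement{v}\}_{v\in\vertices}\cup\{\complexEmpty{i}\}_{i\in[\numberOfCliques]}$, not just vertices. Relating $\cliqueInfluence{\graph}{\clique}$ back to the pairwise matrix $\pairwiseInfluence{\graph}$ is the content of \Cref{lemma:clique_influence_bound} (via \Cref{lemma:influence_on_clique,lemma:clique_to_pairwise_1,lemma:clique_to_pairwise_2,lemma:probability_bound}) and is where the factor $(2+C)$ in $(2+C)C$ comes from; your Step~2 claims the eigenvalue bound $C/(k-1)$ directly, which does not follow. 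Separately, your Step~1 construction only gives a bijection between maximal faces and independent sets when the cliques are \emph{disjoint}; the paper treats general covers by a Markov-chain comparison (\Cref{lemma:non_disjoint_comparison}) reducing to a disjoint refinement, at the cost of one extra $\maxPartition$ factor in the gap.
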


Using a bound for the sum of absolute pairwise influences that was recently established in \cite{Chen2020rapid}, it follows that the univariate hard-core model satisfies \Cref{condition:influence_bound} up to the tree threshold.
As a result, we know that the mixing time of block dynamics is polynomial in $\numberOfCliques$ and $\maxPartition$ for any clique cover of size $\numberOfCliques$. 
To the best of our knowledge, this is the first result for the mixing time of block dynamics for the univariate hard-core model on general graphs that holds in this parameter range.

As we aim to apply clique dynamics to avoid sampling from the conditional Gibbs distribution in each step, we still need to prove that \Cref{thm:main} also holds in terms of clique dynamics.
To this end, we apply a Markov chain comparison argument from \cite{diaconis1993comparison} to prove that using clique dynamics instead of block dynamics for the same clique cover $\clique$ increases the mixing time by at most a factor $2 \maxPartition$.
The following corollary, which is central for proving \Cref{thm:hard_sphere_approx}, follows immediately.
\begin{restatable}{corollary}{univariateCliqueDynamics}
	\label{cor:univarite_clique_dynamics}
	Let $(\graph, \weight)$ be an instance of the univariate hard-core model such that the degree of $\graph$ is bounded by $\degree$.
	Let $\clique$ be a given clique cover of size $\numberOfCliques$ with $\maxPartition = \max_{i \in [\numberOfCliques]} \{\partitionFunction[\subgraph{\graph}{\clique[i]}, \weight]\}$.
	Denote by $\cliqueDynamics = \cliqueDynamics[\graph, \weight, \clique]$ the corresponding clique dynamics.
	If there is some $\delta \in \R_{>0}$ such that $\weight \le (1-\delta) \criticalWeight{\degree}$ then the mixing time of the clique dynamics $\cliqueDynamics$, starting from $\emptyset \in \independentSets[\graph]$, is bounded by
	\[
		\mix{\cliqueDynamics}{\err}[\emptyset] \le \numberOfCliques^{\bigO{1/\delta^2}} \maxPartition^{\bigO{1/\delta^2}} \ln\left( \frac{1}{\err} \right) .
		\qedhere
	\]
\end{restatable}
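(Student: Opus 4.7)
The plan is to combine three ingredients already in place: (i) a bound on the total pairwise influence for the univariate hard-core model below the tree threshold, (ii) the block dynamics mixing time from \Cref{thm:main}, and (iii) a Markov chain comparison between clique dynamics and block dynamics on the same clique cover.

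First, I would verify that \Cref{condition:influence_bound} holds for $(\graph, \weight)$ with $C = \bigO{1/\delta}$. For the univariate hard-core model with $\weight \le (1 - \delta) \criticalWeight{\degree}$ on a graph of maximum degree $\degree$, the work cited as \cite{Chen2020rapid} provides a uniform bound of the form $\sum_{v \in S} \absolute{\pairwiseInfluence{\graph}[][r, v]} \le C$ for every $S \subseteq \vertices$ and every $r \in S$, with $C = \bigO{1/\delta}$. Since this is a uniform bound over the row-sums of the absolute influence matrix, \Cref{condition:influence_bound} is satisfied with the constant function $q \equiv 1$.

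Next, I would directly apply \Cref{thm:main} to $(\graph, \weight)$ with the given clique cover $\clique$. Substituting $C = \bigO{1/\delta}$ into the exponent $\bigO{(2 + C)C}$ yields $\bigO{1/\delta^2}$, so the block dynamics $\blockDynamics[\graph, \weight, \clique]$ satisfies
\[
    \mix{\blockDynamics}{\err}[\emptyset] \le \numberOfCliques^{\bigO{1/\delta^2}} \maxPartition^{\bigO{1/\delta^2}} \ln\left(\frac{1}{\err}\right).
\]

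The remaining step is to transfer this bound from block dynamics to clique dynamics. For the same clique cover $\clique$, both chains have the same stationary distribution $\gibbsDistribution[\graph, \weight]$ and the same state space $\independentSets[\graph]$; they differ only in how a selected clique $\clique[i]$ is resampled. In particular, every transition of $\blockDynamics$ within a clique $\clique[i]$ can be factored as a two-step procedure that first empties $\clique[i]$ and then reintroduces at most one vertex, and the ratio of the corresponding transition probabilities of $\cliqueDynamics$ and $\blockDynamics$ is controlled by $\maxPartition$. Invoking the comparison technique from \cite{diaconis1993comparison}, as announced in the text, this translates into an overhead of at most a factor $2 \maxPartition$ in the spectral gap, and hence in the mixing time. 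Absorbing this factor into $\maxPartition^{\bigO{1/\delta^2}}$ yields the claimed bound.

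The main obstacle in formalizing this plan is the comparison step, since one needs to carefully relate congestion on the transition graphs of $\blockDynamics$ and $\cliqueDynamics$ (both chains share a natural coupling at the level of cliques, but the clique dynamics may accept or reject in a single shot what the block dynamics resolves internally). The influence-bound step is essentially a citation to \cite{Chen2020rapid}, and the invocation of \Cref{thm:main} is purely mechanical; the only subtlety is checking that the $\bigO{1/\delta}$ dependence of $C$ really matches the row-sum formulation of \Cref{condition:influence_bound}, which is immediate if we take $q \equiv 1$.
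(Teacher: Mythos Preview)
Your proposal is correct and follows essentially the same approach as the paper: the paper proves this corollary by invoking \Cref{cor:univariate} (which packages the influence bound from \cite{Chen2020rapid} with $q\equiv 1$ and \Cref{thm:main}) and then applies \Cref{lemma:comparison_clique_dynamics}, the Diaconis--Saloff-Coste comparison between clique and block dynamics with overhead $2\maxPartition$. Your three ingredients and their roles match the paper's exactly, and your identification of the comparison step as the only nontrivial part is accurate.
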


\subsubsection*{A side journey: comparison to multivariate conditions}
In fact, \Cref{cor:univarite_clique_dynamics} is sufficient for our application to the hard-sphere model.
However, we also aim to set \Cref{condition:influence_bound} in the context of other conditions for rapid mixing of clique dynamics for the multivariate hard-core model.
Note that such a rapid mixing result for clique dynamics caries over to Glauber dynamics by taking each vertex as a separate clique of size 1.

To this end, we compare \Cref{condition:influence_bound} to a strict version of the \pmc, originally introduced in \cite{friedrich2020polymer} in the setting of clique dynamics for abstract polymer models.
It turns out that this strict version of the \pmc directly implies \Cref{condition:influence_bound}.
This is especially interesting, as the \pmc was initially introduced as a local condition (only considering the neighborhood of each vertex) and is based on a coupling argument.
However, we show that it can as well be understood as a sufficient condition for the global decay of pairwise influence with increasing distance between vertices. 

Formally, we say that the \spmc is satisfied for an instance of the multivariate hard-core model $(\graph, \weights)$ if there is a function $\spmcFunction\colon \vertices \to \R_{>0}$ and a constant $\spmcConstant \in (0, 1)$ such that for all $v \in \vertices$ it holds that
\[
	\sum_{w \in \neighbors{v}} \frac{\weight[w]}{1 + \weight[w]} \spmcFunction[w] \le (1 - \spmcConstant) \spmcFunction[v], 
\]
where $\neighbors{v}$ is the neighborhood of $v$ in $\graph$.
This is a strict version of the \pmc in that the original \pmc would correspond to the case $\spmcConstant=0$ (i.e., the \spmc requires some strictly positive slack $\spmcConstant$).

The result of our comparison is summarized in the following statement.
\begin{restatable}{lemma}{pairwiseInfluenceSpmc}
	\label{lemma:pairwise_influence_spmc}
	Let $(\graph, \weights)$ be an instance of the multivariate hard-core model.
	If $(\graph, \weights)$ satisfies the \spmc for a function $\spmcFunction$ and a constant $\spmcConstant$, then it also satisfies \Cref{condition:influence_bound} for $q=\spmcFunction$ and $C = \frac{1}{\spmcConstant}$.
\end{restatable}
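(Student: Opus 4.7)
My strategy is to upper-bound the absolute pairwise influence $\absolute{\pairwiseInfluence{\graph}[][r, v]}$ by a sum of weighted walks in $\graph$, and then iterate the \spmc hypothesis to show that the total $\spmcFunction$-weighted mass of such walks decays geometrically in the walk length.

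\textbf{Step 1: walk bound on influence.} I would first establish that, for the multivariate hard-core model and any pair of distinct vertices $r, v \in \vertices$,
\begin{equation*}
    \absolute{\pairwiseInfluence{\graph}[][r, v]} \;\le\; \sum_{k \ge 1} \sum_{\substack{r = u_0, u_1, \dots, u_k = v \\ \text{walk in } \graph}} \prod_{i=1}^{k} \frac{\weight[u_i]}{1 + \weight[u_i]} .
\end{equation*}
This is a standard-style inequality for $2$-spin anti-ferromagnetic systems. It can be obtained via a Weitz-type self-avoiding walk tree construction rooted at $v$ (or by direct inductive telescoping of conditional marginals), using the fact that every conditional occupation probability in the hard-core model equals either $0$ or $\weight[\cdot]/(1+\weight[\cdot])$; the sum over self-avoiding walks is then relaxed to a sum over all walks without loss. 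This step does not use the \spmc.

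\textbf{Step 2: iterating the \spmc.} For each $u \in \vertices$ and $k \ge 0$, define
\begin{equation*}
    g_k(u) \;=\; \sum_{\substack{u = u_0, u_1, \dots, u_k \\ \text{walk in } \graph}} \spmcFunction[u_k] \prod_{i=1}^{k} \frac{\weight[u_i]}{1 + \weight[u_i]} ,
\end{equation*}
so that $g_0(u) = \spmcFunction[u]$ and $g_{k+1}(u) = \sum_{w \in \neighbors{u}} \frac{\weight[w]}{1 + \weight[w]} \, g_k(w)$. A straightforward induction on $k$, applying the \spmc inequality once at each step, yields $g_k(u) \le (1 - \spmcConstant)^k \spmcFunction[u]$ for all $k \ge 0$.

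\textbf{Step 3: geometric summation.} Combining Steps~1 and~2, and using that the $v = r$ term contributes $0$, we get
\begin{equation*}
    \sum_{v \in S} \absolute{\pairwiseInfluence{\graph}[][r, v]} \spmcFunction[v] \;\le\; \sum_{v \in \vertices \setminus \{r\}} \absolute{\pairwiseInfluence{\graph}[][r, v]} \spmcFunction[v] \;\le\; \sum_{k \ge 1} g_k(r) \;\le\; \sum_{k \ge 1} (1 - \spmcConstant)^k \spmcFunction[r] \;=\; \frac{1 - \spmcConstant}{\spmcConstant} \spmcFunction[r] \;\le\; \frac{1}{\spmcConstant} \spmcFunction[r] ,
\end{equation*}
which is \Cref{condition:influence_bound} with $q = \spmcFunction$ and $C = 1/\spmcConstant$.

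\textbf{Main obstacle.} Steps~2 and~3 are essentially mechanical once Step~1 is in hand; the main technical work sits in Step~1, which requires a careful walk decomposition of $\pairwiseInfluence{\graph}[][r, v]$ with precisely the multiplicative factors $\weight[u_i]/(1+\weight[u_i])$. In the multivariate setting, one must track per-vertex weights along the walk and phrase the Weitz-style argument directly as a bound on absolute pairwise influence rather than as an exact preservation of marginals. Once this is done, the \spmc acts as a one-step contraction for the walk recursion defining $g_k$, and the remaining geometric-series argument is immediate.
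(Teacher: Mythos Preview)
Your approach is correct and essentially the same as the paper's: the paper also passes to the self-avoiding-walk tree rooted at $r$, uses the identity $\pairwiseInfluence{\graph}[][r,v]=\sum_{\sawCopy{v}}\pairwiseInfluence{T}[][r,\sawCopy{v}]$ together with the multiplicative path property on trees to bound each edge factor by $\weight[w]/(1+\weight[w])$, and then proves a layer-by-layer decay lemma (exactly your $g_k$ recursion, phrased in the tree) giving $(1-\spmcConstant)^k\spmcFunction[r]$ before summing the geometric series. One minor point to add: \Cref{condition:influence_bound} is needed for $\pairwiseInfluence{\subgraph{\graph}{S}}$ for every $S\subseteq\vertices$, so you should observe (as the paper does) that the \spmc is inherited by induced subgraphs and run your Steps~1--3 on $\subgraph{\graph}{S}$ rather than on $\graph$.
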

\Cref{lemma:pairwise_influence_spmc} is proven by translating the calculation of pairwise influences to the self-avoiding walk tree of the graph, based on a result in \cite{Chen2020rapid}, and applying a recursive argument on this tree.
The technical details are given in  \Cref{sec:multivariate}.
   
Despite being an interesting relationship between local coupling arguments and global pairwise influence, \Cref{lemma:pairwise_influence_spmc} also implies that, from an algorithmic perspective, \Cref{thm:main} can be used to produce similar results as those obtained in \cite{friedrich2020polymer} for abstract polymer Further, note that for the univariate model, using pairwise influence yields strictly better results than any coupling approach in the literature.
This raises the question if a refined argument based on pairwise influences can be used in the multivariate setting to improve on the clique dynamics condition, leading to better approximation results on abstract polymer models.

\subsection{Analyzing spectral expansion}\label{sec:intro_spectral}
As core technique for obtaining \Cref{thm:main}, we adapt an approach for bounding the mixing time that was recently used to prove rapid mixing of Glauber dynamics for the entire regime blow the tree threshold for several applications, such as the hard-core model~\cite{ALOG20}, general two-state spin systems~\cite{Chen2020rapid}, and proper colorings~\cite{Chen2020coloring,feng2020rapid}.
The idea is to map the desired distribution to a weighted simplicial complex.

A simplicial complex $\complex$ over a groundset $\complexGroundset$ is a set family $\complex \subseteq \powerset{\complexGroundset}$ such that for each $\complexFace \in \complex$ every subset of $\complexFace$ is also in $\complex$.
We call the elements $\complexFace \in \complex$ the faces of $\complex$ and refer to its cardinality~$\size{\complexFace}$ as dimensionality.

For a hard-core instance $(\graph, \weight)$, the authors of \cite{ALOG20} construct a simplicial complex over a ground set~$\complexGroundset$ that contains two elements $\complexElement{\inSet{v}}, \complexElement{\notInSet{v}} \in \complexGroundset$ for each vertex $v \in \vertices$.
For every independent set $\independentSet \in \independentSets[\graph]$, a face $\faceFromIndep{\independentSet} \in \complex$ is introduced such that $\complexElement{\inSet{v}} \in \faceFromIndep{\independentSet}$ if $v \in \independentSet$ and $\complexElement{\notInSet{v}} \in \faceFromIndep{\independentSet}$ otherwise.
The simplicial complex is completed by taking the downward closure of these faces.
Note that by construction all maximum faces of the resulting complex are $\size{\vertices}$-dimensional and there is a one-to-one correspondence between the maximum faces and the independent sets in $\independentSets[\graph]$.
By assigning each maximum face $\faceFromIndep{\independentSet} \in \complex$ an appropriate weight, the Glauber dynamics can be represented as a random walk on those maximum faces, which is sometimes referred to as the two-step walk or down-up walk.
Using a local-to-global theorem \cite{alev2020improved}, the mixing time of this two-step walk can then be bounded based on certain local expansion properties of the simplicial complex $\complex$ (see \Cref{sec:prelim} for the technical details).
It is then proved that such local expansion properties are well captured by the largest eigenvalue of the pairwise influence matrix $\pairwiseInfluence{\graph}$, which is a $\size{\vertices} \times \size{\vertices}$ matrix that contains the pairwise influence $\pairwiseInfluence{\graph}[][v, w]$ for all $v, w \in \vertices$.
Finally, by bounding those influences a bound on this largest eigenvalue of $\pairwiseInfluence{\graph}$ is obtained.
This analysis was later refined and generalized in \cite{Chen2020rapid} to general two-state spin systems.

This method was independently extended in \cite{Chen2020coloring} and \cite{feng2020rapid} to the non-Boolean domain by applying it to the Glauber dynamics for proper colorings.
The main differences to the Boolean domain are that elements of the simplicial complex now represent combinations of a vertex and a color.
Furthermore, the bound on the local spectral expansion was obtained by using a different influence matrix, which captures the effect of selecting a certain color for one vertex on the distribution of colors for another vertex.

Although we are dealing with the hard-core model, which is Boolean in nature, the way that we model block dynamics is mainly inspired by the existing work on proper colorings \cite{Chen2020coloring}.
Assume we have an instance of the multivariate hard-core model $(\graph, \weights)$ and let $\clique$ be a clique cover for $\graph$ of size $\numberOfCliques$ such that every pair of distinct cliques is vertex-disjoint (i.e., $\clique$ is a partition of~$\graph$ into cliques).
We construct a simplicial complex $\complex$ based on a ground set $\complexGroundset$ that contains one element $\complexElement{v} \in \complexGroundset$ for each vertex $v \in \vertices$ and one additional element $\complexEmpty{i}$ for each clique $\clique[i] \in \clique$.
We introduce a face $\faceFromIndep{\independentSet} \in \complex$ for each independent set $\independentSet \in \independentSets[\graph]$ such that for every $\clique[i] \in \clique$ we have $\complexEmpty{i} \in \faceFromIndep{\independentSet}$ if $\clique[i] \cap \independentSet = \emptyset$ and $\complexElement{v} \in \faceFromIndep{\independentSet}$ if $\clique[i] \cap \independentSet = \{v\}$ for some $v \in \clique[i]$.
The simplicial complex is completed by taking the downward closure of these faces.
As we discuss in \Cref{subsec:complex_representation}, all maximum faces of the resulting complex are $\numberOfCliques$-dimensional and there is a bijection between the maximum faces and the independent sets of $\graph$.
Furthermore, there is a natural partitioning $\{\complexPartition{i}\}_{i \in [\numberOfCliques]}$ of the ground set~$\complexGroundset$, each partition $\complexPartition{i}$ corresponding to a clique $\clique[i]$, such that every maximum face in $\complex$ contains exactly one element from each partition $\complexPartition{i}$.

By weighting each maximum face of $\complex$ by the contribution of the corresponding independent set to the partition function, the block dynamics based on $\clique$ are equivalent to the two-step walk on~$\complex$.
Thus, in order to bound the mixing time of the block dynamics, it is sufficient to study the local expansion properties of $\complex$.
To this end, we adapt the influence matrix used for proper colorings in \cite{Chen2020coloring}.
For $x \in \complexGroundset$, let $\gibbsPr{\graph}{x}$ denote the probability that $x \in \faceFromIndep{\independentSet}$ for an independent set $\independentSet \in \independentSets[\graph]$ drawn from $\gibbsDistribution[\graph, \weights]$ and corresponding maximum face $\faceFromIndep{\independentSet} \in \complex$.
Similarly as for defining pairwise influences, we extend this notation to conditional probabilities.
The clique influence matrix $\cliqueInfluence{\graph}{\clique}$ contains an entry $\cliqueInfluence{\graph}{\clique}[x, y]$ for each $x, y \in \complexGroundset$ with
\[
	\cliqueInfluence{\graph}{\clique}[x, y] = \begin{cases}
		0 & \text{ if $x,y \in \complexPartition{i}$ for some $i \in [\numberOfCliques]$, } \\
		\gibbsPr{\graph}{y}[x] - \gibbsPr{\graph}{y} & \text{ otherwise. }
	\end{cases}
\]
By using similar linear-algebraic arguments as in \cite{Chen2020coloring} we prove that the maximum eigenvalue of $\cliqueInfluence{\graph}{\clique}$ can be used to upper bound the local spectral expansion of $\complex$.
To obtain \Cref{thm:main} it is then sufficient to relate \Cref{condition:influence_bound} to the maximum eigenvalue of $\cliqueInfluence{\graph}{\clique}$.
The following lemma establishes this connection.
\begin{restatable}{lemma}{cliqueInfluenceBound}
	\label{lemma:clique_influence_bound}
	Let $(\graph, \weights)$ be an instance of the multivariate hard-core model that satisfies \Cref{condition:influence_bound} for a function $q$ and a constant $C$.
	For every $S \subseteq \vertices$ and every disjoint clique cover $\clique$ of $\subgraph{\graph}{S}$ it holds that
	the largest eigenvalue of $\cliqueInfluence{\subgraph{\graph}{S}}{\clique}$ is at most $(2 + C) C$.
\end{restatable}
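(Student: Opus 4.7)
The plan is to bound $\lambda_{\max}\bigl(\cliqueInfluence{\subgraph{\graph}{S}}{\clique}\bigr)$ by a weighted $\ell_\infty$-norm on a substantially smaller matrix whose entries are explicit combinations of the pairwise influences $\pairwiseInfluence{\graph'}[][v, w]$, where I abbreviate $\graph' \defeq \subgraph{\graph}{S}$, $p_v \defeq \gibbsPr{\graph'}{\inSet{v}}$, and $q_i \defeq \gibbsPr{\graph'}{\complexEmpty{i}}$; applying \Cref{condition:influence_bound} twice will then close the estimate. Throughout I use the standard fact that in the hard-core model a pinning $\notInSet{T}$ yields the same distribution as the model on $\graph' \setminus T$, so \Cref{condition:influence_bound} transfers to every induced subgraph that appears.

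First I would compute explicit formulas for every entry of $\cliqueInfluence{\graph'}{\clique}$ by combining the identity $\gibbsPr{\graph'}{\inSet{w}}[\inSet{v}] - \gibbsPr{\graph'}{\inSet{w}} = (1 - p_v)\, \pairwiseInfluence{\graph'}[][v, w]$ with the partition identity $\mathbf{1}_{\complexEmpty{i}} = 1 - \sum_{v \in \clique[i]} \mathbf{1}_{\inSet{v}}$ inside covariances. A useful consequence is that each $\complexEmpty{i}$-row of $\cliqueInfluence{\graph'}{\clique}$ is the fixed linear combination $-\tfrac{1}{q_i} \sum_{v \in \clique[i]} p_v \cdot (\text{the } \complexElement{v}\text{-row})$. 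Writing $\cliqueInfluence{\graph'}{\clique} = L B$, where $B$ is the $\size{S} \times \size{\complexGroundset}$ matrix of $\complexElement{v}$-rows and $L$ is the fixed $\size{\complexGroundset} \times \size{S}$ matrix encoding this ``empty-as-weighted-sum-of-vertex'' relation, and using that the non-zero spectra of $LB$ and $BL$ coincide, the problem reduces to bounding the eigenvalues of the $\size{S} \times \size{S}$ effective matrix
\[
    \widetilde{A}[v, w] = (1 - p_v)\, \Bigl( \pairwiseInfluence{\graph'}[][v, w] + \tfrac{p_w}{q_{i(w)}} \sum_{u \in \clique[i(w)]} \pairwiseInfluence{\graph'}[][v, u] \Bigr)
\]
for $v, w$ in distinct cliques of $\clique$ (and $0$ otherwise), where $i(w)$ denotes the index of the unique clique of $\clique$ containing $w$.

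I then bound $\lambda_{\max}(\widetilde{A}) \leq \|\widetilde{A}\|_{\infty, q}$, with $q$ the function supplied by \Cref{condition:influence_bound}. Splitting the weighted row sum at $v$ yields a direct term $(1 - p_v) \sum_w \absolute{\pairwiseInfluence{\graph'}[][v, w]}\, q(w)$, bounded by $C\, q(v)$ through a first invocation of the condition, plus an indirect term that, after swapping the order of summation, becomes $(1 - p_v) \sum_u \absolute{\pairwiseInfluence{\graph'}[][v, u]} \cdot \rho_{i(u)}$ with $\rho_i \defeq \tfrac{1}{q_i} \sum_{w \in \clique[i]} p_w\, q(w)$. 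Establishing the uniform clique-internal estimate $\rho_i \leq (1 + C)\, q(u)$ for every $u \in \clique[i]$, and then invoking the condition a second time, bounds the indirect term by $(1 + C) C\, q(v)$; the two parts sum to the desired $(2 + C) C\, q(v)$.

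The main obstacle will be the clique-internal estimate $\rho_i \leq (1 + C)\, q(u)$: since $1/q_i$ can be arbitrarily large when a clique is rarely empty, \Cref{condition:influence_bound} has to be used inside $\clique[i]$ itself to cancel this factor. The crucial observation is that for distinct vertices $r, v$ in the same clique of $\graph'$ one has $\absolute{\pairwiseInfluence{\graph'}[][r, v]} = p_v/(1 - p_r)$, so the condition applied with $S = \clique[i]$ and $r = u$ yields the clean estimate $\sum_{v \in \clique[i] \setminus \{u\}} p_v\, q(v) \leq C(1 - p_u)\, q(u)$. Combining this with the telescoping identity $q_i = \prod_{\ell = 1}^{\size{\clique[i]}} (1 - p^{(\ell - 1)}_{v_\ell})$ obtained by successively removing vertices of $\clique[i]$ from $\graph'$, and re-invoking \Cref{condition:influence_bound} in each reduced subgraph, should compress the $1/q_i$ factor into an additive $1$ and a further $C$. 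Making this telescoping precise so that the constants land exactly on $(2 + C) C$, rather than something looser, is the delicate step of the argument.
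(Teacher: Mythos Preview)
Your factorization $\cliqueInfluence{\graph'}{\clique}=LB$ and the passage to the $|S|\times|S|$ matrix $\widetilde A=BL$ are correct, but the bound $\|\widetilde A\|_{\infty,q}\le (2+C)C$ you aim for is \emph{false} in general, and the step that fails is precisely the clique-internal estimate $\rho_i\le(1+C)\,q(u)$. Take $\graph'=K_{n+1}$ with uniform weight~$\lambda$, split as $\clique[1]=\{a\}$ and $\clique[2]=\{b_1,\dots,b_n\}$. For every $S$ and every root $r$ the row sum of $\pairwiseInfluence{\subgraph{\graph'}{S}}$ equals $(|S|-1)\lambda/(1+(|S|-1)\lambda)<1$, so \Cref{condition:influence_bound} holds with $q\equiv 1$ and $C=1$. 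Yet $q_2=(1+\lambda)/(1+(n+1)\lambda)$ and $\sum_i p_{b_i}=n\lambda/(1+(n+1)\lambda)$, hence $\rho_2=n\lambda/(1+\lambda)$, which is unbounded in~$n$. In fact a direct computation gives $|\widetilde A[a,b_i]|=\lambda/(1+\lambda)$ for every~$i$, so the $q$-weighted row sum at~$a$ is exactly $\rho_2$, far exceeding $(2+C)C=3$. No telescoping over successive vertex removals can repair this, because the target inequality is simply not true. (The lemma itself survives: here $\eigenvalueOf{\widetilde A}[1]=\sqrt{n\lambda^2/((1+\lambda)(1+n\lambda))}<1$.)

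Structurally, collapsing the $\complexEmpty{i}$-rows into vertex-rows via $L$ forces the factor $1/q_i$ into the effective column weights, and $q_i$ can be arbitrarily small. The paper sidesteps this by working directly on the $|\complexGroundset|\times|\complexGroundset|$ matrix with the weight $p(\complexElement{v})=q(v)$ and $p(\complexEmpty{i})=\sum_{v\in\clique[i]}\gibbsPr{\graph'}{v}\,q(v)$, crucially \emph{without} dividing by~$q_i$. The inequality $p(\complexEmpty{i})\le(1+C)\,q(w)$ for every $w\in\clique[i]$ then follows from the simple observation $\gibbsPr{\graph'}{v}\le|\pairwiseInfluence{\graph'}[][w,v]|$ for neighbours $v$ of~$w$ together with one application of \Cref{condition:influence_bound}; this is the correct replacement for your~$\rho_i$ bound. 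For the $\complexEmpty{i}$-row the paper uses a different identity, $\cliqueInfluence{\graph'}{\clique}[\complexEmpty{i},\complexElement{w}]=-\sum_{v\in\clique[i]}\gibbsPr{\graph'}{v}\,\pairwiseInfluence{\graph_v'}[][v,w]$ with $\graph_v'=\subgraph{\graph'}{S\setminus(\clique[i]\setminus\{v\})}$, so \Cref{condition:influence_bound} can be invoked on each reduced graph and the resulting bound is $C\,p(\complexEmpty{i})$ exactly. If you insist on the $BL$ reduction you would need a weight on~$S$ that already encodes the clique structure, not the bare~$q$ supplied by the condition.
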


Note that our simplicial-complex representation is only given under the assumption that the cliques in the clique cover $\clique$ are pairwise disjoint.
Indeed, this is a necessary requirement to map the block dynamics to the two-step walk such that the local-global-theorem from \cite{alev2020improved} can be applied.
Thus, \Cref{lemma:clique_influence_bound} only helps to prove \Cref{thm:main} for disjoint clique covers.
However, we relax this requirement by proving that for every clique cover $\clique$ a disjoint clique cover~$K$ can be constructed such that the block dynamics $\blockDynamics[\graph, \weights, \clique]$ and $\blockDynamics[\graph, \weights, K]$ have asymptotically the same mixing time.
By this comparison argument, we extend \Cref{thm:main} to arbitrary clique covers. 

We are aware that, in the case of Glauber dynamics, more recent techniques for combining simplical complex representations with entropy factorization as proposed in \cite{ChenOptimal} yield superior mixing time results. 
However, in case of the hard-core model, this approach comes with an additional multiplicative factor of $\degree^{\bigO{\degree^2}}$ in the mixing time (see section 8 of \cite{ChenOptimal}).
Although negligible for bounded degree graphs, this would be too much for our application, as the degree of our discretization gets exponentially large in the continuous volume $\vol{\volume}$ of the system.
Thus, directly relating local spectral expansion with the spectral gap of block dynamics is more suitable in our case.
We leave as an open question, whether a modification of the approach in \cite{ChenOptimal} can be applied to further improve our mixing time result.

\subsection{Outlook}\label{sec:intro_outlook}
We obtain the fugacity bound from \Cref{thm:hard_sphere_approx} based on the tree threshold $\criticalWeight{\degree}$ of the hard-core model.
An obvious question is whether there are any structural properties of our discretization that can be used to improve this bound. 
Similar results are known for specific graph classes, such as the $2$-dimensional square lattice \cite{restrepo2013improved,vera2013improved,Weitz2006Counting}.
In \cite{perkinsHardSpheres2020} the authors discuss that a generalization of the connective constant to the continuous Euclidean space might be applicable to improve their uniqueness result for the hard-sphere model.
A comparable algorithmic result was already established for the discrete hard-core model in \cite{sinclair2017spatial}.
However, any such improvement for our discretization would require the connective constant of~$\hsGraph{\resolution}$ to be at least by a constant factor small than its maximum degree $\hsDegree{\resolution}$.
Unfortunately, due to a result in~\cite{penrose1994self}, this is not the case.
Although this does not necessarily imply that a similar concept does not work in continuous space, it gives a strong evidence that a more specialized tool instead of the connective constant might be required.

A different direction for future work is to see which other quantities and properties of the model are preserved under discretization.
This would especially include the thermodynamic pressure and its analyticity.
As a matter of fact, non-analytic points of the pressure along the positive real axis of fugacity in the thermodynamic limit are known to mark phase transitions in infinite volume systems (see for example \cite{perkinsHardSpheres2020}).
One way to approach this could be to prove a relation between zero-freeness of the continuous and the discretized partition function in a complex neighborhood of the real axis by extending our convergence result to the complex domain. 
Along this line, insights could be gained in how far properties like correlation decay and phase transitions (or their absence) are preserved under sufficiently fine discretization. 
%A different direction for future work is to see if negative results, such as computational hardness or torpid mixing of certain Markov chains, can be obtained for the hard-sphere model.
%As already discussed, a variety of such results exists for the discrete hard-core model \cite{2010:Sly:computational_transition,2014:Galanis:inapproximability_independent_hard_core,dyer2002counting,galvin2006slow,mossel2009hardness}.
%There are two obvious ways to work in this direction: either (1) trying to translate the known tools from computer science into the continuous domain, or (2) applying similar discretizations as in this work and see how the obtained result translates back to the original hard-sphere model.
%\todo{remove this paragraph, add discussion about analyticity if pressure}

From a purely technical point of view, it is interesting to see if our result on the mixing time of block dynamics in \Cref{thm:main} also holds without the requirement of using cliques as blocks.
Especially: is the mixing time for block dynamics for the univariate hard-core model polynomial in the number of blocks for any block cover? 
Most of our techniques that we use for clique covers, such as modeling the distribution as a simplicial complex and relating its local spectral expansion to the clique influence matrix, can be generalized in a straightforward way for different choices of blocks.
However, the main difficulty is to relate generalized versions of the clique influence matrix to pairwise influences, as we do in \Cref{lemma:clique_influence_bound}.
One way to circumvent this might be to not rely on pairwise influences at all but to rather investigate the influence matrix directly, for example, via different computational-tree methods. 

Finally, it would be interesting to see if approaches like ours can be extended to other continuous models from statistical physics (see for example coarse-graining \cite{espanol2004statistical}).
We believe that the variety of tools that are already established for discrete spin systems are useful in this setting to establish rigorous computational results for different continuous models.
We emphasize that clique and block dynamics are a useful computational tool to handle the exponential blow-ups caused by discretization.

%!TEX root = ../clique_spectral_independence.tex

\section{Preliminaries}\label{sec:prelim}
We denote the set of all natural numbers (including~$0$) by~\N and the set of all real numbers by~\R.
For each $n \in \N$, let $[n]$ denote the interval $[1, n] \cap \N$.
Further, for a graph $\graph = (\vertices, \edges)$, we write $\neighbors{v}[\graph]$ for the open neighborhood of a vertex $v \in \vertices$(i.e., all $w \in \vertices$ with $(v, w) \in \edges$) and $\neighborsClosed{v} = \neighbors{v}[\graph] \cup \{v\}$ for the closed neighborhood.
We might omit the graph if it is clear from the context.

\subsection{Markov chains and spectral properties}
For any (time-homogeneous) Markov chain $\markov$, we denote its state space by $\stateSpace[\markov]$ and its transition probabilities by $\transitionMatrix[\markov]$.
If $\markov$ has a unique stationary distribution, we denote it by $\stationary[\markov]$.
Assume $\size{\stateSpace[\markov]} = N \in \N_{>0}$.
It is well known that, if $\markov$ is reversible with respect to $\stationary[\markov]$, this implies that $\transitionMatrix[\markov]$ has $N$ real eigenvalues
\[
	1 = \eigenvalueOf{\transitionMatrix[\markov]}[1] \ge \eigenvalueOf{\transitionMatrix[\markov]}[2] \ge \dots \ge \eigenvalueOf{\transitionMatrix[\markov]}[N] \ge -1 . 
\] 
We write $\maxAbsEigenvalue{\transitionMatrix[\markov]} = \max \{\eigenvalueOf{\transitionMatrix}[1], \absolute{\eigenvalueOf{\transitionMatrix}[N]}\}$ for the largest absolute eigenvalue and call $1 - \maxAbsEigenvalue{\transitionMatrix[\markov]}$ the \emph{spectral gap} of $\transitionMatrix[\markov]$.
We extend these notations to general matrices $A$ with real eigenvalues, e.g., we denote the largest eigenvalue by $\eigenvalueOf{A}[1]$.

If $\markov$ is ergodic, we define its mixing time starting from some state $x \in \stateSpace[\markov]$ as 
\[
	\mix{\markov}{\err}[x] = \inf \{ t \in \N \mid \dtv{\transitionSymbol^t_{\markov}(x, \cdot)}{\stationary[\markov]} \le \err \},
\]
where $\transitionSymbol^t_{\markov}(x, \cdot)$ is the distribution of $\markov$ on $\stateSpace[\markov]$ after $t$ steps, starting from $x$, and where $\dtv{\cdot}{\cdot}$ denotes the total variation distance.
Recall that for any ergodic, reversible Markov chain $\markov$ and every state $x \in \stateSpace[\markov]$, it holds that 
\begin{equation} \label{eq:mixing_spectral_gap}
	\mix{\markov}{\err}[x] \le \frac{1}{1 - \maxAbsEigenvalue{\transitionMatrix[\markov]}} \ln\left( \frac{1}{\stationary[\markov][x] \cdot \err} \right) .
\end{equation}
For further details on Markov chains please refer to \cite{levin2017markov}.

\subsection{The multivariate hard-core model} \label{subsec:hard_core}
Let $\graph = (\vertices, \edges)$ be an undirected graph, and let $\independentSets[\graph]$ denote the set of independent sets in $\graph$; if the graph is clear from the context, we only write $\independentSets$. 
The \emph{multivariate hard-core model} is a tuple $(\graph, \weights)$, where $\weights = \{\weight[v]\}_{v \in \vertices}$ is a set of weights, containing one weight $\weight[v] \in \R_{>0}$ for each vertex $v \in \vertices$.
The \emph{partition function} of $(\graph, \weights)$ is defined as
\[
	\partitionFunction[\graph, \weights] \defeq \sum_{\independentSet \in \independentSets} \prod_{v \in \independentSet} \weight[v] .
\]
The \emph{Gibbs distribution} $\gibbsDistribution[\graph, \weights]$ is a probability distribution on $\independentSets$, assigning each independent set $\independentSet \in \independentSets$ the probability
\[
	\gibbsDistributionFunction{\independentSet}[\graph, \weights] = \frac{\prod_{v \in \independentSet} \weight[v]}{\partitionFunction[\graph, \weights]} . 
\]
If the model $(\graph, \weights)$ is clear, we only write~$\partitionFunction$ and~$\gibbsDistribution$.

Large parts of our analysis consider the Gibbs distributions and the partition functions of induced subgraphs $\subgraph{\graph}{S}$ for $S \subseteq \vertices$ while keeping the weights of the respective vertices in $S$.
In this case, we might omit the set of weights and write $\partitionFunction[\subgraph{\graph}{S}]$ for $\partitionFunction[\subgraph{\graph}{S}, \subgraph{\weights}{S}]$ or $\gibbsDistribution[\subgraph{\graph}{S}]$ for $\gibbsDistribution[\subgraph{\graph}{S}, \subgraph{\weights}{S}]$.
Further, for any non-empty set of vertices $S \subseteq \vertices$, we define $\gibbsDistribution[\graph][S]$ to be the distribution of the independent sets in $\independentSets[\subgraph{\graph}{S}]$ induced by $\gibbsDistribution[\graph]$.
Formally, this means $\gibbsDistribution[\graph][S]$ assigns every independent set $\independentSet \in \independentSets[\subgraph{\graph}{S}]$ the probability
\[
	\gibbsDistributionFunction{\independentSet}[\graph][S] = \sum_{\independentSet' \in \independentSets[\graph]} \ind{\independentSet \subseteq \independentSet'} \gibbsDistributionFunction{\independentSet'}[\graph][] .	
\]

We associate every independent set $\independentSet \in \independentSets$ with a spin assignment $\spinConfig[][][\independentSet]\colon \vertices \to \{0, 1\}$ such that $\left(\spinConfig[][][\independentSet]\right)^{-1}(1) = \independentSet$.
We extend this notation to restrictions on subsets $S \subseteq \vertices$.
For any independent set $\independentSet \in \independentSets$, the \emph{partial configuration} on~$S$ corresponding to $\independentSet$ is a spin assignment $\spinConfig[S][][\independentSet]\colon S \to \{0, 1\}$ such that $\big(\spinConfig[S][][\independentSet]\big)^{-1}(1) = \independentSet \cap S$.
By abuse of notation, we use these spin assignments as events (e.g., for conditioning on partial configurations).
Further, for all $S \subseteq V$ let $\zeroSpinConfig[S]\colon S \to \{0\}$ be the partial configuration that fixes all vertices in $S$ not to be in the independent set (i.e., $\zeroSpinConfig[S] = \spinConfig[S][][\emptyset]$).

Finally, for each for $v \in \vertices$, we write $\gibbsPr{\graph}{\inSet{v}}$ to denote the probability of the event that $v \in \independentSet$ for $\independentSet \sim \gibbsDistribution$, and $\gibbsPr{\graph}{\notInSet{v}}$ to denote the probability of the event $v \notin \independentSet$ for $\independentSet \sim \gibbsDistribution$.
Formally,
\[
	\gibbsPr{\graph}{\inSet{v}} = \gibbsDistributionFunction{\{v\}}[\graph][\{v\}] 
	\text{ and } 
	\gibbsPr{\graph}{\notInSet{v}} = \gibbsDistributionFunction{\emptyset}[\graph][\{v\}] .
\]

\subsection{Clique dynamics and clique covers} \label{subsec:clique_dynamics}
For any graph $\graph = (\vertices, \edges)$, a set $\clique = \{\clique[i]\}_{i \in [\numberOfCliques]} \subseteq \powerset{\vertices}$ is called a \emph{clique cover} of $\graph$ with size $\numberOfCliques \in \N_{> 0}$ if $\bigcup_{i \in [\numberOfCliques]} \clique[i] = \vertices$ and each $\clique[i] \in \clique$ induces a clique in $\graph$.
Further, we call $\clique$ a \emph{disjoint clique cover} if every pair of distinct cliques in $\clique$ is vertex-disjoint.

Note that for every $S \subseteq \vertices$ it holds that $\gibbsDistributionFunction{~\cdot~}[\graph][S][\zeroSpinConfig[\vertices \setminus S]] = \gibbsDistributionFunction{\cdot}[\subgraph{\graph}{S}][]$. 
Thus, the following definition of clique dynamics is equivalent to our description in the introduction and easier to compare with our formalization of block dynamics in \Cref{def:block_dynamics}.
\begin{definition}[clique dynamics {\cite{friedrich2020polymer}}]
	\label{def:markov_chain}
	Let~$(\graph, \weights)$ be a multivariate hard-core model, and let~$\clique$ be a clique cover of $\graph$ with size $\numberOfCliques$. 
	The \emph{clique dynamics}~\cliqueDynamics[\graph, \weights, \clique] are the following Markov chain with state space~\independentSets[\graph]. 
	Let $(X_t)_{t \in \N}$ denote a (random) sequence of states of~\cliqueDynamics[\graph, \weights, \clique], where~$X_0$ is arbitrary. Then, for all $t \in \N$ and all $X_t = \independentSet$ with $\independentSet \in \independentSets[\graph]$, the transitions of \cliqueDynamics[\graph, \weights, \clique] are as follows:
	\begin{algorithmic}[1]
		\State\label{markov_chain:line:chooseClique} choose $i \in [\numberOfCliques]$ uniformly at random\,;
		\State\label{markov_chain:line:sampleVertex} choose $\independentSet_{+} \in \independentSets[\subgraph{\graph}{\clique[i]}]$ according to $\gibbsDistributionFunction{~\cdot~}[][\clique[i]][\zeroSpinConfig[\vertices \setminus \clique[i]]]$\,;
		\State\label{markov_chain:line:removeVertex}\lIf{$\independentSet_{+} = \emptyset$}
		{
			$X_{t+1} = \independentSet \setminus \clique[i]$
		}
		\State\label{markov_chain:line:addVertex}\lElseIf{\emph{$\independentSet \cup \independentSet_{+}$ is an independent set}}
		{
			$X_{t+1} = \independentSet \cup \independentSet_{+}$
		}
		\State\label{markov_chain:line:doNotChange}\lElse
		{
			$X_{t+1} = \independentSet$
		}
	\end{algorithmic}
	\vspace*{-1.2\baselineskip}
	\qedhere
\end{definition}

It was shown in \cite{friedrich2020polymer} that for any clique cover $\clique$ the clique dynamics Markov chain $\cliqueDynamics[\graph, \weights, \clique]$ is ergodic and reversible with stationary distribution $\gibbsDistribution[\graph, \weights]$.
In the case $\clique = \{\{v\} \mid v \in \vertices\}$, the clique dynamics correspond to the Glauber dynamics.

\subsection{Block dynamics and block covers} \label{subsec:block_dynamics}
For any graph $\graph = (\vertices, \edges)$, we call a set $\block = \{\block[i]\}_{i \in [\numberOfBlocks]} \subseteq \powerset{\vertices}$ a \emph{block cover} of $\graph$ with size $\numberOfBlocks \in \N_{> 0}$ if $\bigcup_{i \in [\numberOfBlocks]} \block[i] = \vertices$.
We refer to the elements of $\block$ as blocks.
Note that a clique cover is a special case of a block cover, where all blocks are cliques.
\begin{definition}[block dynamics]
	\label{def:block_dynamics}
	Let~$(\graph, \weights)$ be a multivariate hard-core model, and let $\block$ be a block cover of $\graph$ with size $\numberOfBlocks$. 
	We define the \emph{block dynamics}~\blockDynamics[\graph, \weights, \block] to be the following Markov chain with state space~\independentSets[\graph]. 
	Let $(X_t)_{t \in \N}$ denote a (random) sequence of states of~\blockDynamics[\graph, \weights, \block], where~$X_0$ is arbitrary. 
	Then, for all $t \in \N$ and any $X_t = \independentSet$ with $\independentSet \in \independentSets[\graph]$, the transitions of \blockDynamics[\graph, \weights, \block] are as follows:
	\begin{algorithmic}[1]
		\State\label{block_chain:line:chooseClique} choose $i \in [\numberOfBlocks]$ uniformly at random\,;
		\State\label{block_chain:line:sampleInBlock} choose $\independentSet_{+} \in \independentSets[\subgraph{\graph}{\block[i]}]$ according to $\gibbsDistributionFunction{~\cdot~}[][\block[i]][\spinConfig[\vertices \setminus \block[i]][][\independentSet]]$\,;
		\State\label{block_chain:line:update} $X_{t+1} = (\independentSet \setminus \block[i]) \cup \independentSet_{+}$\,;
	\end{algorithmic}
	\vspace*{-1.2\baselineskip}
	\qedhere
\end{definition}

The block dynamics Markov chain $\blockDynamics[\graph, \weights, \block]$ is ergodic with stationary distribution $\gibbsDistribution$, independent of the chosen block cover $\block$.
If $\block = \{\{v\} \mid v \in \vertices\}$, then the block dynamics correspond to the Glauber dynamics.

\subsection{Pairwise influence}
Let $v, w \in \vertices$ and let $S \subset \vertices$ such that $v, w \notin S$.
Further, let $\spinConfig[S] = \spinConfig[S][][\independentSet]$ be a partial configuration on $S$ corresponding to any independent set $\independentSet \in \independentSets$.
The \emph{pairwise influence} of $v$ on $w$ in $\graph$ under condition $\spinConfig[S]$ is defined as
\[
\pairwiseInfluence{\graph}[\spinConfig[S]][v, w] = 
\begin{cases}
0 & \text{ if } v = w, \\
\gibbsPr{\graph}{\inSet{w}}[\inSet{v}, \spinConfig[S]] - \gibbsPr{\graph}{\inSet{w}}[\notInSet{v}, \spinConfig[S]] & \text{ otherwise } .
\end{cases}
\]
For the case $S = \emptyset$, we also write $\pairwiseInfluence{\graph}[][v, w]$.
Furthermore, we denote by $\pairwiseInfluence{\graph}[\spinConfig[S]]$ and $\pairwiseInfluence{\graph}$ the corresponding $(\numberOfVertices - \size{S}) \times (\numberOfVertices - \size{S})$ matrices.

\subsection{Simplicial complexes and local spectral expansion} \label{subsec:simplicial_complexes}
Let $\complexGroundset$ denote a set. A \emph{simplicial complex} (over~\complexGroundset) is family of subsets $\complex \subseteq \powerset{\complexGroundset}$ such that, for all $\complexFace \in \complex$ and all $\complexFace' \subseteq \complexFace$, it holds that $\complexFace' \in \complex$.
We call the elements $\complexFace \in \complex$ \emph{faces,} and we call $\size{\complexFace}$ the \emph{dimension} of a face $\complexFace$.
We denote the set of all $k$-dimensional faces in $\complex$ by $\complex[k]$.
A simplicial complex is \emph{pure $d$-dimensional} if and only if the set of all inclusion-maximal faces is exactly $\complex[d]$.
Last, we say that a pure $d$-dimensional simplicial complex is \emph{$d$-partite} if and only if there is a partition $\{\complexPartition{i}\}_{i \in [d]}$ such that, for all $i \in [d]$ and all $\complexFace \in \complex[d]$, it holds that $\size{\complexPartition{i} \cap \complexFace} = 1$.

We extend the definition of a pure $d$-dimensional simplicial complex $\complex$ to a \emph{weighted} simplicial complex $(\complex, \complexWeight)$ with a weight function $\complexWeight\colon \complex \to \R_{>0}$ in the following inductive manner.
Each face $\complexFace \in \complex[d]$ is assigned a weight $\complexWeight[\complexFace] \in \R_{>0}$.
Each non-maximal face $\complexFace' \in \complex$ has the weight
\[
	\complexWeight[\complexFace'] = \sum\nolimits_{\complexFace \in \complex[d]: \complexFace' \subset \complexFace} \complexWeight[\complexFace] .
\]

We are interested in two types of Markov chains on a weighted pure $d$-dimensional simplicial complex $(\complex, \complexWeight)$.
(1) The \emph{two-step random walk} $\twoStep[\complex, \complexWeight]$, which is a Markov chain on the state space $\complex[d]$.
Let $\complexFace_t \in \complex[d]$ be the state of $\twoStep[\complex, \complexWeight]$ at time $t \in \N$, then $\complexFace_{t+1}$ is chosen according the following transition rule:
\begin{enumerate}
	\item choose $x \in \complexFace_t$ uniformly at random, let $\complexFace' = \complexFace_t \setminus \{x\}$, and
	\item choose $\complexFace_{t+1} \in \set{\complexFace \in \complex[d]}{\complexFace' \subset \complexFace}$ proportionally to their weights $\complexWeight[\complexFace]$.
\end{enumerate}

(2) The \emph{$1$-skeleton of $(\complex, \complexWeight)$} is an edge-weighted graph with vertices $V_{\complex} = \set{x \in \complexGroundset}{ \{x\} \in \complex}[\big]$, edges $E_{\complex} = \set{(x, y) \in V_{\complex}^2}{ \{x, y\} \in \complex}$, and weights $\complexWeight[\{x, y\}]$.
The \emph{skeleton walk on $(\complex, \complexWeight)$}, denoted by $\skeletonWalk[\complex, \complexWeight]$, is the non-lazy random walk on its $1$-skeleton.

For a face $\complexFace \in \complex$, the \emph{link} of $\complexFace$ is a weighted pure $(d - \size{\complexFace})$-dimensional simplicial complex $(\complex[][\complexFace], \complexWeight[][\complexFace])$, where $\complex[][\complexFace] = \set{\complexFace' \setminus \complexFace}{ \complexFace' \in \complex, \complexFace \subseteq \complexFace'}$ and, for all $\complexFace' \in \complex$, we have $\complexWeight[\complexFace'][\complexFace] = \complexWeight[\complexFace' \cup \complexFace]$.

\begin{definition}[local expander]
	\label{def:local_expander}
	Let $(\complex, \complexWeight)$ be a weighted pure $d$-dimensional simplicial complex, and let $\alpha \in \R_{> 0}$.
	We say that a face $\complexFace \in \complex$ is a \emph{local $\alpha$-expander} if and only if the second largest eigenvalue of its skeleton walk $\skeletonWalk_{\complexFace} = \skeletonWalk[\complex[][\complexFace], \complexWeight[][\complexFace]]$ is at most $\alpha$ (i.e., $\eigenvalueOf{\transitionMatrix[\skeletonWalk_{\complexFace}]}[2] \le \alpha$).
	Further, we say $(\complex, \complexWeight)$ is a \emph{local $(\alpha_0, \dots, \alpha_{d-2})$-expander} if and only if, for all $k \in \{0\} \cup [d-2]$, each face $\complexFace \in \complex[k]$ is a local $\alpha_k$-expander.
\end{definition}

In \cite{alev2020improved} the authors relate local expansion and two-step walks. We use the following formulation of their result.
\begin{theorem}[{\cite[Theorem~$1.3$]{ALOG20}}]
	\label{lemma:twoStepMixing}
	Let $(\complex, \complexWeight)$ be a weighted pure $d$-dimensional simplicial complex.
	If $(\complex, \complexWeight)$ is a local $(\alpha_0, \dots, \alpha_{d-2})$-expander, then for the second-largest eigenvalue of the two-step walk $\twoStep = \twoStep[\complex, \complexWeight]$, it holds that
	\[
		\eigenvalueOf{\transitionMatrix[\twoStep]}[2] \le 1 - \frac{1}{d} \prod\nolimits_{k \in \{0\} \cup [d-2]} (1 - \alpha_k) .
		\qedhere
	\]	
\end{theorem}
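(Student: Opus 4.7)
The plan is to prove the bound by induction on dimension, following the local-to-global spectral expansion paradigm of Kaufman--Mass, Oppenheim, and Alev--Lau.

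First, I would set up the hierarchy of down and up operators: for each $k$, let $D_k\colon \R^{\complex[k]}\to\R^{\complex[k-1]}$ average a face over its sub-faces, and let $U_{k-1}\colon \R^{\complex[k-1]}\to\R^{\complex[k]}$ be its adjoint with respect to the inner products weighted by $\complexWeight$. In this language, the two-step walk $\twoStep[\complex, \complexWeight]$ on $\complex[d]$ factors as $U_{d-1}D_d$, and the intermediate non-lazy walks $P_k \defeq D_{k+1}U_k$ on $\complex[k]$ form a hierarchy of related chains. A standard commutation lemma states that $U_{k-1}D_k$ and $D_kU_{k-1}$ share the same non-zero spectrum, so it is enough to control $\eigenvalueOf{P_k}[2]$ at the intermediate levels.

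The heart of the argument is a recursive inequality connecting the spectral gap at level $k$ to that at level $k-1$ through the local expansion of $(k-1)$-faces. The hypothesis gives, for every $\complexFace \in \complex[k-1]$, a second-eigenvalue bound $\alpha_{k-1}$ on the skeleton walk of the link $(\complex[][\complexFace], \complexWeight[][\complexFace])$. Applying a Garland--Oppenheim style decomposition, the Dirichlet form of $P_k$ on a test function $f$ can be rewritten as a $\complexWeight$-weighted average of the Dirichlet forms of the link skeleton walks applied to $f$ restricted to each link. A variational argument on this decomposition then delivers a recursion of the form
\[
    1 - \eigenvalueOf{P_k}[2] \ge (1 - \alpha_{k-1})\bigl(1 - \eigenvalueOf{P_{k-1}}[2]\bigr).
\]
The base case is the skeleton walk on $\complex$ itself, viewed as the link of the empty face, whose second eigenvalue is at most $\alpha_0$ by assumption.

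Unrolling the recursion telescopes into the product $\prod_{k \in \{0\} \cup [d-2]}(1-\alpha_k)$, and the prefactor $1/d$ arises when translating the non-lazy walk back to the two-step walk on $\complex[d]$: the two-step walk first removes one of the $d$ elements of the current top-dimensional face uniformly at random before resampling, which introduces a $1/d$ factor in the Dirichlet-form comparison. The main obstacle will be the Garland--Oppenheim step itself: writing the global Dirichlet form as a weighted sum of link Dirichlet forms and carefully tracking how the orthogonal projections onto the constant functions on each link aggregate into the global projection onto constants is the genuinely technical content of \cite{alev2020improved} and \cite{ALOG20}, and requires careful bookkeeping of adjoints and weighted inner products rather than a routine calculation.
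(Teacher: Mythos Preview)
The paper does not prove this statement: \Cref{lemma:twoStepMixing} is quoted in the preliminaries as \cite[Theorem~1.3]{ALOG20} (building on \cite{alev2020improved}) and is used as a black box throughout. There is therefore no in-paper proof to compare against.

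Your sketch is a faithful outline of the Alev--Lau local-to-global argument that the citation points to: the down/up operator hierarchy, the Garland--Oppenheim decomposition of the level-$k$ Dirichlet form into link Dirichlet forms, the recursion $1-\eigenvalueOf{P_k}[2]\ge(1-\alpha_{k-1})(1-\eigenvalueOf{P_{k-1}}[2])$, and the $1/d$ laziness factor when passing to the top-level two-step walk. That is exactly the intended argument, and you correctly flag the Garland step as the nontrivial part. Since the paper only invokes the result, your proposal is more than what the paper itself supplies.
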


%!TEX root = ../clique_spectral_independence.tex

\section{Mixing time of block dynamics for clique covers} \label{sec:block_dynamics}

\subsection{Simplicial-complex representation} \label{subsec:complex_representation}
Let $(\graph, \weights)$ be an instance of the multivariate hard-core model and let $\clique$ be a disjoint clique cover of size $\numberOfCliques$.
We construct the \emph{simplicial-complex representation} as follows.
For each clique $\clique[i] \in \clique$, we have a set $\complexPartition{i}$ that consists of an element $\complexEmpty{i} \in \complexPartition{i}$ and one element $\complexElement{v} \in \complexPartition{i}$ for each vertex $v \in \clique[i]$.
The ground set of the simplicial complex is $\complexGroundset = \bigcup_{i \in [\numberOfCliques]} \complexPartition{i}$.
Further, the complex $\complex$ contains a face $\faceFromIndep{\independentSet} \in \complex$ for each independent set $\independentSet \in \independentSets$ where,
\begin{itemize}
	\item for each $i \in [\numberOfCliques]$ and $\complexElement{v} \in \complexPartition{i}$, we have $\complexElement{v} \in \faceFromIndep{\independentSet}$ if and only if $v \in \independentSet$, and,
	\item for each $i \in [\numberOfCliques]$, we have $\complexEmpty{i} \in \faceFromIndep{\independentSet}$ if and only if $\independentSet \cap \clique[i] = \emptyset$ .
\end{itemize}
Note that each independent set contains at most one vertex $v \in \clique[i]$ for each clique in the clique cover $\clique[i] \in \clique$.
As~$\clique$ is a disjoint cover, each of the faces $\faceFromIndep{\independentSet} \in \complex$ contains exactly one element from each $\complexPartition{i}$ for $i \in [\numberOfCliques]$.  
We complete $\complex$ by taking the downward closure of these faces.
We make the following observation.
\begin{observation}
	\label{obs:dimension}
	The simplicial-complex representation $\complex$ for an instance of the multivariate hard-core model $(\graph, \weights)$ with disjoint clique cover $\clique$ of size $\numberOfCliques$ is pure $\numberOfCliques$-dimensional and $\numberOfCliques$-partite with partition $\{\complexPartition{i}\}_{i \in [\numberOfCliques]}$ as constructed above.
	Further, there is a one-to-one correspondence between the independent sets of $\graph$ and the maximum faces $\complex[\numberOfCliques]$.  
\end{observation}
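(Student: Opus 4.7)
The plan is to unpack the construction directly and verify the three conclusions in a single pass. The key structural fact I will exploit is that, because $\clique$ is a \emph{disjoint} clique cover, the sets $\{\complexPartition{i}\}_{i \in [\numberOfCliques]}$ form an honest partition of $\complexGroundset$, and because each $\clique[i]$ induces a clique in $\graph$, every independent set $\independentSet \in \independentSets$ meets $\clique[i]$ in at most one vertex.

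First I would establish the crucial intersection property: for every $\independentSet \in \independentSets$ and every $i \in [\numberOfCliques]$, $\size{\faceFromIndep{\independentSet} \cap \complexPartition{i}} = 1$. By the definition of $\faceFromIndep{\independentSet}$, exactly one of the two mutually exclusive cases applies to the clique $\clique[i]$: either $\independentSet \cap \clique[i] = \emptyset$, contributing only $\complexEmpty{i}$; or $\independentSet \cap \clique[i] = \{v\}$ for a unique $v \in \clique[i]$, contributing only $\complexElement{v}$. Summing over $i$ and using the disjointness of the partition, I obtain $\size{\faceFromIndep{\independentSet}} = \numberOfCliques$.

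Next I would argue the bijection. The map $\independentSet \mapsto \faceFromIndep{\independentSet}$ is injective since $\independentSet$ can be recovered as $\set{v \in \vertices}{\complexElement{v} \in \faceFromIndep{\independentSet}}$. For surjectivity onto the inclusion-maximal faces, note that by construction every face of $\complex$ is either some $\faceFromIndep{\independentSet}$ or a subset thereof obtained from downward closure. A proper subset $\complexFace' \subsetneq \faceFromIndep{\independentSet}$ has cardinality strictly less than $\numberOfCliques$ and is therefore not inclusion-maximal because it is contained in $\faceFromIndep{\independentSet} \in \complex$. Conversely, every $\faceFromIndep{\independentSet}$ is inclusion-maximal: any strictly larger face would have to contain a second element of some $\complexPartition{i}$, but no element of $\complex$ contains two elements of the same $\complexPartition{i}$, since every face is contained in some $\faceFromIndep{\independentSet'}$ and these already satisfy the single-element-per-partition property shown above.

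Putting the pieces together yields all three assertions simultaneously. The set of inclusion-maximal faces is exactly $\set{\faceFromIndep{\independentSet}}{\independentSet \in \independentSets}$, all of dimension $\numberOfCliques$, so $\complex$ is pure $\numberOfCliques$-dimensional with a bijection to $\independentSets[\graph]$; and the single-element-per-partition property is precisely the defining property of $\numberOfCliques$-partiteness with partition $\{\complexPartition{i}\}_{i \in [\numberOfCliques]}$. There is no real obstacle here: the only subtle point is verifying that downward closure does not introduce spurious inclusion-maximal faces of smaller dimension, which is handled by the observation that every generated face $\faceFromIndep{\independentSet}$ already has the maximum size $\numberOfCliques$, so every newly introduced face sits strictly below one of them.
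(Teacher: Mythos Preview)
Your proof is correct and follows the same line of reasoning the paper sketches immediately before stating the observation: the paper notes that each independent set meets every clique $\clique[i]$ in at most one vertex and that disjointness of $\clique$ forces each $\faceFromIndep{\independentSet}$ to contain exactly one element of each $\complexPartition{i}$, then leaves the rest implicit. You have simply spelled out the remaining details (injectivity via recovery of $\independentSet$, and that downward closure cannot create new inclusion-maximal faces since every generator already has size $\numberOfCliques$), which is exactly what a full verification requires.
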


We continue by equipping $\complex[\numberOfCliques]$ with weights, which induces weights for all other faces in $\complex$ as well.
For a face $\faceFromIndep{\independentSet} \in \complex[\numberOfCliques]$, corresponding to the independent set $\independentSet \in \independentSets$, we set $\complexWeight[\faceFromIndep{\independentSet}] = \gibbsDistributionFunction{\independentSet}$.
We now observe the following relation to block dynamics.
\begin{observation}
	\label{observation:twoStep_transitions}
	Let $(\graph, \weights)$ be an instance of the multivariate hard-core model, and let $\clique$ be a disjoint clique cover of $\graph$.
	Further, let $(\complex, \complexWeight)$ be the corresponding simplicial-complex representation.
	It holds that the two-step walk $\twoStep[\complex, \weights]$ is equivalent to the block dynamics $\blockDynamics[\graph, \weights, \clique]$ in the sense that there is a bijection between both state spaces that preserves transition probabilities.
	Consequently, $\twoStep[\complex, \weights]$ is ergodic, reversible and has a unique stationary distribution $\stationary[\twoStep][\faceFromIndep{\independentSet}] = \gibbsDistributionFunction{\independentSet}$ for every maximum face $\faceFromIndep{\independentSet} \in \complex[m]$, corresponding to the independent set $\independentSet \in \independentSets[\graph]$. 
\end{observation}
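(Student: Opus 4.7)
The plan is to transport the transition law of the two-step walk across the bijection $\independentSet \mapsto \faceFromIndep{\independentSet}$ guaranteed by the preceding observation, and check that it coincides, step for step, with the update rule of $\blockDynamics[\graph, \weights, \clique]$. Once this is done, ergodicity, reversibility, and the form of the stationary distribution transfer from $\blockDynamics$ to $\twoStep$ for free: $\blockDynamics$ is known to be ergodic and reversible with respect to $\gibbsDistribution[\graph, \weights]$, and by construction the weight of the maximum face $\faceFromIndep{\independentSet}$ equals $\gibbsDistributionFunction{\independentSet}$, so the pushforward of $\gibbs$ along the bijection is precisely the claimed $\stationary[\twoStep]$.

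The first step I would carry out is the matching of the ``clique-selection'' moves. Because $(\complex, \complexWeight)$ is $\numberOfCliques$-partite with parts $\{\complexPartition{i}\}_{i \in [\numberOfCliques]}$ and every maximum face has dimension exactly $\numberOfCliques$, each $\faceFromIndep{\independentSet} \in \complex[\numberOfCliques]$ contains exactly one element from each $\complexPartition{i}$. Hence the uniform choice of $x \in \faceFromIndep{\independentSet}$ in the two-step walk is in distribution the same as first choosing $i \in [\numberOfCliques]$ uniformly and then taking $x$ to be the unique element of $\faceFromIndep{\independentSet} \cap \complexPartition{i}$. This lines up with \cref{block_chain:line:chooseClique} of \Cref{def:block_dynamics}.

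The second (and technically the most work-intensive) step is to verify that, conditioned on $i$, the distribution of the new face $\complexFace_{t+1}$ in the two-step walk coincides with the conditional Gibbs distribution used in \cref{block_chain:line:sampleInBlock}. Let $\complexFace' = \faceFromIndep{\independentSet} \setminus \{x\}$. By the $\numberOfCliques$-partiteness, the maximum faces extending $\complexFace'$ are exactly $\{\faceFromIndep{J} : J \in \independentSets[\graph],\ J \setminus \clique[i] = \independentSet \setminus \clique[i]\}$; by the disjointness of $\clique$, these are in bijection with $\independentSets[\subgraph{\graph}{\clique[i]}]$ compatible with the boundary condition $\spinConfig[\vertices \setminus \clique[i]][][\independentSet]$. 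Because $\complexWeight[\faceFromIndep{J}] = \gibbsDistributionFunction{J}$ and the conditional probability $\gibbsDistributionFunction{J \cap \clique[i]}[][\clique[i]][\spinConfig[\vertices \setminus \clique[i]][][\independentSet]]$ is by definition proportional to $\gibbsDistributionFunction{J}$ among such $J$, sampling $\complexFace_{t+1}$ proportionally to $\complexWeight$ is exactly sampling $\independentSet_{+}$ from the conditional Gibbs distribution and setting $X_{t+1} = (\independentSet \setminus \clique[i]) \cup \independentSet_{+}$. The main care needed here is purely bookkeeping: one has to verify that the normalizing sum $\sum_{\complexFace \supset \complexFace'} \complexWeight[\complexFace]$ is the correct conditional partition function $\partitionFunction[\subgraph{\graph}{\clique[i]}]$ weighted by the fixed outside configuration, which follows immediately from the factorization $\gibbsDistributionFunction{J} \propto \prod_{v \in J} \weight[v]$ of the Gibbs weights and the fact that all vertices outside $\clique[i]$ are fixed.

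Finally, I would conclude. The above two steps exhibit, for every $\faceFromIndep{\independentSet} \in \complex[\numberOfCliques]$ and every $\faceFromIndep{\independentSet'} \in \complex[\numberOfCliques]$, the identity $\transitionMatrix[\twoStep](\faceFromIndep{\independentSet}, \faceFromIndep{\independentSet'}) = \transitionMatrix[\blockDynamics](\independentSet, \independentSet')$. Together with the bijection of state spaces from the preceding observation this is the ``preservation of transition probabilities'' claimed in the statement. Ergodicity and reversibility of $\twoStep$ then follow by transport from the corresponding properties of $\blockDynamics$ shown in \cite{friedrich2020polymer}, and uniqueness of the stationary distribution forces $\stationary[\twoStep][\faceFromIndep{\independentSet}] = \gibbsDistributionFunction{\independentSet}$. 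I do not anticipate a conceptual obstacle; the only nontrivial point is making sure that the disjointness of $\clique$ is used exactly where it is needed, namely to guarantee that the map $\independentSet \mapsto \faceFromIndep{\independentSet}$ is a bijection onto $\complex[\numberOfCliques]$ and that ``extending $\complexFace'$'' corresponds to updating only inside $\clique[i]$.
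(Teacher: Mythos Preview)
Your proposal is correct and essentially constitutes a full verification of what the paper leaves as a bare observation: the paper states \Cref{observation:twoStep_transitions} without proof, treating the equivalence as immediate from the construction of $(\complex,\complexWeight)$ and the definitions of the two-step walk and block dynamics. Your two-step breakdown (matching the uniform element-deletion with the uniform clique choice via $\numberOfCliques$-partiteness, then matching the weight-proportional extension with the conditional Gibbs resample) is exactly the unpacking one would write if asked to justify the observation in detail, and it correctly isolates where disjointness of $\clique$ is used.

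One minor correction: you attribute ergodicity and reversibility of $\blockDynamics$ to \cite{friedrich2020polymer}, but that reference establishes these properties for \emph{clique} dynamics; the paper simply asserts them for block dynamics in \Cref{subsec:block_dynamics} as standard facts (and indeed reversibility of heat-bath block updates with respect to $\gibbsDistribution$ is a direct detailed-balance check). This does not affect the substance of your argument.
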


Based on \Cref{obs:dimension}, applying \Cref{lemma:twoStepMixing}, we obtain a lower bound on the spectral gap of $\twoStep = \twoStep[\complex, \complexWeight]$ in terms of local expansion.
Moreover, for an independent set $\independentSet \sim \gibbsDistribution$, it holds that
\[
	\Pr{\complexElement{v} \in \faceFromIndep{\independentSet}} = \gibbsPr{\graph}{\inSet{v}} \text{ for all $v \in \vertices$ and }
	\Pr{\complexEmpty{i} \in \faceFromIndep{\independentSet}} =  \gibbsPr{\graph}{\bigcap_{v \in \clique[i]} \notInSet{v}} \text{ for all $i \in [\numberOfCliques]$} .
\]
For simplicity, we also write $\gibbsPr{\graph}{\complexElement{v}}$ and $\gibbsPr{\graph}{\complexEmpty{i}}$ for these probabilities.

\subsection{Bounding local expansion by clique influence}
Let $(\graph, \weights)$ be an instance of the multivariate hard-core model with disjoint clique cover $\clique$ of size $\numberOfCliques$.
Further, let $(\complex, \complexWeight)$ be the resulting simplicial-complex representation with ground set $\complexGroundset$ and partition $\{\complexPartition{i}\}_{i \in [\numberOfCliques]}$.
The clique influence matrix $\cliqueInfluence{\graph}{\clique}$ contains an entry $\cliqueInfluence{\graph}{\clique}[x, y]$ for each $x, y \in \complexGroundset$ with
\begin{equation}\label{eq:clique_influence}
	\cliqueInfluence{\graph}{\clique}[x, y] = \begin{cases}
		0 & \text{ if $x,y \in \complexPartition{i}$ for some $i \in [\numberOfCliques]$, } \\
		\gibbsPr{\graph}{y}[x] - \gibbsPr{\graph}{y} & \text{ otherwise. }
	\end{cases}
\end{equation}

Note that this definition includes the cases where $x \in \set{\complexEmpty{i}}{i \in [\numberOfCliques]}$ or $y \in \set{\complexEmpty{i}}{i \in [\numberOfCliques]}$.
The following lemma and its proof are an adapted version of \cite[Theorem~$8$]{Chen2020coloring}.

\begin{lemma}
	\label{lemma:eigenvalues_skeleton}
	Let $(\graph, \weights)$ be an instance of the multivariate hard-core model with a disjoint clique cover $\clique$ of size $\numberOfCliques$.
	Further, let $(\complex, \complexWeight)$ be the resulting simplicial-complex representation.
	Denote by $\skeletonWalk = \skeletonWalk[\complex, \complexWeight]$ be the skeleton walk on $(\complex, \complexWeight)$, and let $\cliqueInfluence{\graph}{\clique}$ be the clique influence matrix as defined in \cref{eq:clique_influence}.
	Then
	\begin{equation}
        \label{eq:eigenvalueBound}
		\eigenvalueOf{\transitionMatrix[\skeletonWalk]}[2] \leq \frac{1}{\numberOfCliques - 1} \eigenvalueOf{\cliqueInfluence{\graph}{\clique}}[1].
		\qedhere 
	\end{equation}
\end{lemma}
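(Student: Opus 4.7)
The plan is to exhibit an additive decomposition $(\numberOfCliques-1)\transitionMatrix[\skeletonWalk] = \cliqueInfluence{\graph}{\clique} + \Pi$, symmetrize by the Gibbs marginals on $\complexGroundset$, and then exploit the rigid structure of $\Pi$ to convert the spectrum of $\cliqueInfluence{\graph}{\clique}$ into a bound on the second eigenvalue of $\skeletonWalk$ via the Rayleigh-quotient characterization of reversible chains.

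First I would compute the transitions of $\skeletonWalk$ in closed form. Because $(\complex, \complexWeight)$ is pure $\numberOfCliques$-dimensional and $\numberOfCliques$-partite, every maximum face contains exactly one element from each $\complexPartition{i}$, whence $\sum_{y \neq x}\complexWeight[\{x,y\}] = (\numberOfCliques-1)\complexWeight[\{x\}]$ for every $x \in \complexGroundset$, while $\complexWeight[\{x,y\}] = 0$ whenever $x, y \in \complexPartition{i}$ for some $i$ (no face contains two elements of the same class). Consequently $\transitionMatrix[\skeletonWalk](x,y) = \frac{1}{\numberOfCliques-1}\gibbsPr{\graph}{y}[x]$ when $x$ and $y$ lie in distinct classes and $\transitionMatrix[\skeletonWalk](x,y) = 0$ otherwise. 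Setting $\Pi[x,y] = \gibbsPr{\graph}{y}$ if $x, y$ are in distinct classes and $\Pi[x,y] = 0$ otherwise, \cref{eq:clique_influence} immediately yields
\[
    (\numberOfCliques-1)\,\transitionMatrix[\skeletonWalk] = \cliqueInfluence{\graph}{\clique} + \Pi.
\]

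Next I would symmetrize. Let $D = \mathrm{diag}\bigl(\gibbsPr{\graph}{x}\bigr)_{x \in \complexGroundset}$. A short calculation using Bayes' rule shows that $D\transitionMatrix[\skeletonWalk]$, $D\cliqueInfluence{\graph}{\clique}$ and $D\Pi$ are all symmetric, so the conjugates $\widetilde{P} \defeq D^{1/2}\transitionMatrix[\skeletonWalk]D^{-1/2}$, $\widetilde{\Phi} \defeq D^{1/2}\cliqueInfluence{\graph}{\clique}D^{-1/2}$, $\widetilde{\Pi} \defeq D^{1/2}\Pi D^{-1/2}$ are real symmetric, have the same spectra as their unsymmetrized counterparts, and satisfy $(\numberOfCliques-1)\widetilde{P} = \widetilde{\Phi} + \widetilde{\Pi}$. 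The crux is to determine the spectrum of $\widetilde{\Pi}$. Put $v_i(x) \defeq \sqrt{\gibbsPr{\graph}{x}}\,\indicator{x \in \complexPartition{i}}$. Using $\sum_{x \in \complexPartition{i}}\gibbsPr{\graph}{x} = 1$, the vectors $v_1, \dots, v_\numberOfCliques$ are orthonormal, and a direct computation gives $\widetilde{\Pi} v_i = \sum_{j \neq i} v_j$. Hence on $V \defeq \mathrm{span}(v_1, \dots, v_\numberOfCliques)$ the restriction of $\widetilde{\Pi}$ is the $\numberOfCliques \times \numberOfCliques$ matrix $J - I$, whose eigenvalues are $\numberOfCliques - 1$ attained on $\widetilde{\mathbf{1}} \defeq D^{1/2}\mathbf{1} = \sum_i v_i$ and $-1$ on the $(\numberOfCliques-1)$-dimensional orthogonal complement of $\widetilde{\mathbf{1}}$ inside $V$; moreover $\widetilde{\Pi}$ annihilates $V^{\perp}$. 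In particular, $\widetilde{\Pi}$ is negative semidefinite on $\widetilde{\mathbf{1}}^{\perp}$.

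Finally I would close with a Rayleigh-quotient argument. Summing the identities $\sum_{y \in \complexPartition{j}}\gibbsPr{\graph}{y}[x] = 1 = \sum_{y \in \complexPartition{j}}\gibbsPr{\graph}{y}$ over $j \neq i(x)$ shows that $\cliqueInfluence{\graph}{\clique}\mathbf{1} = 0$, so $\widetilde{\Phi}\widetilde{\mathbf{1}} = 0$, confirming that $\widetilde{\mathbf{1}}$ is the eigenvector of $\widetilde{P}$ attaining $\eigenvalueOf{\widetilde{P}}[1] = 1$. For any $v \perp \widetilde{\mathbf{1}}$,
\[
    (\numberOfCliques - 1)\,\transposed{v}\widetilde{P}v = \transposed{v}\widetilde{\Phi}v + \transposed{v}\widetilde{\Pi}v \le \eigenvalueOf{\widetilde{\Phi}}[1]\norm{v}^2 + 0 = \eigenvalueOf{\cliqueInfluence{\graph}{\clique}}[1]\norm{v}^2,
\]
and taking the supremum over such $v$ yields $\eigenvalueOf{\transitionMatrix[\skeletonWalk]}[2] = \eigenvalueOf{\widetilde{P}}[2] \le \frac{1}{\numberOfCliques - 1}\eigenvalueOf{\cliqueInfluence{\graph}{\clique}}[1]$, as claimed. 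The main technical obstacle I anticipate is the spectral analysis of $\widetilde{\Pi}$: without the fact that $\widetilde{\Pi} \preceq 0$ on $\widetilde{\mathbf{1}}^{\perp}$, the decomposition alone would contribute an extra $\bigO{\numberOfCliques}$ term from $\Pi$ that would ruin the bound. Everything else is standard linear algebra once that structural fact is in place, and the overall approach mirrors the corresponding analysis for proper colorings in \cite{Chen2020coloring}.
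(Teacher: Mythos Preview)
Your proposal is correct and follows essentially the same route as the paper: the same closed-form for $\transitionMatrix[\skeletonWalk]$, the same additive decomposition $(\numberOfCliques-1)\transitionMatrix[\skeletonWalk]=\cliqueInfluence{\graph}{\clique}+\Pi$ (the paper writes it as $\cliqueInfluence{\graph}{\clique}=(\numberOfCliques-1)\transitionMatrix[\skeletonWalk]-\numberOfCliques(\mathbold{\Pi}-\sum_j\mathbold{\Pi}_j)$, which is your $\Pi$), and the same symmetrization via the Gibbs marginals. The only difference is the endgame: the paper builds an explicit eigenbasis of $\transitionMatrix[\skeletonWalk]$, shows the $\numberOfCliques$ ``trivial'' eigenvectors lie in $\ker\cliqueInfluence{\graph}{\clique}$ and that the remaining eigenvectors are shared with eigenvalues scaled by $\numberOfCliques-1$, then finishes by a sign case-split on $\eigenvalueOf{\transitionMatrix[\skeletonWalk]}[2]$; your Rayleigh-quotient argument, using that $\widetilde{\Pi}\preceq 0$ on $\widetilde{\mathbf{1}}^{\perp}$, bypasses the eigenbasis construction and the case distinction and is a mild streamlining of the same content.
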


\begin{proof}
	We first take a detailed look at the entries of the transitions $\transitionMatrix[\skeletonWalk]$.
	Note that by definition $\stateSpace[\skeletonWalk] = \complexGroundset$ and $\transitionMatrix[\skeletonWalk][x, x] = 0$ for all $x \in \complexGroundset$.
	Further, it holds for each $x \in \complexGroundset$ that
	\[
		\sum_{\substack{z \in \complexGroundset: \\ z \neq x}} \complexWeight[\{x, z\}] 
		= \sum_{\substack{z \in \complexGroundset: \\ z \neq x}} \gibbsPr{\graph}{x, z} 
		= \sum_{i \in [\numberOfCliques]} \sum_{\substack{z \in \complexPartition{i}: \\ z \neq x}} \gibbsPr{\graph}{x, z}
		= \sum_{\substack{i \in [\numberOfCliques]: \\ x \notin \complexPartition{i}}} \sum_{z \in \complexPartition{i}} \gibbsPr{\graph}{x, z}
		= (\numberOfCliques - 1) \gibbsPr{\graph}{x} ,
	\]
	where the third equality comes from the fact that for all $i \in [\numberOfCliques]$ and every pair $x, z \in \complexPartition{i}$ it holds that $\gibbsPr{\graph}{x, z} = 0$.
	Thus, we get for each $y \in \complexGroundset$ with $y \neq x$ the transition probability
	\[
		\transitionMatrix[\skeletonWalk][x, y] 
		= \frac{\complexWeight[\{x, y\}]}{\sum_{\substack{z \in \complexGroundset: \\ z \neq x}} \complexWeight[\{x, z\}]}
		= \frac{\gibbsPr{\graph}{x, y}}{(\numberOfCliques - 1) \gibbsPr{\graph}{x}}
		= \frac{1}{\numberOfCliques - 1} \gibbsPr{\graph}{y}[x] .
	\] 
	Note that this especially implies $\transitionMatrix[\skeletonWalk][x, y] = 0$ if $x, y \in \complexPartition{i}$ for some $i \in [\numberOfCliques]$.
	
	Let $\mathbold{D}$ be the matrix with $\mathbold{D}(x, x) = \frac{1}{\numberOfCliques} \gibbsPr{\graph}{x}$ for each $x \in \complexGroundset$ and $0$ everywhere else, and let~$\mathbold{d}$ be the vector of its diagonal, that is, for all $x \in \complexGroundset$, we have $\mathbold{d}(x) = \mathbold{D}(x, x)$.
	Note that~$\skeletonWalk$ satisfies the detailed-balance equations with~$\mathbold{d}$, that is, it is reversible with respect to~$\mathbold{d}$.
    Thus, $\mathbold{A} = \mathbold{D}^{1/2} \transitionMatrix[\skeletonWalk] \mathbold{D}^{-1/2}$ is symmetric.
	This implies that for each eigenvector $\mathbold{z}$ of $\mathbold{A}$ with eigenvalue $\eigenvalue$ there is a left eigenvector $\transposed{\mathbold{z}} \mathbold{D}^{1/2} = \transposed{\left(\mathbold{D}^{1/2} \mathbold{z}\right)}$ and a right eigenvector $\mathbold{D}^{-1/2} \mathbold{z}$ of $\transitionMatrix[\skeletonWalk]$ for the same eigenvalue~$\eigenvalue$.
	Thus, if $\mathbold{z}' = \mathbold{D}^{-1/2} \mathbold{z}$ is such a right eigenvector of $\transitionMatrix[\skeletonWalk]$, then $\mathbold{D} \mathbold{z}' = \mathbold{D} \mathbold{D}^{-1/2} \mathbold{z} = \mathbold{D}^{1/2} \mathbold{z}$ yields a (transposed) left eigenvector for the same eigenvalue.
	
	We investigate the eigenvalues of $\transitionMatrix[\skeletonWalk]$ more carefully.
	Consider the column vector $\mathbold{1}$ with $\mathbold{1}(x) = 1$ for all $x \in \complexGroundset$.
	Note that $\mathbold{1}$ is a right eigenvector of $\transitionMatrix[\skeletonWalk]$ for eigenvalue $1$, which is the maximum eigenvalue, since~$\transitionMatrix[\skeletonWalk]$ is a transition matrix.
	We denote by $\mathbold{\pi} = \transposed{\left( \mathbold{D} \mathbold{1} \right)}$ the corresponding left eigenvector with $\mathbold{\pi}(x) = \frac{1}{\numberOfCliques} \gibbsPr{\graph}{x}$ for all $x \in \complexGroundset$.
	Further, we define a set of column vectors $\{\mathbold{1}_i\}_{i \in [\numberOfCliques]}$ and a set of row vectors $\{\mathbold{\pi}_i\}_{i \in [\numberOfCliques]}$ such that for each $i \in [\numberOfCliques]$ and each $x \in \complexGroundset$ it holds that
	\[
		\mathbold{1}_i (x) = \begin{cases}
			1 & \text{ if $x \in \complexPartition{i}$,} \\
			0 & \text{ otherwise}
		\end{cases}
		\text{\hspace*{1 em} and \hspace*{1 em} }
		\mathbold{\pi}_i (x) = \begin{cases}
		\frac{1}{\numberOfCliques} \gibbsPr{\graph}{x} & \text{ if $x \in \complexPartition{i}$,} \\
		0 & \text{ otherwise.}
		\end{cases}
	\] 
	
	Note that for each $i, j \in [\numberOfCliques]$, $i \neq j$, and each $x \in \complexPartition{i}$ it holds that $\sum_{y \in \complexPartition{j}} \transitionMatrix[\skeletonWalk][x, y] = \frac{1}{\numberOfCliques - 1}$ and $\sum_{y \in \complexPartition{i}} \transitionMatrix[\skeletonWalk][x, y] = 0$.
	Thus, for all $i \in [\numberOfCliques]$, we have $\transitionMatrix[\skeletonWalk] \mathbold{1}_{i} = \frac{1}{\numberOfCliques - 1} (\mathbold{1} - \mathbold{1}_{i})$.
	It follows that
	\[
		\transitionMatrix[\skeletonWalk] \left( \frac{1}{\numberOfCliques} \mathbold{1} - \mathbold{1}_i \right)
		= \frac{1}{\numberOfCliques} \mathbold{1} - \frac{1}{\numberOfCliques - 1} \left( \mathbold{1} - \mathbold{1}_i \right)
		= - \frac{1}{\numberOfCliques (\numberOfCliques - 1)} \mathbold{1} + \frac{1}{\numberOfCliques - 1} \mathbold{1}_i
		= - \frac{1}{\numberOfCliques - 1} \left( \frac{1}{\numberOfCliques} \mathbold{1} - \mathbold{1}_i \right) ,
	\]  
	which shows that for each $i \in [\numberOfCliques]$ the vector $\frac{1}{\numberOfCliques} \mathbold{1} - \mathbold{1}_i$ is a right eigenvector with eigenvalue $- \frac{1}{\numberOfCliques - 1}$.
	Similarly, the vector $\transposed{\left( \mathbold{D} \left( \frac{1}{\numberOfCliques} \mathbold{1} - \mathbold{1}_i \right) \right)} = \frac{1}{\numberOfCliques} \mathbold{\pi} - \mathbold{\pi}_i$ is a left eigenvector for this eigenvalue.
	
	We use these vectors to construct an eigenbasis of $\transitionMatrix[\skeletonWalk]$.
	Let $i \in [\numberOfCliques]$, and consider the set
	\[
		S = \{\mathbold{1}\} \cup \bigcup_{\substack{j \in [\numberOfCliques]: \\j \neq i}} \left\{\frac{1}{\numberOfCliques} \mathbold{1} - \mathbold{1}_{j}\right\}.
	\]
	Note that $S$ is a set of~$\numberOfCliques$ linearly independent right eigenvectors of $\transitionMatrix[\skeletonWalk]$.
	By the relation between eigenvectors of $\transitionMatrix[\skeletonWalk]$ and $\mathbold{A}$, we construct a set $S_{\mathbold{A}} = \set{\mathbold{D}^{1/2} \mathbold{z}}{\mathbold{z} \in S}$ of independent eigenvectors of $\mathbold{A}$.
	As~$\mathbold{A}$ is symmetric, such a set can always be extended to an eigenbasis~$\overline{S_{\mathbold{A}}}$ of $\mathbold{A}$, such that the vectors in $\overline{S_{\mathbold{A}}} \setminus S_{\mathbold{A}}$ are orthogonal to the vectors in $S_{\mathbold{A}}$.
	This gives us an eigenbasis $\overline{S} = \set{\mathbold{D}^{-1/2} \mathbold{z}}{\mathbold{z} \in \overline{S_{\mathbold{A}}}}$ of right eigenvectors of  $\transitionMatrix[\skeletonWalk]$.
	
	We proceed by relating the eigenvalues of $\transitionMatrix[\skeletonWalk]$ to the eigenvalues of $\cliqueInfluence{\graph}{\clique}$, using~$\overline{S}$. Note that both~$\transitionMatrix[\skeletonWalk]$ and~$\cliqueInfluence{\graph}{\clique}$ are $(\numberOfVertices + \numberOfCliques) \times (\numberOfVertices + \numberOfCliques)$ matrices.
	We first show (Claim~$1$) that all vectors of $S$ are in the kernel of $\cliqueInfluence{\graph}{\clique}$. Since $|S| = \numberOfCliques + 1$ and since the vectors of~$S$ are linearly independent, the kernel of~$\cliqueInfluence{\graph}{\clique}$ has a dimension of at least $\numberOfCliques + 1$. Thus,~$\cliqueInfluence{\graph}{\clique}$ has at least $\numberOfCliques + 1$ eigenvectors associated with the eigenvalue~$0$.
    Then (Claim~$2$) we show that all vectors of $\overline{S} \setminus S$, which are the remaining eigenvectors of~$\transitionMatrix[\skeletonWalk]$ in our consideration, are also right eigenvectors of $\cliqueInfluence{\graph}{\clique}$ but with eigenvalues scaled by $(\numberOfCliques - 1)$.
    Last (Claim~$3$), we conclude that \cref{eq:eigenvalueBound} holds.
	
	\textbf{Claim~$1$.}
    Let $\mathbold{z} \in S$, and let $\mathbold{0}$ denote the vector with $\mathbold{0}(x) = 0$ for all $x \in \complexGroundset$.
	If $\mathbold{z} = \mathbold{1}$, then $\cliqueInfluence{\graph}{\clique} \mathbold{z} = \mathbold{0}$ because for all $j \in [\numberOfCliques]$ and every $x \in \complexPartition{j}$ it holds that
	\[
		\cliqueInfluence{\graph}{\clique} \mathbold{1} (x) 
%		= \sum_{\substack{k \in [\numberOfCliques]: \\k \neq j}} \sum_{y \in \complexPartition{k}} \cliqueInfluence{\graph}{\clique}(x, y)
		= \sum_{\substack{k \in [\numberOfCliques]: \\k \neq j}} \sum_{y \in \complexPartition{k}} \left( \gibbsPr{\graph}{y}[x] - \gibbsPr{\graph}{y} \right)
		= \sum_{\substack{k \in [\numberOfCliques]: \\k \neq j}} \sum_{y \in \complexPartition{k}} \gibbsPr{\graph}{y}[x] - \sum_{\substack{k \in [\numberOfCliques]: \\k \neq j}} \sum_{y \in \complexPartition{k}} \gibbsPr{\graph}{y}
%		= (\numberOfCliques - 1) - (\numberOfCliques - 1)
		= 0 .
	\]
    
	If $\mathbold{z} = \mathbold{1}_{j}$ for some $j \in [\numberOfCliques]$, again, we have $\cliqueInfluence{\graph}{\clique} \mathbold{z} = \mathbold{0}$ because for all $k \in [\numberOfCliques]$ and every $x \in \complexPartition{k}$ it holds that
	\[
		\cliqueInfluence{\graph}{\clique} \mathbold{1}_{j} (x) 
		= \sum_{\substack{l \in [\numberOfCliques]: \\l \neq k}} \sum_{y \in \complexPartition{l}} \cliqueInfluence{\graph}{\clique}(x, y) \indicator{k = j, l = j}
		= 0 .
	\] 
	
    \textbf{Claim~$2$.}    
    We first show that all vectors $\mathbold{z} \in \overline{S} \setminus S$ are orthogonal to $\frac{1}{\numberOfCliques} \mathbold{\pi} - \mathbold{\pi}_{j}$ for all $j \in [\numberOfCliques]$.
    Let $\mathbold{z} \in \overline{S} \setminus S$.
    First, note that 
    \[
        \mathbold{\pi} \mathbold{z} 
        = \transposed{\left( \mathbold{D} \mathbold{1} \right)} \mathbold{z}
        = \transposed{\mathbold{1} } \mathbold{D} \mathbold{z}
        = \transposed{\left( \mathbold{D}^{1/2} \mathbold{1} \right)} \mathbold{D}^{1/2} \mathbold{z}
        = 0 ,
    \]
    where the last equality is due to $\mathbold{D}^{1/2} \mathbold{1} \in S_{\mathbold{A}}$ and $\mathbold{D}^{1/2} \mathbold{z} \in \overline{S_{\mathbold{A}}} \setminus S_{\mathbold{A}}$.
    Similarly, we obtain for each $j \in [\numberOfCliques]$ with $j \neq i$ that
    \[
        \left( \frac{1}{\numberOfCliques} \mathbold{\pi} - \mathbold{\pi}_{j} \right) \mathbold{z} 
        = \transposed{\left( \mathbold{D}^{1/2} \left( \frac{1}{\numberOfCliques} \mathbold{1} - \mathbold{1}_{j} \right) \right)} \mathbold{D}^{1/2} \mathbold{z}
        = 0	. 
    \]
    Finally, note that $\frac{1}{\numberOfCliques} \mathbold{\pi} - \mathbold{\pi}_{i}$ can be obtain as a linear combination from $\mathbold{\pi}$ and the vectors $\frac{1}{\numberOfCliques} \mathbold{\pi} - \mathbold{\pi}_{j}$ for $j \neq i$, implying that it is orthogonal to $\mathbold{z}$ as well.
    
    Assume that~$\mathbold{z}$ has eigenvalue $\eigenvalue$.
    We define the matrix $\mathbold{\Pi} = \mathbold{1}\mathbold{\pi}$ and the matrices $\mathbold{\Pi}_{j} = \mathbold{1}_{j} \mathbold{\pi}_{j}$ for $j \in [\numberOfCliques]$, and note that
    \[
        \cliqueInfluence{\graph}{\clique} = (\numberOfCliques - 1) \transitionMatrix[\skeletonWalk] - \numberOfCliques \left( \mathbold{\Pi} - \sum_{j \in [\numberOfCliques]} \mathbold{\Pi}_{j} \right) .
    \]
    
	Since~$\mathbold{z}$ is orthogonal to all vectors $\frac{1}{\numberOfCliques} \mathbold{\pi} - \mathbold{\pi}_{j}$ for $j \in [\numberOfCliques]$, we have for every $k \in [\numberOfCliques]$ and $x \in \complexPartition{k}$ that
	\[
		\left( \left( \mathbold{\Pi} - \sum_{j \in [\numberOfCliques]} \mathbold{\Pi}_{j} \right) \mathbold{z} \right) (x)
		= \sum_{y \in \complexGroundset} \sum_{j \in [\numberOfCliques]} \left( \left( \frac{1}{\numberOfCliques} \mathbold{\Pi} (x, y) - \mathbold{\Pi}_{j} (x, y) \right) \mathbold{z} \right) (y) 
		= \sum_{j \in [\numberOfCliques]} \left( \frac{1}{\numberOfCliques} \mathbold{\pi} - \mathbold{\pi}_{j} \right) \mathbold{z}
		= 0 .
	\]
	This implies that $\left( \mathbold{\Pi} - \sum_{j \in [\numberOfCliques]} \mathbold{\Pi}_{j} \right) \mathbold{z} = \mathbold{0}$, and thus it holds that
	\[
		\cliqueInfluence{\graph}{\clique} \mathbold{z} = (\numberOfCliques - 1) \transitionMatrix[\skeletonWalk] \mathbold{z} = (\numberOfCliques - 1) \eigenvalue \mathbold{z}.
	\]
	
    \textbf{Claim~$3$.}
    Recall that $\eigenvalueOf{\transitionMatrix[\skeletonWalk]}[1] = 1$ and that all eigenvectors of~$\transitionMatrix[\skeletonWalk]$ from $S \setminus \{\mathbold{1}\}$ have a negative eigenvalue.
    We make a case distinction with respect to the sign of $\eigenvalueOf{\transitionMatrix[\skeletonWalk]}[2] = \beta$.
    If $\beta > 0$, then there is an eigenvector $\mathbold{z} \in \overline{S} \setminus S$ corresponding to~$\beta$, as the eigenvalues of vectors from $S \setminus \{\mathbold{1}\}$ are negative.
    By Claim~$2$, there is an eigenvalue~$\beta'$ of~$\cliqueInfluence{\graph}{\clique}$ such that $\beta = \beta'/(\numberOfCliques - 1)$.
    Since $\beta' \leq \eigenvalueOf{\cliqueInfluence{\graph}{\clique}}[1]$, \cref{eq:eigenvalueBound} holds.
    
    If $\beta \leq 0$, then \cref{eq:eigenvalueBound} follows immediately, as the kernel of~$\cliqueInfluence{\graph}{\clique}$ is nontrivial and, thus, $\eigenvalueOf{\cliqueInfluence{\graph}{\clique}}[1] \geq 0$.
    This concludes the proof.
\end{proof}

\subsection{Bounding clique influence}
We prove an upper bound for $\eigenvalueOf{\cliqueInfluence{\graph'}{\clique}}[1]$ for all induced subgraphs~$\graph'$ of~$\graph$ and every disjoint clique cover $\clique$ of $\graph'$, given that $(\graph, \weights)$ satisfy \Cref{condition:influence_bound}.
\cliqueInfluenceBound*

In the proof of \Cref{lemma:clique_influence_bound}, we apply the following lemma, based on a result in \cite{Chen2020rapid}.
\begin{lemma}
	\label{lemma:spectral_radius}
	Let $n \in \N$, let $A \in \C^{n \times n}$, and let $\spectralRadius{A}$ denote the spectral radius of~$A$.
	Assume that there is a $\xi \in \R$ and a $p\colon [n] \to \R_{>0}$ such that for all $i \in [n]$ it holds that
	$
	\sum_{j \in [n]} \absolute{A(i, j)} p(j) \le \xi p(i)
	$.
	Then $\spectralRadius{A} \le \xi$.
\end{lemma}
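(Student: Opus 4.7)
The plan is to bound $|\lambda|$ for every eigenvalue $\lambda$ of $A$ via a standard weighted Perron-style argument. Let $\lambda \in \C$ be an eigenvalue of $A$ with nonzero eigenvector $v \in \C^n$, so that $Av = \lambda v$. Applying the triangle inequality entrywise gives
\[
    |\lambda|\,|v_i| \;=\; \Big|\sum_{j \in [n]} A(i,j)\, v_j\Big| \;\le\; \sum_{j \in [n]} |A(i,j)|\,|v_j|
\]
for every $i \in [n]$. The goal is to turn the hypothesis on $p$ into a pointwise bound on $|\lambda|$.

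Next, I would rescale by the positive weights: define $u_i = |v_i|/p(i)$ for $i \in [n]$, and pick $k \in [n]$ that maximizes $u_i$. Since $v \neq 0$ and every $p(j) > 0$, we have $u_k > 0$, hence $|v_k| > 0$. Rewriting $|v_j| = u_j\, p(j) \le u_k\, p(j)$ and using the hypothesis at index $k$, we obtain
\[
    |\lambda|\,|v_k| \;\le\; \sum_{j \in [n]} |A(k,j)|\, p(j)\, u_j \;\le\; u_k \sum_{j \in [n]} |A(k,j)|\, p(j) \;\le\; u_k\, \xi\, p(k) \;=\; \xi\,|v_k| .
\]
Dividing by $|v_k| > 0$ yields $|\lambda| \le \xi$. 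Since $\lambda$ was arbitrary, the spectral radius satisfies $\spectralRadius{A} \le \xi$, as required.

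There is no real obstacle here: the entire argument is the classical observation that the hypothesis asserts that $p$ is a positive super-eigenvector (in the sense of componentwise inequalities) of the nonnegative matrix $|A|$ with eigenvalue at most $\xi$, and $\spectralRadius{A} \le \spectralRadius{|A|} \le \xi$ follows by the Collatz--Wielandt characterization of the spectral radius of a nonnegative matrix. The only mild care needed is to ensure $|v_k| \neq 0$ when dividing, which is guaranteed by choosing $k$ to maximize the strictly positive quantity $u_i = |v_i|/p(i)$.
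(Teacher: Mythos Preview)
Your proof is correct and is precisely the standard weighted Gershgorin/Collatz--Wielandt argument. The paper itself does not give a proof of this lemma; it merely states it as ``based on a result in \cite{Chen2020rapid}'' and uses it as a black box, so there is nothing to compare against beyond noting that your argument is the expected one.
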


Note that, by \Cref{lemma:spectral_radius}, \Cref{condition:influence_bound} implies $\eigenvalueOf{\pairwiseInfluence{\subgraph{\graph}{S}}}[1] \le C$ for all $S \subseteq \vertices$.
  
We show that \Cref{condition:influence_bound} implies the existence of a~$\xi$ from \Cref{lemma:spectral_radius} such that for all induced subgraphs $\graph'$ of $\graph$ and every disjoint clique cover $\clique$ of $\graph'$ there is a function $p$ that satisfies the conditions of \Cref{lemma:spectral_radius} for $\cliqueInfluence{\graph'}{\clique}$.
To this end, we use the following lemmas.

\begin{lemma}
	\label{lemma:influence_on_clique}
	Let $(\graph, \weights)$ be an instance of the multivariate hard-core model with clique cover~$\clique$ of size $\numberOfCliques$.
	Further, let $(\complex, \complexWeight)$ be the corresponding simplicial-complex representation with ground set $\complexGroundset$ and partition $\{\complexPartition{i}\}_{i \in [\numberOfCliques]}$.
	For all $i, j \in [\numberOfCliques]$ and $x \in \complexPartition{i}$ it holds that 
	\[
	\cliqueInfluence{\graph}{\clique}[x, \complexEmpty{j}] = - \sum_{v \in \clique[j]} \cliqueInfluence{\graph}{\clique}[x, \complexElement{v}] .
	\qedhere
	\]
\end{lemma}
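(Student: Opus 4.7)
The plan is to unfold the definition in \cref{eq:clique_influence} and apply a simple partition-of-unity identity on $\complexPartition{j}$. By the construction of the simplicial-complex representation in \Cref{subsec:complex_representation}, every independent set $\independentSet \in \independentSets[\graph]$ yields a face $\faceFromIndep{\independentSet}$ containing exactly one element from $\complexPartition{j}$: either $\complexEmpty{j}$ (when $\independentSet \cap \clique[j] = \emptyset$) or the unique $\complexElement{v}$ with $v \in \clique[j] \cap \independentSet$ (which is well-defined because $\clique[j]$ is a clique). Hence the events $\{\complexEmpty{j} \in \faceFromIndep{\independentSet}\}$ and $\{\complexElement{v} \in \faceFromIndep{\independentSet}\}$ for $v \in \clique[j]$ partition the sample space of $\gibbsDistribution[\graph, \weights]$.

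I would first dispose of the case $i = j$: then $x$, $\complexEmpty{j}$, and every $\complexElement{v}$ with $v \in \clique[j]$ all lie in $\complexPartition{i} = \complexPartition{j}$, so the first branch of \cref{eq:clique_influence} makes both sides of the claim equal to zero. For $i \neq j$, the partition-of-unity observation gives
\[
    \gibbsPr{\graph}{\complexEmpty{j}} + \sum_{v \in \clique[j]} \gibbsPr{\graph}{\complexElement{v}} = 1,
\]
together with the conditional analogue
\[
    \gibbsPr{\graph}{\complexEmpty{j}}[x] + \sum_{v \in \clique[j]} \gibbsPr{\graph}{\complexElement{v}}[x] = 1.
\]
Subtracting these two identities and reading off the second branch of \cref{eq:clique_influence} entrywise yields exactly $\cliqueInfluence{\graph}{\clique}[x, \complexEmpty{j}] = -\sum_{v \in \clique[j]} \cliqueInfluence{\graph}{\clique}[x, \complexElement{v}]$.

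The argument is essentially bookkeeping, and there is no serious obstacle. The only subtle point is to verify that the partition-of-unity identity survives under the conditioning associated with $x$, which is $\inSet{u}$ for $x = \complexElement{u}$ and $\bigcap_{u \in \clique[i]} \notInSet{u}$ for $x = \complexEmpty{i}$. This is immediate: regardless of the condition, each independent set in its support still deterministically contributes exactly one element of $\complexPartition{j}$ to its face, so the conditional probabilities over $\complexPartition{j}$ must still sum to one.
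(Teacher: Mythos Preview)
Your proof is correct and takes essentially the same approach as the paper's: both exploit that the events $\{\complexEmpty{j}\}$ and $\{\complexElement{v}\}_{v\in\clique[j]}$ partition the sample space, with the paper phrasing this via complements and disjoint unions while you phrase it as probabilities summing to one. You additionally make the trivial case $i=j$ explicit, which the paper leaves implicit.
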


\begin{proof}
	By definition,
	\[
		\cliqueInfluence{\graph}{\clique}[x, \complexEmpty{j}] 
		%		= \gibbsPr{\graph}{\complexEmpty{j}}[\inSet{v}] - \gibbsPr{\graph}{\complexEmpty{j}}
		= \gibbsPr{\graph}{\bigcap\nolimits_{w \in \clique[j]} \notInSet{w}}[x] - \gibbsPr{\graph}{\bigcap\nolimits_{w \in \clique[j]} \notInSet{w}}
		= - \left( \gibbsPr{\graph}{\bigcup\nolimits_{w \in \clique[j]} \inSet{w}}[x] - \gibbsPr{\graph}{\bigcup\nolimits_{w \in \clique[j]} \inSet{w}} \right) .
	\]
	Note that for any pair of vertices from the same clique $w_1, w_2 \in \clique[j]$ with $w_1 \neq w_2$ the events that $w_1$ is in an independent set and that $w_2$ is in an independent set are disjoint.
	Thus, we obtain
	\[
		- \left( \gibbsPr{\graph}{\bigcup\nolimits_{w \in \clique[j]} \inSet{w}}[x] - \gibbsPr{\graph}{\bigcup\nolimits_{w \in \clique[j]} \inSet{w}} \right)
		= - \sum_{w \in \clique[j]} \left( \gibbsPr{\graph}{\inSet{w}}[x] - \gibbsPr{\graph}{\inSet{w}} \right)
		= - \sum_{w \in \clique[j]} \cliqueInfluence{\graph}{\clique}[x, w] .
		\qedhere  
	\]
\end{proof}

\begin{lemma}
	\label{lemma:clique_to_pairwise_1}
	Let $(\graph, \weights)$ be an instance of the multivariate hard-core model with clique cover~$\clique$ of size $\numberOfCliques$.
	Further, let $(\complex, \complexWeight)$ be the corresponding simplicial-complex representation with ground set $\complexGroundset$ and partition $\{\complexPartition{i}\}_{i \in [\numberOfCliques]}$.
	For all $i, j \in [\numberOfCliques]$ with $i \neq j$ and all $v \in \clique[i], w \in \clique[j]$ it holds that
	\[
		\cliqueInfluence{\graph}{\clique}[\complexElement{v}, \complexElement{w}] = \gibbsPr{\graph}{\notInSet{v}} \pairwiseInfluence{\graph}[][v, w] .
		\qedhere
	\]
\end{lemma}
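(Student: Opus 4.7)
The plan is to unfold both sides of the claimed identity using the definitions given in the paper and reduce the equality to a one-line application of the law of total probability. The key observation is that the element $\complexElement{v} \in \complexGroundset$ appears in the face $\faceFromIndep{\independentSet}$ if and only if $v \in \independentSet$ (by construction of the simplicial-complex representation in \Cref{subsec:complex_representation}), so for $\independentSet \sim \gibbsDistribution$ the event $\complexElement{v} \in \faceFromIndep{\independentSet}$ coincides with $\inSet{v}$. The same holds for $\complexElement{w}$ versus $\inSet{w}$. Hence, using the definition of $\cliqueInfluence{\graph}{\clique}$ in \cref{eq:clique_influence}, we have
\[
	\cliqueInfluence{\graph}{\clique}[\complexElement{v}, \complexElement{w}]
	= \gibbsPr{\graph}{\inSet{w}}[\inSet{v}] - \gibbsPr{\graph}{\inSet{w}},
\]
which is well-defined because $v \in \clique[i]$ and $w \in \clique[j]$ lie in different partition classes.

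Next, I would rewrite the unconditional probability on the right as a convex combination conditioned on the state of $v$. Specifically, by the law of total probability,
\[
	\gibbsPr{\graph}{\inSet{w}}
	= \gibbsPr{\graph}{\inSet{w}}[\inSet{v}] \gibbsPr{\graph}{\inSet{v}}
	+ \gibbsPr{\graph}{\inSet{w}}[\notInSet{v}] \gibbsPr{\graph}{\notInSet{v}}.
\]
Substituting and using $1 - \gibbsPr{\graph}{\inSet{v}} = \gibbsPr{\graph}{\notInSet{v}}$, the difference collapses to
\[
	\gibbsPr{\graph}{\notInSet{v}} \bigl(\gibbsPr{\graph}{\inSet{w}}[\inSet{v}] - \gibbsPr{\graph}{\inSet{w}}[\notInSet{v}]\bigr).
\]
The expression in parentheses is precisely $\pairwiseInfluence{\graph}[][v, w]$ by the definition of pairwise influence (applied with $S = \emptyset$, and noting $v \neq w$ since they belong to distinct cliques of a disjoint cover, so the $v = w$ case does not occur), yielding the claimed identity.

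There is no real obstacle here: the statement is a direct algebraic manipulation, and the only thing one has to be careful about is that the conditional probability $\gibbsPr{\graph}{\inSet{w}}[\inSet{v}]$ is well-defined, i.e., that $\gibbsPr{\graph}{\inSet{v}} > 0$. This follows from $\weight[v] > 0$ together with the fact that the singleton $\{v\}$ is always an independent set, so the event $\inSet{v}$ has strictly positive mass under $\gibbsDistribution$. With that sanity check in place, the two-line computation above is the whole proof.
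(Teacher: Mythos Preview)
Your proposal is correct and follows essentially the same route as the paper's proof: unfold the definition of $\cliqueInfluence{\graph}{\clique}[\complexElement{v}, \complexElement{w}]$, expand $\gibbsPr{\graph}{\inSet{w}}$ via the law of total probability conditioned on the state of $v$, and factor out $\gibbsPr{\graph}{\notInSet{v}}$ to recognize the pairwise influence. Your additional remarks on well-definedness of the conditional probability and on $v \neq w$ are sound and go slightly beyond what the paper spells out, but the argument is otherwise identical.
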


\begin{proof}
	By the law of total probability,
	\begin{align*}
		\cliqueInfluence{\graph}{\clique}[\complexElement{v}, \complexElement{w}] 
		&= \gibbsPr{\graph}{\inSet{w}}[\inSet{v}] - \gibbsPr{\graph}{\inSet{w}} \\
		&= \gibbsPr{\graph}{\inSet{w}}[\inSet{v}] - \gibbsPr{\graph}{\inSet{w}}[\inSet{v}] \gibbsPr{\graph}{\inSet{v}} - \gibbsPr{\graph}{\inSet{w}}[\notInSet{v}] \gibbsPr{\graph}{\notInSet{v}} \\
		&= \gibbsPr{\graph}{\notInSet{v}} \left( \gibbsPr{\graph}{\inSet{w}}[\inSet{v}] - \gibbsPr{\graph}{\inSet{w}}[\notInSet{v}] \right) \\
		&= \gibbsPr{\graph}{\notInSet{v}} \pairwiseInfluence{\graph}[][v, w] .
		\qedhere
	\end{align*}
\end{proof}

\begin{lemma}
	\label{lemma:clique_to_pairwise_2}
	Let $(\graph, \weights)$ be an instance of the multivariate hard-core model with clique cover~$\clique$ of size $\numberOfCliques$.
	Further, let $(\complex, \complexWeight)$ be the corresponding simplicial-complex representation with ground set $\complexGroundset$ and partition $\{\complexPartition{i}\}_{i \in [\numberOfCliques]}$.
	For all $i, j \in [\numberOfCliques]$ with $i \neq j$ and every $w \in \clique[j]$ it holds that
	\[
	\cliqueInfluence{\graph}{\clique}[\complexEmpty{i}, \complexElement{w}]
		= \sum_{v \in \clique[i]} \gibbsPr{\graph}{v} \pairwiseInfluence{\graph}[\zeroSpinConfig[\clique[i] \setminus \{v\}]][v, w]
		= \sum_{v \in \clique[i]} \gibbsPr{\graph}{v} \pairwiseInfluence{\graph_v}[][v, w] ,
	\]
	where $\graph_v = \subgraph{\graph}{\vertices \setminus (\clique[i] \setminus \{v\})}$.
\end{lemma}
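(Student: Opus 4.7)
I would obtain the first equality by applying the law of total probability to $\gibbsPr{\graph}{\inSet{w}}$, partitioning the sample space using the clique structure of $\clique[i]$, and obtain the second equality from the standard hard-core identity that pinning vertices to $0$ is equivalent to deleting them.

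Since $\clique[i]$ is a clique, any independent set contains at most one vertex of $\clique[i]$, so the event $\complexEmpty{i} = \bigcap_{u \in \clique[i]} \notInSet{u}$ together with $\{\inSet{v}\}_{v \in \clique[i]}$ forms a partition of the sample space of $\gibbsDistribution[\graph, \weights]$. The law of total probability therefore yields
\[
    \gibbsPr{\graph}{\inSet{w}} = \gibbsPr{\graph}{\complexEmpty{i}} \gibbsPr{\graph}{\inSet{w}}[\complexEmpty{i}] + \sum_{v \in \clique[i]} \gibbsPr{\graph}{\inSet{v}} \gibbsPr{\graph}{\inSet{w}}[\inSet{v}] .
\]
Substituting this into $\cliqueInfluence{\graph}{\clique}[\complexEmpty{i}, \complexElement{w}] = \gibbsPr{\graph}{\inSet{w}}[\complexEmpty{i}] - \gibbsPr{\graph}{\inSet{w}}$ and using the complementary identity $\sum_{v \in \clique[i]} \gibbsPr{\graph}{\inSet{v}} = 1 - \gibbsPr{\graph}{\complexEmpty{i}}$ collapses the difference into a weighted combination $\sum_{v \in \clique[i]} \gibbsPr{\graph}{\inSet{v}} \bigl( \gibbsPr{\graph}{\inSet{w}}[\complexEmpty{i}] - \gibbsPr{\graph}{\inSet{w}}[\inSet{v}] \bigr)$.

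Because $\clique[i]$ is a clique, the event $\inSet{v}$ already implies $\zeroSpinConfig[\clique[i] \setminus \{v\}]$, so $\gibbsPr{\graph}{\inSet{w}}[\inSet{v}] = \gibbsPr{\graph}{\inSet{w}}[\inSet{v}, \zeroSpinConfig[\clique[i] \setminus \{v\}]]$; similarly $\complexEmpty{i}$ coincides with $\notInSet{v} \wedge \zeroSpinConfig[\clique[i] \setminus \{v\}]$, so $\gibbsPr{\graph}{\inSet{w}}[\complexEmpty{i}] = \gibbsPr{\graph}{\inSet{w}}[\notInSet{v}, \zeroSpinConfig[\clique[i] \setminus \{v\}]]$. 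Matching the definition of pairwise influence, each bracketed term is $\pairwiseInfluence{\graph}[\zeroSpinConfig[\clique[i] \setminus \{v\}]][v, w]$ (up to the sign convention inherent in that definition), which yields the first equality.

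For the second equality I would invoke the standard hard-core identity that pinning is deletion: for every $U \subseteq \vertices$, the conditional distribution of $\independentSet \sim \gibbsDistribution[\graph, \weights]$ given $\zeroSpinConfig[U]$ equals $\gibbsDistribution[\subgraph{\graph}{\vertices \setminus U}]$. This is immediate because the unnormalised weight $\prod_{u \in \independentSet} \weight[u]$ of any surviving configuration is unaffected by vertices pinned to $0$. Applying this with $U = \clique[i] \setminus \{v\}$ rewrites the conditional pairwise influence as the unconditional pairwise influence in $\graph_v = \subgraph{\graph}{\vertices \setminus (\clique[i] \setminus \{v\})}$. I anticipate no substantive obstacle; the entire argument is a short chain of elementary conditional-probability manipulations, the only care being to track the clique-induced partition so that the complementary probabilities $\sum_{v \in \clique[i]} \gibbsPr{\graph}{\inSet{v}}$ and $\gibbsPr{\graph}{\complexEmpty{i}}$ sum to one, which is what lets the difference of conditionals collapse cleanly.
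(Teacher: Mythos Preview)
Your proposal is correct and follows essentially the same approach as the paper: both arguments use the law of total probability with the clique-induced partition $\{\complexEmpty{i}\} \cup \{\inSet{v}\}_{v \in \clique[i]}$ of the sample space, then rewrite the resulting conditional probabilities using the fact that $\inSet{v}$ implies $\zeroSpinConfig[\clique[i] \setminus \{v\}]$ and that $\complexEmpty{i}$ coincides with $\notInSet{v} \wedge \zeroSpinConfig[\clique[i] \setminus \{v\}]$, followed by the standard ``pinning to $0$ is deletion'' identity for the second equality. The only cosmetic difference is that the paper first splits into $\complexEmpty{i}$ versus its complement $\overline{\complexEmpty{i}}$ and then decomposes $\overline{\complexEmpty{i}}$ into the disjoint events $\{\inSet{v}\}_{v \in \clique[i]}$, whereas you partition in one step; both routes arrive at exactly the same intermediate expression $\sum_{v \in \clique[i]} \gibbsPr{\graph}{\inSet{v}} \bigl( \gibbsPr{\graph}{\inSet{w}}[\complexEmpty{i}] - \gibbsPr{\graph}{\inSet{w}}[\inSet{v}] \bigr)$. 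Your parenthetical remark about the sign is well taken: with the paper's definition $\pairwiseInfluence{\graph}[\spinConfig[S]][v,w] = \gibbsPr{\graph}{\inSet{w}}[\inSet{v},\spinConfig[S]] - \gibbsPr{\graph}{\inSet{w}}[\notInSet{v},\spinConfig[S]]$, the bracketed difference is $-\pairwiseInfluence{\graph}[\zeroSpinConfig[\clique[i] \setminus \{v\}]][v,w]$, so the displayed identity in the lemma is off by an overall sign (the paper's own proof has the same discrepancy in its final line); this is immaterial downstream since the lemma is only applied in absolute value.
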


\begin{proof}
	Let $\overline{\complexEmpty{i}}$ denote the complementary event to $\complexEmpty{i}$, meaning that some vertex $u \in \clique[i]$ is an independent set drawn from the Gibbs distribution.
	By the law of total probability,
	\begin{align*}
		\cliqueInfluence{\graph}{\clique}[\complexEmpty{i}, \complexElement{w}] 
		&= \gibbsPr{\graph}{\inSet{w}}[\complexEmpty{i}] - \gibbsPr{\graph}{\inSet{w}} \\
		&= \gibbsPr{\graph}{\inSet{w}}[\complexEmpty{i}] - \gibbsPr{\graph}{\inSet{w}}[\complexEmpty{i}] \gibbsPr{\graph}{\complexEmpty{i}} - \gibbsPr{\graph}{\inSet{w}}[\overline{\complexEmpty{i}}] \gibbsPr{\graph}{\overline{\complexEmpty{i}}} \\
		&= \gibbsPr{\graph}{\overline{\complexEmpty{i}}} \left( \gibbsPr{\graph}{\inSet{w}}[\complexEmpty{i}] - \gibbsPr{\graph}{\inSet{w}}[\overline{\complexEmpty{i}}] \right) \\
		&= \gibbsPr{\graph}{\bigcup\nolimits_{u \in \clique[i]} \inSet{u}} \left( \gibbsPr{\graph}{\inSet{w}}[\bigcap\nolimits_{u \in \clique[i]} \notInSet{u}] - \gibbsPr{\graph}{\inSet{w}}[\bigcup\nolimits_{u \in \clique[i]} \inSet{u}] \right) .
	\end{align*}
	Because the events that two distinct vertices from the same clique are in an independent set are disjoint, we get 
	\begin{align*}
		&\gibbsPr{\graph}{\inSet{w}}[\bigcup\nolimits_{u \in \clique[i]} \inSet{u}] 
		= \sum_{v \in \clique[i]} \gibbsPr{\graph}{\inSet{w}}[\inSet{v}] \frac{\gibbsPr{\graph}{\inSet{v}}}{\gibbsPr{\graph}{\bigcup_{u \in \clique[i]} \inSet{u}}} \textrm{ and} \\
		\gibbsPr{\graph}{\inSet{w}}[\bigcap\nolimits_{u \in \clique[i]} \notInSet{u}] &= \gibbsPr{\graph}{\inSet{w}}[\bigcap\nolimits_{u \in \clique[i]} \notInSet{u}] \frac{\gibbsPr{\graph}{\bigcup_{v \in \clique[i]} \inSet{v}}}{\gibbsPr{\graph}{\bigcup_{u \in \clique[i]} \inSet{u}}} = \sum_{v \in \clique[i]} \gibbsPr{\graph}{\inSet{w}}[\bigcap\nolimits_{u \in \clique[i]} \notInSet{u}] \frac{\gibbsPr{\graph}{\inSet{v}}}{\gibbsPr{\graph}{\bigcup_{u \in \clique[i]} \inSet{u}}} .
	\end{align*}
	Thus, we obtain
	\begin{align*}
		&\gibbsPr{\graph}{\bigcup\nolimits_{u \in \clique[i]} \inSet{u}} 
		\left( \gibbsPr{\graph}{\inSet{w}}[\bigcap\nolimits_{u \in \clique[i]} \notInSet{u}] - \gibbsPr{\graph}{\inSet{w}}[\bigcup\nolimits_{u \in \clique[i]} \inSet{u}] \right)\\
		&= \sum_{v \in \clique[i]} 
		\gibbsPr{\graph}{\inSet{v}} \left( \gibbsPr{\graph}{\inSet{w}}[\bigcap\nolimits_{u \in \clique[i]} \notInSet{u}] - \gibbsPr{\graph}{\inSet{w}}[\inSet{v}] \right) .  
	\end{align*}
	Note that for each $v \in \clique[i]$ it holds that
	\[
		\gibbsPr{\graph}{\inSet{w}}[\bigcap\nolimits_{u \in \clique[i]} \notInSet{u}] = \gibbsPr{\graph}{\inSet{w}}[\notInSet{v}, \bigcap\nolimits_{u \in \clique[i] \setminus \{v\}} \notInSet{u}] ,
	\]
	Further, because $v$ being in the independent set implies that no other vertex $u \in \clique[i]$ can be in the independent set too, it also holds that
	\[
		\gibbsPr{\graph}{\inSet{w}}[\inSet{v}] = \gibbsPr{\graph}{\inSet{w}}[\inSet{v}, \bigcap\nolimits_{u \in \clique[i] \setminus \{v\}} \notInSet{u}] .
	\] 
	Consequently, we conclude that
	\[
		\cliqueInfluence{\graph}{\clique}[\complexEmpty{i}, \complexElement{w}] 
		= \sum_{v \in \clique[i]} \gibbsPr{\graph}{\inSet{v}} \pairwiseInfluence{\graph}[\zeroSpinConfig[\clique[i] \setminus \{v\}]][v, w]
		= \sum_{v \in \clique[i]} \gibbsPr{\graph}{v} \pairwiseInfluence{\graph_v}[][v, w] .
		\qedhere
	\]
\end{proof}

\begin{lemma}
	\label{lemma:probability_bound}
	Let $(\graph, \weights)$ be an instance of the multivariate hard-core model, and let $v \in \vertices$ and $w \in \neighbors{v}[\graph]$.
	Then $\gibbsPr{\graph}{w} \le - \pairwiseInfluence{\graph}[][v, w] = \absolute{\pairwiseInfluence{\graph}[][v, w]}$.
\end{lemma}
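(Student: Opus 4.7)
The key observation is that $v$ and $w$ are adjacent, which forces $\gibbsPr{\graph}{\inSet{w}}[\inSet{v}] = 0$: no independent set of $\graph$ can contain both endpoints of an edge. This immediately simplifies the pairwise influence to $\pairwiseInfluence{\graph}[][v,w] = -\gibbsPr{\graph}{\inSet{w}}[\notInSet{v}]$, which is nonpositive, so the equality $-\pairwiseInfluence{\graph}[][v,w] = \absolute{\pairwiseInfluence{\graph}[][v,w]}$ is automatic. Hence the real content of the lemma is the inequality $\gibbsPr{\graph}{\inSet{w}} \le \gibbsPr{\graph}{\inSet{w}}[\notInSet{v}]$.

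The plan is to obtain this by a one-line application of the law of total probability. Conditioning on whether $v$ is in the independent set gives
\[
	\gibbsPr{\graph}{\inSet{w}} = \gibbsPr{\graph}{\inSet{w}}[\inSet{v}] \gibbsPr{\graph}{\inSet{v}} + \gibbsPr{\graph}{\inSet{w}}[\notInSet{v}] \gibbsPr{\graph}{\notInSet{v}} = \gibbsPr{\graph}{\inSet{w}}[\notInSet{v}] \gibbsPr{\graph}{\notInSet{v}},
\]
using the vanishing first term from adjacency. Bounding $\gibbsPr{\graph}{\notInSet{v}} \le 1$ then yields $\gibbsPr{\graph}{\inSet{w}} \le \gibbsPr{\graph}{\inSet{w}}[\notInSet{v}] = -\pairwiseInfluence{\graph}[][v,w]$, which is exactly the claim.

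There is no genuine obstacle here; the proof is essentially a bookkeeping exercise that makes the hard-core exclusion principle explicit inside the definition of pairwise influence. The only thing to be careful about is making sure the conditional probabilities $\gibbsPr{\graph}{\cdot}[\inSet{v}]$ and $\gibbsPr{\graph}{\cdot}[\notInSet{v}]$ are both well-defined, i.e.\ that $\gibbsPr{\graph}{\inSet{v}}, \gibbsPr{\graph}{\notInSet{v}} > 0$. The former holds because $\{v\} \in \independentSets$ with positive weight $\weight[v] > 0$, and the latter holds because $\emptyset \in \independentSets$ contributes weight $1$; both are then positive after normalization by $\partitionFunction[\graph,\weights]$.
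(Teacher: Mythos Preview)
Your proof is correct and follows essentially the same approach as the paper: both first observe that adjacency forces $\gibbsPr{\graph}{\inSet{w}}[\inSet{v}]=0$, so $\pairwiseInfluence{\graph}[][v,w]=-\gibbsPr{\graph}{\inSet{w}}[\notInSet{v}]$, and then verify $\gibbsPr{\graph}{\inSet{w}}\le\gibbsPr{\graph}{\inSet{w}}[\notInSet{v}]$. The only difference is in this last step: the paper writes both sides explicitly as partition-function ratios and compares denominators ($\partitionFunction[\subgraph{\graph}{\vertices\setminus\{v\}}]\le\partitionFunction[\graph]$), whereas you use the law of total probability and bound $\gibbsPr{\graph}{\notInSet{v}}\le 1$, which is arguably a touch cleaner.
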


\begin{proof}
	Since $w \in \neighbors{v}[\graph]$, it holds that $\pairwiseInfluence{\graph}[][v, w] = - \gibbsPr{\graph}{w}[\notInSet{v}]$.
	We conclude by noting that
	\[
		\gibbsPr{\graph}{w}[\notInSet{v}] 
		= \weight[w] \frac{\partitionFunction[\subgraph{\graph}{\vertices \setminus \neighborsClosed{w}[\graph]}]}{\partitionFunction[\subgraph{\graph}{\vertices \setminus \{v\}}]}
		\ge \weight[w] \frac{\partitionFunction[\subgraph{\graph}{\vertices \setminus \neighborsClosed{w}[\graph]}]}{\partitionFunction[\graph]}
		= \gibbsPr{\graph}{w} .
		\qedhere
	\]
\end{proof}

We now prove the main lemma of this subsection.

\begin{proof}[Proof of \Cref{lemma:clique_influence_bound}]
	To simplify notation, set $\graph' = \subgraph{\graph}{S}$ and $\numberOfCliques = \size{\clique}$.
	Let $(\complex, \complexWeight)$ be the simplicial-complex representation of $(\graph', \subgraph{\weights}{S})$ with clique cover $\clique$ and let $\complexGroundset$ be the corresponding ground set of $(\complex, \complexWeight)$ with partition $\{\complexPartition{i}\}_{i \in [\numberOfCliques]}$.
	
	As we aim to prove our claim using \Cref{lemma:spectral_radius}, we need to construct a function $p\colon \complexGroundset \to \R_{>0}$ such that for all $x \in \complexGroundset$ it holds that
	\[
		\sum_{y \in \complexGroundset} \absolute{\cliqueInfluence{\graph'}{\clique}[x, y]} p(y) \le (2 + C) C p(x) .
	\]
	To this end, we set $p(\complexElement{v}) = q(v)$ for all $v \in S$ and $p(\complexEmpty{i}) = \sum_{v \in \clique[i]} \gibbsPr{\graph'}{v} q(v)$ for all $i \in [\numberOfCliques]$.
	By \Cref{lemma:probability_bound} we have for all $i \in [\numberOfCliques]$ and $w \in \clique[i]$ that
	\[
		\sum_{v \in \clique[i]} \gibbsPr{\graph'}{v} q(v) 
		\le \gibbsPr{\graph'}{w} q(w) + \sum_{v \in \neighbors{w}[\graph']} \absolute{\pairwiseInfluence{\graph'}[][w, v]} q(v) ,
	\]
	which, by \Cref{condition:influence_bound}, implies 
	\begin{align}
		\label{eq:clique_influence_bound:1}
		p(\complexEmpty{i}) < (1 + C) q(w) .
	\end{align}
	
	Without loss of generality, assume $x \in \complexPartition{i}$ for some $i \in [\numberOfCliques]$. 
	Recall that by definition $\cliqueInfluence{\graph'}{\clique}[x, y] = 0$ for all $y \in \complexPartition{i}$.
	By \Cref{lemma:influence_on_clique}, we obtain
	\begin{align*}
	\sum_{y \in \complexGroundset} \absolute{\cliqueInfluence{\graph'}{\clique}[x, y]} p(y)
		&= \sum_{\substack{j \in [\numberOfCliques]: \\j \neq i}} \left(\absolute{\cliqueInfluence{\graph'}{\clique}[x, \complexEmpty{j}]} p(\complexEmpty{j}) 
			+ \sum_{w \in \clique[j]} \absolute{\cliqueInfluence{\graph'}{\clique}[x, \complexElement{w}]} p(\complexElement{w}) \right) \\
		&\le \sum_{\substack{j \in [\numberOfCliques]: \\j \neq i}} \sum_{w \in \clique[j]} \absolute{\cliqueInfluence{\graph'}{\clique}[x, \complexElement{w}]} \left( p(\complexEmpty{j}) + p(\complexElement{w}) \right) .
	\end{align*}
	Further, by our choice of $p$ and by \cref{eq:clique_influence_bound:1}, we obtain 
	\[
		\sum_{\substack{j \in [\numberOfCliques]: \\j \neq i}} \sum_{w \in \clique[j]} \absolute{\cliqueInfluence{\graph'}{\clique}[x, \complexElement{w}]} \left( p(\complexEmpty{j}) + p(\complexElement{w}) \right)
		<  (2 + C) \sum_{\substack{j \in [\numberOfCliques]: \\j \neq i}} \sum_{w \in \clique[j]} \absolute{\cliqueInfluence{\graph'}{\clique}[x, \complexElement{w}]} q(w) .
	\]
	
	We proceed with a case distinction based on $x$.
	Assume that $x = \complexElement{v}$ for some $v \in \clique[i]$.
	By \Cref{lemma:clique_to_pairwise_1}, we have
	\begin{align*}
		(2 + C) \sum_{\substack{j \in [\numberOfCliques]: \\j \neq i}} \sum_{w \in \clique[j]} \absolute{\cliqueInfluence{\graph'}{\clique}[\complexElement{v}, \complexElement{w}]} q(w)
		&= (2 + C) \gibbsPr{\graph'}{\notInSet{v}} \sum_{\substack{j \in [\numberOfCliques]: \\j \neq i}} \sum_{w \in \clique[j]} \absolute{\pairwiseInfluence{\graph'}[][v, w]} q(w) \\
		&\le (2 + C) \sum_{\substack{j \in [\numberOfCliques]: \\j \neq i}} \sum_{w \in \clique[j]} \absolute{\pairwiseInfluence{\graph'}[][v, w]} q(w) .
	\end{align*}
	Using that the cliques are disjoint and applying \Cref{condition:influence_bound}, we get
	\[
		(2 + C) \sum_{\substack{j \in [\numberOfCliques]: \\j \neq i}} \sum_{w \in \clique[j]} \absolute{\pairwiseInfluence{\graph'}[][v, w]} q(w)
		\le (2 + C) \sum_{w \in S} \absolute{\pairwiseInfluence{\graph'}[][v, w]} q(w)
		\le (2 + C) C q(v)
		= (2 + C) C p(\complexElement{v}) .
	\]
	
	Now, assume that $x = \complexEmpty{i}$. 
	By \Cref{lemma:clique_to_pairwise_2}, we have
	\begin{align*}
		(2 + C) \sum_{\substack{j \in [\numberOfCliques]: \\j \neq i}} \sum_{w \in \clique[j]} \absolute{\cliqueInfluence{\graph'}{\clique}[\complexEmpty{i}, \complexElement{w}]} q(w)
		&= (2 + C) \sum_{\substack{j \in [\numberOfCliques]: \\j \neq i}} \sum_{w \in \clique[j]} \absolute{\sum_{v \in \clique[i]} \gibbsPr{\graph'}{v} \pairwiseInfluence{\graph_v'}[][v, w]} q(w) \\
		&\le (2 + C) \sum_{v \in \clique[i]} \gibbsPr{\graph'}{v} \sum_{\substack{j \in [\numberOfCliques]: \\j \neq i}} \sum_{w \in \clique[j]} \absolute{\pairwiseInfluence{\graph_v'}[][v, w]} q(w) ,
	\end{align*}
	where $\graph_{v}' = \subgraph{\graph'}{S \setminus (\clique[i] \setminus \{v\})} = \subgraph{\graph}{S \setminus (\clique[i] \setminus \{v\})}$.
	As the cliques are disjoint and $\graph_v'$ is a subgraph of $\graph$, we apply \Cref{condition:influence_bound} to obtain
	\begin{align*}
		(2 + C) \sum_{v \in \clique[i]} \gibbsPr{\graph'}{v} \sum_{\substack{j \in [\numberOfCliques]: \\j \neq i}} \sum_{w \in \clique[j]} \absolute{\pairwiseInfluence{\graph_v'}[][v, w]} q(w)
		&= (2 + C) \sum_{v \in \clique[i]} \gibbsPr{\graph'}{v} \sum_{w \in S \setminus (\clique[i] \setminus \{v\})} \absolute{\pairwiseInfluence{\graph_v'}[][v, w]} q(w) \\
		&\le (2 + C) \sum_{v \in \clique[i]} \gibbsPr{\graph'}{v} C q(v) \\	
		&= (2 + C) C  \sum_{v \in \clique[i]} \gibbsPr{\graph'}{v} q(v) \\
		&= (2 + C) C p(\complexEmpty{i}),
	\end{align*}
	which concludes the proof.
\end{proof}

\subsection{Canonical paths in skeleton walks} \label{subsec:canonical_paths}
The previous section shows that we can bound local expansion of the simplicial-complex representation of a disjoint clique cover based on pairwise influence between vertices.
The resulting bound on the second largest eigenvalue of the skeleton walks might in some cases be larger than~$1$, which makes \Cref{lemma:twoStepMixing} inapplicable.
Thus, we introduce a more crude bound on this second eigenvalue by applying the canonical-path method to the skeleton walk.
Although the resulting bound is worse in most cases, it is guaranteed to be less than $1$, which is sufficient to cover the cases where using pairwise influence fails.

We start by giving a short overview on the canonical-path method.
Let $\markov$ be a Markov chain that is reversible with respect to its stationary distribution $\stationary[\markov]$.
Let $\mcEdges{\markov} = \{(x, y) \in \stateSpace[\markov]^2 \mid x \neq y, \transitionMatrix[\markov][x, y] > 0\}$ denote the edges of the Markov chain excluding self loops, and let $\mcAllEdges{\markov} = \{(x, y) \in \stateSpace[\markov]^2 \mid \transitionMatrix[\markov][x, y] > 0\}$ be the set of edges including self loops.
For each $(x, y) \in \mcAllEdges{\markov}$, we set $\transitionWeight{\markov}[x, y] = \stationary[\markov][x] \transitionMatrix[\markov][x, y]$.
The idea of the canonical-path method is to construct a path $\canonicalPath = (x_0 = x, x_1, \dots, x_l = y)$ for every $x, y \in \stateSpace[\markov]$ with $x \neq y$ using the edges in $\mcEdges{\markov}$ (i.e., $(x_{i-1}, x_i) \in \mcEdges{\markov}$ for all $i \in [l]$).
We denote by $\mcEdges{\canonicalPath[x y]}$ the set of edges that are used by the path $\canonicalPath[x y]$ and by $\length{\canonicalPath[x y]}$ the length of a path.
Further, we call a set of paths $\canonicalPaths = \{\canonicalPath[x y] \mid x, y \in \stateSpace[\markov], x \neq y\}$ canonical if and only if it contains exactly one path for each $x, y \in \stateSpace[\markov]$ with $x \neq y$, and its \emph{congestion} is defined to be
\[
	\congestion[\canonicalPaths] = \max_{(w, z) \in \mcEdges{\markov}} \frac{1}{\transitionWeight{\markov}[w, z]} \sum_{\substack{x, y \in \stateSpace[\markov]: \\ (w, z) \in \mcEdges{\canonicalPath[x y]}}} \length{\canonicalPath[x y]} \stationary[\markov][x] \stationary[\markov][y] . 
\]
\begin{theorem}[{\cite[Theorem $5$]{sinclair1992improved}}]
	\label{thm:canonical_paths_method}
	For any reversible Markov chain $\markov$ and every set of canonical paths $\canonicalPaths$ for $\markov$ it holds that
	\[
		\eigenvalueOf{\transitionMatrix[\markov]}[2] \le 1 - \frac{1}{\congestion[\canonicalPaths]}. \qedhere
	\]
\end{theorem}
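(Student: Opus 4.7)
The plan is to obtain the eigenvalue bound via the Poincaré inequality together with a path-counting argument, which is the standard route for this kind of result. Since $\markov$ is reversible with respect to $\stationary[\markov]$, the second-largest eigenvalue admits the variational characterization
\[
    1 - \eigenvalueOf{\transitionMatrix[\markov]}[2] = \inf_{f} \frac{\mathcal{E}(f, f)}{\mathrm{Var}_{\stationary[\markov]}(f)},
\]
where the infimum ranges over non-constant $f\colon \stateSpace[\markov] \to \R$ and the Dirichlet form is $\mathcal{E}(f, f) = \tfrac{1}{2} \sum_{(u,v) \in \mcEdges{\markov}} \transitionWeight{\markov}[u,v] \left( f(u) - f(v) \right)^2$. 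So it suffices to prove $\mathrm{Var}_{\stationary[\markov]}(f) \le \congestion[\canonicalPaths] \cdot \mathcal{E}(f, f)$ for every $f$, and I would set up the rest of the argument to do exactly that.

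First I would rewrite the variance in its symmetric pair form $\mathrm{Var}_{\stationary[\markov]}(f) = \tfrac{1}{2} \sum_{x, y \in \stateSpace[\markov]} \stationary[\markov][x] \stationary[\markov][y] (f(x) - f(y))^2$. For each pair $(x, y)$ with $x \neq y$, I would telescope along the canonical path $\canonicalPath[xy] = (x = x_0, x_1, \dots, x_l = y)$ writing $f(x) - f(y) = \sum_{(u,v) \in \mcEdges{\canonicalPath[xy]}} (f(u) - f(v))$ and then apply Cauchy–Schwarz to get $(f(x) - f(y))^2 \le \length{\canonicalPath[xy]} \sum_{(u,v) \in \mcEdges{\canonicalPath[xy]}} (f(u) - f(v))^2$.

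Substituting this into the variance and swapping the order of summation over pairs $(x, y)$ and edges $(u, v)$ yields
\[
    \mathrm{Var}_{\stationary[\markov]}(f) \le \tfrac{1}{2} \sum_{(u,v) \in \mcEdges{\markov}} (f(u) - f(v))^2 \sum_{\substack{x, y \in \stateSpace[\markov]: \\ (u,v) \in \mcEdges{\canonicalPath[xy]}}} \length{\canonicalPath[xy]} \stationary[\markov][x] \stationary[\markov][y].
\]
Multiplying and dividing each term by $\transitionWeight{\markov}[u, v]$, the inner factor is at most $\congestion[\canonicalPaths]$ by the very definition of congestion, so the right-hand side is bounded by $\congestion[\canonicalPaths] \cdot \mathcal{E}(f, f)$. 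Combining this with the Poincaré characterization gives $1 - \eigenvalueOf{\transitionMatrix[\markov]}[2] \ge 1/\congestion[\canonicalPaths]$, as desired.

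The only subtle point I would flag is that the argument genuinely needs $(u, v) \in \mcEdges{\markov}$ (i.e.\ the non-self-loop edges), because otherwise $f(u) - f(v) = 0$ makes the term trivial and the path counting must be consistent with the set over which the Dirichlet sum is taken; this is why $\mcEdges{\markov}$ (rather than $\mcAllEdges{\markov}$) appears in both the definition of $\congestion[\canonicalPaths]$ and the edges used by canonical paths. Everything else is routine Cauchy–Schwarz and index swapping, so no significant obstacle arises.
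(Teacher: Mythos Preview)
The paper does not prove this theorem; it is quoted verbatim as \cite[Theorem~5]{sinclair1992improved} and used as a black box. Your argument is the standard Dirichlet-form/Poincar\'e proof from Sinclair's original paper and is correct, so there is nothing to compare against here.
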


By applying the canonical-path method to the skeleton walk, we obtain the following lemma.
\begin{lemma}
	\label{lemma:canonical_paths}
	Let $(\graph, \weights)$ be an instance of the multivariate hard-core model with disjoint clique cover $\clique$ of size $\numberOfCliques$, and let $\maxPartition = \max_{i \in [\numberOfCliques]} \{ \partitionFunction[\subgraph{\graph}{\clique[i]}] \}$.
	Further, let $(\complex, \complexWeight)$ be the resulting simplicial-complex representation, and let $\skeletonWalk = \skeletonWalk[\complex, \complexWeight]$ be the skeleton walk on $(\complex, \complexWeight)$.
	Then $\eigenvalueOf{\transitionMatrix[\skeletonWalk]}[2] \le 1 - \frac{1}{12 \maxPartition^2}$.
\end{lemma}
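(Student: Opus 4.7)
The plan is to apply the canonical-paths method (\Cref{thm:canonical_paths_method}) to $\skeletonWalk$ by exhibiting a set of canonical paths of congestion at most $12 \maxPartition^2$.

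First, I will prove the key estimate that for every $w \in \complexGroundset$ and every $l \in [\numberOfCliques]$ with $w \notin \complexPartition{l}$,
\[
    \gibbsPr{\graph}{\complexEmpty{l}}[w] \ge \frac{1}{\maxPartition}.
\]
Conditioning on the event~$w$ turns $\gibbsPr{\graph}{\cdot}[w]$ into a Gibbs distribution on an induced subgraph of~$\graph$, whose marginal on the remaining vertices of $\clique[l]$ is supported on $\emptyset$ and singletons. Since removing the closed neighborhood of any $u \in \clique[l]$ in this conditional graph shrinks the partition function below that of removing only $\clique[l]$, the mass of $\emptyset$ dominates the total up to a factor of at most $\partitionFunction[\subgraph{\graph}{\clique[l]}] \le \maxPartition$.

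Second, I construct the canonical paths. Fix any rule $i \mapsto j_i \in [\numberOfCliques] \setminus \{i\}$. For each ordered pair $x \neq y$ with $x \in \complexPartition{i}$ and $y \in \complexPartition{j}$, set
\begin{itemize}
    \item $\canonicalPath[xy] = (x, \complexEmpty{j_i}, y)$ if $i = j$;
    \item $\canonicalPath[xy] = (x, y)$ if $i \ne j$ and at least one of $x, y$ is of the form $\complexEmpty{\cdot}$;
    \item $\canonicalPath[xy] = (x, \complexEmpty{j}, \complexEmpty{i}, y)$ if $i \ne j$ and both $x, y$ are of the form $\complexElement{\cdot}$.
\end{itemize}
Every step has positive transition probability because the joint event along each consecutive pair is realized by an explicit small independent set (a singleton or $\emptyset$). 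In particular, any edge appearing on some canonical path has at least one coordinate of the form $\complexEmpty{\cdot}$.

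Third, for each edge $(w, z) \in \mcEdges{\skeletonWalk}$ with $w \in \complexPartition{k}$, $z \in \complexPartition{l}$, $k \ne l$, I will enumerate the contributing pairs $(x, y)$ into six families: the direct edge (D), the first, middle, and last edge of length-three paths (A, B, C), and the first and last edge of length-two paths (E1, E2). Using $\stationary[\skeletonWalk][\cdot] = \gibbsPr{\graph}{\cdot}/\numberOfCliques$ and $\transitionWeight{\skeletonWalk}[w, z] = \gibbsPr{\graph}{w, z}/(\numberOfCliques(\numberOfCliques - 1))$, each family's summed contribution reduces to a constant times $\gibbsPr{\graph}{a}/\gibbsPr{\graph}{a, b}$ or $1/\gibbsPr{\graph}{a, b}$ in which at least one of $a, b$ is an $\complexEmpty{\cdot}$-element. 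Applying the key estimate once for (D), (A), (C), (E1), (E2) and twice for (B) yields respective contribution bounds $2\maxPartition, 3\maxPartition, 3\maxPartition^2, 3\maxPartition, 2\maxPartition, 2\maxPartition$.

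Finally, edges with both coordinates of type $\complexElement{\cdot}$ carry no canonical path; edges with exactly one $\complexEmpty{\cdot}$-coordinate accumulate at most $7\maxPartition$; and edges $(\complexEmpty{k}, \complexEmpty{l})$ accumulate at most $6\maxPartition + 3\maxPartition^2 \le 12\maxPartition^2$, using $\maxPartition \ge 1$. Hence $\congestion \le 12 \maxPartition^2$, and the claim follows from \Cref{thm:canonical_paths_method}. The main obstacle is the bookkeeping over which canonical-path family visits which edge type; all the probabilistic content is concentrated in the single estimate $\gibbsPr{\graph}{\complexEmpty{l}}[w] \ge 1/\maxPartition$.
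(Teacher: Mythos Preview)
Your proposal is correct and follows essentially the same approach as the paper: the canonical paths you construct are identical to the paper's (up to the labeling convention for the length-three paths and the choice of the map $i\mapsto j_i$, which plays the role of the paper's fixed-point-free permutation~$p$), and your key probabilistic estimate $\gibbsPr{\graph}{\complexEmpty{l}}[w]\ge 1/\maxPartition$ is exactly what the paper uses via submultiplicativity of~$\partitionFunction$. The only cosmetic difference is how the case analysis is organized---you split by the role an edge plays in a canonical path (six families), whereas the paper splits by edge type (three cases $A$, $B$, $C$) and factors out the uniform length bound $\length{\canonicalPath[xy]}\le 3$ up front---but both arrive at the same congestion bound $12\maxPartition^2$.
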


\begin{proof}
	As discussed in the proof of \Cref{lemma:eigenvalues_skeleton}, $\transitionMatrix[\skeletonWalk]$ is reversible with respect to its stationary distribution $\stationary[\skeletonWalk]$, where $\stationary[\skeletonWalk][x] = \frac{1}{\numberOfCliques}\gibbsPr{\graph}{x}$.
	Thus, $\transitionWeight{\skeletonWalk}[x, y] = \frac{1}{\numberOfCliques (\numberOfCliques - 1)}\gibbsPr{\graph}{x, y}$.
	
	We start by constructing the paths $\canonicalPaths = \{\canonicalPath[x y] \mid x, y \in \complexGroundset, x \neq y\}$.
	To this end, let~$p$ be a fixed-point-free permutation of $[\numberOfCliques]$.
	Our construction goes as follows:
	\begin{itemize}
		\item $\canonicalPath[\complexEmpty{i} \complexEmpty{j}] = (\complexEmpty{i}, \complexEmpty{j})$ for $i \neq j$,
		\item $\canonicalPath[\complexElement{v} \complexEmpty{i}] = (\complexElement{v}, \complexEmpty{i})$, $\canonicalPath[\complexEmpty{i} \complexElement{v}] = (\complexEmpty{i}, \complexElement{v})$ for all $v \in \vertices$ with $v \notin \clique[i]$,
		\item $\canonicalPath[\complexElement{v} \complexElement{w}] = (\complexElement{v}, \complexEmpty{i}, \complexEmpty{j}, \complexElement{w})$ for $v \in \clique[j], w \in \clique[i]$ with $i \neq j$,
		\item $\canonicalPath[x y] = (x, \complexEmpty{p(i)}, y)$ for $x, y \in \complexPartition{i}$.
	\end{itemize}
	Let $\mcEdges{\canonicalPaths} = \bigcup_{\canonicalPath \in \canonicalPaths} \mcEdges{\canonicalPath}$ and note that for all $x \neq y$ we have $\length{\canonicalPath[x y]} \le 3$.
	It suffices to upper-bound
	\[
		\congestion[\canonicalPaths] \le 3 \max_{e \in \mcEdges{\canonicalPaths}} \frac{1}{\transitionWeight{\skeletonWalk}[e]} \sum_{\substack{x, y \in \complexGroundset: \\ e \in \mcEdges{\canonicalPath[x y]}}} \stationary[\skeletonWalk][x] \stationary[\skeletonWalk][y] .
	\]
	
	We derive such an upper bound by partitioning $\mcEdges{\canonicalPaths}$ into the $3$ following types of edges:
	\begin{itemize}
		\item $A = \{(\complexElement{v}, \complexEmpty{i}) \in \mcEdges{\canonicalPaths} \mid v \notin \clique[i]\}$,
		\item $B = \{(\complexEmpty{i}, \complexElement{v}) \in \mcEdges{\canonicalPaths} \mid v \notin \clique[i]\}$,
		\item $C = \{(\complexEmpty{i}, \complexEmpty{j}) \in \mcEdges{\canonicalPaths} \mid i \neq j\}$.
	\end{itemize}
	We make a case distinction with respect to these types.
	
	\textbf{Case~$A$.}
    Let $(\complexElement{v}, \complexEmpty{i}) \in A$ and without loss of generality assume $v \in \clique[j]$ for $j \neq i$.
	If $p(j) \neq i$, then $(\complexElement{v}, \complexEmpty{i})$ is only used by paths that start at $\complexElement{v}$ and go to any element in $\complexPartition{i}$ (including $\complexEmpty{i}$).
	Further, if $p(j) = i$, then it is also used by paths from $\complexElement{v}$ to any $y \in \complexPartition{j}$ with $y \neq \complexElement{v}$.
	Thus, we obtain 
	\begin{align*}
		\frac{1}{\transitionWeight{\skeletonWalk}[\complexElement{v}, \complexEmpty{i}]} \sum_{\substack{x, y \in \complexGroundset: \\ (\complexElement{v}, \complexEmpty{i}) \in \mcEdges{\canonicalPath[x y]}}} \stationary[\skeletonWalk][x] \stationary[\skeletonWalk][y]
		& \le \frac{\numberOfCliques (\numberOfCliques - 1)}{\numberOfCliques^2} \frac{1}{\gibbsPr{\graph}{\complexEmpty{i}}[v]} \left(\sum_{y \in \complexPartition{i}} \gibbsPr{\graph}{y} + \sum_{\substack{y \in \complexPartition{j}: \\y \neq \complexElement{v}}}  \gibbsPr{\graph}{y} \right) \\
		& \le \frac{2}{\gibbsPr{\graph}{\complexEmpty{i}}[v]} ,
	\end{align*}
	where the second inequality comes from the fact that we have $\sum_{y \in \complexPartition{k}} \gibbsPr{\graph}{y} = \gibbsPr{\graph}{\bigcup_{y \in \complexPartition{k}} y} = 1$ for all $k \in [\numberOfCliques]$.
	By the submultiplicativity of~\partitionFunction,
	\[
		\gibbsPr{\graph}{\complexEmpty{i}}[v] 
		= \frac{\partitionFunction[\subgraph{\graph}{\vertices \setminus (\neighborsClosed{v} \cup \clique[i])}]}{\partitionFunction[\subgraph{\graph}{\vertices \setminus \neighborsClosed{v}}]}
		\ge \frac{\partitionFunction[\subgraph{\graph}{\vertices \setminus (\neighborsClosed{v} \cup \clique[i])}]}{\partitionFunction[\subgraph{\graph}{\vertices \setminus (\neighborsClosed{v} \cup \clique[i])}] \partitionFunction[\subgraph{\graph}{\clique[i] \setminus \neighborsClosed{v}}]} 
		\ge \frac{1}{\partitionFunction[\subgraph{\graph}{\clique[i]}]} .
	\]
	We obtain the bound
	\begin{gather} \label{eq:canonical_paths:A}
		\frac{1}{\transitionWeight{\skeletonWalk}[\complexElement{v}, \complexEmpty{i}]} \sum_{\substack{x, y \in \complexGroundset: \\ (\complexElement{v}, \complexEmpty{i}) \in \mcEdges{\canonicalPath[x y]}}} \stationary[\skeletonWalk][x] \stationary[\skeletonWalk][y]
		\le 2 \partitionFunction[\subgraph{\graph}{\clique[i]}] .
	\end{gather}
	
	\textbf{Case~$B$.}
    For $(\complexEmpty{i}, \complexElement{v}) \in B$, by symmetry, this case is analogous to case~$A$.
    Thus, we get
	\begin{gather} \label{eq:canonical_paths:B}
		\frac{1}{\transitionWeight{\skeletonWalk}[\complexEmpty{i}, \complexElement{v}]} \sum_{\substack{x, y \in \complexGroundset: \\ (\complexEmpty{i}, \complexElement{v}) \in \mcEdges{\canonicalPath[x y]}}} \stationary[\skeletonWalk][x] \stationary[\skeletonWalk][y]
		\le 2 \partitionFunction[\subgraph{\graph}{\clique[i]}] .
	\end{gather}
	
	\textbf{Case~$C$.}
    Finally, consider $(\complexEmpty{i}, \complexEmpty{j}) \in C$. 
	If $p(j) \neq i$ and $p(i) \neq j$, then this edge is only used by paths from $\complexElement{v}$ to $\complexElement{w}$ for any pair $v \in \clique[j], w \in \clique[i]$ and for the direct transition from $\complexEmpty{i}$ to $\complexEmpty{j}$.
	If $p(j) = i$, then it is also used by paths from any $\complexElement{v}$ for $v \in \clique[j]$ to $\complexEmpty{j}$.
    Symmetrically, if $p(i) = j$, then it is also used by paths from $\complexEmpty{i}$ to $\complexElement{v}$ for $v \in \clique[i]$.
	Thus, we obtain
	\begin{align*}
		\frac{1}{\transitionWeight{\skeletonWalk}[\complexEmpty{i}, \complexEmpty{j}]} & \sum_{\substack{x, y \in \complexGroundset: \\ (\complexEmpty{i}, \complexEmpty{j}) \in \mcEdges{\canonicalPath[x y]}}} \stationary[\skeletonWalk][x] \stationary[\skeletonWalk][y] \\
		& \le \frac{\numberOfCliques (\numberOfCliques - 1)}{\numberOfCliques^2} \frac{1}{\gibbsPr{\graph}{\complexEmpty{i}, \complexEmpty{j}}} \Bigg( \sum_{\substack{v \in \clique[j]: \\ w \in \clique[i]}} \gibbsPr{\graph}{v} \gibbsPr{\graph}{w} + \gibbsPr{\graph}{\complexEmpty{i}} \gibbsPr{\graph}{\complexEmpty{j}}\\
        &\hspace*{4 cm} + \gibbsPr{\graph}{\complexEmpty{j}} \sum_{v \in \clique[j]} \gibbsPr{\graph}{v} + \gibbsPr{\graph}{\complexEmpty{i}} \sum_{v \in \clique[i]} \gibbsPr{\graph}{v} \Bigg) \\
		& \le \frac{4}{\gibbsPr{\graph}{\complexEmpty{i}, \complexEmpty{j}}} .
	\end{align*}
	Now, observe that
	\begin{align*}
		\gibbsPr{\graph}{\complexEmpty{i}, \complexEmpty{j}} 
		& = \frac{\partitionFunction[\subgraph{\graph}{\vertices \setminus (\clique[i] \cup \clique[j])}]}{\partitionFunction[\graph]}
		\ge \frac{\partitionFunction[\subgraph{\graph}{\vertices \setminus (\clique[i] \cup \clique[j])}]}{\partitionFunction[\subgraph{\graph}{\vertices \setminus (\clique[i] \cup \clique[j])}] \partitionFunction[\subgraph{\graph}{\clique[i]}] \partitionFunction[\subgraph{\graph}{\clique[j]}]}\\
        & = \frac{1}{\partitionFunction[\subgraph{\graph}{\clique[i]}] \partitionFunction[\subgraph{\graph}{\clique[j]}]} .
	\end{align*}
	Thus,
	\begin{gather} \label{eq:canonical_paths:C}
		\frac{1}{\transitionWeight{\skeletonWalk}[\complexEmpty{i}, \complexEmpty{j}]} \sum_{\substack{x, y \in \complexGroundset \\\text{s.t. } (\complexEmpty{i}, \complexEmpty{j}) \in \mcEdges{\canonicalPath[x y]}}} \stationary[\skeletonWalk][x] \stationary[\skeletonWalk][y]
		\le 3 \partitionFunction[\subgraph{\graph}{\clique[i]}] \partitionFunction[\subgraph{\graph}{\clique[j]}] .
	\end{gather}
	
	Combining \cref{eq:canonical_paths:A,eq:canonical_paths:B,eq:canonical_paths:C} we get $\congestion[\canonicalPaths] \le 12 \maxPartition^2$.
	By \Cref{thm:canonical_paths_method} this implies $\eigenvalueOf{\transitionMatrix[\skeletonWalk]}[2] \le 1 - \frac{1}{12 \maxPartition^2}$, which concludes the proof.
\end{proof}

Note that \Cref{lemma:clique_influence_bound,lemma:eigenvalues_skeleton,lemma:canonical_paths} only consider the skeleton walk on the complex $(\complex, \complexWeight)$. 
However, in order to bound the local expansion, we need to investigate the skeleton walk on all links $(\complex[][\complexFace], \complexWeight[][\complexFace])$ for every face $\complexFace \in \complex[k]$ with $0 \le k \le \numberOfCliques - 2$.
To achieve this, we map the link for any such face to the simplicial complex representation of a smaller instance, such that we can apply \Cref{lemma:twoStepMixing}.
To this end, we introduce the following lemma.
\begin{lemma}
	\label{lemma:complex_scale_invariance}
	Let $\complex$ be a pure $d$-dimensional simplicial complex for $d \ge 2$, let $\complexWeight$ and $\complexWeight'$ be two weight functions for $\complex$, and let $\skeletonWalk = \skeletonWalk[\complex, \complexWeight]$ and $\skeletonWalk' = \skeletonWalk[\complex, \complexWeight']$.
	 Further, if there is an $r \in \R_{>0}$ such that for all maximum faces $\complexFace \in \complex[d]$ we have $\complexWeight'(\complexFace) = r \complexWeight[\complexFace]$, then $\skeletonWalk = \skeletonWalk'$ and, in particular, $\eigenvalueOf{\transitionMatrix[\skeletonWalk]}[2] = \eigenvalueOf{\transitionMatrix[\skeletonWalk']}[2]$. 
\end{lemma}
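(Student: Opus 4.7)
The plan is to exploit the fact that every face's weight is, by the inductive definition of a weighted pure $d$-dimensional simplicial complex, a sum of the weights of the maximum faces that contain it. Scaling all maximum-face weights by a common factor $r$ therefore scales every face's weight by exactly $r$, and since the transition probabilities of a skeleton walk are defined as ratios of edge weights over sums of edge weights at a vertex, the factor $r$ cancels and the two walks coincide.

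More concretely, I would first verify by induction on $d - k$ (for $k \in \{0, 1, \dots, d\}$) that $\complexWeight'(\complexFace) = r \complexWeight[\complexFace]$ holds for every face $\complexFace \in \complex[k]$. The base case $k = d$ is given by hypothesis. For the inductive step from $k+1$ down to $k$, I would apply the recursive definition of the weight function to any $\complexFace \in \complex[k]$ and observe that
\[
    \complexWeight'(\complexFace)
    = \sum_{\complexFace'' \in \complex[d]: \complexFace \subset \complexFace''} \complexWeight'(\complexFace'')
    = r \sum_{\complexFace'' \in \complex[d]: \complexFace \subset \complexFace''} \complexWeight[\complexFace'']
    = r \complexWeight[\complexFace] .
\]
In particular, for every vertex $x$ with $\{x\} \in \complex$ and every edge $\{x, y\} \in \complex$, the equalities $\complexWeight'(\{x\}) = r\, \complexWeight[\{x\}]$ and $\complexWeight'(\{x, y\}) = r\, \complexWeight[\{x, y\}]$ hold.

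Next, I would unpack the definition of the skeleton walk. Both $\skeletonWalk$ and $\skeletonWalk'$ are non-lazy random walks on the same edge-weighted $1$-skeleton of $\complex$, differing only in the edge weights. For any two vertices $x, y$ of the $1$-skeleton with $\{x, y\} \in \complex$, the transition probability of $\skeletonWalk'$ is
\[
    \transitionMatrix[\skeletonWalk'][x, y]
    = \frac{\complexWeight'(\{x, y\})}{\sum_{z:\, \{x, z\} \in \complex} \complexWeight'(\{x, z\})}
    = \frac{r\, \complexWeight[\{x, y\}]}{r \sum_{z:\, \{x, z\} \in \complex} \complexWeight[\{x, z\}]}
    = \transitionMatrix[\skeletonWalk][x, y] ,
\]
and for non-adjacent pairs both sides are zero. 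Hence $\transitionMatrix[\skeletonWalk] = \transitionMatrix[\skeletonWalk']$, and in particular the two matrices share the same spectrum, so $\eigenvalueOf{\transitionMatrix[\skeletonWalk]}[2] = \eigenvalueOf{\transitionMatrix[\skeletonWalk']}[2]$.

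There is no real obstacle here; the statement is essentially a scale-invariance observation for weighted random walks. The only point that needs a little care is making sure that the weight function $\complexWeight$ on non-maximal faces is indeed determined by the maximal-face weights via the inductive definition given in \Cref{subsec:simplicial_complexes} (rather than being an independently prescribed function), so that the scaling propagates to all faces and hence to all skeleton edges.
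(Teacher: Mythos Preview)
Your proposal is correct and follows essentially the same approach as the paper: show that scaling the maximum-face weights by $r$ scales all face weights by $r$ (the paper does this directly from the definition, just as your displayed computation does—the ``induction on $d-k$'' framing is harmless but unnecessary), and then observe that the transition probabilities of the skeleton walk are ratios in which the common factor $r$ cancels. The paper additionally remarks explicitly that the state spaces coincide and that self-loop probabilities are zero for both walks, but these are implicit in your argument.
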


\begin{proof}
We prove this statement by showing equality of the state spaces and transition probabilites.
	The fact that $\stateSpace[\skeletonWalk] = \stateSpace[\skeletonWalk']$ follows directly from the fact that both walks are on the 1-skeleton of the same complex.
	Now, let $\complexFace' \in \complex$ be any face of $\complex$. 
	Note that
	\[
	\complexWeight'(\complexFace') 
	= \sum_{\substack{\complexFace \in \complex[d]: \\ \complexFace' \subseteq \complexFace}} \complexWeight'(\complexFace)
	= r \sum_{\substack{\complexFace \in \complex[d]: \\ \complexFace' \subseteq \complexFace}} \complexWeight[\complexFace]
	= r \complexWeight[\complexFace'] .
	\]
	Thus, the weights of all faces differ by the same factor $r$.
	Let $\{x\}, \{y\} \in \complex$ with $x \neq y$.
	If $\{x, y\} \notin \complex$, then $\transitionMatrix[\skeletonWalk][x, y] = 0 = \transitionMatrix[\skeletonWalk'][x, y]$.
	Otherwise, if $\{x, y\} \in \complex$, then
	\[
	\transitionMatrix[\skeletonWalk'][x, y] 
	= \frac{\complexWeight'(\{x, y\})}{\sum\limits_{\substack{\{z\} \in \complex: \\ \{x, z\} \in \complex}} \complexWeight'(\{x, z\})}
	= \frac{r \complexWeight[\{x, y\}]}{r \sum\limits_{\substack{\{z\} \in \complex: \\ \{x, z\} \in \complex}} \complexWeight[\{x, z\}]}
	= \transitionMatrix[\skeletonWalk][x, y] .
	\]
	As both $\skeletonWalk$ and $\skeletonWalk'$ have self loop probabilities of $0$, it follows $\transitionMatrix[\skeletonWalk] = \transitionMatrix[\skeletonWalk']$.
	This implies $\eigenvalueOf{\transitionMatrix[\skeletonWalk]}[2] = \eigenvalueOf{\transitionMatrix[\skeletonWalk']}[2]$.
\end{proof}

\subsection{Block dynamics for non-disjoint clique covers}
So far, we investigated the two-step walk on the simplicial complex representation resulting from a disjoint clique cover.
In this section, we discuss how this can be used to obtain information about the eigenvalues of the block dynamics based on arbitrary clique covers.
We use a Markov chain comparison theorem based on \cite{diaconis1993comparison}.
Similarly to the canonical-paths method, it is based on constructing paths between certain pairs of vertices in the state space.
Recall the notation introduced at the beginning of \Cref{subsec:canonical_paths}. 
\begin{theorem}[{\cite[Theorem $2.1$]{diaconis1993comparison}}]
	\label{thm:comparison}
	Let $\markov, \markov'$ be two reversible Markov chains on a common state space $\stateSpace[\markov] = \stateSpace = \stateSpace[\markov']$.
	For each $(x, y) \in \mcEdges{\markov}$, fix some path $\canonicalPath[x y] = (x_0 = x, x_1, ..., x_l=y)$ in $\mcAllEdges{\markov'}$ (i.e., $(x_{i-1}, x_i) \in \mcAllEdges{\markov'}$ for all $i \in [l]$) and let $\canonicalPaths = \{\canonicalPath[x y] \mid (x, y) \in \mcEdges{\markov}\}$ be the resulting set of paths.
	Further, let $\statRatio \in \R_{>0}$ be such that $\stationary[\markov][x] \ge \statRatio \stationary[\markov'][x]$ for all $x \in \stateSpace$ and set
	\[
		\flowRatio{\canonicalPaths} 
		= \max_{(w, z) \in \mcAllEdges{\markov'}} \frac{1}{\transitionWeight{\markov'}[w, z]} \sum_{\substack{x, y \in \stateSpace: \\ (z, w) \in \mcEdges{\canonicalPath[x y]}}} \length{\canonicalPath[x y]} \transitionWeight{\markov}[x, y] . 
	\]
	For all $2 \le i \le \size{\stateSpace}-1$ it now holds that
	\[
		\eigenvalueOf{\transitionMatrix[\markov']}[i] \le 1 - \frac{\statRatio}{\flowRatio{\canonicalPaths}} \left(1 - \eigenvalueOf{\transitionMatrix[\markov]}[i]\right) .
		\qedhere
	\]
\end{theorem}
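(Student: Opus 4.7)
The plan is to compare the Dirichlet forms of $\markov$ and $\markov'$ using the paths in $\canonicalPaths$, compare their variances via $\statRatio$, and combine these through the Courant--Fischer min-max characterization of eigenvalues. For a reversible chain $\markov$, recall that the Dirichlet form
\[
	\mathcal{E}_{\markov}(f, f) = \frac{1}{2} \sum_{x, y \in \stateSpace[\markov]} (f(x) - f(y))^2 \transitionWeight{\markov}[x, y]
\]
satisfies that $1 - \eigenvalueOf{\transitionMatrix[\markov]}[i]$ equals the minimum, over $i$-dimensional subspaces $W$ of functions on $\stateSpace[\markov]$ containing the constant function $\mathbf{1}$, of $\max_{f \in W,\ f \text{ non-constant}} \mathcal{E}_{\markov}(f, f)/\mathrm{Var}_{\stationary[\markov]}(f)$.

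The first step is the Dirichlet form bound $\mathcal{E}_{\markov}(f, f) \le \flowRatio{\canonicalPaths} \mathcal{E}_{\markov'}(f, f)$. For each $(x, y) \in \mcEdges{\markov}$ and its associated path $\canonicalPath[xy] = (x_0 = x, x_1, \ldots, x_l = y)$ in $\mcAllEdges{\markov'}$, the Cauchy--Schwarz inequality gives
\[
	(f(x) - f(y))^2 \le \length{\canonicalPath[xy]} \sum_{k = 1}^{\length{\canonicalPath[xy]}} (f(x_{k-1}) - f(x_k))^2.
\]
Plugging this into the sum defining $\mathcal{E}_{\markov}(f, f)$ and swapping the order of summation to re-index by edges $(w, z) \in \mcAllEdges{\markov'}$ produces an inner sum that is precisely of the form appearing in the definition of $\flowRatio{\canonicalPaths}$, and is therefore bounded by $\flowRatio{\canonicalPaths} \transitionWeight{\markov'}[w, z]$.

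The second step compares the variances. Since $\stationary[\markov][x] \ge \statRatio \stationary[\markov'][x]$ for every $x \in \stateSpace[\markov]$, choosing $c = \sum_x f(x) \stationary[\markov][x]$ yields
\[
	\mathrm{Var}_{\stationary[\markov']}(f) \le \sum_x (f(x) - c)^2 \stationary[\markov'][x] \le \frac{1}{\statRatio} \sum_x (f(x) - c)^2 \stationary[\markov][x] = \frac{1}{\statRatio} \mathrm{Var}_{\stationary[\markov]}(f).
\]
Combining the two estimates over any $i$-dimensional subspace $W$ containing $\mathbf{1}$ and taking the minimum over such $W$ gives $1 - \eigenvalueOf{\transitionMatrix[\markov']}[i] \ge (\statRatio / \flowRatio{\canonicalPaths})(1 - \eigenvalueOf{\transitionMatrix[\markov]}[i])$, which rearranges to the theorem's conclusion.

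The main subtlety I expect is that for $i > 2$, the min-max formulation is stated over subspaces of two different inner-product spaces $\ell^2(\stationary[\markov])$ and $\ell^2(\stationary[\markov'])$, which \emph{a priori} have different notions of orthogonality. This is resolved by the observations that the Dirichlet form is invariant under shifting $f$ by a constant and that $\mathbf{1}$ spans the eigenspace of eigenvalue $1$ in both chains; thus any $i$-dimensional subspace $W$ containing $\mathbf{1}$ is admissible as a test subspace in both variational problems simultaneously, so the same $W$ may be used on both sides of the comparison.
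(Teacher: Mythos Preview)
The paper does not prove this statement; it is quoted verbatim as \cite[Theorem~2.1]{diaconis1993comparison} and used as a black box. Your sketch is the standard Diaconis--Saloff-Coste argument from that reference: bound $\mathcal{E}_{\markov}\le \flowRatio{\canonicalPaths}\,\mathcal{E}_{\markov'}$ by Cauchy--Schwarz along paths, bound $\mathrm{Var}_{\stationary[\markov']}\le \statRatio^{-1}\mathrm{Var}_{\stationary[\markov]}$ pointwise, and feed both into the min--max characterization of $1-\eigenvalueOf{\transitionMatrix}[i]$ over $i$-dimensional subspaces containing $\mathbf{1}$. The resolution you give for the ``different inner products'' issue is the right one: the test subspaces are linear subspaces of $\R^{\stateSpace}$ containing $\mathbf{1}$, a notion independent of the choice of $\stationary$, and the pointwise inequality on Rayleigh quotients passes through $\max_f$ and then $\min_W$ in the correct direction. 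So your proposal is correct and matches the original source; there is simply nothing in the present paper to compare it against.
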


Although the two-step walk and block dynamics do not act directly on the same state space, we know by \Cref{obs:dimension} that there is a one-to-one correspondence between their state spaces.
Thus, we can assume them to have the same state space up to relabeling of states.
For every $\independentSet \in \independentSets$, we write $\faceFromIndep{\independentSet} \in \complex[\numberOfCliques]$ for the corresponding maximum face in the simplicial complex and, for every maximum face $\complexFace \in \complex[\numberOfCliques]$, we let $\indepFromFace{\complexFace} \in \independentSets$ denote the respective independent set.

\begin{lemma}
	\label{lemma:non_disjoint_comparison}
	Let $(\graph, \weights)$ be an instance of the multivariate hard-core model with a clique cover~$\clique$ of size $\numberOfCliques$, and let $\maxPartition = \max_{i \in [\numberOfCliques]} \{\partitionFunction[\subgraph{\graph}{\clique[i]}]\}$.
	Further, let $\blockDynamics = \blockDynamics[\graph, \weights, \clique]$.
	Then there is a disjoint clique cover $K$ of size $\numberOfCliques$ such that
	\[
		\max_{i \in [\numberOfCliques]} \{\partitionFunction[\subgraph{\graph}{K_i}]\} \le \maxPartition,
	\] 
	and, for $(\complex, \complexWeight)$ being the simplicial complex representation resulting from $K$ and $\twoStep = \twoStep[\complex, \complexWeight]$, it holds that
	\begin{equation}
        \label{eq:eigenvalue_comparison_block_dynamics_simplicial_complex}
		\eigenvalueOf{\transitionMatrix[\blockDynamics]}[2] \le 1 - \frac{1}{\maxPartition} \left(1 - \eigenvalueOf{\transitionMatrix[\twoStep]}[2]\right) .
		\qedhere
	\end{equation}
\end{lemma}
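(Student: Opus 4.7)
The plan is to refine the given cover $\clique$ into a disjoint clique cover $K$ of the same size and then invoke the Markov-chain comparison theorem (\Cref{thm:comparison}) with length-one paths to transfer spectral information from $\blockDynamics[\graph, \weights, K]$ (which, by \Cref{observation:twoStep_transitions}, coincides with $\twoStep$) back to $\blockDynamics$.

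First I would set $K_i \defeq \clique[i] \setminus \bigcup_{j < i} \clique[j]$ for each $i \in [\numberOfCliques]$. These sets are pairwise disjoint, each $K_i \subseteq \clique[i]$ induces a (possibly empty) clique, and $\bigcup_i K_i = \bigcup_i \clique[i] = \vertices$, so $K$ is a disjoint clique cover of size $\numberOfCliques$. Monotonicity of the partition function in the vertex set gives $\partitionFunction[\subgraph{\graph}{K_i}] \le \partitionFunction[\subgraph{\graph}{\clique[i]}] \le \maxPartition$. Writing $\markov \defeq \blockDynamics[\graph, \weights, K]$, \Cref{observation:twoStep_transitions} identifies $\markov$ with $\twoStep$; both $\markov$ and $\blockDynamics$ share the state space $\independentSets[\graph]$ and the stationary distribution $\gibbsDistribution[\graph, \weights]$, so \Cref{thm:comparison} applies with $\statRatio = 1$.

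For each $(\independentSet, \independentSet') \in \mcEdges{\markov}$ the symmetric difference $\independentSet \symDiff \independentSet'$ lies in a unique $K_i$, and since $K_i \subseteq \clique[i]$ the same pair is also an edge of $\blockDynamics$ realized via the block $\clique[i]$. I would therefore use the length-one canonical path $\canonicalPath[\independentSet \independentSet'] = (\independentSet, \independentSet')$. With these paths each directed edge of $\blockDynamics$ participates in at most one canonical path, yielding
\[
\flowRatio{\canonicalPaths} \le \max_{(\independentSet, \independentSet') \in \mcEdges{\markov}} \frac{\transitionWeight{\markov}[\independentSet, \independentSet']}{\transitionWeight{\blockDynamics}[\independentSet, \independentSet']} .
\]

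The main step is bounding this ratio by $\maxPartition$. Writing out the transitions and using $\transitionMatrix[\blockDynamics][\independentSet, \independentSet'] \ge \frac{1}{\numberOfCliques}\mu_{\clique[i]}(\independentSet' \cap \clique[i] \mid \independentSet \setminus \clique[i])$, where $\mu_B(\cdot \mid \tau)$ denotes the projection of the Gibbs measure onto $B$ conditioned on $\tau$ outside $B$ and only the contribution of the block $\clique[i]$ is kept, the ratio is bounded by $\mu_{K_i}(\independentSet' \cap K_i \mid \independentSet \setminus K_i) / \mu_{\clique[i]}(\independentSet' \cap \clique[i] \mid \independentSet \setminus \clique[i])$. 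The chain rule for conditional Gibbs measures factors the denominator as $\mu_{\clique[i] \setminus K_i}(T \mid \independentSet \setminus \clique[i]) \cdot \mu_{K_i}(\independentSet' \cap K_i \mid \independentSet \setminus K_i)$, with $T \defeq \independentSet \cap (\clique[i] \setminus K_i) = \independentSet' \cap (\clique[i] \setminus K_i)$, so the ratio collapses to $1/\mu_{\clique[i] \setminus K_i}(T \mid \independentSet \setminus \clique[i])$. The crucial, and slightly subtle, observation is that $T = \emptyset$ on any edge: if instead $T = \{u\}$ for some $u \in \clique[i] \setminus K_i$, then since $\clique[i]$ is a clique we have $\independentSet \cap \clique[i] = \{u\}$ and analogously $\independentSet' \cap \clique[i] = \{u\}$, so $\independentSet \cap K_i = \independentSet' \cap K_i = \emptyset$ because $u \notin K_i$; together with $\independentSet \symDiff \independentSet' \subseteq K_i$ this forces $\independentSet = \independentSet'$, contradicting $(\independentSet, \independentSet') \in \mcEdges{\markov}$. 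With $T = \emptyset$, the empty configuration on $\clique[i] \setminus K_i$ has probability at least $1/\partitionFunction[\subgraph{\graph}{\clique[i]^\ast}] \ge 1/\maxPartition$, where $\clique[i]^\ast = \clique[i] \setminus \neighbors{\independentSet \setminus \clique[i]}$. Hence $\flowRatio{\canonicalPaths} \le \maxPartition$, and \Cref{thm:comparison} yields \cref{eq:eigenvalue_comparison_block_dynamics_simplicial_complex}.
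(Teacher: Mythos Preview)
Your proof is correct and follows the same strategy as the paper: construct $K$ by the greedy refinement $K_i = \clique[i] \setminus \bigcup_{j<i}\clique[j]$, identify $\blockDynamics[\graph,\weights,K]$ with $\twoStep$ via \Cref{observation:twoStep_transitions}, take length-one canonical paths, and apply \Cref{thm:comparison} with $\statRatio=1$ and $\flowRatio{\canonicalPaths}\le\maxPartition$. The only difference is how the ratio $\transitionWeight{\markov}/\transitionWeight{\blockDynamics}$ is bounded: the paper does a direct case split on whether the transition adds, removes, or swaps a vertex, whereas you factor the conditional Gibbs measure via the chain rule and use the clique property to force $T=\emptyset$, handling all three cases at once; both arguments arrive at the same bound $\maxPartition$, so this is a stylistic rather than a structural difference.
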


\begin{proof}
	We start by constructing the a disjoint clique cover $K$ from $\clique$.
	To this end, we set $K_1 = \clique[1]$ and $K_i = \clique[i] \setminus K_{i - 1}$ for $2 \le i \le \numberOfCliques$.
	Note that $K_i \cap K_j = \emptyset$ for all $i \neq j$ and $\bigcup_{i \in [\numberOfCliques]} K_i = \bigcup_{i \in [\numberOfCliques]} \clique[i] = \vertices$.
	Thus, $K = \{K_i\}_{i \in [\numberOfCliques]}$ is a disjoint clique cover.
	Further, for all $i \in [\numberOfCliques]$ it holds that $K_i \subseteq \clique[i]$, which implies $\partitionFunction[\subgraph{\graph}{K_i}] \le \partitionFunction[\subgraph{\graph}{\clique[i]}]$ and thus $\max_{i \in [\numberOfCliques]} \{\partitionFunction[\subgraph{\graph}{K_i}]\} \le \maxPartition$.
	
	We now prove \cref{eq:eigenvalue_comparison_block_dynamics_simplicial_complex} by applying \Cref{thm:comparison}.
	First note that we know by \Cref{observation:twoStep_transitions} that $\stationary[\twoStep][\complexFace] = \gibbsDistributionFunction{\indepFromFace{\complexFace}} = \stationary[\blockDynamics][\indepFromFace{\complexFace}]$, meaning that we can set $\statRatio = 1$.
	It remains to construct a set of paths $\canonicalPaths$ and upper bound $\flowRatio{\canonicalPaths}$.
	Note that for every $(\complexFace, \complexFace') \in \mcEdges{\twoStep}$ there exists some $i \in [\numberOfCliques]$ such that $\indepFromFace{\complexFace} \symDiff \indepFromFace{\complexFace'} \subseteq K_i \subseteq \clique[i]$, implying that there is a corresponding edge $(\indepFromFace{\complexFace}, \indepFromFace{\complexFace'}) \in \mcAllEdges{\blockDynamics}$.
	We set the path $\canonicalPath[\complexFace \complexFace']$ between $\indepFromFace{\complexFace}$ and $\indepFromFace{\complexFace'}$ to be this edge and $\canonicalPaths$ to be the set of all such paths.
	
	Note that we actually only use edges in $\mcEdges{\blockDynamics}$, which means that we ignore self loops while bounding $\flowRatio{\canonicalPaths}$.
	Further, for all $(\complexFace, \complexFace') \in \mcEdges{\twoStep}$ it holds that $\length{\canonicalPath[\complexFace \complexFace']} = 1$.
	We obtain for every $(\independentSet, \independentSet') \in \mcEdges{\blockDynamics}$ that
	\[
		\frac{1}{\transitionWeight{\blockDynamics}[\independentSet, \independentSet']} \sum_{\substack{\complexFace, \complexFace' \in \complex[\numberOfCliques]\colon \\ (\independentSet, \independentSet') \in \mcEdges{\canonicalPath[\complexFace \complexFace']}}} \length{\canonicalPath[\complexFace \complexFace']} \transitionWeight{\twoStep}[\complexFace, \complexFace']
		= \begin{cases}
			\frac{\transitionWeight{\twoStep}[\faceFromIndep{\independentSet}, \faceFromIndep{\independentSet'}]}{\transitionWeight{\blockDynamics}[\independentSet, \independentSet']} & \text{ if } (\faceFromIndep{\independentSet}, \faceFromIndep{\independentSet'}) \in \mcEdges{\twoStep}, \\
			0 & \text{ otherwise. }
		\end{cases}
	\]
	
	We proceed by doing a case distinction based on $(\independentSet, \independentSet') \in \mcEdges{\blockDynamics}$, assuming that $(\faceFromIndep{\independentSet}, \faceFromIndep{\independentSet'}) \in \mcEdges{\twoStep}$.
	Consider a transition $(\independentSet, \independentSet') \in \mcEdges{\blockDynamics}$ with $\independentSet = \independentSet' \cup \{v\}$ for some $v \notin \independentSet$ (i.e., removing $v$).
	Without loss of generality, assume $v \in K_i$ for some $i \in [\numberOfCliques]$.
	First, note that
	\[
		\transitionWeight{\blockDynamics}[\independentSet, \independentSet'] 
		= \gibbsDistributionFunction{\independentSet} \frac{1}{\numberOfCliques} \sum_{\substack{j \in [\numberOfCliques]: \\ v \in \clique[j]}} \gibbsDistributionFunction{\emptyset}[][\clique[j]][\spinConfig[\vertices \setminus \clique[j]][][\independentSet]]
		\ge \gibbsDistributionFunction{\independentSet} \frac{1}{\numberOfCliques} \frac{1}{\maxPartition} .
	\] 
	Further, by \Cref{observation:twoStep_transitions} it holds that 
	\[
		\transitionWeight{\twoStep}[\complexFace, \complexFace'] 
		= \gibbsDistributionFunction{\independentSet} \frac{1}{\numberOfCliques} \gibbsDistributionFunction{\emptyset}[][K_i][\spinConfig[\vertices \setminus K_i][][\independentSet]]
		\le \gibbsDistributionFunction{\independentSet} \frac{1}{\numberOfCliques} .
	\]
	Thus, we obtain
	\[
		\frac{\transitionWeight{\twoStep}[\faceFromIndep{\independentSet}, \faceFromIndep{\independentSet'}]}{\transitionWeight{\blockDynamics}[\independentSet, \independentSet']} \le \maxPartition .
	\]
	
	Now, consider $(\independentSet, \independentSet') \in \mcEdges{\blockDynamics}$ with $\independentSet' = \independentSet \cup \{v\}$ for some $v \notin \independentSet$ (i.e., adding $v$) and assume $v \in K_i$ for some $i \in [\numberOfCliques]$.
	We observe that
	\[
		\transitionWeight{\blockDynamics}[\independentSet, \independentSet'] 
		= \gibbsDistributionFunction{\independentSet} \frac{1}{\numberOfCliques} \sum_{\substack{j \in [\numberOfCliques]: \\ v \in \clique[j]}} \gibbsDistributionFunction{\{v\}}[][\clique[j]][\spinConfig[\vertices \setminus \clique[j]][][\independentSet]]
		\ge \gibbsDistributionFunction{\independentSet} \frac{1}{\numberOfCliques} \frac{1}{\maxPartition} \weight[v].
	\]
	Similarly, we have 
	\[
		\transitionWeight{\twoStep}[\complexFace, \complexFace'] 
		= \gibbsDistributionFunction{\independentSet} \frac{1}{\numberOfCliques} \gibbsDistributionFunction{\{v\}}[][K_i][\spinConfig[\vertices \setminus K_i][][\independentSet]]
		\le \gibbsDistributionFunction{\independentSet} \frac{1}{\numberOfCliques} \weight[v] .
	\]
	Again, we get 
	\[
		\frac{\transitionWeight{\twoStep}[\faceFromIndep{\independentSet}, \faceFromIndep{\independentSet'}]}{\transitionWeight{\blockDynamics}[\independentSet, \independentSet']} \le \maxPartition .
	\]
	
	Finally, consider $(\independentSet, \independentSet') \in \mcEdges{\blockDynamics}$ with $\independentSet' = (\independentSet \setminus \{w\}) \cup \{v\}$ for $v \neq w$ (i.e., remove $w$ and add $v$).
	Note that, as $(\independentSet, \independentSet') \in \mcEdges{\blockDynamics}$, there has to be some $j \in [\numberOfCliques]$ such that $v, w \in \clique[j]$.
	We get that  
	\[
		\transitionWeight{\blockDynamics}[\independentSet, \independentSet'] 
		= \gibbsDistributionFunction{\independentSet} \frac{1}{\numberOfCliques} \sum_{\substack{j \in [\numberOfCliques]: \\ v, w \in \clique[j]}} \gibbsDistributionFunction{\{v\}}[][\clique[j]][\spinConfig[\vertices \setminus \clique[j]][][\independentSet]]
		\ge \gibbsDistributionFunction{\independentSet} \frac{1}{\numberOfCliques} \frac{1}{\maxPartition} \weight[v].
	\]
	Further, as $(\faceFromIndep{\independentSet}, \faceFromIndep{\independentSet'}) \in \mcEdges{\twoStep}$, there is exactly one $i \in [\numberOfCliques]$ such that $v, w \in K_i$.
	This yields
	\[
		\transitionWeight{\twoStep}[\complexFace, \complexFace'] 
		= \gibbsDistributionFunction{\independentSet} \frac{1}{\numberOfCliques} \gibbsDistributionFunction{\{v\}}[][K_i][\spinConfig[\vertices \setminus K_i][][\independentSet]]
		\le \gibbsDistributionFunction{\independentSet} \frac{1}{\numberOfCliques} \weight[v].
	\]
	Thus, also in this case we have
	\[
		\frac{\transitionWeight{\twoStep}[\faceFromIndep{\independentSet}, \faceFromIndep{\independentSet'}]}{\transitionWeight{\blockDynamics}[\independentSet, \independentSet']} \le \maxPartition .
	\]
	
	As $\blockDynamics$ changes at most one clique in $\clique$ at each step, the above case distinction covers all $(\independentSet, \independentSet') \in \mcEdges{\blockDynamics}$.
	We obtain $\flowRatio{\canonicalPaths} \le \maxPartition$, which proves the claim.
\end{proof}

\subsection{Bounding the mixing time} 
We now have everything to state and prove our main theorem on the mixing time of block dynamics.

\mixingBlockDynamics*

\begin{proof}
	By \cref{eq:mixing_spectral_gap} it is sufficient to lower bound the spectral gap of $\transitionMatrix[\blockDynamics]$ by $\frac{1}{\poly{\maxPartition} }$ and $\frac{1}{\poly{\numberOfCliques}}$ to prove our claim.
	Further, transforming the chain into a lazy version only results in constant overhead in the mixing time.
	Thus, we focus on lower-bounding $1 - \eigenvalueOf{\transitionMatrix[\blockDynamics]}[2]$, which is equivalent to upper-bounding $\eigenvalueOf{\transitionMatrix[\blockDynamics]}[2]$.
	
	Let $K$ be the disjoint clique cover, constructed from $\clique$ as described in the proof of \Cref{lemma:non_disjoint_comparison}, and let $\maxPartition^{(K)} = \max_{i \in [\numberOfCliques]} \{\partitionFunction[\subgraph{\graph}{K_i}]\}$. 
	Further, let $(\complex, \complexWeight)$ be the simplicial-complex representation based on~$K$ with groundset~$\complexGroundset$ and partitions $\{\complexPartition{i}\}_{i \in [\numberOfCliques]}$, and let $\twoStep = \twoStep[\complex, \complexWeight]$ denote the two-step walk on $(\complex, \complexWeight)$.
	By \Cref{lemma:non_disjoint_comparison},
	\begin{gather} \label{eq:main:comparison}
		\eigenvalueOf{\transitionMatrix[\blockDynamics]}[2] \le 1 - \frac{1}{\maxPartition} \left(1 - \eigenvalueOf{\transitionMatrix[\twoStep]}[2]\right) .
	\end{gather}
	Thus, it is sufficient for us to upper bound $\eigenvalueOf{\transitionMatrix[\twoStep]}[2]$.
	
	We aim to apply \Cref{lemma:twoStepMixing}, which involves upper-bounding local expansion of the simplicial-complex representation.
	Let~$C$ be the constant for which $(\graph, \weights)$ satisfies \Cref{condition:influence_bound}.
	We proceed by proving that $(\complex, \complexWeight)$ is a local $(\alpha_0, \dots, \alpha_{\numberOfCliques-2})$-expander, where
	\begin{gather} \label{eq:main:local_expansion}
		\alpha_k \le \min \left\{ 1 - \frac{1}{12 \maxPartition^2}, \frac{(2 + C)C}{\numberOfCliques - k - 1} \right\} \text{ for } 0 \le k \le \numberOfCliques - 2 .
	\end{gather}
	We start by arguing both bounds for the case $k = 0$.
    Then we generalize our arguments for the cases $k \in [\numberOfCliques - 2]$.
	
	\textbf{Case $k = 0$.}
    Let $\skeletonWalk = \skeletonWalk[\complex, \complexWeight]$ be the skeleton walk on $(\complex, \complexWeight)$.
	By definition, we have $\alpha_0 = \eigenvalueOf{\transitionMatrix[\skeletonWalk]}[2]$.
	Note that the first bound $\alpha_0 \le 1 - \frac{1}{12 \maxPartition^2}$ follows directly from \Cref{lemma:canonical_paths} and the fact that $\maxPartition^{(K)} \le \maxPartition$ by \Cref{lemma:non_disjoint_comparison}.
	To prove the second bound, we apply \Cref{lemma:eigenvalues_skeleton}, which gives us
	\[
		\alpha_0 \leq \frac{\eigenvalueOf{\cliqueInfluence{\graph}{K}}[1]}{\numberOfCliques - 1} .
	\]
	By \Cref{lemma:clique_influence_bound}, we conclude that $\alpha_0 \le \frac{(2 + C)C}{\numberOfCliques - 1}$.
	
	\textbf{Case $k \in [\numberOfCliques - 2]$.}
	By definition, we have to show for all $\complexFace \in \complex[k]$ that the skeleton walk $\skeletonWalk_{\complexFace} = \skeletonWalk[\complex[][\complexFace], \complexWeight[][\complexFace]]$ on the link $(\complex[][\complexFace], \complexWeight[][\complexFace])$ satisfies
	\[
		\eigenvalueOf{\transitionMatrix[\skeletonWalk_{\complexFace}]}[2] \le \min \left\{ 1 - \frac{1}{12 \maxPartition^2}, \frac{(2 + C)C}{\numberOfCliques - k - 1} \right\} .
	\]
	We construct a subset of vertices $S \subseteq \vertices$ such that
	\[
		S = \bigcup_{x \in \complexFace} \begin{cases}
			\neighborsClosed{v} & \text{ if $x = \complexElement{v}$ for some $v \in \vertices$,} \\
			\clique[i] & \text{ if $x = \complexEmpty{i}$ for some $i \in [\numberOfCliques]$.}
		\end{cases}
	\]
	Let $\graph' = \subgraph{\graph}{\vertices \setminus S}$ be the subgraph induced by $\vertices \setminus S$ and let $\weights' = \subgraph{\weights}{\vertices \setminus S}$ be the corresponding vertex weights.
	Further, let $K' = \set{K_i \setminus S}{ i \in [\numberOfCliques] \land \complexFace \cap \complexPartition{i} = \emptyset}$ and note that $K'$ is a disjoint clique cover of the multivariate hard-core instance $(\graph', \weights')$.
	Let $(\complex', \complexWeight')$ be the corresponding simplicial-complex representation and let $\skeletonWalk' = \skeletonWalk[\complex', \complexWeight']$ be the skeleton walk on $(\complex', \complexWeight')$.
	Note that $\complex' = \complex[][\complexFace]$ and that for every maximum face $\complexFace' \in \complex[\numberOfCliques - \size{\complexFace}][\complexFace]$ it holds that
	\begin{align*}
		\complexWeight[\complexFace'][\complexFace] 
		& = \complexWeight[\complexFace' \cup \complexFace] 
		%			= \gibbsDistributionFunction{\indepFromFace{\complexFace' \cup \complexFace}}[\graph, \weights] 
		\\
		& = \frac{1}{\partitionFunction[\graph, \weights]} \left(\prod_{w \in \vertices \setminus S} \weight[w] \ind{\complexElement{w} \in \complexFace'}\right) \left(\prod_{v \in S} \weight[v] \ind{\complexElement{v} \in \complexFace}\right) \\
		& = \frac{1}{\partitionFunction[\graph', \weights']} \left(\prod_{w \in \vertices \setminus S} \weight'_{w} \ind{\complexElement{w} \in \complexFace'}\right) \frac{\partitionFunction[\graph', \weights']}{\partitionFunction[\graph, \weights]} \left(\prod_{v \in S} \weight[v] \ind{\complexElement{v} \in \complexFace}\right) \\
		& = \complexWeight'(\complexFace') \partitionFunction[\graph', \weights'] \complexWeight[\complexFace],
	\end{align*}
	where $\partitionFunction[\graph', \weights'] \complexWeight[\complexFace] > 0$.
	Thus, by \Cref{lemma:complex_scale_invariance}, we obtain $\eigenvalueOf{\transitionMatrix[\skeletonWalk_{\complexFace}]}[2] = \eigenvalueOf{\transitionMatrix[\skeletonWalk']}[2]$.
	
    To upper-bound $\eigenvalueOf{\transitionMatrix[\skeletonWalk']}[2]$, observe that
	\begin{itemize}
		\item $\size{K'} = \numberOfCliques - \size{\complexFace} = \numberOfCliques - k$,
		\item $\max_{K_i' \in K'} \{\partitionFunction[\subgraph{\graph}{K_i'}]\} \le \maxPartition^{(K)} \le \maxPartition$, and
		\item $\graph'$ is and induced subgraph of $\graph$. 
	\end{itemize}
	Thus, analogous to the case $k=0$, applying \Cref{lemma:canonical_paths} yields
	\[
		\alpha_k \le 1 - \frac{1}{12 \max_{K_i' \in K'} \{\partitionFunction[\subgraph{\graph}{K_i'}]\}^2 } \le  1 - \frac{1}{12 \maxPartition^2 }.
	\]
	Together, \Cref{lemma:eigenvalues_skeleton} and \Cref{lemma:clique_influence_bound} result in
	\[
		\alpha_k \le \frac{(2+C)C}{\size{K'} - 1} = \frac{(2+C)C}{\numberOfCliques - k - 1} .
	\]
	
	From \cref{eq:main:local_expansion} and \Cref{lemma:twoStepMixing}, we conclude
	\begin{align*}
		\eigenvalueOf{\transitionMatrix[\twoStep]}[2] 
		& \le 1 - \frac{1}{\numberOfCliques} \prod_{0 \le k \le \numberOfCliques - 2} \left( 1 - \min \left\{ 1 - \frac{1}{12 \maxPartition^2}, \frac{(2+C)C}{\numberOfCliques - k - 1} \right\} \right) \\
		& = 1 - \frac{1}{\numberOfCliques} \prod_{0 \le k \le \numberOfCliques - 2} \max \left\{ \frac{1}{12 \maxPartition^2}, 1 - \frac{(2+C)C}{\numberOfCliques - k - 1} \right\} .
	\end{align*}
	Let $k_0 = m - 2 (2+C)C - 1$ and observe that for $k \le k_0$ it holds that 
	\[
		1 - \frac{(2+C)C}{\numberOfCliques - k - 1} \ge \frac{1}{2} > \frac{1}{12} \ge \frac{1}{12 \maxPartition^2} .
	\]
	Thus, we have
	\begin{align*}
		\frac{1}{\numberOfCliques} \prod_{0 \le k \le \numberOfCliques - 2} \max \left\{ \frac{1}{12 \maxPartition^2}, 1 - \frac{(2+C)C}{\numberOfCliques - k - 1} \right\}
		& \ge \frac{1}{\numberOfCliques} \left( \prod_{0 \le k \le k_0} \left( 1 - \frac{(2+C)C}{\numberOfCliques - k - 1} \right) \right) \left( \prod_{k_0 < k \le \numberOfCliques - 2} \frac{1}{12 \maxPartition^2} \right) \\
		& = \frac{1}{\numberOfCliques} \left( \frac{1}{12 \maxPartition^2} \right)^{m - 2 - k_0} \prod_{0 \le k \le k_0} \left( 1 - \frac{(2+C)C}{\numberOfCliques - k - 1} \right) \\
		& = \frac{1}{\numberOfCliques} \left( \frac{1}{\sqrt{12} \maxPartition} \right)^{4 (2+C)C - 2} \prod_{0 \le k \le k_0} \left( 1 - \frac{(2+C)C}{\numberOfCliques - k - 1} \right) .
	\end{align*}
	Further, because $\ln(1 - x) \ge \frac{- x}{1 - x}$ for $x < 1$, we have
	\begin{align*}
		\ln \left(\prod_{0 \le k \le k_0} \left( 1 - \frac{(2+C)C}{\numberOfCliques - k - 1} \right) \right) 
		& = \sum_{0 \le k \le k_0} \ln \left( 1 - \frac{(2+C)C}{\numberOfCliques - k - 1} \right) \\
		& \ge \sum_{0 \le k \le k_0} \frac{- (2+C)C / (\numberOfCliques - k - 1)}{1 - (2+C)C / (m - k - 1)} \\
		& = - (2+C)C \sum_{0 \le k \le k_0} \frac{1}{\numberOfCliques - (2+C)C - k - 1} \\
		& =  - (2+C)C \sum_{(2+C)C \le j \le \numberOfCliques - (2+C)C - 1} \frac{1}{j} \\
		& \ge - (2+C)C \ln (\numberOfCliques) .
	\end{align*}
	We obtain
	\begin{gather} \label{eq:main:twoStep}
		\eigenvalueOf{\transitionMatrix[\twoStep]}[2]
		\le 1 - \frac{1}{\numberOfCliques} \left( \frac{1}{\sqrt{12} \maxPartition} \right)^{4(2+C)C - 2} \eulerE^{- (2+C)C \ln(\numberOfCliques)}
		= 1 - \left(\frac{1}{\numberOfCliques}\right)^{(2+C)C + 1} \left( \frac{1}{\sqrt{12} \maxPartition} \right)^{4(2+C)C - 2}.
	\end{gather}
	
	By combining \cref{eq:main:comparison,eq:main:twoStep}, we get
	\[
		1 - \eigenvalueOf{\transitionMatrix[\blockDynamics]}[2] \ge \frac{1}{\bigO{\maxPartition^{4(2+C)C - 1} \numberOfCliques^{(2+C)C + 1}}}.
	\] 
	As $C$ is assumed to be a constant, this implies the desired mixing time.
\end{proof}

%!TEX root = ../clique_spectral_independence.tex

\section{Univariate model: mixing up to uniqueness}
We consider the univariate hard-core model, often just referred to as hard-core model, in which all vertices $v \in \vertices$ have the same weight $\weight[v] = \weight$ for some $\weight \in \R_{>0}$.
We denote an instance of this model by $(\graph, \weight)$.

We define $\criticalWeight{\degree} = \frac{(\degree - 1)^{\degree - 1}}{(\degree - 2)^{\degree}}$ to be the \emph{critical weight} of the hard-core model.
As we discussed in the introduction, $\criticalWeight{\degree}$ is the threshold for correlation decay on general graphs and a tight upper bound for rapid mixing of Glauber dynamics.

We show that the univariate model $(\graph, \weight)$ satisfies \Cref{condition:influence_bound} for all $\weight \le \criticalWeight{\degree}$.
To do so, we use the following recently established result.
\begin{lemma}[\cite{Chen2020rapid}]
	\label{lemma:univariate_chen}
	Let $(\graph, \weight)$ be an instance of the univariate hard-core model and assume that the maximum degree of $\graph$ is bounded by $\degree$.
	If there is a constant $\delta > 0$ such that $\weight \le (1 - \delta) \criticalWeight{\degree}$, then there is a constant $C \in \bigO{\frac{1}{\delta}}$ such that for all $S \subseteq \vertices$ it holds that $\norm{\pairwiseInfluence{\subgraph{\graph}{S}}}[\infty] \le C$.
\end{lemma}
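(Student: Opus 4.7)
The plan is to combine Weitz's self\-/avoiding walk (SAW) tree reduction with a potential\-/function contraction analysis of the tree recursion, which is the standard toolkit behind results of this type.

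First, fix $S \subseteq \vertices$ and a root vertex $r \in S$, and build the SAW tree $\sawTree = \sawTree[\subgraph{\graph}{S}, r]$ in Weitz's sense: a vertex $w \in S \setminus \{r\}$ gives rise to a (finite) set of copies $\sawCopies{w}[r]$ in $\sawTree$, and by Weitz's theorem the marginal of $r$ under $\gibbsDistribution[\subgraph{\graph}{S}, \weight]$ equals the marginal of the root of $\sawTree$ under the univariate hard\-/core measure on $\sawTree$ with the appropriate $\{0,1\}$ pins on leaves. Crucially, $\sawTree$ has maximum degree at most~$\degree$, and conditioning on $\inSet{w}$ versus $\notInSet{w}$ in $\subgraph{\graph}{S}$ translates into pinning all copies in $\sawCopies{w}[r]$ to the corresponding state in $\sawTree$.

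Second, I would pass from pairwise influences on $\subgraph{\graph}{S}$ to influences on the tree. Using the ratio $R_v = \gibbsPr{\sawTree_v}{\inSet{v}}/\gibbsPr{\sawTree_v}{\notInSet{v}}$ and the recursion $R_v = \weight \prod_{u \text{ child of } v} 1/(1+R_u)$, the pairwise influence $\pairwiseInfluence{\subgraph{\graph}{S}}[][r,w]$ decomposes as a signed sum over $\widehat{w} \in \sawCopies{w}[r]$ of products of partial derivatives of the tree recursion along the root\-/to\-/$\widehat{w}$ path. In particular,
\[
    \sum_{w \in S} \absolute{\pairwiseInfluence{\subgraph{\graph}{S}}[][r,w]}
    \;\le\; \sum_{k \ge 1} \sum_{\widehat{w} \in \sawLayer{k}{\sawTree}}
    \absolute{\text{influence of }\widehat{w}\text{ on the root of }\sawTree}.
\]

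Third, I would bound the inner sum by a potential\-/function contraction argument. Pick a potential $\Phi$ with $\Phi'(R)^2 \propto 1/(R(1+R))$ (the standard choice going back to Restrepo et al./Li-Lu-Yin); in the rescaled variable $\Psi = \Phi(R)$ the one\-/step tree recursion has Jacobian norm bounded by the maximum over $(R_1,\dots,R_k)$ with $k \le \degree-1$ of a quantity whose supremum equals $1$ exactly at the tree threshold $\weight = \criticalWeight{\degree}$. When $\weight \le (1-\delta)\criticalWeight{\degree}$, a quantitative version gives a uniform contraction rate $1 - \Omega(\delta)$ per layer. Summing the influence over layer~$k$ of $\sawTree$ then yields a bound of the form $(1-\Omega(\delta))^k$ times a factor that absorbs the branching (because the derivative of the recursion in each coordinate contributes one factor per child), so the whole series is a geometric one with ratio $1 - \Omega(\delta)$ and hence totals $\bigO{1/\delta}$. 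Taking the maximum over $r$ gives $\norm{\pairwiseInfluence{\subgraph{\graph}{S}}}[\infty] \le C$ with $C \in \bigO{1/\delta}$, uniformly in $S$.

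The main obstacle will be the third step: turning the asymptotic "contraction below the tree threshold" into an explicit $\bigO{1/\delta}$ bound that is uniform in $k$, in $S$, and in the boundary pins induced by Weitz's construction. This requires the potential to have the right compatibility with the branching factor $\degree-1$ and with the pin\-/induced boundary ratios so that the derivative estimate from the infinite $\degree$-regular tree transfers to every finite subtree appearing in $\sawTree$. Once that is in place, the $\ell_1$\-/summation over layers and Weitz's equivalence assemble into the claimed bound.
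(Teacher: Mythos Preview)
The paper does not prove this lemma; it is quoted verbatim as a result of \cite{Chen2020rapid} and used as a black box (the only in-paper consequence is the one-line \Cref{lemma:univariate}). So there is no ``paper's own proof'' to compare against here.

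That said, your sketch is an accurate outline of how \cite{Chen2020rapid} actually establishes the bound: Weitz's SAW-tree reduction to transfer the influence question to a tree of maximum degree~$\degree$, the path-product/layer decomposition of influences on the tree (the paper records the relevant tree facts as \Cref{lemma:saw_influence} and \Cref{lemma:tree_influence}), and a potential-function contraction of the tree recursion below the uniqueness threshold that yields a per-layer factor $1-\Omega(\delta)$ and hence a geometric sum of order $\bigO{1/\delta}$. You have also correctly identified the only nontrivial step, namely making the contraction quantitative and uniform over all pinnings induced by the SAW construction; the potential $\Phi$ with $\Phi'(R)^2 \propto 1/(R(1+R))$ is exactly the one used there. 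Nothing in your plan is wrong or missing at the level of a sketch; it matches the cited proof.
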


This implies the following result immediately.
\begin{lemma}
	\label{lemma:univariate}
	Let $(\graph, \weight)$ be an instance of the univariate hard-core model and assume that the maximum degree of $\graph$ is bounded by $\degree$.
	If there is a constant $\delta > 0$ such that $\weight \le (1 - \delta) \criticalWeight{\degree}$, then $(\graph, \weight)$ satisfies \Cref{condition:influence_bound} for a constant $C \in \bigO{\frac{1}{\delta}}$.
\end{lemma}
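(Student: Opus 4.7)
The plan is to take the witness function $q\colon \vertices \to \R_{>0}$ to be the constant function $q \equiv 1$. With this choice, \Cref{condition:influence_bound} collapses into the requirement that for every $S \subseteq \vertices$ and every $r \in S$ the partial row sum $\sum_{v \in S} \absolute{\pairwiseInfluence{\graph}[][r, v]}$ is bounded by some constant~$C$, which is a statement about (truncated) row sums of the pairwise influence matrix.

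First I would invoke \Cref{lemma:univariate_chen} with the full set $S = \vertices$: under the hypothesis $\weight \le (1 - \delta) \criticalWeight{\degree}$ it supplies a constant $C \in \bigO{1/\delta}$ such that $\norm{\pairwiseInfluence{\graph}}[\infty] \le C$. Since the matrix $\infty$-norm is precisely the maximum absolute row sum, this already gives $\sum_{v \in \vertices} \absolute{\pairwiseInfluence{\graph}[][r, v]} \le C$ for every $r \in \vertices$. To then verify \Cref{condition:influence_bound} for arbitrary $S \subseteq \vertices$ and $r \in S$, I would use that the summands $\absolute{\pairwiseInfluence{\graph}[][r, v]}$ are non-negative, so restricting the summation to~$S$ can only shrink the total:
\[
\sum_{v \in S} \absolute{\pairwiseInfluence{\graph}[][r, v]}\, q(v)
= \sum_{v \in S} \absolute{\pairwiseInfluence{\graph}[][r, v]}
\le \sum_{v \in \vertices} \absolute{\pairwiseInfluence{\graph}[][r, v]}
\le C
= C \cdot q(r) .
\]

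There is no real obstacle in this argument: recognizing that the choice $q \equiv 1$ reduces \Cref{condition:influence_bound} to a plain row-sum bound, combined with monotonicity of the sum in the index set, makes the claim an immediate corollary of \Cref{lemma:univariate_chen}, with the same constant $C \in \bigO{1/\delta}$ carried over.
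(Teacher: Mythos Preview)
Your choice $q \equiv 1$ matches the paper, and the reduction to a row-sum bound is the right idea. There is, however, a subtle gap in how you invoke \Cref{lemma:univariate_chen}. You apply it only for $S = \vertices$ and then pass to an arbitrary $S$ by monotonicity, which yields $\sum_{v \in S}\absolute{\pairwiseInfluence{\graph}[][r,v]} \le C$ with the influence computed in the \emph{full} graph $\graph$. The paper instead applies \Cref{lemma:univariate_chen} for each $S$ separately, obtaining $\sum_{v \in S}\absolute{\pairwiseInfluence{\subgraph{\graph}{S}}[][r,v]} \le \norm{\pairwiseInfluence{\subgraph{\graph}{S}}}[\infty] \le C$, i.e.\ with the influence computed in the induced subgraph $\subgraph{\graph}{S}$. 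These two quantities are not comparable in general (deleting vertices can either increase or decrease a pairwise influence), so your monotonicity step does not deliver the subgraph version.

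Why this matters: although \Cref{condition:influence_bound} is literally phrased with $\pairwiseInfluence{\graph}$, the way it is consumed later (see the proof of \Cref{lemma:clique_influence_bound}, where the condition is applied to $\graph' = \subgraph{\graph}{S}$ and to $\graph_v' = \subgraph{\graph}{S \setminus (\clique[i]\setminus\{v\})}$) requires the bound for the subgraph influence matrix $\pairwiseInfluence{\subgraph{\graph}{S}}$. Your argument therefore verifies only the weaker literal reading. The repair is immediate and costs nothing: invoke \Cref{lemma:univariate_chen} directly for every $S$, which is exactly what that lemma provides, and drop the monotonicity step.
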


\begin{proof}
	By \Cref{lemma:univariate_chen}, there is a $C \in \bigO{\frac{1}{\delta}}$ such that, for all $S \in \vertices$ and $r \in S$, it holds that
	\[
		\sum_{v \in S} \absolute{\pairwiseInfluence{\subgraph{\graph}{S}}[][r, v]} \le \norm{\pairwiseInfluence{\subgraph{\graph}{S}}}[\infty] \le C .
	\]
	Thus, \Cref{condition:influence_bound} is satisfied for the same constant $C$ and $q(v) = 1$ for all $v \in \vertices$.
\end{proof}

The following claim is a direct consequence of \Cref{thm:main} and \Cref{lemma:univariate}.

\begin{corollary}
	\label{cor:univariate}
	Let $(\graph, \weight)$ be an instance of the univariate hard-core model and assume that the maximum degree of $\graph$ is bounded by $\degree$.
	Let $\clique$ be a clique cover for~$\graph$ of size $\numberOfCliques$, and let $\maxPartition = \max_{i \in [\numberOfCliques]} \{\partitionFunction[\subgraph{\graph}{\clique[i]}]\}$.
	If there is a constant $\delta > 0$ such that $\weight \le (1-\delta) \criticalWeight{\degree}$, then the mixing time of the block dynamics $\blockDynamics[\graph, \weight, \clique]$, starting from $\emptyset \in \independentSets[\graph]$, is bounded by
	\[
		\mix{\blockDynamics}{\err}[\emptyset] \le \numberOfCliques^{\bigO{1/\delta^2}} \maxPartition^{\bigO{1/\delta^2}} \ln\left( \frac{1}{\err} \right) .
		\qedhere
	\] 
\end{corollary}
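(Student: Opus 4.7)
The plan is straightforward: the statement is explicitly advertised in the excerpt as a direct consequence of \Cref{thm:main} and \Cref{lemma:univariate}, so the proof reduces to composing these two ingredients and tracking the exponent in $C$.

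First I would invoke \Cref{lemma:univariate} on $(\graph, \weight)$ with the degree bound $\degree$ and the slack $\delta$. This yields a constant $C \in \bigO{1/\delta}$ such that \Cref{condition:influence_bound} holds for the function $q(v) = 1$. Note that the lemma applies uniformly to every induced subgraph, which matches the quantifier over $S \subseteq \vertices$ in \Cref{condition:influence_bound}.

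Second, I would feed this $C$ into \Cref{thm:main} applied to the (univariate) hard-core instance $(\graph, \weight)$ with the given clique cover $\clique$ of size $\numberOfCliques$ and maximum clique partition function $\maxPartition$. This directly produces the bound
\[
	\mix{\blockDynamics}{\err}[\emptyset] \le \numberOfCliques^{\bigO{(2 + C)C}} \maxPartition^{\bigO{(2 + C)C}} \ln\left( \frac{1}{\err} \right).
\]

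Finally, I would simplify the exponent. Since $C \in \bigO{1/\delta}$, we get $(2+C)C \in \bigO{1/\delta^2}$ (the leading $C^2$ term dominates for small $\delta$, and the linear term is absorbed), which yields the desired expression $\numberOfCliques^{\bigO{1/\delta^2}} \maxPartition^{\bigO{1/\delta^2}} \ln(1/\err)$.

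There is no real obstacle here: both heavy lifts (the influence bound for the univariate model up to the tree threshold, and the spectral-expansion-plus-comparison mixing time result for block dynamics under \Cref{condition:influence_bound}) are already established. The only minor bookkeeping is making sure that the $\bigO{\cdot}$ constants in \Cref{lemma:univariate_chen} do not depend on $S$, so that \Cref{condition:influence_bound} holds with a single global constant $C$; this is immediate from the statement of \Cref{lemma:univariate}.
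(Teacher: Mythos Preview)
Your proposal is correct and matches the paper's approach exactly: the paper states this corollary as ``a direct consequence of \Cref{thm:main} and \Cref{lemma:univariate}'' without further proof, and your argument spells out precisely that composition, including the bookkeeping $(2+C)C \in \bigO{1/\delta^2}$.
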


%!TEX root = ../clique_spectral_independence.tex

\section{Multivariate model: comparison to \pmc}\label{sec:multivariate}
In this section, we relate \Cref{condition:influence_bound} to a strict version of the \pmc, first introduced in \cite{friedrich2020polymer}.
\begin{definition}[\Spmc]
	An instance of the multivariate hard-core model $(\graph, \weights)$ satisfies the \spmc for a function $\spmcFunction\colon \vertices \to \R_{>0}$ and a constant $\spmcConstant \in (0, 1)$ if and only if for all $v \in \vertices$ it holds that
	\[
		\sum_{w \in \neighbors{v}} \frac{\weight[w]}{1 + \weight[w]} \spmcFunction[w] \le (1 - \spmcConstant) \spmcFunction[v] .
		\qedhere
	\]
\end{definition}

We show that the \spmc is sufficient to imply \Cref{condition:influence_bound}.
This yields a mixing-time bound for block dynamics based on the \spmc and also bounds the eigenvalues of the pairwise influence matrix, which might be of independent interest.
To obtain our result, we translate the original instance $(\graph, \weights)$ to the self-avoiding-walk tree and apply a recursive proof on this tree.

\subsubsection*{Influence in self-avoiding-walk trees} 
For any instance of the multivariate hard-core model $(\graph , \weights)$ and any vertex $\sawRoot \in \vertices$, let $\sawTree[\graph, \sawRoot]$ denote the tree of self-avoiding walks as defined in \cite{Weitz2006Counting}, which is constructed as follows.
Assume there is a total order of vertices in $v_1, \dots, v_{\numberOfVertices}$, where $\numberOfVertices = \size{\vertices}$.
A \emph{self-avoiding walk} of length $l \ge 2$ is a simple path $v_{i_1}, v_{i_2}, \dots, v_{i_l}$ in $\graph$.
Further, a \emph{closed self-avoiding walk} $v_{i_1}, \dots, v_{i_{l-1}}, v_{i_j}$ of length $l \ge 3$ consists of a self-avoiding walk $v_{i_1}, \dots, v_{i_{l-1}}$ of length $l-1$ and an appended vertex $v_{i_j}$ such that $j \in [l-2]$ and $(v_{i_{l-1}}, v_{i_j}) \in \edges$.
That is, the edge $(v_{i_{l-1}}, v_{i_j})$ closes a cycle $v_{i_j}, v_{i_{j+1}}, \dots, v_{i_{l-1}}$. 
The graph $\sawTree[\graph, \sawRoot]$ consists of all closed self-avoiding walks with $v_{i_1} = \sawRoot$, and all self-avoiding walks with $v_{i_1} = \sawRoot$ and $v_{i_l}$ having degree $1$ in $\graph$.
Note that any vertex $v \in \vertices$ with $v \neq \sawRoot$ might have multiple copies in $\sawTree[\graph, \sawRoot]$.

For any root $\sawRoot \in \vertices$, the multivariate hard-core model $(\graph, \weights)$ is translated to a multivariate hard-core model on $\sawTree[\graph, \sawRoot]$ as follows.
Let $v_{i_1}, \dots, v_{i_l - 1}, v_{i_j}$ be a closed self-avoiding walk.
We fix~$v_{i_j}$ always to be in the independent set (fix spin to $1$) if $i_{j+1} > i_{l-1}$, and we fix it always to be excluded from the independent (fix spin to $0$) otherwise.
We call such vertices \emph{fixed copies}.
For each $v \in \vertices$, let $\sawCopies{v}[\sawTree[\graph, \sawRoot]]$ denote the set of all \emph{unfixed copies of $v$} in $\sawTree[\graph, \sawRoot]$.
We write $\sawCopies{v}$ if the tree $\sawTree[\graph, \sawRoot]$ is clear from the context.
In the multivariate hard-core model on $\sawTree[\graph, \sawRoot]$, each such copy $\sawCopy{v} \in \sawCopies{v}[\sawTree[\graph, \sawRoot]]$ has weight $\weight[\sawCopy{v}] = \weight[v]$.

This way of translating $(\graph, \weights)$ to the tree of self-avoiding walks for some root $\sawRoot \in \vertices$ was shown to have a variety of useful properties.
One of them is that pairwise influences are preserved in the following sense.
\begin{lemma}[{\cite[Lemma $8$]{Chen2020rapid}}]
	\label{lemma:saw_influence}
	Let $(\graph, \weights)$ be an instance of the multivariate hard-core model.
	For all $\sawRoot, v \in \vertices$ and $T = \sawTree[\graph, \sawRoot]$ it holds that
	\[
		\pairwiseInfluence{\graph}[][\sawRoot, v] = \sum_{\sawCopy{v} \in \sawCopies{v}[T]} \pairwiseInfluence{T}[][\sawRoot, \sawCopy{v}] .
		\qedhere
	\] 
\end{lemma}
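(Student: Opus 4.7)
The plan is to deduce the identity by differentiating Weitz's classical marginal correspondence with respect to $\weight[v]$. The construction of $T = \sawTree[\graph, \sawRoot]$ together with the prescribed boundary conditions on fixed copies, exactly as set up in the excerpt, is Weitz's construction, which yields the root-marginal identity
\[
\gibbsPr{\graph}{\inSet{\sawRoot}} \;=\; \gibbsPr{T}{\inSet{\sawRoot}},
\]
where on the right every unfixed copy $\sawCopy{u} \in \sawCopies{u}[T]$ inherits the weight $\weight[u]$. The combinatorial data defining $T$ and the assignment of fixed-copy spins depends only on the vertex ordering, not on the weights. Hence the equality holds as an identity of rational functions of the weights; in particular it remains valid if we simultaneously perturb $\weight[v]$ in $\graph$ and set $\weight[\sawCopy{v}] = \weight[v]$ for every $\sawCopy{v} \in \sawCopies{v}[T]$ in $T$. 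Write $y \defeq \weight[v]$ and $y \mapsto f(y)$, $y \mapsto \tilde f(y)$ for the two sides of this joint identity.

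For the case $v \neq \sawRoot$, take the logarithmic derivative in $y$ on both sides. Using the standard identity $\partial \ln \partitionFunction / \partial \ln \weight[u] = \gibbsPr{\cdot}{\inSet{u}}$ applied to $\gibbsPr{\graph}{\inSet{\sawRoot}} = \weight[\sawRoot] \partitionFunction[\subgraph{\graph}{\vertices \setminus \neighborsClosed{\sawRoot}}] / \partitionFunction[\graph]$, the LHS derivative equals
\[
\frac{d \ln f}{d \ln y} \;=\; \gibbsPr{\graph}{\inSet{v}}[\inSet{\sawRoot}] - \gibbsPr{\graph}{\inSet{v}},
\]
where the first summand is read as $0$ when $v \in \neighbors{\sawRoot}$ (so that $v$ has been removed and $\partitionFunction[\subgraph{\graph}{\vertices \setminus \neighborsClosed{\sawRoot}}]$ does not depend on $y$). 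A short manipulation via the law of total probability rewrites this as $\pairwiseInfluence{\graph}[][\sawRoot, v] \cdot \gibbsPr{\graph}{\notInSet{\sawRoot}}$.

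For the RHS, since every unfixed copy $\sawCopy{v} \in \sawCopies{v}[T]$ carries the common parameter $y$ while fixed copies are frozen and contribute no derivative, the chain rule gives
\[
\frac{d \ln \tilde f}{d \ln y} \;=\; \sum_{\sawCopy{v} \in \sawCopies{v}[T]} \frac{\partial \ln \gibbsPr{T}{\inSet{\sawRoot}}}{\partial \ln \weight[\sawCopy{v}]} \;=\; \gibbsPr{T}{\notInSet{\sawRoot}} \sum_{\sawCopy{v} \in \sawCopies{v}[T]} \pairwiseInfluence{T}[][\sawRoot, \sawCopy{v}],
\]
where each partial derivative is evaluated by the exact manipulation used for the LHS, now inside $T$.

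Equating the two derivatives and using $\gibbsPr{\graph}{\notInSet{\sawRoot}} = \gibbsPr{T}{\notInSet{\sawRoot}}$ (which is Weitz's identity at the specific weights), the common factor cancels and the claim follows. The remaining case $v = \sawRoot$ is trivial under the convention $\pairwiseInfluence{\cdot}[][x, x] = 0$: the only copy of $\sawRoot$ in $T$ is $\sawRoot$ itself, so both sides vanish. The main technical point to verify is that Weitz's equality is genuinely a functional identity in $\weight[v]$; this is essentially by inspection of the inductive recursion on conditional marginals, which never uses specific numerical values of the weights. A small bookkeeping step is to check that the fixed copies of $v$ in $T$ (whose spin is prescribed by the vertex order, not by $\weight[v]$) are insensitive to $y$, so that only unfixed copies appear in the chain rule above.
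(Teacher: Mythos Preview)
The paper does not give its own proof of this lemma; it is quoted verbatim from \cite[Lemma~8]{Chen2020rapid} and used as a black box. So there is nothing to compare against in the present paper.

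Your argument is sound and is in fact a clean way to derive the identity. The two key steps check out: the logarithmic derivative of $\gibbsPr{\graph}{\inSet{\sawRoot}} = \weight[\sawRoot]\,\partitionFunction[\subgraph{\graph}{\vertices \setminus \neighborsClosed{\sawRoot}}]/\partitionFunction[\graph]$ with respect to $\ln \weight[v]$ is indeed $\gibbsPr{\graph}{\inSet{v}}[\inSet{\sawRoot}] - \gibbsPr{\graph}{\inSet{v}}$, and the law-of-total-probability rewrite to $\gibbsPr{\graph}{\notInSet{\sawRoot}}\,\pairwiseInfluence{\graph}[][\sawRoot, v]$ is exactly the manipulation appearing in the paper's \Cref{lemma:clique_to_pairwise_1}. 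On the tree side, the chain rule over unfixed copies is justified because Weitz's identity is an equality of rational functions in the weights, and fixed copies contribute either a common multiplicative factor (spin $1$) or nothing (spin $0$) to the relevant partition functions, so their weight does not enter the marginal at $\sawRoot$. The cancellation of $\gibbsPr{\graph}{\notInSet{\sawRoot}} = \gibbsPr{T}{\notInSet{\sawRoot}} > 0$ is unproblematic since the empty independent set always has positive mass.

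One minor remark: when you apply the same derivative computation inside $T$ for a copy $\sawCopy{v}$ adjacent to $\sawRoot$, you should note that such a copy exists only when $v \in \neighbors{\sawRoot}[\graph]$, and then $\gibbsPr{T}{\inSet{\sawCopy{v}}}[\inSet{\sawRoot}] = 0$ matches the convention you already flagged on the graph side; this is consistent, but worth stating explicitly since the tree has its own boundary conditions.
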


\Cref{lemma:saw_influence} states that it suffices to discuss the pairwise influence on the self-avoiding walk tree instead of the original graph.
This allows us to use the following multiplicative property for pairwise influence along paths in tree graphs.
\begin{lemma}[{\cite[Lemma B.2]{ALOG20}}]
	\label{lemma:tree_influence}
	Let $T=(\vertices, \edges)$ be a tree and $(T, \weights)$ be a multivariate hard-core model on $T$.
	Further, let $v, w \in \vertices$ be a pair of distinct, non-adjacent vertices, and let $u \in \vertices$ with $u \neq v$ and $u \neq w$ be any vertex on the unique path between $v$ and $w$.
	Then
	\[
		\pairwiseInfluence{T}[][v, w] = \pairwiseInfluence{T}[][v, u] \pairwiseInfluence{T}[][u, w] . 
		\qedhere 
	\]
\end{lemma}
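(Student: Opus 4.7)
The plan is to exploit the spatial Markov property of the hard-core Gibbs distribution on a tree: since $u$ lies on the unique path connecting $v$ and $w$, removing $u$ disconnects $v$ from $w$ in $T$, so conditioning on the spin at $u$ renders the spins at $v$ and $w$ independent. First I would verify this conditional independence directly from the definition of $\gibbsDistribution[T, \weights]$ by splitting $T \setminus \{u\}$ into its two subtrees (one containing $v$, the other containing $w$); the hard-core weight of a joint configuration factorizes across these components once the spin at $u$ is fixed, so $\gibbsPr{T}{\inSet{w}, \inSet{v}}[\sigma_u] = \gibbsPr{T}{\inSet{w}}[\sigma_u] \gibbsPr{T}{\inSet{v}}[\sigma_u]$ for any spin $\sigma_u \in \{\inSet{u}, \notInSet{u}\}$.

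Next, I would apply the law of total probability conditioning on the spin at $u$:
\begin{align*}
\gibbsPr{T}{\inSet{w}}[\inSet{v}] &= \gibbsPr{T}{\inSet{w}}[\inSet{u}] \gibbsPr{T}{\inSet{u}}[\inSet{v}] + \gibbsPr{T}{\inSet{w}}[\notInSet{u}] \gibbsPr{T}{\notInSet{u}}[\inSet{v}], \\
\gibbsPr{T}{\inSet{w}}[\notInSet{v}] &= \gibbsPr{T}{\inSet{w}}[\inSet{u}] \gibbsPr{T}{\inSet{u}}[\notInSet{v}] + \gibbsPr{T}{\inSet{w}}[\notInSet{u}] \gibbsPr{T}{\notInSet{u}}[\notInSet{v}],
\end{align*}
where I have silently used the Markov property to drop $v$ from the conditioning in the first factors and to drop $w$ from the conditioning in the second factors. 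Subtracting, and using $\gibbsPr{T}{\notInSet{u}}[\cdot] = 1 - \gibbsPr{T}{\inSet{u}}[\cdot]$ to obtain the identity $\gibbsPr{T}{\notInSet{u}}[\inSet{v}] - \gibbsPr{T}{\notInSet{u}}[\notInSet{v}] = -\pairwiseInfluence{T}[][v, u]$, yields
\[
\pairwiseInfluence{T}[][v, w] = \bigl(\gibbsPr{T}{\inSet{w}}[\inSet{u}] - \gibbsPr{T}{\inSet{w}}[\notInSet{u}]\bigr) \pairwiseInfluence{T}[][v, u] = \pairwiseInfluence{T}[][u, w] \pairwiseInfluence{T}[][v, u],
\]
which is the desired identity.

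The main obstacle is making the Markov property rigorous in the conditional statements, specifically the step $\gibbsPr{T}{\inSet{w}}[\inSet{u}, \inSet{v}] = \gibbsPr{T}{\inSet{w}}[\inSet{u}]$. This is the only place where the tree structure is used, and it requires a careful factorization argument: partition $T \setminus \{u\}$ into the subtree $T_v$ containing $v$ and the subtree $T_w$ containing $w$ (plus possibly other subtrees hanging off $u$ that are irrelevant for the identity). For any fixed boundary condition $\sigma_u \in \{0,1\}$ at $u$, the partition function factors as a product of independent partition functions on these subtrees with the appropriate vertices excluded when $\sigma_u = 1$. Once this factorization is written down, conditional independence follows immediately, and the rest is algebraic manipulation. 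A short induction or a direct verification of the three-way factorization $\gibbsPr{T}{\sigma_u, \inSet{v}, \inSet{w}} \propto \gibbsPr{T}{\sigma_u} \cdot \gibbsPr{T}{\inSet{v}}[\sigma_u] \cdot \gibbsPr{T}{\inSet{w}}[\sigma_u]$ suffices.
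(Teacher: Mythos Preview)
The paper does not prove this lemma; it is quoted verbatim from \cite[Lemma~B.2]{ALOG20} and used as a black box. Your proposal is the standard Markov-property argument for this fact and is correct: since $u$ separates $v$ from $w$ in $T$, the Gibbs measure factorizes across the components of $T\setminus\{u\}$ once the spin at $u$ is fixed, yielding the conditional independence you need, and the rest is straightforward algebra. This is essentially the same argument as in the cited source, so there is nothing to contrast.
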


\subsubsection*{Bounding pairwise influence via the \spmc}
We start by proving that the influence of the root on a certain layer in the self-avoiding-walk tree exhibits the following exponential decay in terms of depth. For a tree $T$ and integer $k$ let $\sawLayer{T}{k} \subseteq \sawCopy{V}$ denote the set of vertices in $T$ at layer $k \in \N$.
\begin{lemma}
	\label{lemma:influence_decay}
	Let $(\graph, \weights)$ be a multivariate hard-core model, and let $\sawRoot \in \vertices$.
	Furthermore, let $T = \sawTree[\sawRoot, \graph]$ and let $\sawCopy{V} = \bigcup_{v \in \vertices} \sawCopies{v}[T]$.
	Assume that $(\graph, \weights)$ satisfies the \spmc for a function $\spmcFunction$ and a constant~$\spmcConstant$, and define the function $\sawFunction\colon \sawCopy{V} \to \R_{>0}$ with $\sawFunction[\sawCopy{v}] = \spmcFunction[v]$ for all $\sawCopy{v} \in \sawCopies{v}[T]$ and $v \in \vertices$.
	Then for all $k \in \N_{>0}$ it holds that
	\[
		\sum_{w \in \sawLayer{T}{k}} \absolute{\pairwiseInfluence{T}[][r, w]} \sawFunction[w] \le (1 - \spmcConstant)^k \spmcFunction[r] .
		\qedhere 
	\]
\end{lemma}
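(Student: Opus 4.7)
The plan is to induct on $k$, combining the tree-path multiplicativity of pairwise influence from \Cref{lemma:tree_influence} with an elementary single-vertex marginal bound. The marginal bound states that in any hard-core instance, and under any boundary conditions fixing some vertices in or out of the independent set, every unfixed vertex $b$ has marginal probability of being present at most $\weight[b]/(1 + \weight[b])$; this is the standard ratio-of-partition-functions argument, and it applies inside $T$ under its fixed-copy boundary. Specialized to two vertices $a, b$ adjacent in $T$, it gives
\[
|\pairwiseInfluence{T}[][a, b]| = \gibbsPr{T}{\inSet{b}}[\notInSet{a}] \le \frac{\weight[b]}{1 + \weight[b]},
\]
because adjacency in $T$ forces $\gibbsPr{T}{\inSet{b}}[\inSet{a}] = 0$.

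For the base case $k = 1$, the layer $\sawLayer{T}{1}$ consists of one unfixed copy $\sawCopy{v}$ for each neighbor $v \in \neighbors{\sawRoot}$ of $\sawRoot$ in $\graph$, and each such copy is adjacent to $\sawRoot$ in $T$. Using $\sawFunction[\sawCopy{v}] = \spmcFunction[v]$, the adjacent-vertex bound above, and the \spmc applied at $\sawRoot$,
\[
\sum_{\sawCopy{v} \in \sawLayer{T}{1}} |\pairwiseInfluence{T}[][\sawRoot, \sawCopy{v}]| \sawFunction[\sawCopy{v}] \le \sum_{v \in \neighbors{\sawRoot}} \frac{\weight[v]}{1 + \weight[v]} \spmcFunction[v] \le (1 - \spmcConstant) \spmcFunction[\sawRoot].
\]

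For the inductive step from $k \ge 1$ to $k+1$: each $w \in \sawLayer{T}{k+1}$ has a unique parent $u_w \in \sawLayer{T}{k}$, which lies on the unique $\sawRoot$-to-$w$ path in $T$. Since $\sawRoot$ and $w$ have tree-distance at least $2$, \Cref{lemma:tree_influence} factorizes
\[
|\pairwiseInfluence{T}[][\sawRoot, w]| = |\pairwiseInfluence{T}[][\sawRoot, u_w]| \cdot |\pairwiseInfluence{T}[][u_w, w]|.
\]
Grouping the target sum by the parent $u \in \sawLayer{T}{k}$ and then repeating the base-case computation with $u$ in place of $\sawRoot$ (noting that the unfixed children of $u$ in $T$ are copies of distinct elements of $\neighbors{v}[\graph]$ whenever $u \in \sawCopies{v}$, so the inner sum is dominated by summing over all of $\neighbors{v}$) bounds the inner sum by $(1 - \spmcConstant) \spmcFunction[v] = (1 - \spmcConstant) \sawFunction[u]$. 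Hence
\[
\sum_{w \in \sawLayer{T}{k+1}} |\pairwiseInfluence{T}[][\sawRoot, w]| \sawFunction[w] \le (1 - \spmcConstant) \sum_{u \in \sawLayer{T}{k}} |\pairwiseInfluence{T}[][\sawRoot, u]| \sawFunction[u] \le (1 - \spmcConstant)^{k+1} \spmcFunction[\sawRoot],
\]
where the last step applies the inductive hypothesis.

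There is no substantive obstacle beyond careful bookkeeping. The one point worth verifying is that the elementary marginal bound $\weight[b]/(1 + \weight[b])$ remains valid in $T$ with its mixed fixed-copy boundary conditions: conditioning on a fixed vertex's spin either removes it (spin $0$) or removes it together with forbidding its neighbors (spin $1$), in both cases leaving a bona fide hard-core instance on the remaining unfixed vertices to which the partition-function-ratio argument applies verbatim.
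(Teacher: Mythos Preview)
Your proof is correct and follows essentially the same approach as the paper: induction on $k$, using the adjacent-vertex marginal bound $|\pairwiseInfluence{T}[][a,b]| \le \weight[b]/(1+\weight[b])$ together with \Cref{lemma:tree_influence} for the factorization, and then the \spmc to contract each layer by $(1-\spmcConstant)$. The only cosmetic difference is that the paper first observes that the \spmc transfers from $\graph$ to $T$ (since each $\sawCopy{v}$ has neighbors in $T$ that are copies of a subset of $\neighbors{v}[\graph]$) and then applies it directly in $T$, whereas you pull the sum back to $\neighbors{v}[\graph]$ each time; both amount to the same inequality.
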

\begin{proof}
	Note that if $(\graph, \weights)$ satisfies the \spmc for a function $\spmcFunction$ and a constant $\spmcConstant$, then the corresponding multivariate hard-core instance on $T$ satisfies the \spmc for $\sawFunction$ and the same constant $\spmcConstant$.
	Based on that, we prove our claim by induction on $k$.
	
	\textbf{Base case: $k = 1$.}
	Note that $\sawLayer{T}{1} = \neighbors{r}[T]$.
	Further, we have for each $w \in \neighbors{r}[T]$ by definition
	\[
		\absolute{\pairwiseInfluence{T}[][r, w]} = \absolute{\gibbsPr{T}{\inSet{w}}[\inSet{r}] - \gibbsPr{T}{\inSet{w}}[\notInSet{r}]} \le \frac{\weight[w]}{1 + \weight[w]} .
	\]
	Thus, by the \spmc, we obtain
	\[
		\sum_{w \in \sawLayer{T}{1}} \absolute{\pairwiseInfluence{T}[][r, w]} \sawFunction[w] 
		\le \sum_{w \in \sawLayer{T}{1}} \frac{\weight[w]}{1 + \weight[w]} \sawFunction[w]
		\le (1 - \spmcConstant) \sawFunction[r]
		= (1 - \spmcConstant) \spmcFunction[r],
	\]
	which proves the case $k = 1$.
	
	\textbf{Induction step: $k > 1$.}
    Assume that the statement holds for $k - 1$.
	For every $u \in \sawLayer{T}{k - 1}$, let $T_u$ denote the subtree rooted at~$u$, and let $\sawLayer{T_u}{l}$ denote the vertices at layer $l \in \N$ in $T_u$.
	Note that the sets $\sawLayer{T_u}{1}$ for $u \in \sawLayer{T}{k - 1}$ are a partition of $\sawLayer{T}{k}$.
	By \Cref{lemma:tree_influence}, we get
	\begin{align*}
		\sum_{w \in \sawLayer{T}{k}} \absolute{\pairwiseInfluence{T}[][r, w]} \sawFunction[w]
		&= \sum_{u \in \sawLayer{T}{k-1}} \sum_{w \in \sawLayer{T_u}{1}} \absolute{\pairwiseInfluence{T}[][r, w]} \sawFunction[w] \\
		&= \sum_{u \in \sawLayer{T}{k-1}} \absolute{\pairwiseInfluence{T}[][r, u]} \sum_{w \in \sawLayer{T_u}{1}} \absolute{\pairwiseInfluence{T}[][u, w]} \sawFunction[w] .
	\end{align*}
	Further, for every $u \in \sawLayer{T}{k - 1}$ it holds that $\sawLayer{T_u}{1} \subset \neighbors{u}[T]$, and for all $w \in \sawLayer{T_u}{1}$ we have
	\[
		\absolute{\pairwiseInfluence{T}[][u, w]} = \absolute{\gibbsPr{T}{\inSet{w}}[\inSet{u}] - \gibbsPr{T}{\inSet{w}}[\notInSet{u}]} \le \frac{\weight[w]}{1 + \weight[w]} .
	\]
	Thus, by the \spmc, we get
	\begin{align*}
		\sum_{u \in \sawLayer{T}{k-1}} \absolute{\pairwiseInfluence{T}[][r, u]} \sum_{w \in \sawLayer{T_u}{1}} \absolute{\pairwiseInfluence{T}[][u, w]} \sawFunction[w] 
		&< \sum_{u \in \sawLayer{T}{k-1}} \absolute{\pairwiseInfluence{T}[][r, u]} \sum_{w \in \neighbors{u}[T]} \frac{\weight[w]}{1 + \weight[w]} \sawFunction[w] \\
		&\le (1 - \spmcConstant) \sum_{u \in \sawLayer{T}{k-1}} \absolute{\pairwiseInfluence{T}[][r, u]} \sawFunction[u] .
	\end{align*}
	By the induction hypothesis, we obtain
	\[
		(1 - \spmcConstant) \sum_{u \in \sawLayer{T}{k-1}} \absolute{\pairwiseInfluence{T}[][r, u]} \sawFunction[u]
		\le (1 - \spmcConstant) (1 - \spmcConstant)^{k-1} \spmcFunction[r]
		= (1 - \spmcConstant)^{k} \spmcFunction[r] ,
	\]
	which concludes the proof.
\end{proof}

Now, we use this layer-wise decay in the self-avoiding-walk tree to prove that \Cref{condition:influence_bound} is satisfied.
\pairwiseInfluenceSpmc*
\begin{proof}
	Note that if $(\graph, \weights)$ satisfies the \spmc, the same holds for the instance $(\subgraph{\graph}{S}, \subgraph{\weights}{S})$ for the same function $\spmcFunction$ and constant $\spmcConstant$.
	
	Assume $\subgraph{\graph}{S}$ is connected and let $\graph' = \subgraph{\graph}{S}$.
	Further, let $T = \sawTree[\sawRoot, \graph']$ and $\sawCopy{S} = \bigcup_{v \in S} \sawCopies{v}[T]$, and define the function $\sawFunction\colon \sawCopy{S} \to \R_{> 0}$ as in \Cref{lemma:influence_decay}.
	Recall that, by definition, $\pairwiseInfluence{\graph'}[][\sawRoot, \sawRoot] = 0$.
	By \Cref{lemma:saw_influence}, we get
	\begin{align*}
		\sum_{v \in S} \absolute{\pairwiseInfluence{\graph'}[][\sawRoot, v]} \spmcFunction[v]
		&= \sum_{v \in S \setminus \{\sawRoot\}} \absolute{\pairwiseInfluence{\graph'}[][\sawRoot, v]} \spmcFunction[v] \\
		&= \sum_{v \in S \setminus \{\sawRoot\}} \absolute{\sum_{\sawCopy{v} \in \sawCopies{v}[T]} \pairwiseInfluence{T}[][\sawRoot, \sawCopy{v}]} \spmcFunction[v] \\
		&\le \sum_{v \in S \setminus \{\sawRoot\}} \sum_{\sawCopy{v} \in \sawCopies{v}[T]} \absolute{\pairwiseInfluence{T}[][\sawRoot, \sawCopy{v}]} \sawFunction[\sawCopy{v}].
	\end{align*}
	Note that the sets $\sawCopies{v}[T]$ for $v \in S \setminus \{\sawRoot\}$ are a partition of $\sawCopy{S} \setminus \{\sawRoot\}$.
	Recall that $\sawLayer{T}{k} \subset \sawCopy{S}$ denotes the vertices in $T$ at layer $k \in \N$, and observe that the sets $\sawLayer{T}{k}$ for $k \in \N_{>0}$ are a partition of $\sawCopy{S} \setminus \{\sawRoot\}$ as well.
	Thus, we have 
	\[
		\sum_{v \in S \setminus \{\sawRoot\}} \sum_{\sawCopy{v} \in \sawCopies{v}[T]} \absolute{\pairwiseInfluence{T}[][\sawRoot, \sawCopy{v}]} \sawFunction[\sawCopy{v}]
		\le \sum_{k \in \N_{>0}} \sum_{w \in \sawLayer{T}{k}} \absolute{\pairwiseInfluence{T}[][\sawRoot, w]} \sawFunction[w] .
	\]
	By \Cref{lemma:influence_decay}, we obtain the desired bound:
	\[
		\sum_{k \in \N_{>0}} \sum_{w \in \sawLayer{T}{k}} \absolute{\pairwiseInfluence{T}[][\sawRoot, w]} \sawFunction[w]
		\le \spmcFunction[\sawRoot] \sum_{k \in \N_{>0}} (1 - \spmcConstant)^k
		= \frac{1}{\spmcConstant} \spmcFunction[\sawRoot] .
	\]
	
	Now, assume $\subgraph{\graph}{S}$ is not connected.
	Set $\graph'$ to be the largest connected component in $\subgraph{\graph}{S}$ that contains $\sawRoot$ and let $S' \subset S$ be the set of vertices in $\graph'$.
	The claim follows from applying the proof above to $\graph'$ with vertex set $S'$ and by observing that $\pairwiseInfluence{\subgraph{\graph}{S}}[][\sawRoot, v] = 0$ for all $v \in S \setminus S'$.
\end{proof}

\Cref{lemma:pairwise_influence_spmc} immediately implies with \Cref{thm:main} the following result for the mixing time of block dynamics under \spmc.
\begin{corollary}
	Let $(\graph, \weights)$ be an instance of the multivariate hard-core model.
	Let $\clique$ be a given clique cover for~$\graph$ of size~$\numberOfCliques$, and let $\maxPartition = \max_{i \in [\numberOfCliques]} \{\partitionFunction[\subgraph{\graph}{\clique[i]}]\}$.
	If $(\graph, \weight)$ satisfies the \spmc for a function $\spmcFunction$ and a constant $\spmcConstant$, then the mixing time of the block dynamics $\blockDynamics = \blockDynamics[\graph, \weight, \clique]$, starting from $\emptyset \in \independentSets[\graph]$, is bounded by
	\[
		\mix{\blockDynamics}{\err}[\emptyset] \le \numberOfCliques^{\bigO{1/\alpha^2}} \maxPartition^{\bigO{1/\alpha^2}} \ln\left( \frac{1}{\err} \right) .
	\qedhere
	\]
\end{corollary}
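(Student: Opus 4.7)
The proof is essentially a direct composition of two already-proven results, so my plan is short. First I would invoke \Cref{lemma:pairwise_influence_spmc}: under the hypothesis that $(\graph, \weights)$ satisfies the \spmc for the function $\spmcFunction$ and constant $\spmcConstant$, the lemma yields that \Cref{condition:influence_bound} holds for $(\graph, \weights)$ with the choice $q = \spmcFunction$ and $C = 1/\spmcConstant$.

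Next I would apply \Cref{thm:main} to the multivariate hard-core instance $(\graph, \weights)$ together with the given clique cover $\clique$ of size $\numberOfCliques$. Because \Cref{condition:influence_bound} has just been established with constant $C = 1/\spmcConstant$, the theorem directly yields
\[
\mix{\blockDynamics}{\err}[\emptyset] \le \numberOfCliques^{\bigO{(2 + C)C}} \maxPartition^{\bigO{(2 + C)C}} \ln\!\left( \frac{1}{\err} \right).
\]
Substituting $C = 1/\spmcConstant$ and observing that, since $\spmcConstant \in (0,1)$ so $C \ge 1$, one has
\[
(2 + C)C \;=\; \frac{2}{\spmcConstant} + \frac{1}{\spmcConstant^{2}} \;\in\; \bigO{1/\spmcConstant^{2}},
\]
the exponent collapses to $\bigO{1/\spmcConstant^{2}}$, yielding precisely the stated bound $\numberOfCliques^{\bigO{1/\spmcConstant^{2}}} \maxPartition^{\bigO{1/\spmcConstant^{2}}} \ln(1/\err)$.

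There is essentially no obstacle here: the whole technical work has been done in \Cref{lemma:pairwise_influence_spmc} (which translates a \emph{local} coupling-style condition into a \emph{global} sum-of-influences bound via the self-avoiding-walk tree) and in \Cref{thm:main} (which turns such an influence bound into a spectral-gap estimate for block dynamics via the simplicial-complex machinery). The corollary is the bookkeeping step that advertises the combined implication. The only thing worth being careful about is the exponent accounting: one must notice that the \spmc forces $\spmcConstant \le 1$, which guarantees $C \ge 1$ and therefore that $(2+C)C$ is absorbed into $\bigO{1/\spmcConstant^{2}}$ rather than leaving an awkward additive $\bigO{1/\spmcConstant}$ term.
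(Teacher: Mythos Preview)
Your proposal is correct and matches the paper's approach exactly: the paper states that the corollary follows immediately from \Cref{lemma:pairwise_influence_spmc} together with \Cref{thm:main}, and your write-up supplies precisely that two-step argument along with the exponent simplification $(2+C)C \in \bigO{1/\spmcConstant^{2}}$ using $\spmcConstant \in (0,1)$.
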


Finally, \Cref{lemma:pairwise_influence_spmc} together with \Cref{lemma:spectral_radius} imply the following result.

\begin{corollary}
	Let $(\graph, \weights)$ be an instance of the multivariate hard-core model that satisfies the \spmc for a function $\spmcFunction$ and a constant $\spmcConstant$.
	For every $S \subseteq \vertices$ it holds that $\eigenvalueOf{\pairwiseInfluence{\subgraph{\graph}{S}}}[1] \le \frac{1}{\spmcConstant}$.
\end{corollary}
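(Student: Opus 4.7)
The plan is to combine two results that are already stated earlier in the excerpt. First, I would invoke \Cref{lemma:pairwise_influence_spmc}, which asserts that if $(\graph, \weights)$ satisfies the \spmc for a function $\spmcFunction$ and constant $\spmcConstant$, then $(\graph, \weights)$ satisfies \Cref{condition:influence_bound} with $q = \spmcFunction$ and $C = \tfrac{1}{\spmcConstant}$. Note that the \spmc passes to induced subgraphs: restricting the sum defining the \spmc to neighbors inside $S$ only decreases the left-hand side, so $(\subgraph{\graph}{S}, \subgraph{\weights}{S})$ also satisfies the \spmc for the restriction of $\spmcFunction$ to $S$ with the same constant $\spmcConstant$. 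Consequently, \Cref{condition:influence_bound} holds for $\subgraph{\graph}{S}$ with the same $C = \tfrac{1}{\spmcConstant}$.

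Next, I would apply \Cref{lemma:spectral_radius} to the pairwise influence matrix $\pairwiseInfluence{\subgraph{\graph}{S}}$. Taking the matrix $A = \pairwiseInfluence{\subgraph{\graph}{S}}$, $n = \size{S}$, $\xi = \tfrac{1}{\spmcConstant}$, and $p = \spmcFunction$ restricted to $S$, the hypothesis of \Cref{lemma:spectral_radius} is exactly \Cref{condition:influence_bound} applied with $r$ ranging over $S$. Therefore
\[
    \spectralRadius{\pairwiseInfluence{\subgraph{\graph}{S}}} \le \frac{1}{\spmcConstant} .
\]
Since every eigenvalue of a matrix is bounded in modulus by the spectral radius, in particular the largest eigenvalue satisfies $\eigenvalueOf{\pairwiseInfluence{\subgraph{\graph}{S}}}[1] \le \spectralRadius{\pairwiseInfluence{\subgraph{\graph}{S}}} \le \tfrac{1}{\spmcConstant}$, which is the claim.

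There is no substantive obstacle here: the corollary is essentially a packaging of the remark made in the text right after \Cref{lemma:spectral_radius}, namely that \Cref{condition:influence_bound} yields $\eigenvalueOf{\pairwiseInfluence{\subgraph{\graph}{S}}}[1] \le C$. The only point requiring a brief word is the observation that the \spmc descends to induced subgraphs so that \Cref{lemma:pairwise_influence_spmc} may be applied on each $\subgraph{\graph}{S}$ directly, yielding the uniform constant $\tfrac{1}{\spmcConstant}$ across all $S \subseteq \vertices$.
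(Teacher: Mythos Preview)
Your proposal is correct and follows essentially the same route as the paper, which simply states that the corollary is an immediate consequence of \Cref{lemma:pairwise_influence_spmc} together with \Cref{lemma:spectral_radius}. Your extra remark that the \spmc descends to induced subgraphs is harmless but not strictly needed: \Cref{lemma:pairwise_influence_spmc} applied once to $(\graph,\weights)$ already yields \Cref{condition:influence_bound} quantified over all $S\subseteq\vertices$, and the paper's remark after \Cref{lemma:spectral_radius} then gives $\eigenvalueOf{\pairwiseInfluence{\subgraph{\graph}{S}}}[1]\le C=\tfrac{1}{\spmcConstant}$ directly.
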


%!TEX root = ../clique_spectral_independence.tex

\section{Comparing clique dynamics to block dynamics} \label{sec:cique_dynamics_comparison}
Using \Cref{thm:comparison}, we show that a bound on the mixing time of the clique dynamics for any clique cover is obtained based on a bound for the mixing time of the block dynamics for the same clique cover.
\begin{lemma}
	\label{lemma:comparison_clique_dynamics}
	Let $(\graph, \weights)$ be an instance of the multivariate hard-core model, let $\clique$ be a clique cover of $\graph$ with size $\numberOfCliques$, and let $\maxPartition = \max_{i \in [\numberOfCliques]} \{\partitionFunction[\clique[i]]\}$.
	Let $\blockDynamics = \blockDynamics[\graph, \weights, \clique]$ denote the block dynamics and $\cliqueDynamics = \cliqueDynamics[\graph, \weights, \clique]$ the clique dynamics based on $\clique$.
	Then
	\[
		\eigenvalueOf{\transitionMatrix[\cliqueDynamics]}[2] \le 1 - \frac{1}{2 \maxPartition} \big(1 - \eigenvalueOf{\transitionMatrix[\blockDynamics]}[2]\big).
		\qedhere
	\]
\end{lemma}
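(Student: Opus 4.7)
The plan is to apply the Markov chain comparison theorem of Diaconis and Saloff-Coste (\Cref{thm:comparison}), taking $\markov = \blockDynamics$ as the reference chain and $\markov' = \cliqueDynamics$ as the target. Both chains share the state space $\independentSets[\graph]$ and the stationary distribution $\gibbsDistribution[\graph, \weights]$, so we may set $\statRatio = 1$, and the whole argument reduces to constructing a set of canonical paths $\canonicalPaths$ in $\mcAllEdges{\cliqueDynamics}$ that realize the block-dynamics edges and showing that the resulting congestion satisfies $\flowRatio{\canonicalPaths} \le 2 \maxPartition$.

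I would build the paths as follows. Every $(\independentSet, \independentSet') \in \mcEdges{\blockDynamics}$ differs on a single clique $\clique[i] \in \clique$ via one of three elementary moves: adding a vertex, removing a vertex, or swapping two vertices of $\clique[i]$. The first two moves are themselves clique-dynamics edges (realized by drawing the corresponding singleton or the empty set from the in-clique Gibbs distribution), so for them the path is the single-edge path $(\independentSet, \independentSet')$. The swap $\independentSet \to (\independentSet \setminus \{v\}) \cup \{w\}$ cannot be executed in a single clique-dynamics step, because drawing $\{w\}$ while $v \in \independentSet$ forces the rejection branch. I would route this case through the intermediate state $\independentSet \setminus \{v\}$, producing a length-$2$ path whose two edges are a ``remove $v$'' step followed by an ``add $w$'' step; both are valid clique-dynamics edges realized by the very clique $\clique[i]$ on which the block move acts. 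In particular every canonical path has length at most $2$.

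The technical core is the congestion estimate. Fix a clique-dynamics edge $(J, J')$, say of ``add $w$'' type so that $J' = J \cup \{w\}$; the ``remove'' case is symmetric. Then $(J, J')$ appears in its own length-$1$ path and in the length-$2$ path of every block swap $(J \cup \{v\}, J')$ with $v \neq w$ such that $J \cup \{v\}$ is independent and $v, w$ share a clique. Writing out $\transitionWeight{\cliqueDynamics}[J, J']$ and the relevant $\transitionWeight{\blockDynamics}$ quantities one realizing clique at a time, each such clique $\clique[i]$ contributes a term proportional to $\weight[w] / \partitionFunction[\subgraph{\graph}{\clique[i]}]$ to the denominator, and its contribution to the numerator involves $\weight[w] / \partitionFunction[\subgraph{\graph}{\clique[i]'}]$, where $\clique[i]' \subseteq \clique[i]$ is the set of vertices still compatible with $J \setminus \clique[i]$. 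The telescoping $\sum_{v \in \clique[i]' \setminus \{w\}} \weight[v] \le \partitionFunction[\subgraph{\graph}{\clique[i]'}]$ cancels the denominator $\partitionFunction[\subgraph{\graph}{\clique[i]'}]$, so that together with the path-length factor~$2$ and the ``$(J, J')$ itself'' term the per-clique numerator contribution is bounded by $2 \weight[w]$. Rewriting $2 \weight[w] = 2 \partitionFunction[\subgraph{\graph}{\clique[i]}] \cdot \weight[w]/\partitionFunction[\subgraph{\graph}{\clique[i]}] \le 2 \maxPartition \cdot \weight[w]/\partitionFunction[\subgraph{\graph}{\clique[i]}]$ yields a term-by-term bound by $2 \maxPartition$ times the denominator contribution, and summing over realizing cliques gives $\flowRatio{\canonicalPaths} \le 2 \maxPartition$; the claimed spectral bound then follows from \Cref{thm:comparison}.

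The main obstacle I anticipate is the careful bookkeeping in the non-disjoint case, where a single block- or clique-dynamics transition can be realized through several overlapping cliques simultaneously. Rather than comparing transition weights as black boxes, one must decompose each weight into contributions from individual realizing cliques and verify that the per-clique inequalities combine correctly across the decomposition. The symmetric treatment of the ``remove'' clique edge, and the verification that both edges of each length-$2$ path actually lie in $\mcEdges{\cliqueDynamics}$ (which uses precisely that the swap is realized by a clique containing both $v$ and $w$), are routine but still need to be spelled out.
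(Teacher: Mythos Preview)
Your proposal is correct and follows essentially the same approach as the paper: apply \Cref{thm:comparison} with $\statRatio=1$, route block-dynamics swaps through the intermediate state $\independentSet_s \cap \independentSet_t$ so all paths have length at most~$2$, and bound the congestion by $2\maxPartition$ via the telescoping $1+\sum_{v}\weight[v]\le \partitionFunction[\subgraph{\graph}{\clique[i]'}]$ inside each realizing clique. The only cosmetic differences are that the paper treats the ``remove'' edge explicitly and invokes reversibility for the ``add'' case (you do the reverse), and the paper packages the final inequality as a global count $|\{i:v\in\clique[i]\}|$ rather than your term-by-term per-clique bound; these are equivalent presentations of the same computation.
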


\begin{proof}
	We aim to apply \Cref{thm:comparison}.
	Note that, by $\stationary[\cliqueDynamics] = \gibbsDistribution = \stationary[\blockDynamics]$, we can choose $\statRatio = 1$.
	We proceed by constructing a set of paths $\canonicalPaths$ and by upper-bounding $\flowRatio{\canonicalPaths}$.
	For every $(\independentSet_s, \independentSet_t) \in \mcEdges{\blockDynamics}$, we choose the path $\canonicalPath[\independentSet_s \independentSet_t]$ via edges in $\mcAllEdges{\cliqueDynamics}$ as follows:
	\begin{itemize}
		\item $\canonicalPath[\independentSet_s \independentSet_t] = (\independentSet_s, \independentSet_t)$ if $\independentSet_s \symDiff \independentSet_t = \{v\}$ for some $v \in \vertices$, and
		\item $\canonicalPath[\independentSet_s \independentSet_t] = (\independentSet_s, \independentSet_s \cap \independentSet_t, \independentSet_t)$ if $\independentSet_s \symDiff \independentSet_t = \{v, w\}$ for $v \neq w$.  		
	\end{itemize}
	
	We set $\canonicalPaths$ to be the set of all such paths.
	Note that we actually only use edges in $\mcEdges{\cliqueDynamics}$, which means that we can ignore self loops while bounding $\flowRatio{\canonicalPaths}$.
	We proceed by doing a case distinction on the transition $(\independentSet, \independentSet') \in \mcEdges{\cliqueDynamics}$.
	
	\textbf{Case 1.}
    Consider $(\independentSet, \independentSet') \in \mcEdges{\cliqueDynamics}$ with $\independentSet = \independentSet' \cup \{v\}$ for a $v \notin \independentSet$ (i.e., removing $v$).
	First, note that
	\begin{align}
		\notag
		\transitionWeight{\cliqueDynamics}[\independentSet, \independentSet']
		&= \gibbsDistributionFunction{\independentSet} \frac{1}{\numberOfCliques} \sum_{\substack{i \in [\numberOfCliques]: \\ v \in \clique[i]}} \gibbsDistributionFunction{\emptyset}[][\clique[i]][\zeroSpinConfig[\vertices \setminus \clique[i]]] \\
		\notag
		&= \gibbsDistributionFunction{\independentSet} \frac{1}{\numberOfCliques} \sum_{\substack{i \in [\numberOfCliques]: \\ v \in \clique[i]}} \frac{1}{\partitionFunction[\subgraph{\graph}{\clique[i]}]} \\
		\label{eq:comparison_clique_dynamics:case1:1}
		&\ge \gibbsDistributionFunction{\independentSet} \frac{1}{\numberOfCliques} \frac{1}{\maxPartition} \size{\set{i \in [\numberOfCliques]}{ v \in \clique[i]}} .
	\end{align}
	Further, for all $(\independentSet_s, \independentSet_t) \in \mcEdges{\blockDynamics}$ such that $(\independentSet, \independentSet') \in \mcEdges{\canonicalPath[\independentSet_s \independentSet_t]}$ it holds that $\independentSet_s = \independentSet$, and either $\independentSet_t = \independentSet'$ or $\independentSet_t = \independentSet' \cup \{w\}$ for some $w \in \vertices$ that shares a clique with $v$.
	If $\independentSet_t = \independentSet'$, then we have
	\begin{align}
		\notag
		\transitionWeight{\blockDynamics}[\independentSet, \independentSet'] 
		&= \gibbsDistributionFunction{\independentSet} \frac{1}{\numberOfCliques} \sum_{\substack{i \in [\numberOfCliques]: \\ v \in \clique[i]}} \gibbsDistributionFunction{\emptyset}[][\clique[i]][\spinConfig[\vertices \setminus \clique[i]][][\independentSet]] \\
		\notag
		&= \gibbsDistributionFunction{\independentSet} \frac{1}{\numberOfCliques} \sum_{\substack{i \in [\numberOfCliques]: \\ v \in \clique[i]}} \dfrac{\gibbsDistributionFunction{\independentSet'}}{ \gibbsDistributionFunction{\independentSet'} + \sum\limits_{\substack{u \in \clique[i]:\\ \independentSet' \cup \{u\} \in \independentSets}} \gibbsDistributionFunction{\independentSet' \cup \{u\}}} \\
		\label{eq:comparison_clique_dynamics:case1:2}
		&= \gibbsDistributionFunction{\independentSet} \frac{1}{\numberOfCliques} \sum_{\substack{i \in [\numberOfCliques]: \\ v \in \clique[i]}} \dfrac{1}{ 1 + \sum\limits_{\substack{u \in \clique[i]:\\ \independentSet' \cup \{u\} \in \independentSets}} \weight[u]} .
	\end{align}
	Otherwise, if $\independentSet_t = \independentSet' \cup \{w\}$, we get
	\begin{align}
		\notag
		\transitionWeight{\blockDynamics}[\independentSet, \independentSet' \cup \{w\} ] 
		&= \gibbsDistributionFunction{\independentSet} \frac{1}{\numberOfCliques} \sum_{\substack{i \in [\numberOfCliques]: \\ v, w \in \clique[i]}} \gibbsDistributionFunction{\{w\}}[][\clique[i]][\spinConfig[\vertices \setminus \clique[i]][][\independentSet]] \\
		\notag
		&= \gibbsDistributionFunction{\independentSet} \frac{1}{\numberOfCliques} \sum_{\substack{i \in [\numberOfCliques]: \\ v, w \in \clique[i]}} \dfrac{\gibbsDistributionFunction{\independentSet' \cup \{w\}}}{ \gibbsDistributionFunction{\independentSet'} + \sum\limits_{\substack{u \in \clique[i]:\\ \independentSet' \cup \{u\} \in \independentSets}} \gibbsDistributionFunction{\independentSet' \cup \{u\}}} \\
		\label{eq:comparison_clique_dynamics:case1:3}
		&= \gibbsDistributionFunction{\independentSet} \frac{1}{\numberOfCliques} \sum_{\substack{i \in [\numberOfCliques]: \\ v, w \in \clique[i]}} \dfrac{\weight[w]}{ 1 + \sum\limits_{\substack{u \in \clique[i]:\\ \independentSet' \cup \{u\} \in \independentSets}} \weight[u]} .
	\end{align}
	Combining \cref{eq:comparison_clique_dynamics:case1:2,eq:comparison_clique_dynamics:case1:3} and observing that $\length{\canonicalPath[\independentSet_s \independentSet_t]} \le 2$ for all involved paths yields
	\begin{align*}
		\sum_{\substack{(\independentSet_s, \independentSet_t) \in \mcEdges{\blockDynamics}: \\(\independentSet, \independentSet') \in \canonicalPath[\independentSet_s \independentSet_t]}} \length{\canonicalPath[\independentSet_s \independentSet_t]} \transitionWeight{\blockDynamics}[\independentSet_s, \independentSet_t]
		&\le 2 \gibbsDistributionFunction{\independentSet} \frac{1}{\numberOfCliques} \sum_{\substack{i \in [\numberOfCliques]: \\ v \in \clique[i]}} 
		\left(\dfrac{1}{ 1 + \sum\limits_{\substack{u \in \clique[i]:\\ \independentSet' \cup \{u\} \in \independentSets}} \weight[u]} + \sum_{\substack{w \in \clique[i]:\\ \independentSet' \cup \{w\} \in \independentSets}} \dfrac{\weight[w]}{ 1 + \sum\limits_{\substack{u \in \clique[i]:\\ \independentSet' \cup \{u\} \in \independentSets}} \weight[u]}\right) \\
		&= 2 \gibbsDistributionFunction{\independentSet} \frac{1}{\numberOfCliques} \size{\set{i \in [\numberOfCliques]}{ v \in \clique[i]}} .
	\end{align*}
	Together with \cref{eq:comparison_clique_dynamics:case1:1}, we get
	\[
		\frac{1}{\transitionWeight{\cliqueDynamics}[\independentSet, \independentSet']} \sum_{\substack{(\independentSet_s, \independentSet_t) \in \mcEdges{\blockDynamics}: \\(\independentSet, \independentSet') \in \canonicalPath[\independentSet_s \independentSet_t]}} \length{\canonicalPath[\independentSet_s \independentSet_t]} \transitionWeight{\blockDynamics}[\independentSet_s, \independentSet_t]
		\le 2 \maxPartition.
	\]
	
    \textbf{Case 2.}
	Consider $(\independentSet, \independentSet') \in \mcEdges{\cliqueDynamics}$ with $\independentSet' = \independentSet \cup \{v\}$ for some $v \notin \independentSet$ (i.e., adding $v$).
	Note that, as~$\cliqueDynamics$ is reversible, it holds that $(\independentSet', \independentSet) \in \mcEdges{\cliqueDynamics}$.
	As $(\independentSet', \independentSet)$ is a transition that deletes~$v$ from~$\independentSet'$, resulting in $\independentSet$, it was already considered in the first case.
	Further, by construction of $\canonicalPaths$, it holds that $(\independentSet, \independentSet')$ is on a path $\canonicalPath[\independentSet_s \independentSet_t]$ for any $(\independentSet_s, \independentSet_t) \in \mcEdges{\blockDynamics}$ if and only if $(\independentSet', \independentSet)$ is on the path $\canonicalPath[\independentSet_t \independentSet_s]$ for $(\independentSet_t, \independentSet_s) \in \mcEdges{\blockDynamics}$.
	Due to reversibility of $\cliqueDynamics$ and $\blockDynamics$, we know that $\transitionWeight{\cliqueDynamics}$ and $\transitionWeight{\blockDynamics}$ are symmetric.
	Thus, we can conclude from our result for $(\independentSet', \independentSet)$ in the first case that
	\[
		\frac{1}{\transitionWeight{\cliqueDynamics}[\independentSet, \independentSet']} \sum_{\substack{(\independentSet_s, \independentSet_t) \in \mcEdges{\blockDynamics}: \\(\independentSet, \independentSet') \in \canonicalPath[\independentSet_s \independentSet_t]}} \length{\canonicalPath[\independentSet_s \independentSet_t]} \transitionWeight{\blockDynamics}[\independentSet_s, \independentSet_t]
		= \frac{1}{\transitionWeight{\cliqueDynamics}[\independentSet', \independentSet]} \sum_{\substack{(\independentSet_t, \independentSet_s) \in \mcEdges{\blockDynamics}: \\(\independentSet', \independentSet) \in \canonicalPath[\independentSet_t \independentSet_s]}} \length{\canonicalPath[\independentSet_t \independentSet_s]} \transitionWeight{\blockDynamics}[\independentSet_t, \independentSet_s]
		\le 2 \maxPartition .
	\]

	As $\cliqueDynamics$ changes at most one vertex at each step, the above case distinction is complete and we get $\flowRatio{\canonicalPaths} \le 2 \maxPartition$, which by \Cref{thm:comparison} concludes the proof.
\end{proof}

We immediately obtain the following corollary, which is central for our proof of \Cref{thm:hard_sphere_approx}.
\univariateCliqueDynamics*

\begin{proof}
	Let $\blockDynamics = \blockDynamics[\graph, \weight, \clique]$.
	Due to \Cref{lemma:comparison_clique_dynamics} we know that $\mix{\cliqueDynamics}{\err}[\emptyset] \le 2 \maxPartition \mix{\blockDynamics}{\err}[\emptyset]$.
	Bounding $\mix{\blockDynamics}{\err}[\emptyset]$ based on \Cref{cor:univariate} proves the claim.
\end{proof}

%!TEX root = ../clique_spectral_independence.tex

\section{The monoatomic hard-sphere model}\label{sec:hard_spheres}
We study the \emph{grand canonical ensemble} of the monoatomic hard-sphere model in a $d$-dimensional finite cubic region $\volume = [0, \sidelength)^d$ of Euclidean space with side length $\sidelength \in \R_{\geq 1}$.
We write $\vol{\volume} = \sidelength^d$ for the volume of $\volume$.
The hard-sphere model describes the distribution of identical particles, represented as $d$-dimensional balls in $\volume$. 
This distribution is governed by a fugacity parameter $\hsFugacity \in \R_{>0}$, describing the contribution of each particle to the chemical potential, and hard-core interactions between particles, meaning that no two particles are allowed to overlap.
For simplicity, it is common to assume particles to have volume $1$, meaning that their radius is $\radius = (1/\normSphere{d})^{1/d}$, where $\normSphere{d}$ denotes the volume of a unit sphere in $d$ dimensions.

A probabilistic interpretation of grand canonical ensemble is that the centers of particles are distributed according to a Poisson point process on $\volume$ with activity $\hsFugacity$, conditioned on the fact that particles are non-overlapping (i.e., each pair of distinct centers have distance at least $2r$).
Note that this implies that particles are indistinguishable, meaning that exchanging the positions of two particles results in exactly the same configuration of the system.
We aim for approximating the \emph{grand canonical partition function}, which can be seen as the normalizing constant of the resulting distribution of system states.
As a reminder, the partition function can formally be defined as
\[
	\hardSpherePrt[\volume, \hsFugacity] = 
	1 + \sum_{\numParticles \in \N_{> 0}}  
	\frac{\hsFugacity^\numParticles}{\numParticles !}
	\int_{\volume^\numParticles} \valid[x^{(1)}, \dots, x^{(\numParticles)}] \,\d \lebesgue{d \times \numParticles} ,
\]
where 
\[
	\valid[x^{(1)}, \dots, x^{(\numParticles)}] = \begin{cases}
		1 \emph{ if $\dist{x^{(i)}}{x^{(j)}} \ge 2r$ for all $i, j \in [\numParticles]$ with $i \neq j$} \\
		0 \emph{ otherwise} 
	\end{cases} 
\]
and $\lebesgue{d \times \numParticles}$ is the Lebesgue measure on $\R^{d \times \numParticles}$.

\subsection{Hard-core representation}\label{subsec:hard_spheres:discretization}
To apply our result for clique dynamics to the continuous hard-sphere model, we will approximate it by an instance of the hard-core model.
The main idea of this discretization is to restrict the centers of spheres to vertices in an integer grid, while scaling the fugacity $\hsFugacity$ and the radius $\radius$ appropriately.
The resulting discrete hard-sphere model can easily be transformed into a hard-core instance.
We proceed by formalizing the direct transformation from the continuous hard-sphere model instance to the discrete hard-core model.
% For a detailed reading, see \cite{friedrich2020polymer}.

Let $(\volume, \hsFugacity)$ be an instance of the continuous hard-sphere model with $\volume = [0, \sidelength)^d$.
Recall that we fixed the radius $\radius = (1/\normSphere{d})^{1/d}$.
Let $\grid[n] = \Z^d \cap [0, n)^d$ be a finite integer grid of side length $n \in \N_{>0}$.
For any $\resolution \in \R_{>0}$ such that $\resolution \sidelength  \in \N_{>0}$, the hard-core representation of $(\volume, \hsFugacity)$ with resolution $\resolution$ is a hard-core model $(\hsGraph{\resolution}, \hsWeight{\resolution})$ with $\hsGraph{\resolution} = (\hsVertices{\resolution}, \hsEdges{\resolution})$ and
\begin{itemize}
	\item there is a vertex $\hsVertex{x} \in \hsVertices{\resolution}$ for each grid point $x \in \grid[\resolution \sidelength]$,
	\item there is an edge $(\hsVertex{x}, \hsVertex{y}) \in \hsEdges{\resolution}$ for any pair of grid points $x, y \in \grid[\resolution \sidelength]$ with $x \neq y$ and $\dist{x}{y} \le 2  \resolution \radius$, and
	\item $\hsWeight{\resolution} = \resolution^{-d} \hsFugacity$.
\end{itemize}
Note that in the above definition $\dist{x}{y}$ denotes the Euclidean distance.

We will use the following convergence result for the partition function of the hard-core representation in terms of the resolution $\resolution$ to approximate the hard-sphere partition function.   
\hardsphereConvergence*

\begin{proof}
	Note that it suffices to bound the additive error $\absolute{\hardSpherePrt[\volume, \hsFugacity] - \partitionFunction[\hsGraph{\resolution}, \hsWeight{\resolution}]}$.
	Because $\hardSpherePrt[\volume, \hsFugacity] \ge 1$, this directly results in the desired multiplicative bound.
	
	In order to obtain an additive bound, we start by transforming $\partitionFunction[\hsGraph{\resolution}, \hsWeight{\resolution}]$ to a form that is more similar to the form of $\hardSpherePrt[\volume, \hsFugacity]$.
	
	Let $\volume = [0, \sidelength)^{d}$ and let $\rescale[\resolution]\colon \grid[\resolution \sidelength] \to \volume$ with $(x_1, \dots, x_d) \mapsto \rescale[\resolution][x] = \left( x_1/\resolution, \dots, x_d/\resolution \right)$.
	Note that, for all $x^{(i)}, x^{(j)} \in \grid[\resolution \sidelength]$ it holds that
	\[
		\dist{x^{(i)}}{x^{(j)}} \ge 2 \resolution \radius 
		~\leftrightarrow~
		\dist{\rescale[\resolution][x^{(i)}]}{\rescale[\resolution][x^{(j)}]} \ge 2 \radius .
	\]
	Thus, we see that
	\begin{align}
		\notag
		\partitionFunction[\hsGraph{\resolution}, \hsWeight{\resolution}]
		&= \sum_{\independentSet \in \independentSets[\hsGraph{\resolution}]} \hsWeight{\resolution}^{\size{\independentSet}} \\
		\notag
		&= 1 + \sum_{\numParticles \in \N_{>0}} \sum_{\substack{\independentSet \in \independentSets[\hsGraph{\resolution}]\\ \size{\independentSet} = \numParticles}} \hsWeight{\resolution}^\numParticles \\
		\notag
		&= 1 + \sum_{\numParticles \in \N_{> 0}} \frac{\hsWeight{\resolution}^\numParticles}{\numParticles!} \sum_{\substack{\left( x^{(1)}, \dots, x^{(\numParticles)} \right) \\\in \left( \grid[\resolution \sidelength] \right)^\numParticles}}  \valid[\rescale[\resolution][x^{(1)}], \dots, \rescale[\resolution][x^{(\numParticles)}]] \\
		\label{eq:discreteHardSpherePartitationFunction}
		&= 1 + \sum_{\numParticles \in \N_{> 0}} \frac{\hsFugacity^\numParticles}{\numParticles!} 
			\sum_{\substack{\left( x^{(1)}, \dots, x^{(\numParticles)} \right) \\\in \left( \grid[\resolution \sidelength] \right)^\numParticles}}  \left(\frac{1}{\resolution}\right)^{d \cdot \numParticles} \valid[\rescale[\resolution][x^{(1)}], \dots, \rescale[\resolution][x^{(\numParticles)}]] .
	\end{align}
	
	We continue by rewriting
	\[
		\sum_{\substack{\left( x^{(1)}, \dots, x^{(\numParticles)} \right) \\\in \left( \grid[\resolution \sidelength] \right)^\numParticles}}  \left(\frac{1}{\resolution}\right)^{d \cdot \numParticles} \valid[\rescale[\resolution][x^{(1)}], \dots, \rescale[\resolution][x^{(\numParticles)}]]	
	\]
	for any fixed $\numParticles \in \N_{> 0}$.
	Let $\rescale[\resolution][\grid[\resolution \sidelength]] \subseteq \volume$ denote the image of $\rescale[\resolution]$, and let $\remap[\resolution]\colon \volume \to \rescale[\resolution][\grid[\resolution \sidelength]]$ with
	\[
		(x_1, \dots, x_d) \mapsto \left( \frac{\lfloor \resolution x_1 \rfloor }{\resolution}, \dots, \frac{\lfloor \resolution x_d \rfloor }{\resolution} \right).
	\]
	Further, for all $\numParticles \in \N_{> 0}$ and all $\left( x^{(1)}, \dots, x^{(\numParticles)} \right) \in \left( \rescale[\resolution][\grid[\resolution \sidelength]] \right)^\numParticles$, let
	\begin{align*}
		W^{(\resolution)}_{x^{(1)}, \dots, x^{(\numParticles)}} 
		& = \set{ \left( y^{(1)}, \dots, y^{(\numParticles)} \right) \in \volume^\numParticles}{ \forall i \in [\numParticles]\colon \remap[\resolution][y^{(i)}] = x^{(i)} } \\ 
		& = \left( \invremap{\resolution}{x^{(1)}} \right) \times \dots \times \left( \invremap{\resolution}{x^{(\numParticles)}} \right) .
	\end{align*}
	Note that the sets $W^{(\resolution)}_{x^{(1)}, \dots, x^{(\numParticles)}}$ partition $\volume^{\numParticles}$ into $(d \times k)$-dimensional hypercubes of side length~$1/\resolution$.
	Thus, for all $\left( x^{(1)}, \dots, x^{(\numParticles)} \right) \in \left( \rescale[\resolution][\grid[\resolution \sidelength]] \right)^\numParticles$, it holds that
	\[
		\lebesgue{d \times \numParticles}[W^{(\resolution)}_{x^{(1)}, \dots, x^{(\numParticles)}}] = \left( \frac{1}{\resolution} \right)^{d \cdot k} .	
	\]
	By this and by the definition of a Lebesgue integral for elementary functions, we obtain
	\begin{align*}
		\sum_{\substack{\left( x^{(1)}, \dots, x^{(\numParticles)} \right) \\\in \left( \grid[\resolution \sidelength] \right)^\numParticles}} 
		&	\left( \frac{1}{\resolution} \right)^{d \cdot k} 
		\valid[\rescale[\resolution][x^{(1)}], \dots, \rescale[\resolution][x^{(\numParticles)}]] \\
		& = \sum_{\substack{\left( x^{(1)}, \dots, x^{(\numParticles)} \right) \\\in \left( \grid[\resolution \sidelength] \right)^\numParticles}} 
		\lebesgue{d \times \numParticles}[W^{(\resolution)}_{\rescale[\resolution][x^{(1)}], \dots, \rescale[\resolution][x^{(\numParticles)}]}] \cdot
		\valid[\rescale[\resolution][x^{(1)}], \dots, \rescale[\resolution][x^{(\numParticles)}]] \\
		& = \sum_{\substack{\left( x^{(1)}, \dots, x^{(\numParticles)} \right) \\\in \left( \rescale[\resolution][\grid[\resolution \sidelength]] \right)^\numParticles}} 
		\lebesgue{d \times \numParticles}[W^{(\resolution)}_{x^{(1)}, \dots, x^{(\numParticles)}}] \cdot
		\valid[x^{(1)}, \dots, x^{(\numParticles)}] \\
		& = \int_{\volume^{\numParticles}} \valid[\remap[\resolution][x^{(1)}], \dots, \remap[\resolution][x^{(\numParticles)}]] \,\d \lebesgue{d \times \numParticles} .
	\end{align*}
	
	Substituting this expression back into \cref{eq:discreteHardSpherePartitationFunction} yields
	\[
		\partitionFunction[\hsGraph{\resolution}, \hsWeight{\resolution}] 
		= 1 + \sum_{\numParticles \in \N_{> 0}} \frac{\hsFugacity^\numParticles}{\numParticles!}
			\int_{\volume^\numParticles} \valid[\remap[\resolution][x^{(1)}], \dots, \remap[\resolution][x^{(\numParticles)}]] \,\d \lebesgue{d \times \numParticles} .
	\]  
	
	We now express $\absolute{\hardSpherePrt[\volume, \hsFugacity] - \partitionFunction[\hsGraph{\resolution}, \hsWeight{\resolution}]}$ in terms of the absolute difference of the integrals for all $\numParticles \in \N_{> 0}$.
	It holds that
	\begin{gather*} 
		\absolute{\int_{\volume^\numParticles} \valid[x^{(1)}, \dots, x^{(\numParticles)}] \,\d \lebesgue{d \times \numParticles} - \int_{\volume^\numParticles} \valid[\remap[\resolution][x^{(1)}], \dots, \remap[\resolution][x^{(\numParticles)}]] \,\d \lebesgue{d \times \numParticles}} \\
		\le \int_{\volume^\numParticles} \absolute{\valid[x^{(1)}, \dots, x^{(\numParticles)}] - \valid[\remap[\resolution][x^{(1)}], \dots, \remap[\resolution][x^{(\numParticles)}]]} \,\d \lebesgue{d \times \numParticles} .
	\end{gather*}
	
	Let $N^{(\resolution)} \subseteq \volume^\numParticles$ be such that for all $\left( x^{(1)}, \dots, x^{(\numParticles)} \right) \in N^{(\resolution)}$ it holds that $\valid[x^{(1)}, \dots, x^{(\numParticles)}] \neq \valid[\remap[\resolution][x^{(1)}], \dots, \remap[\resolution][x^{(\numParticles)}]]$.
	As $\valid$ is an indicator function, it holds that
	\[
		\int_{\volume^\numParticles} \absolute{ \valid[x^{(1)}, \dots, x^{(\numParticles)}] - \valid[\remap[\resolution][x^{(1)}], \dots, \remap[\resolution][x^{(\numParticles)}]] } \,\d \lebesgue{d \times \numParticles}
		= \lebesgue{d \times \numParticles}[N^{(\resolution)}] .	
	\]
	
	We construct a superset of $N^{(\resolution)}$, for which we calculate the Lebesgue measure.
	First, note that $N^{(\resolution)} = \emptyset$ for $k=1$, as in this case $\valid[x^{(1)}] = \valid[\remap[\resolution][x^{(1)}]] = 1$ for all $x^{(1)} \in \volume$.
	Further, let $K = \big( \sidelength \sqrt{d}/(2 \radius) \big)^d$. 
	Note that, for all $\numParticles > K$, it holds that at least two particles have distance less than $2 \radius$, meaning that such a configuration has always overlapping particles and $N^{(\resolution)} = \emptyset$.
	We are left with considering $2 \le \numParticles \le K$.
	
	We observe that, for all $\left( x^{(1)}, \dots, x^{(\numParticles)} \right) \in \volume^\numParticles$ such that
	\[
		\valid[x^{(1)}, \dots, x^{(\numParticles)}] \neq \valid[\remap[\resolution][x^{(1)}], \dots, \remap[\resolution][x^{(\numParticles)}]],
	\]
	there is a pair of points $x^{(i)}, x^{(j)}$ for $i, j \in [\numParticles]$ such that $i \neq j$ and
	\begin{align*}
		&\dist{x^{(i)}}{x^{(j)}} < 2 \radius \le \dist{\remap[\resolution][x^{(i)}]}{\remap[\resolution][x^{(j)}]} \ \textrm{ or}\\
		&\dist{x^{(i)}}{x^{(j)}} \ge 2 \radius > \dist{\remap[\resolution][x^{(i)}]}{\remap[\resolution][x^{(j)}]} .
	\end{align*}
	As, for every point $x^{(i)} \in \volume$, it holds that
	\[
		\dist{x^{(i)}}{\remap[\resolution][x^{(i)}]} \le \frac{\sqrt{d}}{\resolution},
	\]
	there is a pair of points $x^{(i)}, x^{(j)}$ for $i, j \in [\numParticles]$ such that $i \neq j$ and
	\[
		\absolute{2 \radius - \dist{x^{(i)}}{x^{(j)}}} \le 2 \frac{\sqrt{d}}{\resolution} .
	\]
	For all $i, j \in [\numParticles]$ with $i \neq j$ let $S^{(\resolution)}_{i, j} \subseteq \volume^{\numParticles}$ be the set of points $\left( x^{(1)}, \dots, x^{(\numParticles)} \right) \in \volume^\numParticles$ such that this is the case.
	Then
	\[
		\lebesgue{d \times \numParticles}[N^{(\resolution)}] 
		\le \lebesgue{d \times \numParticles}[\bigcup_{1 \le i < j \le \numParticles} S^{(\resolution)}_{i, j}]	
		\le \sum_{1 \le i < j \le \numParticles} \lebesgue{d \times \numParticles}[S^{(\resolution)}_{i, j}] .
	\]
	
	By Fubini's theorem, noting that~$S^{(\resolution)}_{i, j}$ only depends on~$i$ and~$j$, we get
	\begin{align*}
		\lebesgue{d \times \numParticles}[S^{(\resolution)}_{i, j}] 
		& = \int_{\volume^\numParticles}  \indicator{\absolute{2 \radius - \dist{x^{(i)}}{x^{(j)}}} \le 2 \frac{\sqrt{d}}{\resolution}} \,\d \lebesgue{d \times \numParticles} \\
		& = \sidelength^{d (\numParticles - 2)} \int_{\volume^2}  \indicator{\absolute{2 \radius - \dist{x^{(i)}}{x^{(j)}}} \le 2 \frac{\sqrt{d}}{\resolution}}  \,\d \lebesgue{d \times 2} \\
		& \le \sidelength^{d (\numParticles - 1)} \cdot \left( \left( 2 \radius + 2 \frac{\sqrt{d}}{\resolution} \right)^d - \left( 2 \radius - 2 \frac{\sqrt{d}}{\resolution} \right)^d \right)\ ,
	\end{align*}
	where the last equality comes from the fact that $\radius$ was chosen as the radius of a ball of volume $1$ in $d$ dimensions.
	By the assumption $\resolution \ge 2 \sqrt{d}$ and the binomial theorem, we further bound
	\begin{align*}
		\left( 2 \radius + 2 \frac{\sqrt{d}}{\resolution} \right)^d - \left( 2 \radius - 2 \frac{\sqrt{d}}{\resolution} \right)^d &= \sum_{i = 0}^{d} 2\cdot\indicator{i \text{ is odd}} \binom{d}{i} \big(2 \radius\big)^{d - i} \left(2 \frac{\sqrt{d}}{\resolution}\right)^i\\
		&\hspace*{-2 em}= \frac{2\sqrt{d}}{\resolution}\sum_{i = 1}^{d} 2\cdot\indicator{i \text{ is odd}} \binom{d}{i} \big(2 \radius\big)^{d - i} \left(2 \frac{\sqrt{d}}{\resolution}\right)^{i - 1}\\
		&\leq \frac{2\sqrt{d}}{\resolution}\sum_{i = 1}^{d} 2\cdot\indicator{i \text{ is odd}} \binom{d}{i} \big(2 \radius\big)^{d - i} 1^{i - 1}\\
		&\le 2\frac{2 \sqrt{d}}{\resolution} \big( 2 \radius + 1 \big)^d.
	\end{align*}
	Using this bound for $\lebesgue{d \times \numParticles}[S^{(\resolution)}_{i, j}]$, we obtain
	\begin{align*}
		\lebesgue{d \times \numParticles}[N^{(\resolution)}] 
		\le \numParticles^2 \cdot 2 \cdot \sidelength^{d (\numParticles - 1)} \cdot \frac{2 \sqrt{d}}{\resolution} \cdot \left( 2 \radius + 1 \right)^d .		
	\end{align*}
	Thus, we get
	\begin{align*}
		\absolute{\hardSpherePrt[\volume, \hsFugacity] - \partitionFunction[\hsGraph{\resolution}, \hsWeight{\resolution}]} 
		& \le \sum_{\numParticles = 2}^{K} \frac{\hsFugacity^\numParticles}{\numParticles!} \lebesgue{d \times \numParticles}[N^{(\resolution)}] \\
		& \le \frac{1}{\resolution}\sum_{\numParticles = 2}^{K} \frac{\hsFugacity^\numParticles}{\numParticles!} \numParticles^2 \cdot 4 \sidelength^{d (\numParticles - 1)} \cdot \sqrt{d} \cdot \left( 2 \radius + 1 \right)^d . 
	\end{align*} 
	
	We simplify the bound further by
	\begin{align*}
		\frac{1}{\resolution} \sum_{\numParticles = 2}^{K} \frac{\hsFugacity^\numParticles}{\numParticles!} \numParticles^2 \cdot 4 \sidelength^{d (\numParticles - 1)} \cdot \sqrt{d} \cdot \left( 2 \radius + 1 \right)^d 
		&\le \frac{1}{\resolution} K^2 \cdot 4 \sidelength^{d (K - 1)} \cdot \sqrt{d} \cdot \left( 2 \radius + 1 \right)^d \sum_{\numParticles = 2}^{K} \frac{\hsFugacity^\numParticles}{\numParticles!} \\
		&\le \frac{1}{\resolution} K^2 \cdot 4 \sidelength^{d (K - 1)} \cdot \sqrt{d} \cdot \left( 2 \radius + 1 \right)^d \eulerE^{\hsFugacity}, 
	\end{align*}	
	where the last inequality follows from the Taylor expansion of $\eulerE^x$ at~$0$. 
	
	Overall, we bound
	\begin{align*}
	\absolute{\hardSpherePrt[\volume, \hsFugacity] - \partitionFunction[\hsGraph{\resolution}, \hsWeight{\resolution}]} 
	&\le \frac{1}{\resolution} K^2 \cdot 4 \sidelength^{d (K - 1)} \cdot \sqrt{d} \cdot \left( 2 \radius + 1 \right)^d \eulerE^{\hsFugacity}\\
	& \le \frac{1}{\resolution} \eulerE^{\bigTheta{K  d \ln (\sidelength) + \ln \left( \radius + 1 \right) + \eulerE^{\hsFugacity}}} .
	\end{align*}
	Observe that $\radius \in \bigO{1}$ and $\eulerE^{\hsFugacity} \in \bigO{1}$.
	Further, for $\radius = (1/\normSphere{d})^{1/d}$ it holds that $K \in \bigO{\sidelength^d}$.
	Thus we have  
	\[
		\absolute{\hardSpherePrt[\volume, \hsFugacity] - \partitionFunction[\hsGraph{\resolution}, \hsWeight{\resolution}]}
		\le \frac{1}{\resolution} \eulerE^{\bigTheta{\sidelength^d  \ln (\sidelength^d)}}
		= \frac{1}{\resolution} \eulerE^{\bigTheta{ \vol{\volume} \ln (\vol{\volume})}},
	\]
	which concludes the proof.
\end{proof}

\subsection{Approximation bound}
We aim for applying \Cref{cor:univarite_clique_dynamics} to the hard-core representation of the hard-sphere model.
In order to do so, we need a bound on the maximum degree $\hsDegree{\resolution}$ of the graph $\hsGraph{\resolution}$ for any sufficiently large resolution $\resolution$.
Let $\integerSphere{d}[s]$ denote the number of integer grid points in a $d$-dimensional sphere of radius $s$ centered at the origin.
Note that the number of neighbors of a vertex $\hsVertex{x} \in \hsVertices{\resolution}$ for any grid point $x \in \grid[\resolution \sidelength]$ is upper bounded by $\integerSphere{d}[2 \resolution r]$.
We use the following bound on $\integerSphere{d}$.
\begin{lemma}
	\label{lemma:integer_sphere}
	Let $\gaussCircleError \in (0, 1]$ and $s \in \R_{> 0}$. 
	For all $\resolution \ge \big( 2 \sqrt{d} \big)^d / (\gaussCircleError s)$ it holds that $\integerSphere{d}[\resolution s] \le (1 + \gaussCircleError) \cdot \normSphere{d} \cdot \left( \resolution s \right)^d$.   
\end{lemma}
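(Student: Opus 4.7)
The plan is to use the classical lattice-cube argument from the Gauss circle problem, adapted to $d$ dimensions. For every integer point $p$ inside the ball $B(0, \resolution s) \subset \R^d$, associate the half-open unit cube $C_p = p + [0, 1)^d$. Distinct integer points give disjoint cubes, each of Lebesgue measure $1$, so the total volume of $\bigcup_p C_p$ equals $\integerSphere{d}[\resolution s]$. Since any point of $C_p$ lies within Euclidean distance $\sqrt{d}$ of $p$, and $|p| \le \resolution s$, the union is contained in the ball of radius $\resolution s + \sqrt{d}$. Therefore
\[
    \integerSphere{d}[\resolution s] \le \normSphere{d} \bigl(\resolution s + \sqrt{d}\bigr)^d = \normSphere{d} (\resolution s)^d \left(1 + \frac{\sqrt{d}}{\resolution s}\right)^d.
\]

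The remaining task is to show that the hypothesis $\resolution \ge (2\sqrt{d})^d/(\gaussCircleError s)$ forces the factor $\bigl(1 + \sqrt{d}/(\resolution s)\bigr)^d$ to be at most $1 + \gaussCircleError$. Set $x = \sqrt{d}/(\resolution s)$. Using $(1+x)^d \le \eulerE^{dx}$ and the elementary inequality $\eulerE^{\gaussCircleError/2} \le 1 + \gaussCircleError$ for $\gaussCircleError \in (0, 1]$ (which follows from comparing Taylor expansions, or directly by monotonicity since $\eulerE^{1/2} \approx 1.649 < 2$), it suffices to verify $dx \le \gaussCircleError/2$, i.e. $\resolution \ge 2 d^{3/2}/(\gaussCircleError s)$. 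The stated lower bound $\resolution \ge (2\sqrt{d})^d/(\gaussCircleError s) = 2^d d^{d/2}/(\gaussCircleError s)$ dominates $2 d^{3/2}/(\gaussCircleError s)$ for every $d \ge 1$ (equality holds at $d=1$, and for $d \ge 2$ one checks $2^d d^{d/2} \ge 2 d^{3/2}$ directly), so the required inequality on $x$ holds.

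Combining the two displays yields $\integerSphere{d}[\resolution s] \le (1 + \gaussCircleError) \normSphere{d} (\resolution s)^d$, as desired. There is no real obstacle here: the argument is a short geometric volume comparison followed by a routine calibration of constants, and the mild looseness in the stated threshold $(2\sqrt{d})^d/(\gaussCircleError s)$ simply gives slack in the final exponential estimate.
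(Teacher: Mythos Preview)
Your proof is correct and follows essentially the same approach as the paper: both start from the identical lattice-cube volume comparison $\integerSphere{d}[\resolution s] \le \normSphere{d}(\resolution s + \sqrt{d})^d$, and then bound the resulting factor $(1+\sqrt{d}/(\resolution s))^d$ by $1+\gaussCircleError$. The only cosmetic difference is that the paper expands via the binomial theorem and crudely bounds the tail by $(2\sqrt{d})^d/(\resolution s)$, whereas you pass through $(1+x)^d \le \eulerE^{dx}$ and $\eulerE^{\gaussCircleError/2}\le 1+\gaussCircleError$; both routes are routine and your constants check out.
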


\begin{proof}
	We start by considering a sphere of radius $\resolution s + \sqrt{d}$ at the origin.
	Note that this enlarged sphere contains for each grid point $(x_1, \dots, x_d)$ in the original sphere the cubic region $[x_1, x_1 + 1] \times \dots \times [x_d, x_d + 1]$ of volume $1$.
	Thus, the volume of the enlarged sphere is a trivial upper bound on the number of grid points in the original sphere. 
	
	Formally, we get
	\[
		\integerSphere{d}[\resolution s] 
		\le \normSphere{d} \cdot \left( \resolution s + \sqrt{d} \right)^d , 	
	\]
	which we rewrite as
	\[
		\normSphere{d} \cdot \left( \resolution s + \sqrt{d} \right)^d 
		= \normSphere{d} \cdot \left( \resolution s \right)^d + \normSphere{d} \cdot \sum_{i \in [d]} \binom{d}{i} \left( 	\resolution s \right)^{d-i} \sqrt{d}^i	.
	\]
	Further, note that for our choice of $\resolution$ it holds that $\resolution s \ge 1$.
	Thus, we get
	\[
		\normSphere{d} \cdot \left( \resolution s \right)^d + \normSphere{d} \cdot \sum_{i \in [d]} \binom{d}{i} \left( \resolution s \right)^{d-i} \sqrt{d}^i 
		\le 
		\normSphere{d} \cdot \left( \resolution s \right)^d + \normSphere{d} \cdot \left( \resolution s \right)^{d-1} \cdot 2^d \sqrt{d}^d 
		=
		\normSphere{d} \cdot \left( \resolution s \right)^d \cdot \left( 1 + \frac{1}{\resolution s} \left( 2 \sqrt{d} \right)^d \right) .
	\]
We conclude the proof by noting that $\big( 2 \sqrt{d} \big)^d / \resolution s \le \gaussCircleError$.	
\end{proof}

As we fixed $\radius = (1/\normSphere{d})^{1/d}$ we can immediately conclude that for every $\gaussCircleError \in (0, 1]$ there is some $\resolution_{\gaussCircleError} \in \bigTheta{1/\gaussCircleError}$ such that for all $\resolution \ge \resolution_{\gaussCircleError}$ it holds that
\[
	\hsDegree{\resolution} \le (1 + \gaussCircleError) \left(2 \resolution \right)^d .
\]

Finally, the following general lemma will help us to turn a sampling scheme for $\gibbsDistribution[\hsGraph{\resolution}, \hsWeight{\resolution}]$ into a randomized approximation of $\partitionFunction[\hsGraph{\resolution}, \hsWeight{\resolution}]$.
\begin{lemma}[{\cite[Lemma $13$]{friedrich2020polymer}}]
	\label{lemma:appx_partition_function}
	Let $(\graph, \weights)$ be an instance of the multivariate hard-core model and let $\clique$ be a clique cover of size $\numberOfCliques$ with $\maxPartition = \max_{i \in [\numberOfCliques]} \{\partitionFunction[\subgraph{\hsGraph{\resolution}}{\clique[i]}]\}$.
	Further, for $i \in [\numberOfCliques]$ let $\vertices_i = \vertices \setminus \bigcup_{j < i} \clique[j]$.
	For every $\err \in (0, 1]$ there are $s \in \bigTheta{\numberOfCliques \maxPartition / \err^2}$ and $\err_s \in \bigTheta{\err/(\numberOfCliques \maxPartition)}$ such that a randomized $\err$-approximation of $\partitionFunction[\graph, \weights]$ can be computed by drawing $s$ samples $\err_s$-approximately from $\gibbsDistribution[\subgraph{\graph}{\vertices_i}]$ for each $i \in [\numberOfCliques]$. 
\end{lemma}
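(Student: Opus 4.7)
The plan is to express $\partitionFunction[\graph, \weights]$ as a telescoping product of local ratios along the clique cover and then estimate each ratio by Monte Carlo. Setting $\vertices_{\numberOfCliques + 1} = \emptyset$ so that $\partitionFunction[\subgraph{\graph}{\vertices_{\numberOfCliques + 1}}] = 1$, one has
\[
	\partitionFunction[\graph, \weights]
	= \prod_{i \in [\numberOfCliques]} \frac{\partitionFunction[\subgraph{\graph}{\vertices_i}]}{\partitionFunction[\subgraph{\graph}{\vertices_{i+1}}]}
	= \prod_{i \in [\numberOfCliques]} \frac{1}{p_i} ,
\]
where $p_i$ is the probability that $\independentSet \sim \gibbsDistribution[\subgraph{\graph}{\vertices_i}]$ satisfies $\independentSet \cap \clique[i] = \emptyset$. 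Submultiplicativity of the partition function yields $p_i \geq 1/\partitionFunction[\subgraph{\graph}{\clique[i]}] \geq 1/\maxPartition$, so each factor $1/p_i$ is bounded by $\maxPartition$.

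Next I would form $\hat{p}_i$ as the empirical mean of $\indicator{\independentSet \cap \clique[i] = \emptyset}$ over $s$ samples drawn from $\gibbsDistribution[\subgraph{\graph}{\vertices_i}]$, with samples independent across different $i$, and set $\hat{Z} = \prod_{i \in [\numberOfCliques]} \hat{p}_i^{-1}$. Assuming for the moment exact sampling, $\Var{\hat{p}_i}/p_i^2 \leq (1 - p_i)/(s p_i) \leq \maxPartition/s$, and by independence across $i$,
\[
	\frac{\E{\prod_{i \in [\numberOfCliques]} \hat{p}_i^{\, 2}}}{\prod_{i \in [\numberOfCliques]} p_i^2}
	= \prod_{i \in [\numberOfCliques]} \left( 1 + \frac{\Var{\hat{p}_i}}{p_i^2} \right)
	\leq \eulerE^{\numberOfCliques \maxPartition/s} .
\]
Choosing $s \in \bigTheta{\numberOfCliques \maxPartition / \err^2}$ makes the right-hand side $1 + \bigO{\err^2}$, so Chebyshev's inequality applied to $\prod_{i \in [\numberOfCliques]} \hat{p}_i$ gives a relative error at most $\err/2$ on $\hat{Z}$ with constant probability; a median-of-means boost then yields arbitrarily high confidence. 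The factor-of-$\numberOfCliques$ saving in the sample complexity $s$ comes from analysing the variance of the \emph{product} directly, rather than union-bounding each $\hat{p}_i$ separately.

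Finally I would absorb the $\err_s$-approximate nature of the sampler. If each sample is drawn from a distribution at total variation distance at most $\err_s$ from the true Gibbs distribution, then $\absolute{\E{\hat{p}_i} - p_i} \leq \err_s$, so $\E{\hat{p}_i}/p_i$ lies in $[1 - \err_s \maxPartition, 1 + \err_s \maxPartition]$. Consequently $\prod_{i \in [\numberOfCliques]} \E{\hat{p}_i} / \prod_{i \in [\numberOfCliques]} p_i$ is within a factor $(1 \pm \err_s \maxPartition)^{\numberOfCliques}$ of~$1$, which is $1 + \bigO{\err}$ whenever $\err_s \in \bigTheta{\err/(\numberOfCliques \maxPartition)}$. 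Combined with the variance bound above, this produces the claimed $\err$-approximation of $\partitionFunction[\graph, \weights]$. The main technical obstacle is precisely this bias--variance bookkeeping: the product-moment argument responsible for saving a factor of $\numberOfCliques$ in $s$ must survive the switch from exact to approximate sampling, which is what forces $\err_s$ to scale as $\err/(\numberOfCliques \maxPartition)$ rather than merely $\err/\maxPartition$.
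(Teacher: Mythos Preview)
The paper does not prove this lemma; it is quoted verbatim as \cite[Lemma~13]{friedrich2020polymer} and used as a black box in the proof of \Cref{thm:hard_sphere_approx}. So there is no in-paper proof to compare against.

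That said, your argument is the standard self-reducibility scheme and is correct in outline: the telescoping identity $\partitionFunction[\graph,\weights]=\prod_i 1/p_i$ with $p_i=\gibbsPr{\subgraph{\graph}{\vertices_i}}{\independentSet\cap\clique[i]=\emptyset}$, the lower bound $p_i\ge 1/\maxPartition$ via submultiplicativity, the product-of-means variance bound yielding $s\in\bigTheta{\numberOfCliques\maxPartition/\err^2}$, and the bias control forcing $\err_s\in\bigTheta{\err/(\numberOfCliques\maxPartition)}$. This is precisely the approach of the cited reference (which introduced clique dynamics and this clique-wise self-reducibility for abstract polymer models), so your reconstruction matches the intended proof. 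One minor point: when cliques overlap, $\vertices_i\setminus\vertices_{i+1}=\clique[i]\cap\vertices_i$ rather than $\clique[i]$, but this does not affect your argument since $\independentSet\cap\clique[i]=\emptyset$ and $\independentSet\cap(\clique[i]\cap\vertices_i)=\emptyset$ coincide for $\independentSet\subseteq\vertices_i$, and $\partitionFunction[\subgraph{\graph}{\clique[i]\cap\vertices_i}]\le\partitionFunction[\subgraph{\graph}{\clique[i]}]\le\maxPartition$.
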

Note that sampling from $\gibbsDistribution[\subgraph{\graph}{\vertices_i}]$ means sampling from $\gibbsDistribution[\graph]$ for $i = 0$ and ignoring all cliques $\{\clique_j\}_{j < i}$ for $i \ge 1$.

\hardsphereApproximation*
\begin{proof}
	Set $\gaussCircleError = \delta/2$ and $\err' = \err/3$.
	By combining \Cref{lemma:hard_sphere_convergence} and \Cref{lemma:integer_sphere}, we know that we can choose a resolution $\resolution \in \bigTheta{\eulerE^{\vol{\volume} \ln \vol{\volume}} / \left(\err' \gaussCircleError\right)} = \bigTheta{\eulerE^{\vol{\volume} \ln \vol{\volume}} / \left(\err \gaussCircleError\right)}$ such that
	\begin{gather}
		\label{eq:hard_sphere_approx:convergence}
		1 - \err' \le \frac{\hardSpherePrt[\volume, \hsFugacity]}{\partitionFunction[\hsGraph{\resolution}, \hsWeight{\resolution}]} \le 1 + \err' \text{ and }\\
		\label{eq:hard_sphere_approx:integer_sphere}
		\hsDegree{\resolution} \le (1 + \gaussCircleError) \left(2 \resolution \right)^d .
	\end{gather}
	Note that $(1 - \err')^2 \ge 1 - \err$ and $(1 + \err')^2 \le 1 + \err$.
	Thus, \cref{eq:hard_sphere_approx:convergence} implies that it is sufficient to $\err'$-approximate $\partitionFunction[\hsGraph{\resolution}, \hsWeight{\resolution}]$.
	We start by arguing that we can apply \Cref{cor:univarite_clique_dynamics} to $\partitionFunction[\hsGraph{\resolution}, \hsWeight{\resolution}]$.
	Then, we construct a clique cover and show that each step of the clique dynamics can be computed efficiently.
	Finally, we will use \Cref{lemma:appx_partition_function} to get the desired approximation.
	
	To apply \Cref{cor:univarite_clique_dynamics}, we need to show that $\hsWeight{\resolution} \le \left(1 - \delta'\right) \criticalWeight{\hsDegree{\resolution}}$ for some $\delta' \in (0, 1]$. 
	To this end, we choose $\delta' = \delta/2$.
	Due to \cref{eq:hard_sphere_approx:integer_sphere} we know that
	\[
		\criticalWeight{\hsDegree{\resolution}} 
		= \frac{(\hsDegree{\resolution} - 1)^{\hsDegree{\resolution} - 1}}{(\hsDegree{\resolution} - 2)^{\hsDegree{\resolution}}}
		\ge \frac{\left((1 + \gaussCircleError) \left(2 \resolution \right)^d - 1\right)^{(1 + \gaussCircleError) \left(2 \resolution \right)^d - 1}}{\left((1 + \gaussCircleError) \left(2 \resolution \right)^d - 2\right)^{(1 + \gaussCircleError) \left(2 \resolution \right)^d}} .
	\]
	Now, note that
	\[
		\hsFugacity 
		\le (1 - \delta) \frac{\eulerE}{2^d}
		\le \frac{1 - \frac{\delta}{2}}{1 + \frac{\delta}{2}} \frac{\eulerE}{2^d}
		= \frac{1 - \delta'}{1 + \gaussCircleError} \frac{\eulerE}{2^d}
		\le \left(1 - \delta'\right) \resolution^{d} \frac{\left((1 + \gaussCircleError) \left(2 \resolution \right)^d - 1\right)^{(1 + \gaussCircleError) \left(2 \resolution \right)^d - 1}}{\left((1 + \gaussCircleError) \left(2 \resolution \right)^d - 2\right)^{(1 + \gaussCircleError) \left(2 \resolution \right)^d}} ,
	\]
	where the last inequality comes from the fact that $x \frac{(x-1)^{x-1}}{(x-2)^{x}}$ converges to $\eulerE$ from above as $x \to \infty$.
	Dividing by $\resolution^d$ yields $\hsWeight{\resolution} = \resolution^{-d} \hsFugacity \le \left(1 - \delta'\right) \criticalWeight{\hsDegree{\resolution}}$.
	
	We now construct the clique cover that we are going to use.
	This is done by dividing the grid $\grid[\resolution \sidelength]$ into cubic regions of side length $a = \left\lfloor \frac{2 \resolution}{\sqrt{d}} \normSphere{d}^{-1/d} \right\rfloor$.  
	Formally, for a tuple $(i_{1}, \dots, i_{d}) \in \N^d$, let 
	\[
		\subgrid[i_{1}, \dots, i_{d}] = \set{(x_1, \dots, x_d) \in \grid }{\forall j \in [d]\colon i_j a \le x_j < (i_j + 1) a}.	
	\]
	Note that for every pair of grid points $x, y \in \subgrid[i_{1}, \dots, i_{d}]$ it holds that $\dist{x}{y} < 2 \resolution \normSphere{d}^{-1/d} = 2 \resolution \radius$.
	Thus, the set of vertices, corresponding to grid points in $\subgrid[i_{1}, \dots, i_{d}]$, form a clique in $\hsGraph{\resolution}$.
	We obtain a clique cover $\clique$ of size $\numberOfCliques = \size{\clique} \in \bigO{\left(\resolution \sidelength / a \right)^d} = \bigO{\vol{\volume}}$.
	Further, it holds that
	\[
		\maxPartition \le 1 + a^d \hsWeight{\resolution} = 1 + a^d \resolution^{-d} \hsFugacity \in \bigO{1}.
	\]
	By \Cref{cor:univarite_clique_dynamics}, the clique dynamics based on $\clique$ have mixing time polynomial in $\vol{\volume}^{1/\delta'^2}$, thus also polynomial in $\vol{\volume}^{1/\delta^2}$, and in $\ln\left( 1 / \err_{s}\right)$ for any sampling error $\err_{s} \in (0, 1]$.
	
	We proceed by arguing that we can compute each step efficiently.
	Note that we cannot construct the graph explicitly, as it would be far to large for our choice of resolution $\resolution$.
	However, by identifying each vertex by its corresponding grid point, deciding whether there is an edge between two vertices or if a vertex belongs to a certain clique can be done by comparing integers up to size $\bigO{\resolution \sidelength}$, which can be done in $\bigO{\ln\left(\resolution \sidelength\right)} = \bigO{\vol{\volume} \ln \vol{\volume}}$.
	Choosing a clique from the clique cover can be done by choosing $d$ integers up to size $\bigO{\sidelength}$.
	Finally, for a given clique $\clique[i]$, we can sample from $\gibbsDistributionFunction{~\cdot~}[\hsGraph{\resolution}, \hsWeight{\resolution}][\clique[i]][\zeroSpinConfig[\hsVertices{\resolution} \setminus \clique[i]]] = \gibbsDistribution[\subgraph{\hsGraph{\resolution}}{\clique[i]}, \hsWeight{\resolution}]$ by
	\begin{enumerate}[(1)]
		\item \label{enum:hardsphere_inner_sampling:pos}
		sample $x \in \subgrid[i]$ uniformly at random, where $\subgrid[i]$ is the region of the grid corresponding to $\clique[i]$, and
	 	\item \label{enum:hardsphere_inner_sampling:reject}
	 	return $\emptyset$ with probability $\frac{1}{\partitionFunction[\subgraph{\hsGraph{\resolution}}{\clique_i}]}$ and $\{\hsVertex{x}\}$ otherwise.
	\end{enumerate}
 	Note that \ref{enum:hardsphere_inner_sampling:pos} involves sampling $d$ integers up to size $\bigO{a}$. In addition, step \ref{enum:hardsphere_inner_sampling:reject} involves computing $\partitionFunction[\subgraph{\hsGraph{\resolution}}{\clique_i}] = 1 + \size{\clique[i]} \resolution^{-d} \hsFugacity$.
 	Thus, sampling from $\gibbsDistribution[\subgraph{\hsGraph{\resolution}}{\clique[i]}, \hsWeight{\resolution}]$ can be done in $\bigO{\ln (a)} = \bigO{\vol{\volume} \ln \vol{\volume}}$.
 	
 	We now know that we can sample $\err_s$-approximately from $\gibbsDistribution[\hsGraph{\resolution}, \hsWeight{\resolution}]$ in time polynomial in $\vol{\volume}^{1/\delta^2}$ and $\ln(1/\err_s)$.
 	Applying \Cref{lemma:appx_partition_function} proves the theorem.
\end{proof}

\printbibliography

\newpage

\end{document}